\documentclass[12pt]{article}
\usepackage{amsmath,amssymb,amsthm,bm}
\usepackage{enumitem}
\usepackage{color}
\usepackage{tikz-cd}
\usepackage{url}

\theoremstyle{plain}
\newtheorem{thm}{Theorem}[section]
\newtheorem{lem}[thm]{Lemma}
\newtheorem{prop}[thm]{Proposition}
\newtheorem{cor}[thm]{Corollary}
\newtheorem{prob}[thm]{Problem}

\theoremstyle{definition}
\newtheorem{rem}[thm]{Remark}
\newtheorem{df}[thm]{Definition}

\makeatletter

\@addtoreset{equation}{section}
\makeatother



\newcommand{\C}{\mathbb{C}}

\newcommand{\Z}{\mathbb{Z}}
\newcommand{\N}{\mathbb{N}}

\newcommand{\Span}{\mathop{\mathrm{Span}}\nolimits}

\newcommand{\MD}{\mathrm{multideg}}
\newcommand{\LT}{\mathrm{LT}}

\title{Imprimitive association schemes and elimination theory}
\author{ Akihiro Higashitani\thanks{Department of Pure and Applied Mathematics, Graduate School of Information Science and Technology, Osaka University, Yamadaoka 1-5, Suita, Osaka 565-0871, Japan}, Hirotake Kurihara\thanks{Department of Applied Science, Yamaguchi University, 2-16-1 Tokiwadai, Ube 755-8611, Japan}}
\date{}
\begin{document}
\maketitle

\begin{abstract}
We prove that a commutative association scheme is imprimitive if and only if it admits a multivariate $P$- or $Q$-polynomial structure with respect to an elimination-type monomial order. This provides a direct bridge between the classical theory of block and quotient schemes for imprimitive association schemes and elimination theory in computational commutative algebra. For an imprimitive multivariate $P$- or $Q$-polynomial association scheme, we determine the induced multivariate polynomial structures on the quotient and block schemes and describe their associated polynomials via explicit specializations, variable deletions, and rescalings of the original associated polynomials. At the level of zero-dimensional ideals, we show that the ideal of the block scheme is exactly an elimination ideal, whereas the ideal of the quotient scheme is obtained by adjoining the valency relations for the eliminated variables and then eliminating. As applications, we study direct products and crested products from the viewpoint of multivariate polynomiality, and we characterize the schemes that are multivariate $P$- or $Q$-polynomial with respect to every monomial order as precisely the direct products of univariate $P$- or $Q$-polynomial schemes. We also discuss formal duality, composition series, and several related open problems.
\end{abstract}
\textbf{Keywords}: 
imprimitive association schemes; multivariate polynomial association schemes; monomial orders; Gr\"obner bases; elimination theory.

\noindent
\textbf{2020 Mathematics Subject Classification}: Primary 05E30; Secondary 13P10, 13P15.

\tableofcontents

\section{Introduction}
\label{sec:intro}

Commutative association schemes form one of the basic frameworks of algebraic combinatorics. They have deep connections to finite geometry, coding theory, special functions, and harmonic analysis on finite spaces. Among the structural notions in the subject, \emph{imprimitivity} is especially fundamental. A nontrivial closed subset of an imprimitive association scheme gives rise to an equivalence relation on the underlying set and hence to two canonical descendants: the quotient scheme and the block scheme. In this way, imprimitivity governs a basic decomposition mechanism in the theory. On the metric side, it encompasses the familiar bipartite and antipodal decompositions of distance-regular graphs. On the cometric (i.e., $Q$-polynomial) side, it underlies the $Q$-bipartite/$Q$-antipodal dichotomy and its exceptional cases; see, for example, \cite{BI1984,BCN1989,godsilMartin1995,martinMuzychukWilliford2007,suzuki1998,vanDamMartinMuzychuk2013}. It also interacts fruitfully with formal duality and product constructions. Examples include Curtin's work on the Bose--Mesner algebras of block and quotient schemes \cite{curtin2008} and Bailey--Cameron's crested products \cite{baileycameron2005}. More recently, subtle feasibility questions for imprimitive parameter sets have continued to motivate new work \cite{vidali2025}.

Parallel to this classical line of research, a multivariate extension of $P$-polynomiality and $Q$-polynomiality has begun to take shape. Long before a general definition was available, multivariate orthogonal-polynomial phenomena were already visible in specific commutative association schemes and related Gelfand-pair situations. Examples include $q$-Krawtchouk-type and $(n+1,m+1)$-hypergeometric families, wreath products, generalized Hamming schemes, and schemes based on attenuated spaces \cite{CST2006,Godsil2010,Kurihara2013,Mizukawa2004,Mizukawa2007,MT2004,Stanton1980}. In the last few years, these scattered examples have been brought under a common conceptual framework. Bernard, Cramp\'{e}, Poulain d'Andecy, Vinet, and Zaimi introduced bivariate $P$-polynomial (and likewise bivariate $Q$-polynomial) association schemes \cite{bi}. Bannai, Kurihara, Zhao, and Zhu then reformulated the notion in terms of arbitrary monomial orders and established a general theory of multivariate $P$- and $Q$-polynomial association schemes \cite{BKZZ}. Since then the subject has developed further. One now has graph-theoretic counterparts in the form of $m$-distance-regular graphs \cite{BCVZZ24}, explicit bivariate $Q$-polynomial structures for nonbinary Johnson schemes and association schemes obtained from attenuated spaces \cite{BKZZ2,biQ}, and further bivariate $P$- and $Q$-polynomial structures for attenuated-space schemes \cite{BernardAttenuated2025}. Thus multivariate polynomiality is currently in an active phase of development. 

An important feature of the multivariate theory is that the definition itself depends on a choice of monomial order. From the viewpoint of computational commutative algebra, this is not merely technical. Monomial orders govern Gr\"obner bases and, crucially for the present paper, elimination ideals and elimination theorems for polynomial systems (see, e.g., \cite{Cox}). 
This perspective has already appeared in the context of association schemes. Indeed, the Bose--Mesner algebra of a commutative association scheme can be realized as a quotient by a zero-dimensional ideal, and Gr\"obner basis methods have been used to study polynomiality and related algorithmic properties of association schemes \cite{martinezMoro2001}. It is therefore natural to ask whether specific monomial orders encode specific combinatorial features of an association scheme.

The main point of this paper is that imprimitivity is exactly the combinatorial phenomenon detected by elimination orders. More precisely, Theorem~\ref{thm:EliminationThm} shows that, for a commutative association scheme $\mathfrak{X}$, the following are equivalent: 
\begin{enumerate}[label=$(\roman*)$]
    \item $\mathfrak{X}$ is imprimitive;
    \item there exist $\mathcal{D}\subset \N^\ell$ and a monomial order of $s$-elimination type such that $\mathfrak{X}$ is an $\ell$-variate $P$-polynomial association scheme on $\mathcal{D}$ with respect to that order;
    \item there exist $\mathcal{D}^\ast\subset \N^{\ell^\ast}$ and a monomial order of $s^\ast$-elimination type such that $\mathfrak{X}$ is an $\ell^\ast$-variate $Q$-polynomial association scheme on $\mathcal{D}^\ast$ with respect to that order.
\end{enumerate}
Thus a classical decomposition property of imprimitive association schemes turns out to be precisely an elimination-theoretic property inside the recent multivariate polynomial framework. 

Once this connection is established, the behavior of quotient and block schemes becomes transparent from the multivariate viewpoint. Imprimitive association schemes come with canonical quotient schemes and block schemes. We prove that their multivariate $P$- or $Q$-polynomial structures are obtained systematically from that of the original scheme. On the level of associated polynomials, this amounts to deleting variables, specializing variables to valencies or multiplicities, and applying the natural rescalings dictated by quotient parameters; see Theorems~\ref{thm:QuotientScheme} and~\ref{thm:block_scheme_Ppoly}. On the ideal-theoretic level, Theorem~\ref{thm:ideal_block_quotient} shows that the ideal of the block scheme is exactly the $s$-elimination ideal. It also shows that the ideal of the quotient scheme is obtained from the original defining ideal by eliminating the last $\ell-s$ variables and then applying a natural rescaling. In Section~\ref{sec:examples} we illustrate these theorems with examples.

These results have several applications. In Section~\ref{sec:multpoly_products} we analyze direct products and crested products from the viewpoint of multivariate polynomiality. In particular, we show that crested products admit natural multivariate $P$- and $Q$-polynomial structures compatible with the elimination/block orders coming from imprimitivity. We also obtain a characterization of multivariate $P$- or $Q$-polynomial association schemes that remain polynomial for every monomial order: they are precisely the direct products of univariate $P$- or $Q$-polynomial association schemes; see Theorems~\ref{thm:directproduct_anyorder} and~\ref{thm:directproduct_anyorder_Q}. Beyond these applications, we discuss further consequences for formal duality, composition series of quotient schemes, and several open problems that seem worth pursuing.

The organization of this paper is as follows. Section~\ref{sec:2} collects the preliminaries on association schemes, imprimitivity, monomial orders, Gr\"obner bases, and multivariate $P$- or $Q$-polynomial association schemes. Section~\ref{sec:eigen_characterization} gives an equivalent characterization of multivariate $P$-polynomiality in terms of the first eigenmatrix and the associated zero-dimensional ideal, and the $Q$-polynomial counterpart is developed there as well. Section~\ref{sec:MainTheorem} contains the main theorems relating imprimitivity and elimination orders, together with the induced multivariate structures on quotient and block schemes. Section~\ref{sec:examples} illustrates the theory on concrete examples. Section~\ref{sec:multpoly_products} studies direct products, crested products, and the characterization by arbitrary monomial orders. Section~\ref{sec:other_topics} discusses additional topics, including formal duality and composition-series aspects, and Section~\ref{sec:open_problems} concludes with open problems and future directions.

\section{Preliminaries}
\label{sec:2}

\subsection{Association schemes}
\label{sec:AS}
In this subsection, we recall the basic definitions for association schemes.
For background, we refer the reader to Bannai--Bannai--Ito--Tanaka~\cite{BBIT2021} and Bannai--Ito~\cite{BI1984}.
Let $X$ and $\mathcal{I}$ be finite sets
and let $\mathcal{R}$ be a surjective map from $X\times X$ to $\mathcal{I}$.
Let $M_X(\C)$ be the $\C$-algebra of complex matrices with rows and columns indexed by $X$.
The \emph{adjacency matrix} $A_i$ of $i\in \mathcal{I}$
is defined to be the matrix in $M_X(\C)$ whose $(x,y)$ entries are
\[
(A_i)_{xy}=
\begin{cases}
    1 & \text{if $\mathcal{R}(x,y)=i$,}\\   
    0 & \text{otherwise.}
\end{cases}    
\]
Clearly,
\begin{enumerate}[label=$(\mathrm{A}\arabic*)$]
    \item $\sum_{i\in \mathcal{I}} A_i =J_X$, where $J_X$ is the all-ones matrix of $M_X(\C)$. \label{AS:Hadamard}
\end{enumerate}
A triple $\mathfrak{X}=(X,\mathcal{R},\mathcal{I})$ (or simply $(X,\mathcal{R})$)
is called a \emph{commutative association scheme} if
$\mathfrak{X}$ satisfies the following conditions:
\begin{enumerate}[label=$(\mathrm{A}\arabic*)$]
    \setcounter{enumi}{1}
    \item there exists $i_0 \in \mathcal{I}$ such that $A_{i_0}=I_X$, where $I_X$ is the identity matrix of $M_X(\C)$; \label{AS:I}
    \item for each $i\in \mathcal{I}$, there exists $i'\in \mathcal{I}$ such that $A_i^T=A_{i'}$, where $A_i^T$ denotes the transpose of $A_i$; \label{AS:transpose}
    \item for each $i,j\in \mathcal{I}$,
    \[
    A_i A_j = \sum_{k\in \mathcal{I}} p^k_{ij} A_k   
    \]
    holds.
    The constant $p^k_{ij}$ is called the \emph{intersection number}; \label{AS:pijk}
    \item for $i,j,k\in \mathcal{I}$, $p^k_{ij}=p^k_{ji}$ holds; equivalently, $A_i A_j = A_j A_i$.\label{AS:commutative}
\end{enumerate}
If $|\mathcal{I}|=d+1$, then $\mathfrak{X}$ is said to have class $d$.
By relabeling the index set if necessary, one may, and often does, assume that $i_0=0$; we will occasionally use this convention below.
Moreover, if  $\mathfrak{X}$ satisfies 
\begin{enumerate}[label=$(\mathrm{A}\arabic*)$]
    \setcounter{enumi}{5}
\item for each $i\in \mathcal{I}$, $i=i'$ holds, \label{AS:symmetric}
\end{enumerate}
then $\mathfrak{X}$ is called \emph{symmetric}.
Henceforth, when we simply say ``association scheme'', we mean a commutative association scheme.
We also write $\mathfrak{X}=(X,\{A_i\}_{i\in \mathcal{I}})$ when we wish to emphasize the adjacency matrices.

Let $\mathfrak{X}_1=(X_1,\mathcal{R}_1,\mathcal{I}_1)$ and $\mathfrak{X}_2=(X_2,\mathcal{R}_2,\mathcal{I}_2)$ be association schemes.
A pair of maps $(f,g)$, where $f\colon X_1\to X_2$ and $g\colon \mathcal{I}_1\to \mathcal{I}_2$, is called a \emph{homomorphism} from $\mathfrak{X}_1$ to $\mathfrak{X}_2$ if, for any $x,x'\in X_1$,
\[
    g\bigl(\mathcal{R}_1(x,x')\bigr)=\mathcal{R}_2\bigl(f(x),f(x')\bigr)
\]
holds.
That is, the following diagram is commutative:
\[
\begin{tikzcd}
X_1\times X_1 \arrow[r, "\mathcal{R}_1"] \arrow[d, "f\times f"'] & \mathcal{I}_1 \arrow[d, "g"] \\
X_2\times X_2 \arrow[r, "\mathcal{R}_2"'] & \mathcal{I}_2
\end{tikzcd}
\]
Furthermore, if both $f$ and $g$ are bijections, then $(f,g)$ is called an \emph{isomorphism}.
If such an isomorphism $(f,g)$ exists, then $\mathfrak{X}_1$ and $\mathfrak{X}_2$ are said to be \emph{isomorphic}.

Let $\mathcal{I},\mathcal{I}'$ be finite sets of the same cardinality, and let $\varphi\colon \mathcal{I}\to \mathcal{I}'$ be a bijection.
Then, for an association scheme $\mathfrak{X}=(X,\mathcal{R},\mathcal{I})$, if we set
$\mathcal{R}':=\varphi\circ \mathcal{R}$,
then $\mathfrak{X}'=(X,\mathcal{R}',\mathcal{I}')$ is also an association scheme.
In this case, $(\mathrm{id}_X,\varphi)$ is an isomorphism from $\mathfrak{X}$ to $\mathfrak{X}'$.
We call $\mathfrak{X}'$ a \emph{relabeling} of $\mathfrak{X}$ to $\mathcal{I}'$ (with respect to $\varphi$).

Let $\mathfrak{A}=\Span_\C \{ A_i \}_{i\in \mathcal{I}}$.
By \ref{AS:pijk}, $\mathfrak{A}$ becomes a subalgebra of $M_X(\C)$.
The algebra $\mathfrak{A}$ is called the \emph{Bose--Mesner algebra} of $\mathfrak{X}$.
By \ref{AS:Hadamard}, $\{A_i\}_{i\in \mathcal{I}}$ is a basis of $\mathfrak{A}$, and we have $\dim_{\C} \mathfrak{A}=d+1$
if $\mathfrak{X}$ is of class $d$.
By \ref{AS:transpose} and~\ref{AS:commutative},
$\mathfrak{A}$ has another basis  
$\{E_j\}_{j\in \mathcal{J}}$ consisting of the primitive idempotents of $\mathfrak{A}$,
where $\mathcal{J}$ is a finite set.
Since $\{A_i\}_{i\in \mathcal{I}}$ and $\{E_j\}_{j\in \mathcal{J}}$ are bases of $\mathfrak{A}$,
$|\mathcal{I}|=|\mathcal{J}|$ holds.
By~\ref{AS:Hadamard}, $\mathfrak{A}$ is closed under entrywise multiplication; this product is denoted by $\circ$ and called the \emph{Hadamard product}.
Then $\{E_j\}_{j\in \mathcal{J}}$ has the following properties:
\begin{enumerate}[label=$(\mathrm{E}\arabic*)$]
    \item $\sum_{j\in \mathcal{J}}E_j=I_X$; \label{ASE:sum}
    \item there exists $j_0 \in \mathcal{J}$ such that $E_{j_0}=\frac{1}{|X|}J_X$; \label{ASE:J}
    \item for each $j\in \mathcal{J}$, there exists $j^T\in \mathcal{J}$ such that $E_j^T=E_{j^T}$; \label{ASE:transpose}
    \item for each $i,j\in \mathcal{J}$,
    \[
    (|X| E_i)\circ (|X|E_j)=\sum_{k\in \mathcal{J}}q^k_{ij}|X|E_k
    \]
    holds.
    The constant $q^k_{ij}$ is called the \emph{Krein number} of $\mathfrak{X}$; \label{ASE:qijk}
\end{enumerate}

The entries of the \emph{first eigenmatrix} $P:=(P_i(j))_{j\in \mathcal{J}, i\in \mathcal{I}}$
and the \emph{second eigenmatrix} $Q:=(Q_j(i))_{i\in \mathcal{I}, j\in \mathcal{J}}$
are defined by
\[
A_i = \sum_{j\in \mathcal{J}}P_i(j)E_j \qquad
\text{and}\qquad
|X| E_j = \sum_{i\in \mathcal{I}}Q_j(i)A_i,
\]
respectively.

For each $i\in \mathcal{I}$, the intersection number $p^{i_0}_{i\,i^T}$ is called the \emph{valency} of $i$, and is denoted by $k_i$.
By the definition of valency, $k_i$ is equal to the out-degree of each vertex in the digraph whose adjacency matrix is $A_i$.
Then, by the Perron--Frobenius theorem,
\begin{equation}\label{eq:P_ineq}
    \text{$P_i(j_0)=k_i$ and $|P_i(j)|\le k_i$ for any $j\in \mathcal{J}$}
\end{equation}
hold. On the other hand, for each $j\in \mathcal{J}$, 
the rank of $E_j$ is called the \emph{multiplicity} of $j$, and is denoted by $m_j$.
This is equal to the Krein number $q^{j_0}_{j\,j^T}$.
Furthermore,
\begin{equation}\label{eq:Q_ineq}
    \text{$Q_j(i_0)=m_j$ and $|Q_j(i)|\le m_j$ for any $i\in \mathcal{I}$}
\end{equation}
are known to hold. 

We conclude this subsection with a well-known lemma, which we use later.
\begin{lem}
\label{lem:intersection_krein_relation}
Let $\mathfrak{X}=(X,\mathcal{R},\mathcal{I})$ be an association scheme.
Then the following equalities hold for the intersection numbers and Krein numbers of $\mathfrak{X}$:
\begin{enumerate}[label=$(\roman*)$]
    \item \label{item:intersection_numbers} $k_k p^k_{ij}=k_j p^j_{i^T k}=k_i p^i_{k j^T}$ for $i,j,k\in \mathcal{I}$;
    \item \label{item:krein_numbers} $m_k q^k_{ij}=m_j q^j_{i^T k}=m_i q^i_{k j^T}$ for $i,j,k\in \mathcal{J}$.
\end{enumerate}
\end{lem}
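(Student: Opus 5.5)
For part (i), the plan is to count directed paths of length two and compute traces. Fix $i,j,k\in\mathcal{I}$ and consider the quantity $\operatorname{tr}(A_iA_jA_k^T)$. By \ref{AS:pijk}, $A_iA_j=\sum_l p^l_{ij}A_l$. The basis $\{A_l\}$ consists of disjoint $0/1$ matrices, so $\operatorname{tr}(A_lA_k^T)=\sum_{x,y}(A_l)_{xy}(A_k)_{xy}=\delta_{lk}|X|k_k$. Here we use that each row of $A_k$ has $k_k$ ones: this is the valency, equal to $p^{i_0}_{kk^T}$ by comparing the $(x,x)$ entries of $A_kA_k^T$. Hence
\[
\operatorname{tr}(A_iA_jA_k^T)=|X|k_kp^k_{ij}.
\]
Next we use cyclicity of the trace together with $A_i^T=A_{i^T}$ and $A_k^T=A_{k^T}$:
\[
\operatorname{tr}(A_iA_jA_k^T)=\operatorname{tr}(A_jA_k^TA_i)=\operatorname{tr}\bigl(A_{i^T}A_kA_j^T\bigr).
\]
The second equality holds because the trace is invariant under transposition. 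By the first formula this equals $|X|k_jp^j_{i^Tk}$. Similarly,
\[
\operatorname{tr}(A_iA_jA_k^T)=\operatorname{tr}(A_k^TA_iA_j)=\operatorname{tr}\bigl(A_j^TA_i^TA_k\bigr).
\]
After rearranging and using commutativity \ref{AS:commutative}, this is $\operatorname{tr}(A_kA_{j^T}A_i^T)=|X|k_ip^i_{kj^T}$. Dividing by $|X|$ gives (i).

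Part (ii) is the dual computation. We replace the ordinary product by the Hadamard product and the trace by the sum-of-all-entries functional $\sigma(M)=\mathbf{1}^TM\mathbf{1}$. The key facts are the following.
\begin{itemize}
\item $\sigma(M\circ N^T)=\operatorname{tr}(MN)$.
\item $\operatorname{tr}(E_lE_k^T)=\delta_{lk}m_k$. This uses $E_k^T=E_{k^T}$ from \ref{ASE:transpose} and orthogonality of the primitive idempotents. Note that $\operatorname{tr}E_k=\operatorname{rank}E_k=m_k$.
\end{itemize}
Then by \ref{ASE:qijk},
\[
\sigma\bigl((|X|E_i)\circ(|X|E_j)\circ(|X|E_k^T)\bigr)=|X|\sum_lq^l_{ij}\operatorname{tr}(|X|E_lE_k^T)=|X|^2m_kq^k_{ij}.
\]
The triple Hadamard product is symmetric in its three factors, and $\sigma$ is invariant under transposing all factors. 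So we can permute the roles exactly as in (i), writing $E_i=E_{i^T}^T$ and $E_k^T=E_{k^T}$, and obtain $m_jq^j_{i^Tk}$ and $m_iq^i_{kj^T}$.

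The main obstacle is bookkeeping. We must check that $m_k=\operatorname{tr}E_k$ agrees with the paper's $q^{j_0}_{kk^T}$; this follows by taking $\sigma$ of $(|X|E_k)\circ(|X|E_{k^T})$ and using $\sigma(E_l)=|X|\delta_{lj_0}$. We must also track transposes correctly, which needs the commutativity and transpose-closure axioms.
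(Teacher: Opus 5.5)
The paper states this lemma as well known and gives no proof, so there is no route to compare against. Your trace argument for (i), dualized via the sum-of-entries functional for (ii), is the standard proof. Part (i) is correct as written. The final rearrangement $\operatorname{tr}(A_j^TA_i^TA_k)=\operatorname{tr}(A_kA_{j^T}A_i^T)$ is just cyclicity, so commutativity is not needed there.

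In (ii), however, one of your two ``key facts'' is false for non-symmetric schemes. Your displayed computation comes out right only because a second slip cancels it.

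\textbf{The false fact.} Since $E_k^T=E_{k^T}$ and $E_lE_{k^T}=\delta_{l,k^T}E_l$, one has $\operatorname{tr}(E_lE_k^T)=\delta_{l,k^T}m_k$, not $\delta_{lk}m_k$. For example, in the directed-triangle scheme on $\Z/3\Z$, take $E_l$ a nontrivial character idempotent. Then $\operatorname{tr}(E_lE_l^T)=\operatorname{tr}(E_lE_{l^T})=0\neq 1=m_l$.

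\textbf{The cancelling slip.} Your (correct) identity $\sigma(M\circ N^T)=\operatorname{tr}(MN)$, applied with $M=E_l$ and $N^T=E_k^T$, gives $\sigma(E_l\circ E_k^T)=\operatorname{tr}(E_lE_k)$, not $\operatorname{tr}(E_lE_k^T)$.

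As a result, your middle expression $|X|\sum_l q^l_{ij}\operatorname{tr}(|X|E_lE_k^T)$ actually equals $|X|^2m_kq^{k^T}_{ij}$. This differs from the left-hand side in general: in the same $\Z/3\Z$ example, take $i=k$ nontrivial and $j=j_0$.

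\textbf{The repair} is immediate. State the key fact as $\operatorname{tr}(E_lE_k)=\delta_{lk}m_k$; this needs only $E_lE_k=\delta_{lk}E_k$ and $\operatorname{tr}E_k=\operatorname{rank}E_k=m_k$, and (E3) is not used here. Then write
\[
\sigma\bigl((|X|E_i)\circ(|X|E_j)\circ(|X|E_k^T)\bigr)=|X|^2\sum_l q^l_{ij}\operatorname{tr}(E_lE_k)=|X|^2m_kq^k_{ij}.
\]
With this corrected base formula, your permutation step goes through as written. Transpose all three factors, then use commutativity of $\circ$ and $E_a^T=E_{a^T}$. This gives $|X|^2m_jq^j_{i^Tk}$ and $|X|^2m_iq^i_{kj^T}$.

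Your consistency check $m_k=q^{j_0}_{kk^T}$ is also fine once you use $\sigma(E_k\circ E_k^T)=\operatorname{tr}(E_k^2)$. It is not strictly needed, since the paper defines $m_k$ as $\operatorname{rank}E_k$.
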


\subsection{Imprimitive association schemes}
\label{sec:ImprimitiveAS}
The material in this subsection follows Bannai--Ito~\cite{BI1984}, Zieschang~\cite{Zieschang}, and Curtin~\cite{curtin2008}.
Let $\mathfrak{X}=(X,\mathcal{R},\mathcal{I})$ be a commutative association scheme.
For $i,i'\in\mathcal{I}$, define the \emph{complex product} $ii'$ on $\mathcal{I}$ by
$ii':=\{k\in\mathcal{I}\mid p^k_{ii'}>0\}$.
Further, for subsets $\mathcal{A},\mathcal{B}\subset\mathcal{I}$, define
$\mathcal{A}\mathcal{B}:=\bigcup_{i\in\mathcal{A},\,i'\in\mathcal{B}} ii'$ and 
$\mathcal{A}^T:=\{i^T\mid i\in\mathcal{A}\}$.

\begin{df}
\label{df:closed_subset_zieschang}
A nonempty subset $\mathcal{C}\subset\mathcal{I}$ is called a \emph{closed subset} if
$\mathcal{C}^T\mathcal{C}\subset \mathcal{C}$, that is, if
$ i,i'\in\mathcal{C}$ then $i^T i'\subset\mathcal{C}$ holds.
\end{df}
The subsets $\{i_0\}$ and $\mathcal{I}$ are trivially closed subsets of $\mathcal{I}$. 

\begin{lem}[{cf. \cite[Lemma 2.1.4]{Zieschang}}]\label{lem:closed_subset_equiv}
For $\emptyset\neq\mathcal{C}\subset\mathcal{I}$, the following conditions are equivalent:
\begin{enumerate}[label=$(\roman*)$]
\item \label{item:closed_subset_equiv_i} $\mathcal{C}$ is a closed subset.
\item \label{item:closed_subset_equiv_ii} $\mathcal{R}^{-1}(\mathcal{C})\subset X\times X$ is an equivalence relation on $X$.
\item \label{item:closed_subset_equiv_iii} $\{(i,i')\mid i^T i'\cap \mathcal{C} \neq \emptyset \}\subset \mathcal{I}\times \mathcal{I}$ is an equivalence relation on $\mathcal{I}$.
\end{enumerate}
\end{lem}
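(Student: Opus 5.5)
I will prove the cycle of equivalences (i)$\Rightarrow$(ii)$\Rightarrow$(iii)$\Rightarrow$(i). The basic dictionary is that $(x,y)\in\mathcal{R}^{-1}(\mathcal{C})$ if and only if $(A_{\mathcal C})_{xy}\neq 0$, where $A_{\mathcal C}:=\sum_{i\in\mathcal C}A_i$. The key observation is that for $i,i'\in\mathcal I$ and $x,y\in X$,
\[
(A_i A_{i'})_{xy}=\#\{z\mid \mathcal R(x,z)=i,\ \mathcal R(z,y)=i'\}=\sum_k p^k_{ii'}(A_k)_{xy}.
\]
So $ii'$ is exactly the set of relations $k$ for which a pair $(x,y)$ in relation $k$ admits a path $x\to z\to y$ of type $(i,i')$. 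I will also use $\mathcal R(y,x)=\mathcal R(x,y)^T$, by \ref{AS:transpose}.

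\textbf{(i)$\Rightarrow$(ii).} Let $\sim$ denote $\mathcal R^{-1}(\mathcal C)$.
\begin{enumerate}[label=$(\alph*)$]
\item Reflexivity: $i_0\in\mathcal C$. Take any $i\in\mathcal C$, which exists since $\mathcal C\neq\emptyset$. Then $p^{i_0}_{i^Ti}=k_{i^T}>0$, so $i_0\in i^Ti\subset\mathcal C$.
\item Symmetry: $\mathcal C^T=\mathcal C$. For $i\in\mathcal C$, we have $p^{i^T}_{i^T i_0}=1$, so $i^T\in i^Ti_0\subset\mathcal C$. Hence if $\mathcal R(x,y)\in\mathcal C$, then $\mathcal R(y,x)=\mathcal R(x,y)^T\in\mathcal C$.
\item Transitivity: suppose $\mathcal R(x,z)=i\in\mathcal C$ and $\mathcal R(z,y)=i'\in\mathcal C$, and let $k=\mathcal R(x,y)$. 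By the observation, $p^k_{ii'}>0$, so $k\in ii'$. Writing $i=(i^T)^T$ with $i^T\in\mathcal C$, closedness gives $ii'\subset\mathcal C$, so $k\in\mathcal C$.
\end{enumerate}

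\textbf{(ii)$\Rightarrow$(iii).} For each $i$, fix a pair $(x,y)$ with $\mathcal R(x,y)=i$; this is possible because $\mathcal R$ is surjective. I first claim that $i^Ti'\cap\mathcal C\neq\emptyset$ if and only if, for one (equivalently every) $x$, there are $y\sim y'$ with $\mathcal R(x,y)=i$ and $\mathcal R(x,y')=i'$. To see this, suppose $k\in i^Ti'\cap \mathcal C$ and pick $(y,y')$ with $\mathcal R(y,y')=k$. Since $p^k_{i^Ti'}>0$, there is $x$ with $\mathcal R(y,x)=i^T$ and $\mathcal R(x,y')=i'$. The converse reverses this argument. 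The existence statement is independent of $x$, because the number of such pairs $(y,y')$ is $\sum_{k\in\mathcal C}k_i\,p^{i}_{i' k^T}$ or a similar constant, by regularity (Lemma \ref{lem:intersection_krein_relation}).

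In this description the relation on $\mathcal I$ reads: \emph{from a fixed base point $x$, the $\sim$-classes reached by $i$-neighbours and by $i'$-neighbours intersect}. I expect the main obstacle here to be transitivity, since a priori the $i$-neighbours of $x$ might spread over several classes. The fix is to show that the set of $\sim$-classes containing $i$-neighbours of $x$ depends on $i$ only through a single class, as follows. If $y,y''$ are both $i$-neighbours of $x$, we want the classes of $y$ and $y''$ to be "equally related", and transitivity should follow by composing via $\mathcal C\mathcal C\subset\mathcal C$. I will first attempt the more direct route: the relation holds if and only if $i'\in \mathcal C' i$ for a suitable set, and $\mathcal C i=i\mathcal C$ gives a partition of $\mathcal I$ into double cosets $\mathcal C i\mathcal C$. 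Concretely, I will prove that $i^Ti'\cap\mathcal C\neq\emptyset$ is equivalent to $i'\in i\,\mathcal C$, using $p^k_{i^Ti'}>0\iff p^{i'}_{ik}>0$ from Lemma \ref{lem:intersection_krein_relation}\ref{item:intersection_numbers}. The sets $i\mathcal C$ then partition $\mathcal I$, because $(i\mathcal C)\mathcal C=i\mathcal C$ by associativity of the complex product together with $\mathcal C\mathcal C=\mathcal C$; the latter follows from closedness, (a) and (b). This yields reflexivity, symmetry and transitivity.

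\textbf{(iii)$\Rightarrow$(i).} Reflexivity of the relation at $i_0$ gives $\mathcal C\cap i_0i'\neq\emptyset$ exactly when $i'\in\mathcal C$; in particular $\mathcal C$ is precisely the equivalence class of $i_0$. Now let $i,i'\in\mathcal C$, so both are related to $i_0$. By symmetry and transitivity, $i$ is related to $i'$, hence $i^Ti'\cap\mathcal C\neq\emptyset$. This gives only a nonempty intersection, not $i^Ti'\subset\mathcal C$. To close the gap, take $k\in i^Ti'$. Then $p^{i'}_{ik}>0$, so $i'\in ik$ and therefore $i'\in i\{k\}$. Using the coset structure from the previous step, $i$ and $k$ should lie in related classes that both contain $i_0$'s class, forcing $k\in\mathcal C$. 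This final bookkeeping step is where I would double-check the details.
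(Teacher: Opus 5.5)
The paper does not prove this lemma (it only cites Zieschang), so your argument has to stand on its own. Your (i)$\Rightarrow$(ii) is correct, but the cycle breaks at (ii)$\Rightarrow$(iii). There you drop the description via $\sim$-classes and run the coset argument using $\mathcal{C}\mathcal{C}=\mathcal{C}$ and $\mathcal{C}^T=\mathcal{C}$, which you derived from closedness in (a) and (b). So what you actually prove is (i)$\Rightarrow$(iii). Since (ii)$\Rightarrow$(i) is never shown, the cycle is circular as written.

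This is easy to repair. Suppose $\mathcal{R}^{-1}(\mathcal{C})$ is an equivalence relation, and let $i,i'\in\mathcal{C}$ and $k\in i^Ti'$. Pick $x,z,y$ with $\mathcal{R}(x,z)=i^T$, $\mathcal{R}(z,y)=i'$ and $\mathcal{R}(x,y)=k$. Then $(z,x)$ and $(z,y)$ lie in $\mathcal{R}^{-1}(\mathcal{C})$, so symmetry and transitivity give $k=\mathcal{R}(x,y)\in\mathcal{C}$. After that your coset argument does yield (iii). For the symmetry of ``$i'\in i\mathcal{C}$'', use $p^{i'}_{ic}>0\Rightarrow p^{i}_{i'c^T}>0$ from Lemma~\ref{lem:intersection_krein_relation}~\ref{item:intersection_numbers}, together with $\mathcal{C}^T=\mathcal{C}$.

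The more serious gap is (iii)$\Rightarrow$(i), which you leave unfinished. The step ``using the coset structure from the previous step'' is circular: that structure was obtained assuming $\mathcal{C}$ is closed, which is exactly what you are trying to prove. Also, the fact $i'\in ik$ that you extract does not by itself say anything about the relation in (iii) as it concerns $k$.

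The missing move is to put $k$ on the left of a complex product that meets $\mathcal{C}$. Take $i,i'\in\mathcal{C}$, $k\in i^Ti'$, and a triangle $x,z,y$ as above. Reading it along $z\to y\to x$ gives $i=\mathcal{R}(z,x)\in i'k^T=k^Ti'$. Hence $i\in k^Ti'\cap\mathcal{C}$, i.e.\ $k$ is related to $i'$.

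Next, $i_0^Ti'=\{i'\}$ and $i'\in\mathcal{C}$, so $i_0$ is related to $i'$. By symmetry and transitivity, $i_0$ is related to $k$. That means $i_0^Tk=\{k\}$ meets $\mathcal{C}$, so $k\in\mathcal{C}$. This proves $\mathcal{C}^T\mathcal{C}\subset\mathcal{C}$ and closes the cycle.
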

Let $\mathcal{C}$ be a closed subset, and denote by $\sim$ the equivalence relation on $X$ determined by Lemma~\ref{lem:closed_subset_equiv}~\ref{item:closed_subset_equiv_ii}.
For $x\in X$, let $x\mathcal{C}:=\{y\in X\mid x\sim y\}$ be the equivalence class of $x$ with respect to $\sim$, and set $X/\mathcal{C}:=\{x\mathcal{C} \mid x\in X\}$.
By Lemma~\ref{lem:closed_subset_equiv}~\ref{item:closed_subset_equiv_iii}, $\mathcal{C}$ also defines an equivalence relation on $\mathcal{I}$.
For $i,i'\in \mathcal{I}$, we denote this relation by $i\equiv i'$.
For $i\in \mathcal{I}$, let $i\mathcal{C}:=\{i'\in \mathcal{I}\mid i\equiv i'\}$ be the equivalence class of $i$ with respect to $\equiv$, and set $\mathcal{I}/\mathcal{C}:=\{i\mathcal{C} \mid i\in \mathcal{I}\}$.

Next, 
in the same manner as Definition~\ref{df:closed_subset_zieschang},
we introduce on $\mathcal{J}$ the complex product defined by the Krein numbers.
For $j,j'\in\mathcal{J}$, define the complex product (Krein product) on $\mathcal{J}$ with respect to the Hadamard product $\circ$ by
$j\circ j':=\{k\in\mathcal{J}\mid q^k_{jj'}>0\}$.
For subsets $\mathcal{A},\mathcal{B}\subset\mathcal{J}$, define
$\mathcal{A}\circ\mathcal{B}:=\bigcup_{j'\in\mathcal{A},\,j\in\mathcal{B}} j'\circ j$.

\begin{df}
\label{df:dual_closed_subset_curtin}
A nonempty subset $\mathcal{C}^\ast\subset\mathcal{J}$ is called a \emph{closed subset with respect to the Hadamard product $\circ$} if
$(\mathcal{C}^\ast)^T\circ \mathcal{C}^\ast\subset \mathcal{C}^\ast$ holds.
\end{df}

\begin{lem}[{cf. \cite[Lemma~3.2]{curtin2008}}]\label{lem:dual_closed_subset_equiv}
For $\mathcal{C}^\ast\subset\mathcal{J}$ with $\mathcal{C}^\ast\neq\emptyset$, the following conditions are equivalent: 
\begin{enumerate}[label=$(\roman*)$]
\item $\mathcal{C}^\ast$ is a closed subset with respect to the Hadamard product $\circ$.
\item $\mathrm{Span}_\C\{E_j\mid j\in\mathcal{C}^\ast\}$ is a subalgebra with respect to the Hadamard product $\circ$.
\end{enumerate}
\end{lem}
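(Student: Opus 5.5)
The plan is to prove the two implications separately, working entirely with the Krein product and the primitive idempotents. We use that $\{E_j\}_{j\in\mathcal{J}}$ is a basis of $\mathfrak{A}$, that the $E_j$ are Hadamard-closed by \ref{ASE:qijk}, and that the Krein numbers are nonnegative reals (Krein condition). We also use (E3): $E_j^T=E_{j^T}$ is again a primitive idempotent, so $\mathcal{J}\ni j\mapsto j^T$ is an involution on the index set.

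\emph{(i)$\Rightarrow$(ii).} Suppose $(\mathcal{C}^\ast)^T\circ\mathcal{C}^\ast\subset\mathcal{C}^\ast$. Put $V=\Span_\C\{E_j\mid j\in\mathcal{C}^\ast\}$.

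First I show $\mathcal{C}^\ast$ is closed under $T$. Nonemptiness gives some $j\in\mathcal{C}^\ast$. By Lemma~\ref{lem:intersection_krein_relation}\ref{item:krein_numbers} and \ref{ASE:J}, $q^{j_0}_{j^Tj}=q^{j_0}_{j\,j^T}=m_j>0$, so $j_0\in j^T\circ j\subset\mathcal{C}^\ast$. Now for any $j\in\mathcal{C}^\ast$, the same relation gives $q^{j^T}_{j_0^T\, j}$ proportional to $m_{j}q^{j}_{\dots}$, which is positive. Concretely, $|X|E_{j_0}\circ |X|E_j=J\circ |X|E_j=|X|E_j$, so $j_0^T\circ j=\{j\}$ trivially. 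To get $j^T$, I instead use the relation
\[
m_k q^k_{ab}=m_a q^a_{k\,b^T}
\]
with $a=j_0$, $b=j$, $k=j^T$. Since $j_0^T=j_0$, this expresses $q^{j^T}_{j_0^T\,?}$ via $q^{j_0}_{j^T j^T}$; this step needs care. A cleaner route is to use $j,j_0\in\mathcal{C}^\ast$ directly: $j^T\circ j_0$ contains $j^T$, since $E_{j^T}\circ J=E_{j^T}$. Hence $j^T\in(\mathcal{C}^\ast)^T\circ\mathcal{C}^\ast\subset\mathcal{C}^\ast$.

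With $\mathcal{C}^\ast=(\mathcal{C}^\ast)^T$ in hand, for $a,b\in\mathcal{C}^\ast$ we have
\[
E_a\circ E_b=|X|^{-1}\sum_k q^k_{ab}E_k ,
\]
and the nonzero terms have $k\in a\circ b=(a^T)^T\circ b\subset\mathcal{C}^\ast$. So $V$ is Hadamard-closed.

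\emph{(ii)$\Rightarrow$(i).} If $V$ is a Hadamard subalgebra, then for $a,b\in\mathcal{C}^\ast$ the product $E_a\circ E_b$ lies in $V$. Since the $E_k$ are linearly independent, the expansion forces $q^k_{ab}=0$ for $k\notin\mathcal{C}^\ast$, i.e.\ $\mathcal{C}^\ast\circ\mathcal{C}^\ast\subset\mathcal{C}^\ast$.

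It remains to show $(\mathcal{C}^\ast)^T\subset\mathcal{C}^\ast$. Here I argue that $V$ is a finite-dimensional commutative algebra under $\circ$ and contains the Hadamard idempotents of its own structure. Alternatively, I use positivity: for $j\in\mathcal{C}^\ast$, the powers $E_j^{\circ n}$ all lie in $V$. Since $E_j^T=\overline{E_j}$ (the $E_j$ are Hermitian), one has $E_{j^T}=\overline{E_j}$. A finite-dimensional Hadamard-closed space containing $E_j$ therefore contains $\overline{E_j}$: the entries of $E_j$ lie in a finite set of values on the relations, so a polynomial in $E_j$ under $\circ$ interpolates the conjugation on those finitely many values. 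Hence $E_{j^T}\in V$, so $j^T\in\mathcal{C}^\ast$.

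The main obstacle I expect is this last point. The interpolation must use polynomials without constant term unless $J\in V$. To handle this, I first show $j_0\in\mathcal{C}^\ast$ by interpolating the indicator of the nonzero entries. Alternatively, I take the identity of the finite-dimensional semisimple algebra $(V,\circ)$, which is a $0/1$ matrix in $\mathfrak{A}$.
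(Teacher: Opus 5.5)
The paper gives no proof of this lemma; it only cites Curtin's Lemma~3.2. Your argument is correct and self-contained.

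\textbf{(i)$\Rightarrow$(ii).} Discard the false start. In fact $q^{k}_{j_0 j}=\delta_{kj}$, so $q^{j^T}_{j_0 j}$ is zero unless $j=j^T$, and it is not positive in general. What works is exactly your ``cleaner route''. First, $q^{j_0}_{j^T j}=m_j>0$ gives $j_0\in\mathcal{C}^\ast$. Then $j^T\in j^T\circ j_0\subset(\mathcal{C}^\ast)^T\circ\mathcal{C}^\ast$ gives $(\mathcal{C}^\ast)^T=\mathcal{C}^\ast$. As a by-product, $J=|X|E_{j_0}\in V$, so $V$ is even a unital Hadamard subalgebra.

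\textbf{(ii)$\Rightarrow$(i).} What you call the main obstacle is not one. Let $S$ be the finite set of entry values of $E_j$ with $0$ adjoined, and take the Lagrange polynomial $p$ with $p(c)=\overline{c}$ for all $c\in S$. Since $\overline{0}=0$, you get $p(0)=0$, so $p$ has no constant term. Writing $p(t)=\sum_{n\ge 1}a_n t^n$, the Hadamard polynomial $\sum_{n\ge1}a_nE_j^{\circ n}$ equals $\overline{E_j}=E_j^T=E_{j^T}$, because $E_j$ is Hermitian. This element lies in $V$ without knowing beforehand that $J\in V$.

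Your first proposed workaround would fail anyway. Interpolating the indicator of the nonzero values produces the $0/1$ support matrix of $E_j$, which need not be $J$, so it does not show $j_0\in\mathcal{C}^\ast$. Your second workaround does work: use the identity $e$ of the reduced algebra $(V,\circ)$ in place of $J$ for the constant term, noting that $E_j=e\circ E_j$ vanishes off the support of $e$. Equivalently, $V$ has a basis of real $0/1$ matrices and is therefore conjugation-closed. But it is unnecessary.

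To finish, once $j^T\in\mathcal{C}^\ast$, the fact $q^{j_0}_{jj^T}=m_j>0$ also gives $j_0\in\mathcal{C}^\ast$. Then $(\mathcal{C}^\ast)^T\circ\mathcal{C}^\ast\subset\mathcal{C}^\ast\circ\mathcal{C}^\ast\subset\mathcal{C}^\ast$. The equivalence therefore holds whether ``subalgebra'' is read as unital or not.
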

A closed subset $\mathcal{C}^\ast$ also determines an equivalence relation on $X$.
For $x,y\in X$, define $x\sim^\ast y$ if and only if
for every $j\in\mathcal{C}^\ast$ and every $w\in X$, one has $(E_j)_{wx}=(E_j)_{wy}$.
Then $\sim^\ast$ becomes an equivalence relation on $X$.
Moreover, $\mathcal{C}^\ast$ also determines an equivalence relation on $\mathcal{J}$. For example,
\[
i\equiv^\ast j
\ \Longleftrightarrow\
q^{\,j}_{i h}\neq 0\ \text{for some}\ h\in\mathcal{C}^\ast
\]
defines an equivalence relation $\equiv^\ast$ on $\mathcal{J}$.
For $j\in \mathcal{J}$, let $j\mathcal{C}^\ast:=\{j'\in \mathcal{J}\mid j\equiv^\ast j'\}$ be the equivalence class of $j$ with respect to $\equiv^\ast$, and set $\mathcal{J}/\mathcal{C}^\ast:=\{j\mathcal{C}^\ast \mid j\in \mathcal{J}\}$.

Since $\mathcal{R}^{-1}(\mathcal{C})$ for a closed subset $\mathcal{C}$ is an equivalence relation on $X$, the matrix $\sum_{i\in \mathcal{C}}A_i$ becomes $I_q\otimes J_p$ after simultaneous permutations of rows and columns, where $p,q$ are positive integers with $pq=|X|$. 
Then $q=|X/\mathcal{C}|$ and $p=|x\mathcal{C}|=\sum_{i\in \mathcal{C}} k_i$ for any $x\in X$.
Also,
\[
M:=\frac{1}{p} \sum_{i\in \mathcal{C}}A_i
\]
is an idempotent; hence there exists a unique subset $\mathcal{C}^\ast \subset \mathcal{J}$ such that
\[
M=\sum_{j\in \mathcal{C}^\ast} E_j.
\]
This $\mathcal{C}^\ast$ is called the \emph{dual closed subset} corresponding to $\mathcal{C}$.
In this case, $\mathcal{C}^\ast$ is a closed subset with respect to $\circ$ in the sense of Definition~\ref{df:dual_closed_subset_curtin}.
Moreover,
\[
q=\mathrm{rank}(M)=\sum_{j\in\mathcal{C}^\ast} \mathrm{rank}(E_j)=\sum_{j\in\mathcal{C}^\ast} m_j =|x_0 \mathcal{C}^\ast|
\quad\text{and}\quad
p=|X/\mathcal{C}^\ast|
\]
also follow.
Furthermore, for any closed subset $\mathcal{C}^\ast$ of $\mathcal{J}$,
by tracing the above argument in reverse,
we obtain a closed subset $\mathcal{C}$ of $\mathcal{I}$ in the sense of Definition~\ref{df:closed_subset_zieschang} corresponding to $\mathcal{C}^\ast$.
This $\mathcal{C}$ is called the \emph{dual closed subset} corresponding to $\mathcal{C}^\ast$.
In this case, the equivalence relations on $X$ determined by $\sim$ and $\sim^\ast$ coincide.

\begin{lem}[cf.~\cite{BI1984,curtin2008,Zieschang}]
\label{lem:closed_subset}
For a closed subset $\mathcal{C}\subset \mathcal{I}$ of a commutative association scheme $\mathfrak{X}=(X,\mathcal{R},\mathcal{I})$ and the dual closed subset $\mathcal{C}^\ast\subset \mathcal{J}$ corresponding to $\mathcal{C}$
(or equivalently, for a closed subset $\mathcal{C}^\ast\subset \mathcal{J}$ and the dual closed subset $\mathcal{C}\subset \mathcal{I}$ corresponding to $\mathcal{C}^\ast$),
the following statements hold:
\begin{enumerate}[label=$(\roman*)$]
    \item \label{lem:closed_subset:mult} For $i\in \mathcal{C}$, $j\in \mathcal{I}$ and $j' \in j \mathcal{C}$, we have $ij' \subset j\mathcal{C}$.
    \item \label{lem:dual_closed_subset:mult} For $i\in \mathcal{C}^\ast$, $j\in \mathcal{J}$ and $j' \in j \mathcal{C}^\ast$, we have $i\circ j' \subset j\mathcal{C}^\ast$.
    \item \label{lem:closed_subset:k} For $i\in \mathcal{C}$ and $j\in \mathcal{C}^\ast$, we have $P_i(j)=k_i$.
    \item \label{lem:closed_subset:m} For $i\in\mathcal{C}$ and $j\in\mathcal{C}^\ast$, we have $Q_j(i)=m_j$.
\end{enumerate}
\end{lem}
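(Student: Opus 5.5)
We treat the four items in two groups: the spectral facts \ref{lem:closed_subset:k} and \ref{lem:closed_subset:m}, which follow from the idempotent $M$, and the combinatorial facts \ref{lem:closed_subset:mult} and \ref{lem:dual_closed_subset:mult}, which follow from the equivalence relations of Lemma~\ref{lem:closed_subset_equiv} and their duals.

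For \ref{lem:closed_subset:k}, recall $M=\frac1p\sum_{i\in\mathcal{C}}A_i=\sum_{j\in\mathcal{C}^\ast}E_j$.

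1. Fix $i\in\mathcal{C}$. Since $\mathcal{R}^{-1}(\mathcal{C})$ is an equivalence relation, $A_i$ is supported inside the block matrix $I_q\otimes J_p$. Every row of $A_i$ has exactly $k_i$ ones, and all of them lie in the block of that row. Hence $A_i(I_q\otimes J_p)=k_i(I_q\otimes J_p)$, that is, $A_iM=k_iM$.
2. Multiply by $E_j$ with $j\in\mathcal{C}^\ast$. Since $ME_j=E_j$, this gives $A_iE_j=k_iE_j$.
3. By the definition of $P$ we also have $A_iE_j=P_i(j)E_j$. Therefore $P_i(j)=k_i$.

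For \ref{lem:closed_subset:m}, we dualize.

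1. Fix $j\in\mathcal{C}^\ast$. Then $E_j\circ M'$ is computed entrywise, where $M'=\sum_{i\in\mathcal{C}}A_i$ is the $0/1$ matrix of $\sim$.
2. We claim $E_j$ is constant on each block of $\sim=\sim^\ast$. Indeed, $E_jM=E_j$ shows each row of $E_j$ is constant on blocks, and symmetry of the argument via $ME_j=E_j$ handles columns.
3. Take $x\sim y$ with $\mathcal{R}(x,y)=i\in\mathcal{C}$. Then $|X|(E_j)_{xy}=Q_j(i)$ by definition of $Q$, while $|X|(E_j)_{xx}=Q_j(i_0)=m_j$.
4. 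Since $y$ lies in the block of $x$, step 2 gives $(E_j)_{xy}=(E_j)_{xx}$, so $Q_j(i)=m_j$.

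For \ref{lem:closed_subset:mult}, let $i\in\mathcal{C}$, $j'\equiv j$, and $k\in ij'$.

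1. Choose $x,z$ with $\mathcal{R}(x,z)=k$. Since $p^k_{ij'}>0$, there is $y$ with $\mathcal{R}(x,y)=i$ and $\mathcal{R}(y,z)=j'$.
2. Because $i\in\mathcal{C}$, we have $x\sim y$. So $k$ and $j'$ are realized by pairs $(x,z)$ and $(y,z)$ whose first coordinates lie in the same block.
3. Translate this into $i^Tj'\cap\mathcal{C}\neq\emptyset$ form: the relation $\equiv$ of Lemma~\ref{lem:closed_subset_equiv}~\ref{item:closed_subset_equiv_iii} is exactly "realized by pairs with equivalent endpoints". Concretely, $k^Tj'\ni\mathcal{R}(z,\cdot)$ composed appropriately contains an element of $\mathcal{C}$, namely $\mathcal{R}(x,y)$ read through $z$.
4. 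Hence $k\equiv j'\equiv j$, i.e.\ $k\in j\mathcal{C}$.

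The main care needed is matching the exact convention $i^Ti'$ versus $ii'^T$ with Lemma~\ref{lem:intersection_krein_relation}~\ref{item:intersection_numbers}. That identity lets us move between $p^k_{ij'}>0$ and $p^{j'}_{i^Tk}>0$, which gives $j'\in i^Tk$ and hence $k\equiv j'$ after transitivity.

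For \ref{lem:dual_closed_subset:mult}, which I expect to be the main obstacle, we run the same argument formally in the Krein setting. For $i\in\mathcal{C}^\ast$ and $k\in i\circ j'$ we have $q^k_{ij'}>0$. Lemma~\ref{lem:intersection_krein_relation}~\ref{item:krein_numbers} gives $m_kq^k_{ij'}=m_{j'}q^{j'}_{i^Tk}$, hence $q^{j'}_{i^Tk}>0$. Note $i^T\in\mathcal{C}^\ast$, since $M$ is real symmetric and so $\mathcal{C}^\ast$ is closed under ${}^T$. This exhibits $k\equiv^\ast j'$ directly from the definition of $\equiv^\ast$, up to the symmetry $q^{j'}_{i^Tk}=q^{j'}_{k i^T}$. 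Transitivity of $\equiv^\ast$, which is asserted in the text, then yields $k\in j\mathcal{C}^\ast$. The delicate point is the index order in the definition of $\equiv^\ast$, and I would check it against commutativity of $\circ$.
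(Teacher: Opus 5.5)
Your proposal is correct. The paper states this lemma without proof, quoting it from Bannai--Ito, Curtin and Zieschang, so your argument supplies the omitted verification rather than following or departing from one in the text. Items (iii) and (iv) are exactly right:
\begin{itemize}
\item For (iii), $A_iM=k_iM$ holds for $i\in\mathcal{C}$ because every $i$-neighbour of $x$ lies in $x\mathcal{C}$.
\item For (iv), $E_jM=E_j$ makes each row of $E_j$ constant on blocks, so $Q_j(i)=|X|(E_j)_{xy}=|X|(E_j)_{xx}=m_j$.
\end{itemize}
Your step~1 in (iv) about $E_j\circ M'$ is never used and can be dropped.

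Two points need more precision.

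\textbf{Item (i).} Steps 2--3 mix conventions. Your configuration $x\sim y$, $\mathcal{R}(x,z)=k$, $\mathcal{R}(y,z)=j'$ has a common \emph{second} point. What it actually exhibits is $i^T=\mathcal{R}(y,x)\in j'k^T$, not $\mathcal{R}(x,y)\in k^Tj'$; the latter fails in general for non-symmetric schemes. To conclude $k^Tj'\cap\mathcal{C}\neq\emptyset$ you need commutativity together with $\mathcal{C}^T=\mathcal{C}$.

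The cleanest version is a single application of Lemma~\ref{lem:intersection_krein_relation}~\ref{item:intersection_numbers}:
\[
k_i\,p^{i}_{j'^Tk}=k_k\,p^{k}_{j'i}=k_k\,p^k_{ij'}>0 .
\]
Hence $i\in j'^Tk\cap\mathcal{C}$, i.e.\ $j'\equiv k$, and then $j\equiv j'\equiv k$ gives $k\in j\mathcal{C}$. Your closing route via $j'\in i^Tk$ also works, since $i^Tk=ki^T\subset k\mathcal{C}$. However, it needs the identity once more to convert ``$j'\in k\mathcal{C}$'' back into the defining condition $k^Tj'\cap\mathcal{C}\neq\emptyset$ of $\equiv$.

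\textbf{Item (ii).} You expected this to be the hardest item, but no Krein identity is needed. The inequality $q^k_{j'i}=q^k_{ij'}>0$ with $i\in\mathcal{C}^\ast$ is literally the definition of $j'\equiv^\ast k$. Transitivity of $\equiv^\ast$ then gives $k\in j\mathcal{C}^\ast$, without using $(\mathcal{C}^\ast)^T=\mathcal{C}^\ast$. Your detour through $q^{j'}_{i^Tk}>0$ is nevertheless valid.
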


A commutative association scheme $\mathfrak{X}=(X,\mathcal{R},\mathcal{I})$ is called \emph{imprimitive} if there exists a closed subset $\mathcal{C}\subset \mathcal{I}$ satisfying $\{i_0\}\subsetneq \mathcal{C}\subsetneq \mathcal{I}$.
Henceforth in this section, we assume that $\mathfrak{X}$ is imprimitive.

Fix $x_0\in X$, and let $x_0\mathcal{C}$ denote the equivalence class of $x_0$ with respect to $\sim$.
For any $x,y\in x_0\mathcal{C}$, we have $\mathcal{R}(x,y)\in \mathcal{C}$.
Thus, we can define the restriction map $\mathcal{R}|_{x_0\mathcal{C}}\colon x_0\mathcal{C}\times x_0\mathcal{C} \to \mathcal{C}$ of $\mathcal{R}$ on $x_0\mathcal{C}$.
Then $\mathfrak{X}_{x_0\mathcal{C}}=( x_0\mathcal{C}, \mathcal{R}|_{x_0\mathcal{C}},\mathcal{C})$ becomes an association scheme, 
which we call the \emph{block scheme} of $\mathfrak{X}$ at $x_0\mathcal{C}$.
Note that from the definition, considering the natural inclusion maps $\iota_X\colon x_0\mathcal{C} \to X$ and $\iota_\mathcal{I}\colon \mathcal{C}\to \mathcal{I}$, 
the pair $(\iota_X,\iota_\mathcal{I})$ gives an injective homomorphism from $\mathfrak{X}_{x_0\mathcal{C}}$ to $\mathfrak{X}$.
Let $\mathfrak{A}_{x_0\mathcal{C}}$ be the Bose--Mesner algebra of $\mathfrak{X}_{x_0\mathcal{C}}$.
Consider the subalgebra $\mathfrak{A}_{\mathcal{C}}:=\mathrm{span}_\C\{A_i\mid i\in\mathcal{C}\}$ of $\mathfrak{A}$ corresponding to the closed subset $\mathcal{C}$. 
It is known that $\mathfrak{A}_{\mathcal{C}}$ is isomorphic to $\mathfrak{A}_{x_0\mathcal{C}}$ as an algebra for every $x_0\in X$.
On the other hand, note that, for $x,y\in X$, $\mathfrak{X}_{x\mathcal{C}}$ and $\mathfrak{X}_{y\mathcal{C}}$ are not necessarily isomorphic as association schemes.
Let $j\mathcal{C}^\ast$ denote the equivalence class of $j\in\mathcal{J}$ with respect to $\equiv^\ast$, 
and write $\mathcal{J}/\mathcal{C}^\ast:=\{j\mathcal{C}^\ast\mid j\in\mathcal{J}\}$.
For an equivalence class $\bm{j}\in\mathcal{J}/\mathcal{C}^\ast$ under $\equiv^\ast$, let
$E_{\bm{j}}:=\sum_{j\in \bm{j}}E_{j}$.
Then the primitive idempotents $\{E_{\bm{j}}\mid \bm{j}\in\mathcal{J}/\mathcal{C}^\ast\}$ give a basis of $\mathfrak{A}_{\mathcal{C}}$.
That is, the index set of the basis consisting of primitive idempotents of $\mathfrak{A}_{x_0\mathcal{C}}$ is also $\mathcal{J}/\mathcal{C}^\ast$.

For $X/\mathcal{C}$, we define
\[
\mathcal{R}/\mathcal{C} \colon X/\mathcal{C} \times X/\mathcal{C} \to \mathcal{I}/\mathcal{C},\quad
(x\mathcal{C},y\mathcal{C}) \mapsto \mathcal{R}(x,y)\mathcal{C}.
\]
Then $\mathcal{R}/\mathcal{C}$ is well-defined, and 
$\mathfrak{X}/\mathcal{C}=( X/\mathcal{C},\mathcal{R}/\mathcal{C},\mathcal{I}/\mathcal{C})$ becomes an association scheme.
This is called the \emph{quotient scheme} of $\mathfrak{X}$ by $\mathcal{C}$.
Note that from the definition, considering the natural projections $\pi_X\colon X \to X/\mathcal{C}$ and $\pi_\mathcal{I}\colon \mathcal{I}\to \mathcal{I}/\mathcal{C}$, 
the pair $(\pi_X,\pi_\mathcal{I})$ gives a surjective homomorphism from $\mathfrak{X}$ to $\mathfrak{X}/\mathcal{C}$.
We denote the Bose--Mesner algebra of $\mathfrak{X}/\mathcal{C}$ by $\mathfrak{A}/\mathcal{C}$.
It is known that $\mathfrak{A}/\mathcal{C}$ is canonically isomorphic to 
the two-sided ideal $\mathfrak{A} M := \{A M \mid A \in \mathfrak{A}\}$ of $\mathfrak{A}$, 
and for $\bm{i}\in \mathcal{I}/\mathcal{C}$, each adjacency matrix $D_{\bm{i}}$ in $\mathfrak{A}/\mathcal{C}$ corresponds to 
\begin{equation}
    \label{eq:MAM}
    \frac{1}{p}\sum_{i\in \bm{i}} A_i M =\frac{1}{p}\sum_{i\in \bm{i}} A_i
\end{equation}
in $\mathfrak{A} M$.
For each $j\in \mathcal{C}^\ast$,
there exists a $q \times q$ matrix $F_j$ such that
$|X| E_j = (q F_j) \otimes J_p$.
A matrix $F_j$ is an idempotent, and $\{F_j\}_{j\in \mathcal{C}^\ast}$ forms a basis of $\mathfrak{A}/\mathcal{C}$ consisting of primitive idempotents.

The first and second eigenmatrices $P^{\mathrm{bl}}$ and $Q^{\mathrm{bl}}$ of the block scheme $\mathfrak{X}_{x_0\mathcal{C}}$ 
and the first and second eigenmatrices $P^{\mathrm{qt}}$ and $Q^{\mathrm{qt}}$ of the quotient scheme $\mathfrak{X}/\mathcal{C}$
can be expressed using the eigenmatrices $P$ and $Q$ of the original $\mathfrak{X}$ as follows:
The first eigenmatrix $P^{\mathrm{bl}}$ and the second eigenmatrix $Q^{\mathrm{bl}}$ of the block scheme $\mathfrak{X}_{x_0\mathcal{C}}$ 
are given, for $i\in\mathcal{C}$, $\bm{j}\in \mathcal{J}/\mathcal{C}^\ast$ and a representative $j\in\bm{j}$, by 
\begin{align}
P^{\mathrm{bl}}_i(\bm{j}) &= P_i(j), \label{eq:block_scheme_P}\\
Q^{\mathrm{bl}}_{\bm{j}}(i) &= \frac{1}{q}\sum_{j\in \bm{j}} Q_{j}(i). \label{eq:block_scheme_Q}
\end{align}
In particular, the right-hand side of \eqref{eq:block_scheme_P} does not depend on the choice of representative $j\in \bm{j}$;
The first eigenmatrix $P^{\mathrm{qt}}$ and the second eigenmatrix $Q^{\mathrm{qt}}$ of the quotient scheme $\mathfrak{X}/\mathcal{C}$ 
are given, for $\bm{i}\in \mathcal{I}/\mathcal{C}$, a representative $i\in\bm{i}$ and $j\in \mathcal{C}^\ast$, by
\begin{align}
P^{\mathrm{qt}}_{\bm{i}}(j) &= \frac{1}{p}\sum_{i\in \bm{i}} P_{i}(j), \label{eq:quotient_P}\\
    Q^{\mathrm{qt}}_{j}(\bm{i}) &= Q_j(i) \label{eq:quotient_Q}.
\end{align}
In particular, the right-hand side of \eqref{eq:quotient_Q} does not depend on the choice of a representative $i\in \bm{i}$.
These are well-known facts.
For example, \eqref{eq:block_scheme_P} and~\eqref{eq:block_scheme_Q} can be found in Section 2.3 of van Dam--Martin--Muzychuk~\cite{vanDamMartinMuzychuk2013}.
For \eqref{eq:quotient_Q} and \eqref{eq:quotient_P}, we refer to \cite[Section 2.9]{BI1984}. 

Since $\mathfrak{A}M$ is a two-sided ideal of $\mathfrak{A}$, the projection $\mathfrak{A}\to \mathfrak{A}M$ is defined.
Then, for any $i\in \mathcal{I}$, the following equality holds:
\begin{equation}
    \label{eq:quotient_scheme_A}
    A_i M=\frac{k_i}{k_{i\mathcal{C}}} \cdot \frac{1}{p}\sum_{i'\in i\mathcal{C}} A_{i'} M=\frac{k_i}{k_{i\mathcal{C}}} \cdot \frac{1}{p}\sum_{i'\in i\mathcal{C}} A_{i'}, 
\end{equation}
where $k_i$ is the valency of $i$ with respect to $\mathfrak{X}$, and $k_{i\mathcal{C}}$ is the valency of $i\mathcal{C}$ with respect to $\mathfrak{X}/\mathcal{C}$.
Moreover, from \eqref{eq:quotient_scheme_A}, the equality 
\begin{equation}
    \label{eq:quotient_scheme_P_and_valency}
    \frac{P_i(j)}{k_i}=\frac{P^{\mathrm{qt}}_{i\mathcal{C}}(j)}{k_{i\mathcal{C}}}
\end{equation}
holds for any $i\in \mathcal{I}$ and $j\in \mathcal{C}^\ast$. 

\subsection{Monomial orders, Gr\"obner bases and elimination orders}
\label{sec:monomial_orders}

In this subsection, we recall the fundamentals of monomial orders and Gr\"obner bases. 
For further details, see, for example, Cox--Little--O'Shea~\cite{Cox}. 

Let $\N^\ell:=\{(n_1,n_2,\ldots ,n_\ell) \mid \text{$n_i$ are nonnegative integers}\}$. We use the following notation throughout this paper: 
\begin{itemize}
    \item $o := (0,0,\ldots ,0)\in \N^\ell$; 
    \item for $i=1,2,\ldots ,\ell$, let $\epsilon_i\in \N^\ell$ denote the $i$-th unit vector, i.e., the vector in which the $i$-th entry is $1$ and the remaining entries are $0$;
    \item for $\alpha=(n_1,n_2, \ldots ,n_\ell)\in \N^\ell$, let $|\alpha|=\sum^\ell_{i=1}n_i$.
\end{itemize}

\begin{df}
    \label{df:monomialorder}
    A total order $\le$ on $\N^\ell$ is called a \emph{monomial order} if 
    \begin{enumerate}[label=$(\roman*)$]
        \item $\alpha \ge o$ for any $\alpha\in \N^\ell$; and
        \item $\alpha+\gamma \le \beta + \gamma$ holds for any $\alpha,\beta,\gamma \in \N^\ell$ with $\alpha \le \beta$. 
    \end{enumerate}
\end{df}
Let $\C[\bm{x}]=\C[x_1,\ldots,x_\ell]$ be a polynomial ring in $\ell$ variables over $\C$. 
For $\alpha=(n_1,n_2,\ldots , n_\ell)\in \N^\ell$, we write the monomial $x_1^{n_1}x_2^{n_2}\cdots x_\ell^{n_\ell}$ of $\C[\bm{x}]$ by $\bm{x}^\alpha$. 
Namely, we can identify $\N^\ell$ with the set of monomials of $\C[\bm{x}]$. Then $\alpha$ is called the \emph{multidegree} of $\bm{x}^\alpha$. 
A monomial order is usually defined on the set of monomials of $\C[x_1,\ldots,x_\ell]$, but by a mild abuse of notation we identify monomials with their exponent vectors and regard it as a total order on $\N^\ell$. 

We recall some typical examples of monomial orders. Let $\alpha, \beta \in \N^\ell$ with $\alpha \neq \beta$.
\begin{itemize}
    \item We define $\alpha \le_{\mathrm{lex}} \beta$ if the leftmost nonzero entry of $\alpha-\beta\in \Z^\ell$ is negative. 
    This is a monomial order on $\N^\ell$ and $\le_{\mathrm{lex}}$ is called the \emph{lexicographic} (or \emph{lex}) order. 
    \item We define $\alpha \le_{\mathrm{grlex}} \beta$ if 
\[    |\alpha|<|\beta|\  \text{or}\   (|\alpha|=|\beta|\   \text{and}\  \alpha \le_{\mathrm{lex}} \beta).   \]
    This $\le_{\mathrm{grlex}}$ is called the \emph{graded lexicographic} (or \emph{grlex}) order.
\end{itemize}

Fix a monomial order $\le$ on $\N^\ell$. For each nonzero polynomial $f=\sum_{\alpha \in \N^\ell} c_\alpha \bm{x}^\alpha \in \C[\bm{x}]$, 
where $c_\alpha=0$ for all but finitely many $\alpha \in \N^\ell$, 
the multidegree (resp. leading term) of $f$, denoted by $\MD(f)$ (resp. $\LT(f)$), is defined as follows: 
\[\MD(f) := \max\{ \alpha \in \N^\ell \mid   c_\alpha \neq 0   \} \;\;\text{and}\;\; \LT(f) := c_{\MD(f)} \bm{x}^{\MD(f)},\]
where the maximum is taken with respect to $\le$.

\begin{df}
Let $I\subset \C[\bm{x}]$ be an ideal. Fix a monomial order on $\C[\bm{x}]$. 
A subset $\mathcal{G}=\{g_1,\dots,g_m\} \subset I$ is called a \emph{Gr\"obner basis} of $I$ with respect to $\le$ 
if the following monomial ideals are equal: 
\[ \langle  \LT(g_1), \LT(g_2),\ldots , \LT(g_m) \rangle =  \langle \LT(f) \mid f\in I  \rangle .\]
Equivalently, $\mathcal{G}$ is a Gr\"obner basis of $I$ if and only if
the leading term of any element of $I$ is divisible by one of the $\LT(g_i)$.
\end{df}
It is well known that every Gr\"obner basis generates the ideal. 
We set the following two subsets of $\N^\ell$: 
\[\MD(I):=\{\MD (f) \mid f\in I\setminus \{0\}\} \;\text{ and }\; \MD(\mathcal{G}):=\{\MD (g) \mid g\in \mathcal{G}\}.\]


\begin{df}\label{def:elim_block}
Let $s$ be an integer with $1 \le s < \ell$. 
We identify $\N^\ell$ with $\N^s \times \N^{\ell-s}$.
\begin{itemize}
    \item We say that a monomial order $\le$ on $\N^\ell$ is \emph{of $s$-elimination type} (see \cite[Chapter 3 \S 1 Exercise 5]{Cox}) 
    if $(\alpha,\beta_1) > (o,\beta_2)$ holds for any $\alpha \in \N^s$ with $\alpha \neq o$ and $\beta_1,\beta_2 \in \N^{\ell - s}$. 
    \item We say that a monomial order $\le$ on $\N^\ell$ is \emph{of $s$-block type} 
    if $(\alpha_1,\beta_1)>(\alpha_2,\beta_2)$ holds whenever $(\alpha_1,o)>(\alpha_2,o)$ for any $\alpha_1,\alpha_2 \in \N^s$ and $\beta_1,\beta_2 \in \N^{\ell - s}$. 
\end{itemize}
By definition, every monomial order of $s$-block type is of $s$-elimination type, but the converse need not hold (see Remark~\ref{rem:elim_block}). 
\end{df}

Typical examples are the lexicographic order for $s$-elimination type and block orders for $s$-block type.
Monomial orders of $s$-elimination type are the standard orders used in the elimination theorem for ideals; see, for example, \cite[Chapter~3]{Cox}. 


We observe the following, which we will use repeatedly; we omit the proof.
\begin{lem}\label{lem:induced_monomial_order}
Let $n,m$ be positive integers with $m\le n$ and let $\le$ be a monomial order on $\N^n$.
For $1\le i_1<\cdots <i_m \le n$,
define a map $\iota:\N^m\to \N^n$ by
\[\iota(a_1,\ldots,a_m)=\sum_{r=1}^m a_r \epsilon_{i_r}.\]
Then the order $\le_\iota$ on $\N^m$ defined by $\alpha \le_\iota \beta$ if  $\iota(\alpha)\le \iota(\beta)$ is a monomial order on $\N^m$.
\end{lem}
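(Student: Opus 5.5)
We check the two axioms of Definition~\ref{df:monomialorder} for $\le_\iota$, together with the fact that $\le_\iota$ is a total order. The key observation is that $\iota$ is an injective homomorphism of additive monoids $\N^m\to\N^n$. It is additive, since $\iota(\alpha+\gamma)=\iota(\alpha)+\iota(\gamma)$ follows directly from the linear formula defining $\iota$. It satisfies $\iota(o)=o$. It is injective because the indices $i_1<\cdots<i_m$ are distinct, so $\iota(\alpha)$ determines every coordinate $a_r$ as the $i_r$-th entry. Everything then amounts to pulling back the properties of $\le$ along $\iota$.

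First, $\le_\iota$ is a total order on $\N^m$. Reflexivity, transitivity and totality are inherited immediately from $\le$. Antisymmetry uses injectivity: if $\iota(\alpha)\le\iota(\beta)$ and $\iota(\beta)\le\iota(\alpha)$, then $\iota(\alpha)=\iota(\beta)$, so $\alpha=\beta$.

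Next come the two monomial-order axioms. For $(i)$, given $\alpha\in\N^m$, axiom $(i)$ for $\le$ gives $\iota(\alpha)\ge o=\iota(o)$, hence $\alpha\ge_\iota o$. For $(ii)$, suppose $\alpha\le_\iota\beta$, i.e.\ $\iota(\alpha)\le\iota(\beta)$. Adding $\iota(\gamma)$ and using axiom $(ii)$ for $\le$ yields $\iota(\alpha)+\iota(\gamma)\le\iota(\beta)+\iota(\gamma)$. By additivity this says $\iota(\alpha+\gamma)\le\iota(\beta+\gamma)$, that is, $\alpha+\gamma\le_\iota\beta+\gamma$.

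No step is a real obstacle. The only point that needs care is injectivity of $\iota$, which is needed for antisymmetry, and this is exactly where the strict inequalities $i_1<\cdots<i_m$ are used.
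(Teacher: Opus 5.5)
Your proof is correct: pulling back totality and both axioms of Definition~\ref{df:monomialorder} along the injective additive map $\iota$ (with antisymmetry coming from injectivity) is exactly the routine verification the paper has in mind. The paper itself states this lemma and omits the proof.
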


\subsection{Multivariate $P$- and $Q$-polynomial association schemes}
\label{sec:Def}

In this subsection, we recall the definitions of multivariate $P$- and $Q$-polynomial association schemes.

A symmetric association scheme $\mathfrak{X}=(X,\mathcal{R}, \mathcal{I})$ of class $d$
is called \emph{$P$-polynomial}
if it satisfies the following conditions:
$\mathcal{I}=\{0,1,\ldots ,d\}$
and there exists a univariate polynomial $v_i$ of degree $i$ such that $A_i=v_i(A_1)$
for each $i\in \{0,1,\ldots ,d\}$. 
Similarly, a symmetric association scheme $\mathfrak{X}=(X,\mathcal{R}, \mathcal{I})$ of class $d$
is called \emph{$Q$-polynomial}
if it satisfies the following conditions:
$\mathcal{J}=\{0,1,\ldots ,d\}$
and there exists a univariate polynomial $v^\ast_j$ of degree $j$ such that $|X|E_j=v^\ast _j(|X| E_1)$
(under the Hadamard product)
for each $j\in \{0,1,\ldots ,d\}$. 
The following is a well-known characterization of the $P$-polynomial property: the three-term recurrence relation 
\[
    A_1 A_i = p^{i-1}_{1i} A_{i-1} +p^{i}_{1i} A_i +p^{i+1}_{1i} A_{i+1}
\]
holds for each $i \in \{0,1,\ldots,d\}$, where $p^{-1}_{10}A_{-1}$ and $p^{d+1}_{1d}A_{d+1}$ are regarded as zero. 
Similarly, the $Q$-polynomial property is characterized by the following three-term recurrence relation: 
\[
    (|X| E_1) \circ (|X| E_i) = q^{i-1}_{1i}|X| E_{i-1} +q^{i}_{1i} |X| E_i +q^{i+1}_{1i} |X| E_{i+1}
\]
holds for each $i\in \{0,1,\ldots ,d\}$, where $q^{-1}_{10}|X|E_{-1}$ and $q^{d+1}_{1d}|X|E_{d+1}$ are regarded as zero.

For non-symmetric commutative association schemes, one can likewise generalize the above conditions to define non-symmetric $P$- or $Q$-polynomial association schemes 
(see, e.g., Damerell~\cite{Damerell1981}, Lam~\cite{Lam1980}, Leonard~\cite{Leonard1991} and Munemasa~\cite{Munemasa1991}). 
However, for non-symmetric $P$- or $Q$-polynomial association schemes, the above three-term recurrence relations do not hold in general.

Multivariate $P$- and $Q$-polynomial association schemes were introduced by Bannai--Kurihara--Zhao--Zhu~\cite{BKZZ}.
\begin{df}
    \label{df:abPpoly}
    Let $\mathcal{D}\subset \N^\ell$ containing $\epsilon_1,\epsilon_2,\ldots ,\epsilon_\ell$, and let $\le$ be a monomial order on $\N^\ell$. 
    A commutative association scheme $\mathfrak{X}=(X,\mathcal{R},\mathcal{I})$ is called \emph{$\ell$-variate $P$-polynomial}
    on the domain $\mathcal{D}$ with respect to $\le$
    if the following three conditions are satisfied:
    \begin{enumerate}[label=$(\roman*)$]
        \item \label{item:abPpoly_i} If $(n_1,n_2,\ldots ,n_\ell)\in \mathcal{D}$
        and $0\le m_i \leq n_i$ for $i=1,2,\ldots ,\ell$,
        then $(m_1,m_2,\ldots ,m_\ell)\in \mathcal{D}$;
        \item \label{item:abPpoly_ii} there exists a relabeling of the adjacency matrices of $\mathfrak{X}$:
        \[
        \{A_i\}_{i\in \mathcal{I}} = \{A_\alpha \}_{\alpha\in \mathcal{D}},   
        \]
        such that, for $\alpha\in \mathcal{D}$,
        \begin{equation*}
            A_\alpha=v_\alpha(A_{\epsilon_1},A_{\epsilon_2},\ldots ,A_{\epsilon_\ell}),    
        \end{equation*}
        where $v_\alpha(\bm{x})$ is an $\ell$-variate polynomial
        of multidegree $\alpha$ with respect to $\le$
        and all monomials $\bm{x}^\beta$ in $v_\alpha(\bm{x})$ satisfy $\beta \in \mathcal{D}$;
        \item \label{item:abPpoly_iii} for $i=1,2,\ldots ,\ell$ and $\alpha=(n_1,n_2,\ldots,n_\ell)\in \mathcal{D}$,
        the product
        $A_{\epsilon_i}\cdot A_{\epsilon_1}^{n_1}A_{\epsilon_2}^{n_2}\cdots A_{\epsilon_\ell}^{n_\ell}$
        is a linear combination of
        \[
         \{A_{\epsilon_1}^{m_1}A_{\epsilon_2}^{m_2}\cdots A_{\epsilon_\ell}^{m_\ell} \mid \beta =(m_1,m_2,\ldots ,m_\ell)\in \mathcal{D},\  \beta \le  \alpha +\epsilon_i\}.   
        \]
    \end{enumerate}

\end{df}

Henceforth, we use the notation $\bm{A}$ for $(A_{\epsilon_1},A_{\epsilon_2},\ldots ,A_{\epsilon_\ell})$. 
Also, for $\alpha=(n_1,n_2,\ldots , n_\ell)\in \N^\ell$, 
we write $A_{\epsilon_1}^{n_1} A_{\epsilon_2}^{n_2} \cdots A_{\epsilon_\ell}^{n_\ell}$ by $\bm{A}^\alpha$.


Multivariate $Q$-polynomial association schemes can also be defined
as in Definition~\ref{df:abPpoly}.
\begin{df}
    \label{df:abQpoly}
    Let $\mathcal{D}^\ast \subset \N^{\ell^\ast}$ containing $\epsilon_1,\epsilon_2,\ldots ,\epsilon_{\ell^\ast}$ 
    and let $\le$ be a monomial order on $\N^{\ell^\ast}$.
    A commutative association scheme $\mathfrak{X}=(X,\mathcal{R})$
    with the primitive idempotents $\{E_j\}_{j\in \mathcal{J}}$ is called \emph{$\ell^\ast$-variate $Q$-polynomial}
    on the domain $\mathcal{D}^\ast$ with respect to $\le$
    if the following three conditions are satisfied:
    \begin{enumerate}[label=$(\roman*)$]
        \item \label{item:abQpoly_i} if $(n_1,n_2,\ldots ,n_{\ell^\ast})\in \mathcal{D}^\ast$
        and $0\le m_i \leq n_i$ for $i=1,2,\ldots ,\ell^\ast$,
        then $(m_1,m_2,\ldots ,m_{\ell^\ast})\in \mathcal{D}^\ast$;
        \item \label{item:abQpoly_ii} there exists a relabeling of the primitive idempotents of $\mathfrak{X}$:
        \[
        \{E_j\}_{j\in \mathcal{J}} = \{E_\alpha \}_{\alpha\in \mathcal{D}^\ast}, 
        \]
        such that, for $\alpha\in \mathcal{D}^\ast$,
        \[
            |X| E_\alpha=v^\ast_\alpha(|X| E_{\epsilon_1},|X| E_{\epsilon_2},\ldots ,|X| E_{\epsilon_{\ell^\ast}})
            \ \text{(under the Hadamard product),}
        \]
        where $v^\ast_\alpha(\bm{x})$ is an $\ell^\ast$-variate polynomial
        of multidegree $\alpha$ with respect to $\le$
        and all monomials $\bm{x}^\beta$ in $v^\ast_\alpha(\bm{x})$ satisfy $\beta \in \mathcal{D}^\ast$;
        \item \label{item:abQpoly_iii} for $i=1,2,\ldots ,\ell^\ast$ and $\alpha=(n_1,n_2,\ldots,n_{\ell^\ast})\in \mathcal{D}^\ast$,
        the product
        $E_{\epsilon_i}\circ E_{\epsilon_1}^{\circ n_1}\circ E_{\epsilon_2}^{\circ n_2}\circ \cdots  \circ E_{\epsilon_{\ell^\ast}}^{\circ n_{\ell^\ast}}$
        is a linear combination of
        \[
            \{E_{\epsilon_1}^{\circ m_1}\circ E_{\epsilon_2}^{\circ m_2}\circ \cdots  \circ E_{\epsilon_{\ell^\ast}}^{\circ m_{\ell^\ast}} \mid \beta=(m_1,m_2,\ldots ,m_{\ell^\ast})\in \mathcal{D}^\ast,\  \beta\le  \alpha+\epsilon_i\}.   
        \]
    \end{enumerate}

\end{df}

\begin{rem}
The following remarks, also mentioned in \cite{BKZZ}, are summarized below.
The following statement concerns only the multivariate $P$-polynomial association scheme, but similar results hold for the multivariate $Q$-polynomial association scheme.
For details, we refer the reader to \cite{BKZZ}.
\begin{enumerate}[label=$(\roman*)$]
    \item $\mathcal{D}$ must contain $o=(0,\ldots ,0)$
    and $v_{o}(\bm{x})=1$ (i.e., $A_{o}=I_X$) holds.
    Moreover, $v_{\epsilon_i}(\bm{x})=x_i$ (i.e., 
    $v_{\epsilon_i}(\bm{A})=A_{\epsilon_i}$) holds for $i=1,2,\ldots ,\ell$.
    \item Every commutative association scheme
$\mathfrak{X}=(X,\mathcal{R})$ of class $d$ is regarded as a $d$-variate $P$-polynomial association scheme on the domain $\mathcal{D}=\{o,\epsilon_1,\epsilon_2,\ldots ,\epsilon_d\}\subset \N^d$ with respect to $\le_{\mathrm{grlex}}$.
Therefore, we usually consider the ``essential'' variate
for $\mathfrak{X}$, i.e., we consider
$\ell =\min \{\ell' \mid \text{$\mathfrak{X}$ is $\ell'$-variate $P$-polynomial}\} $.
\end{enumerate}
\end{rem}

\begin{lem}[{cf. \cite[Proof of Lemma~2.14]{BKZZ}}]
\label{lem:spanvij}
Let $\mathfrak{X}=(X,\mathcal{R},\mathcal{D})$ be an $\ell$-variate $P$-polynomial association scheme on the domain $\mathcal{D}\subset \N^\ell$ with respect to a monomial order $\le$.
For $\alpha\in \mathcal{D}$, we have
\[
\Span\{A_\beta \mid \beta\in \mathcal{D},\   \beta\le \alpha\}
=
\Span\{\bm{A}^\beta \mid \beta\in \mathcal{D},\   \beta\le \alpha\}. 
\]
\end{lem}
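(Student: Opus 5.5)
We argue by induction along the order $\le$ restricted to $\mathcal{D}$. Since $\mathcal{D}$ is finite and $\le$ is a total order, we may list $\mathcal{D}=\{\alpha_0<\alpha_1<\cdots<\alpha_d\}$, with $\alpha_0=o$ because $o\le\alpha$ for every $\alpha$. Set
\[
V_\alpha:=\Span\{A_\beta\mid \beta\in\mathcal{D},\ \beta\le\alpha\},\qquad W_\alpha:=\Span\{\bm{A}^\beta\mid \beta\in\mathcal{D},\ \beta\le\alpha\}.
\]
We will show $V_\alpha=W_\alpha$ by showing both inclusions.

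\emph{The inclusion $V_\alpha\subset W_\alpha$.} This follows directly from condition~\ref{item:abPpoly_ii} of Definition~\ref{df:abPpoly}. For $\beta\le\alpha$ in $\mathcal{D}$ we have $A_\beta=v_\beta(\bm{A})$. Every monomial $\bm{x}^\gamma$ occurring in $v_\beta$ satisfies $\gamma\in\mathcal{D}$, and $\gamma\le\MD(v_\beta)=\beta\le\alpha$. Hence $A_\beta\in W_\alpha$.

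\emph{The inclusion $W_\alpha\subset V_\alpha$.} We prove this by induction on $\alpha$ in the list above. Write $v_\alpha(\bm{x})=c_\alpha\bm{x}^\alpha+\sum_{\gamma<\alpha,\,\gamma\in\mathcal{D}}c_\gamma\bm{x}^\gamma$ with leading coefficient $c_\alpha\neq0$. Solving for the leading monomial gives
\[
\bm{A}^\alpha=c_\alpha^{-1}\Bigl(A_\alpha-\sum_{\gamma<\alpha}c_\gamma\bm{A}^\gamma\Bigr).
\]
Each $\gamma$ in the sum lies in $\mathcal{D}$ and satisfies $\gamma<\alpha$, so $\gamma\le\alpha_{k-1}$ when $\alpha=\alpha_k$. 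By the induction hypothesis $\bm{A}^\gamma\in W_{\alpha_{k-1}}=V_{\alpha_{k-1}}\subset V_\alpha$. Therefore $\bm{A}^\alpha\in V_\alpha$, and combined with $W_{\alpha_{k-1}}\subset V_{\alpha_{k-1}}\subset V_\alpha$ this gives $W_\alpha\subset V_\alpha$. For the base case, $\bm{A}^o=I_X=A_o$, using the earlier remark that $v_o=1$.

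No step is a real obstacle; the proof is a triangularity argument. The one point that needs care is that the induction runs over the finite totally ordered set $\mathcal{D}$ rather than over all of $\N^\ell$. This is legitimate because condition~\ref{item:abPpoly_ii} guarantees that all monomials appearing in $v_\alpha$ lie in $\mathcal{D}$. Condition~\ref{item:abPpoly_iii} is not needed for this statement.
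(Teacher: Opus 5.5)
Your proof is correct and takes the natural route. The paper gives no proof of its own (it defers to \cite{BKZZ}), and the triangular-inversion argument you give is the same reasoning the paper itself uses in the proof of Corollary~\ref{cor:ideal}: the transition matrix between $\{A_\beta\}$ and $\{\bm{A}^\beta\}$ over $\{\beta\in\mathcal{D}\mid\beta\le\alpha\}$ is triangular with nonzero diagonal. The one implicit fact worth stating is that $o\in\mathcal{D}$, which follows from condition~\ref{item:abPpoly_i} of Definition~\ref{df:abPpoly}.
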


\begin{prop}[cf.~\cite{BKZZ}]
\label{prop:P-TFAE}
Let $\mathcal{D}\subset \N^\ell$ containing $\epsilon_1,\epsilon_2,\ldots ,\epsilon_\ell$, and
let $\mathfrak{X} = (X,\mathcal{R}, \mathcal{D})$
be a commutative association scheme.
Then the statements \ref{item:P-TFAE_i} and~\ref{item:P-TFAE_ii} are equivalent:
\begin{enumerate}[label=$(\roman*)$]
    \item \label{item:P-TFAE_i} $\mathfrak{X}$ is
    an $\ell$-variate $P$-polynomial association scheme 
    on $\mathcal{D}$ with respect to a monomial order $\le$;
    \item \label{item:P-TFAE_ii}
    the condition \ref{item:abPpoly_i} of Definition~\ref{df:abPpoly} holds for $\mathcal{D}$
    and the intersection numbers satisfy,
    for each $i=1,2,\ldots ,\ell$ and each $\alpha \in \mathcal{D}$,
    $p^\beta_{\epsilon_i, \alpha} \neq 0$ for $\beta\in \mathcal{D}$ implies $\beta \le \alpha+\epsilon_i$. 
    Moreover, if $\alpha+\epsilon_i\in \mathcal{D}$, then $p^{\alpha+\epsilon_i}_{\epsilon_i, \alpha} \neq 0$ holds.
\end{enumerate}
\end{prop}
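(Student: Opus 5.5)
The two directions go through the coefficient of $A_\beta$ in $A_{\epsilon_i}A_\alpha$, which is $p^\beta_{\epsilon_i,\alpha}$. I will compare the span of the $A_\beta$ with the span of the monomials $\bm{A}^\beta$, using Lemma~\ref{lem:spanvij} in one direction and an induction on $\mathcal{D}$ along $\le$ in the other. Two preliminary facts are used throughout. First, $\{A_\beta\}_{\beta\in\mathcal{D}}$ is a basis of $\mathfrak{A}$. Second, $\le$ restricted to $\mathcal{D}$ is a well-order, since monomial orders are well-orders.

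\textbf{From (i) to (ii).} Condition (i) of Definition~\ref{df:abPpoly} holds by assumption. Fix $i$ and $\alpha\in\mathcal{D}$, and write $A_\alpha=v_\alpha(\bm{A})$. All monomials of $v_\alpha$ lie in $\mathcal{D}$ and have multidegree at most $\alpha$.
\begin{itemize}
\item By condition (iii), each product $A_{\epsilon_i}\bm{A}^\gamma$ with $\gamma\le\alpha$ is a combination of $\bm{A}^\beta$ with $\beta\in\mathcal{D}$ and $\beta\le\gamma+\epsilon_i\le\alpha+\epsilon_i$.
\item Hence $A_{\epsilon_i}A_\alpha$ lies in $\Span\{\bm{A}^\beta\mid\beta\in\mathcal{D},\ \beta\le\alpha+\epsilon_i\}$.
\item By Lemma~\ref{lem:spanvij}, this span equals $\Span\{A_\beta\mid\beta\le\alpha+\epsilon_i\}$. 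Lemma~\ref{lem:spanvij} is stated for a bound in $\mathcal{D}$; if $\alpha+\epsilon_i\notin\mathcal{D}$, I apply it to the largest element of $\mathcal{D}$ below $\alpha+\epsilon_i$.
\item Linear independence of the $A_\beta$ then gives $p^\beta_{\epsilon_i,\alpha}\ne0\Rightarrow\beta\le\alpha+\epsilon_i$.
\end{itemize}
For the nonvanishing claim, suppose $\alpha+\epsilon_i\in\mathcal{D}$. Write $v_\alpha=c\,\bm{x}^\alpha+(\text{lower terms})$ with $c\neq0$. Then
\[A_{\epsilon_i}A_\alpha=c\,\bm{A}^{\alpha+\epsilon_i}+(\text{terms in }\Span\{\bm{A}^\beta\mid\beta<\alpha+\epsilon_i\}).\]
By Lemma~\ref{lem:spanvij}, the leading monomial satisfies $\bm{A}^{\alpha+\epsilon_i}=c'A_{\alpha+\epsilon_i}+(\text{lower }A_\beta)$ with $c'\ne0$, because $v_{\alpha+\epsilon_i}$ has exact multidegree $\alpha+\epsilon_i$. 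The lower-order correction terms cannot contribute to $A_{\alpha+\epsilon_i}$. So the coefficient of $A_{\alpha+\epsilon_i}$ is $cc'\ne0$, i.e.\ $p^{\alpha+\epsilon_i}_{\epsilon_i,\alpha}\neq0$.

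\textbf{From (ii) to (i).} I construct $v_\alpha$ by induction along $\le$ on $\mathcal{D}$, with the inductive statement that $A_\alpha=v_\alpha(\bm{A})$ where $v_\alpha$ has multidegree exactly $\alpha$ and support in $\mathcal{D}$.
\begin{itemize}
\item Base cases: $v_o=1$ and $v_{\epsilon_i}=x_i$.
\item Inductive step: given $\gamma\in\mathcal{D}$ with $\gamma\ne o$, choose $i$ with $\gamma=\alpha+\epsilon_i$. Then $\alpha\in\mathcal{D}$ by downward closure.
\item Rearranging the expansion of $A_{\epsilon_i}A_\alpha$ gives
\[A_\gamma=\frac{1}{p^\gamma_{\epsilon_i,\alpha}}\Bigl(A_{\epsilon_i}A_\alpha-\sum_{\beta<\gamma}p^\beta_{\epsilon_i,\alpha}A_\beta\Bigr),\]
which is legitimate since $p^\gamma_{\epsilon_i,\alpha}\ne0$ by (ii).
\item Every $\beta<\gamma$ is handled by induction, so we may set $v_\gamma:=\bigl(x_iv_\alpha-\sum_{\beta<\gamma}p^\beta_{\epsilon_i,\alpha}v_\beta\bigr)/p^\gamma_{\epsilon_i,\alpha}$.
\item Degree check: $x_iv_\alpha$ has leading term $\bm{x}^{\gamma}$ by multiplicativity of the order, and each $v_\beta$ has degree $\beta<\gamma$, so $v_\gamma$ has exact multidegree $\gamma$.
\item Support check: $x_iv_\alpha$ has support in $\{\beta'+\epsilon_i\mid\beta'\le\alpha,\ \beta'\text{ in supp}(v_\alpha)\}$. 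I expect this step to need care, since $\beta'+\epsilon_i$ might fall outside $\mathcal{D}$.
\end{itemize}

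To resolve the support issue, I will first prove condition (iii) together with the span equality $\Span\{A_\beta\mid\beta\le\gamma\}=\Span\{\bm{A}^\beta\mid\beta\in\mathcal{D},\beta\le\gamma\}$ in the same induction. Then I will choose $v_\gamma$ as the unique expression of $A_\gamma$ in the basis $\{\bm{A}^\beta\}_{\beta\in\mathcal{D}}$. These monomials form a basis because the triangular relation makes them span $\mathfrak{A}$ and $|\mathcal{D}|=\dim\mathfrak{A}$. With this choice, the support lies in $\mathcal{D}$ automatically, and the leading coefficient is nonzero by the triangularity.

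For (iii), take $\alpha\in\mathcal{D}$. By the span equality, $\bm{A}^\alpha$ is a combination of $A_\beta$ with $\beta\le\alpha$. Multiplying by $A_{\epsilon_i}$ and applying the intersection-number condition of (ii) gives a combination of $A_{\beta'}$ with $\beta'\le\beta+\epsilon_i\le\alpha+\epsilon_i$. Converting back with the span equality yields monomials in $\mathcal{D}$ that are at most $\alpha+\epsilon_i$. The main obstacle is organizing this simultaneous induction cleanly, including the case $\alpha+\epsilon_i\notin\mathcal{D}$.
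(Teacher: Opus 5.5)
The paper does not prove this proposition itself; it is quoted from \cite{BKZZ}, so there is no in-paper argument to compare with. Your proof is the standard triangular change of basis between $\{A_\beta\}_{\beta\in\mathcal{D}}$ and $\{\bm{A}^\beta\}_{\beta\in\mathcal{D}}$, the same mechanism behind Lemma~\ref{lem:spanvij}, and both directions are correct. This includes your treatment of the case $\alpha+\epsilon_i\notin\mathcal{D}$ via the largest element of $\mathcal{D}$ below $\alpha+\epsilon_i$, and the leading-coefficient argument for $p^{\alpha+\epsilon_i}_{\epsilon_i,\alpha}\neq0$.

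There are two points to tighten.

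First, your base case $v_o=1$ silently uses $A_o=I_X$. This is the paper's convention $i_0=o$, but condition (ii) does not imply it. Take the Petersen graph with $\mathcal{D}=\{0,1,2\}\subset\N$, $A_2=I_X$, $A_1$ its adjacency matrix and $A_0=J_X-I_X-A_1$. Then $A_1A_0=2A_0+2A_1$, $A_1^2=A_0+3A_2$ and $A_1A_2=A_1$, so (ii) holds. But (i) would force $A_0=v_0(A_1)$ with $v_0$ a nonzero constant, which is false. So you should state $A_o=I_X$ explicitly as a hypothesis.

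Second, the simultaneous induction you flag as the main obstacle is unnecessary. Prove by induction along $\le$ on $\mathcal{D}$ that $\bm{A}^\gamma\in c_\gamma A_\gamma+\Span\{A_\beta\mid\beta\in\mathcal{D},\ \beta<\gamma\}$ with $c_\gamma\neq0$:
\begin{itemize}
\item write $\gamma=\alpha+\epsilon_i$ with $\alpha\in\mathcal{D}$;
\item multiply the relation for $\alpha$ by $A_{\epsilon_i}$;
\item expand each $A_{\epsilon_i}A_\beta$ with $\beta\le\alpha$ using (ii).
\end{itemize}
This gives $c_\gamma=c_\alpha\, p^{\gamma}_{\epsilon_i,\alpha}$ and never produces a monomial outside $\mathcal{D}$. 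Inverting this triangular system yields $v_\gamma$ with support in $\mathcal{D}$ and multidegree exactly $\gamma$. Condition (iii) then follows exactly as in your last paragraph, and you can drop the recursive formula $v_\gamma=(x_iv_\alpha-\cdots)/p^\gamma_{\epsilon_i,\alpha}$.
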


The following lemma shows that operations on the index set $\mathcal{D}$ of an association scheme are justified even when they extend beyond $\mathcal{D}$. 
In particular, the statement \ref{item:p_support_ii} extends Proposition~\ref{prop:P-TFAE}~\ref{item:P-TFAE_ii} to arbitrary $\alpha,\beta,\gamma\in \mathcal{D}$.

\begin{lem}
\label{lem:p_support}
Let $\mathfrak{X}=(X,\mathcal{R},\mathcal{D})$ be an $\ell$-variate $P$-polynomial association scheme 
on $\mathcal{D}\subset\N^\ell$ with respect to a monomial order $\le$.
Then the following statements hold:
\begin{enumerate}[label=$(\roman*)$]
    \item \label{item:p_support_i} For any $\gamma\in \N^\ell$, $\bm{A}^\gamma$ is written as a linear combination of
$\{\bm{A}^\xi\mid \xi\in\mathcal{D},\ \xi\le \gamma\}$;
    \item \label{item:p_support_ii} For any $\alpha,\beta\in \mathcal{D}$,
$p^{\gamma}_{\alpha,\beta}\neq 0$ for $\gamma\in\mathcal{D}$ implies $\gamma\le \alpha+\beta$.
Moreover, if $\alpha+\beta\in\mathcal{D}$,
then $p^{\alpha+\beta}_{\alpha,\beta}\neq 0$ holds.
\end{enumerate}
\end{lem}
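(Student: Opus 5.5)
We prove \ref{item:p_support_i} by induction on $\gamma$ with respect to the monomial order $\le$; this is legitimate because a monomial order is a well-order (Dickson's lemma). For the base case $\gamma=o$ we have $\bm{A}^o=I_X=A_o$ and $o\in\mathcal{D}$. The induction step splits into two cases.

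If $\gamma\in\mathcal{D}$, there is nothing to prove. If $\gamma\notin\mathcal{D}$ and $\gamma\neq o$, choose an index $i$ with $\gamma_i>0$ and set $\gamma'=\gamma-\epsilon_i<\gamma$. By the induction hypothesis,
\[
\bm{A}^{\gamma'}=\sum_{\xi\in\mathcal{D},\,\xi\le\gamma'}c_\xi\bm{A}^\xi,
\qquad\text{so}\qquad
\bm{A}^{\gamma}=\sum_{\xi\in\mathcal{D},\,\xi\le\gamma'}c_\xi A_{\epsilon_i}\bm{A}^\xi .
\]
By condition~\ref{item:abPpoly_iii} of Definition~\ref{df:abPpoly}, each product $A_{\epsilon_i}\bm{A}^\xi$ is a linear combination of $\bm{A}^\eta$ with $\eta\in\mathcal{D}$ and $\eta\le\xi+\epsilon_i$. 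Monotonicity of the order gives $\xi+\epsilon_i\le\gamma'+\epsilon_i=\gamma$, so every such $\eta$ satisfies $\eta\le\gamma$, which completes the induction. Commutativity of $\mathfrak{A}$ lets us write $\bm{A}^\gamma=A_{\epsilon_i}\bm{A}^{\gamma'}$ without worrying about the order of factors.

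For \ref{item:p_support_ii}, we compute $A_\alpha A_\beta$ in two ways. First, by condition~\ref{item:abPpoly_ii}, $A_\alpha=v_\alpha(\bm{A})$ is a combination of $\bm{A}^{\alpha'}$ with $\alpha'\le\alpha$, and similarly $A_\beta$ is a combination of $\bm{A}^{\beta'}$ with $\beta'\le\beta$. Hence $A_\alpha A_\beta$ is a combination of the $\bm{A}^{\alpha'+\beta'}$ with $\alpha'+\beta'\le\alpha+\beta$. Applying part~\ref{item:p_support_i} to each term, $A_\alpha A_\beta$ is a combination of $\bm{A}^\xi$ with $\xi\in\mathcal{D}$ and $\xi\le\alpha+\beta$. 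Lemma~\ref{lem:spanvij} (used for the largest $\xi\in\mathcal{D}$ that is $\le\alpha+\beta$) converts this into a combination of the $A_\gamma$ with $\gamma\in\mathcal{D}$ and $\gamma\le\alpha+\beta$. Second, $A_\alpha A_\beta=\sum_\gamma p^\gamma_{\alpha\beta}A_\gamma$. Since the $A_\gamma$ are linearly independent, $p^\gamma_{\alpha\beta}\neq0$ forces $\gamma\le\alpha+\beta$.

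For the nonvanishing claim, suppose $\alpha+\beta\in\mathcal{D}$. Track the coefficient of the top monomial: $\bm{A}^{\alpha+\beta}$ occurs in $v_\alpha(\bm{A})v_\beta(\bm{A})$ with coefficient $\LT(v_\alpha)\LT(v_\beta)\neq0$, and every other term is $\bm{A}^\delta$ with $\delta<\alpha+\beta$. Reducing those lower terms by part~\ref{item:p_support_i} gives only $\bm{A}^\xi$ with $\xi\le\delta<\alpha+\beta$. Since $\{\bm{A}^\xi\}_{\xi\in\mathcal{D}}$ is a basis (it spans by Lemma~\ref{lem:spanvij} and has the right cardinality), the coefficient of $\bm{A}^{\alpha+\beta}$ in $A_\alpha A_\beta$ is nonzero. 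On the other hand, in $\sum_\gamma p^\gamma_{\alpha\beta}A_\gamma$ with $\gamma\le\alpha+\beta$, only $A_{\alpha+\beta}$ can contribute $\bm{A}^{\alpha+\beta}$, because $v_\gamma$ has multidegree $\gamma$ and for $\gamma<\alpha+\beta$ the expansion lies in the span of lower $\bm{A}^\xi$. Therefore $p^{\alpha+\beta}_{\alpha\beta}\neq0$.

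The main obstacle is this basis/triangularity bookkeeping: making sure that the reduction in part~\ref{item:p_support_i} never produces $\bm{A}^{\alpha+\beta}$ from strictly lower monomials. This holds because every reduction step in \ref{item:p_support_i} stays weakly below its input.
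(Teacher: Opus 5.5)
Your proof is correct. Part (i) and the inclusion $\gamma\le\alpha+\beta$ in part (ii) follow the paper's argument essentially step for step: induction along the well-order $\le$ using condition (iii) of Definition~\ref{df:abPpoly}, then Lemma~\ref{lem:spanvij} to pass from the $\bm{A}^\xi$ to the $A_\gamma$.

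The genuine difference is the nonvanishing of $p^{\alpha+\beta}_{\alpha,\beta}$. The paper does not prove this; it refers to Lemma~3.9 of Bernard--Cramp\'{e}--Vinet--Zaimi--Zhang. You instead give a self-contained triangularity argument. You expand $A_\alpha A_\beta$ in the basis $\{\bm{A}^\xi\}_{\xi\in\mathcal{D}}$ of $\mathfrak{A}$ in two ways:
\begin{itemize}
\item once as $(v_\alpha v_\beta)(\bm{A})$, where only the product of leading terms contributes to $\bm{A}^{\alpha+\beta}$, since part (i) never raises multidegree;
\item once as $\sum_{\gamma\le\alpha+\beta}p^{\gamma}_{\alpha,\beta}v_\gamma(\bm{A})$, using the first half of (ii).
\end{itemize}
Comparing the two gives the identity $p^{\alpha+\beta}_{\alpha,\beta}\,c_{\alpha+\beta}=c_\alpha c_\beta$, where $c_\delta$ is the leading coefficient of $v_\delta$.

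This buys an explicit formula for the top intersection number. It also keeps the lemma independent of the graph-theoretic ($m$-distance-regular) setting of the cited reference, so it applies verbatim to the commutative, not necessarily symmetric, schemes treated in the paper. The paper's route is shorter on the page, but it outsources exactly the step you identified as the main obstacle.
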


\begin{proof}
\ref{item:p_support_i}
We use induction on $\gamma \in \N^\ell$ with respect to $\le$.
For $\gamma=o$, this is trivial. For $\gamma\neq o$,
there exists some $i$ such that $\gamma-\epsilon_i\in\N^\ell$.
By the induction hypothesis, $\bm{A}^{\gamma-\epsilon_i}$ is a linear combination of
$\{\bm{A}^{\xi'}\mid \xi'\in\mathcal{D},\ \xi'\le \gamma-\epsilon_i\}$.
By Definition~\ref{df:abPpoly}~\ref{item:abPpoly_iii},
for each $\xi'\in\mathcal{D}$ with $\xi'\le \gamma-\epsilon_i$,
$A_{\epsilon_i}\bm{A}^{\xi'}$ is a linear combination of $\bm{A}^{\xi}$ with $\xi\in \mathcal{D}$ and $\xi\le \xi'+\epsilon_i\le \gamma$.
Hence, $\bm{A}^\gamma=A_{\epsilon_i}\bm{A}^{\gamma-\epsilon_i}$ is a linear combination of
$\{\bm{A}^\xi\mid \xi\in\mathcal{D},\ \xi\le \gamma\}$.

\noindent
\ref{item:p_support_ii}
Take $\alpha,\beta\in\mathcal{D}$. By Definition~\ref{df:abPpoly}~\ref{item:abPpoly_ii},
we can write $A_\alpha=v_\alpha(\bm{A})$ and $A_\beta=v_\beta(\bm{A})$ for some polynomial $v_\alpha$ (resp. $v_\beta$) of multidegree $\alpha$ (resp. $\beta$) with respect to $\le$. 
Thus $A_\alpha A_\beta$ is a linear combination of $\bm{A}^{\xi}$ with $\xi\le \alpha+\beta$.
By \ref{item:p_support_i}, this implies that
$A_\alpha A_\beta$ is a linear combination of
$\{\bm{A}^\xi\mid \xi\in\mathcal{D},\ \xi\le\alpha+\beta\}$.
Moreover, by Lemma~\ref{lem:spanvij}, for each $\xi\in\mathcal{D}$,
$\bm{A}^\xi$ is a linear combination of $\{A_\gamma\mid \gamma\in\mathcal{D},\ \gamma\le\xi\}$.
Thus $A_\alpha A_\beta$ is a linear combination of $\{A_\gamma\mid \gamma\in\mathcal{D},\ \gamma\le\alpha+\beta\}$.
Comparing coefficients in the expansion $A_\alpha A_\beta=\sum_{\gamma\in\mathcal{D}} p^{\gamma}_{\alpha,\beta} A_\gamma$
yields the conclusion.

The latter assertion can be found essentially in Lemma 3.9 of Bernard--Crampé--Vinet--Zaimi--Zhang~\cite{BCVZZ24}, so we refer the reader to that reference.
\end{proof}

We next record the $Q$-polynomial analogues of Proposition~\ref{prop:P-TFAE} and Lemma~\ref{lem:p_support}.
\begin{prop}[cf.~\cite{BKZZ}]
    \label{prop:Q-TFAE}
    Let $\mathcal{D}^\ast\subset \N^{\ell^\ast}$ containing $\epsilon_1,\epsilon_2,\ldots ,\epsilon_{\ell^\ast}$, let $\le$ be a monomial order on $\N^{\ell^\ast}$, and 
    let $\mathfrak{X}$ be a commutative association scheme with the primitive idempotents $\{E_\alpha\}_{\alpha\in \mathcal{D}^\ast}$ indexed by $\mathcal{D}^\ast$.
    The statements \ref{item:Q-TFAE_i} and~\ref{item:Q-TFAE_ii} are equivalent:
    \begin{enumerate}[label=$(\roman*)$]
        \item \label{item:Q-TFAE_i} $\mathfrak{X}$ is
        an $\ell^\ast$-variate $Q$-polynomial association scheme 
        on $\mathcal{D}^\ast$ with respect to $\le$;
        \item \label{item:Q-TFAE_ii}
        the condition \ref{item:abQpoly_i} of Definition~\ref{df:abQpoly} holds for $\mathcal{D}^\ast$
        and the Krein numbers satisfy,
        for each $i=1,2,\ldots ,\ell^\ast$ and each $\alpha \in \mathcal{D}^\ast$,
        $q^\beta_{\epsilon_i, \alpha} \neq 0$ for $\beta\in \mathcal{D}^\ast$ implies $\beta \le \alpha+\epsilon_i$. 
        Moreover, if $\alpha+\epsilon_i\in \mathcal{D}^\ast$, then $q^{\alpha+\epsilon_i}_{\epsilon_i, \alpha} \neq 0$ holds.
    \end{enumerate}
\end{prop}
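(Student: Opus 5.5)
The plan mirrors the proof of Proposition~\ref{prop:P-TFAE}. We move from the Bose--Mesner algebra under ordinary multiplication to $\mathfrak{A}$ under the Hadamard product $\circ$. The rescaled idempotents $|X|E_\alpha$ play the role of the $A_\alpha$, the Krein numbers $q^\beta_{\alpha\gamma}$ play the role of $p^\beta_{\alpha\gamma}$, and $|X|E_o=J_X$ is the Hadamard identity. Throughout we write $\bm{E}^{\circ\beta}:=(|X|E_{\epsilon_1})^{\circ m_1}\circ\cdots\circ(|X|E_{\epsilon_{\ell^\ast}})^{\circ m_{\ell^\ast}}$. This is only a positive rescaling of the products in Definition~\ref{df:abQpoly}~\ref{item:abQpoly_iii}, so it changes no span. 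The pair $(\mathfrak{A},\circ)$ is a commutative associative algebra with unit $J_X$ and basis $\{|X|E_\alpha\}_{\alpha\in\mathcal{D}^\ast}$, and its structure constants are the $q^\gamma_{\alpha\beta}$ by \ref{ASE:qijk}. These are exactly the features of $(\mathfrak{A},\cdot)$ with basis $\{A_\alpha\}$ used in the $P$-case.

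For \ref{item:Q-TFAE_i}$\Rightarrow$\ref{item:Q-TFAE_ii}, condition~\ref{item:abQpoly_i} is part of the hypothesis. First we prove the $Q$-analogue of Lemma~\ref{lem:spanvij}: for $\alpha\in\mathcal{D}^\ast$,
\[
\Span\{|X|E_\beta\mid\beta\in\mathcal{D}^\ast,\ \beta\le\alpha\}=\Span\{\bm{E}^{\circ\beta}\mid\beta\in\mathcal{D}^\ast,\ \beta\le\alpha\}.
\]
Each $|X|E_\beta=v^\ast_\beta(\cdots)$ involves only monomials of multidegree $\le\beta$ lying in $\mathcal{D}^\ast$, which gives the inclusion $\subseteq$. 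Equality then follows from a dimension count, since the left side has the right number of linearly independent elements. Next, $|X|E_{\epsilon_i}\circ|X|E_\alpha=\sum_\beta q^\beta_{\epsilon_i\alpha}|X|E_\beta$. Expanding $E_\alpha$ via $v^\ast_\alpha$ and applying \ref{item:abQpoly_iii} termwise, the left side lies in $\Span\{\bm{E}^{\circ\beta}\mid\beta\le\alpha+\epsilon_i\}=\Span\{|X|E_\beta\mid\beta\le\alpha+\epsilon_i\}$. Comparing coefficients gives the support condition. For nonvanishing when $\alpha+\epsilon_i\in\mathcal{D}^\ast$, we look at the leading term: $x_i v^\ast_\alpha$ has multidegree $\alpha+\epsilon_i$. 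Expanding in the basis $\{|X|E_\beta\}$, the coefficient of $|X|E_{\alpha+\epsilon_i}$ equals the leading coefficient of $x_iv^\ast_\alpha$ divided by that of $v^\ast_{\alpha+\epsilon_i}$. This is nonzero because the triangular change of basis above is unitriangular up to nonzero scalars.

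For \ref{item:Q-TFAE_ii}$\Rightarrow$\ref{item:Q-TFAE_i}, we construct $v^\ast_\alpha$ by induction on $\alpha$ with respect to $\le$, a well-order. Set $v^\ast_o=1$ and $v^\ast_{\epsilon_i}=x_i$. For $\alpha\ne o$ in $\mathcal{D}^\ast$, we choose $i$ with $\alpha-\epsilon_i\in\mathcal{D}^\ast$; this is possible by \ref{item:abQpoly_i}. The relation
\[
|X|E_{\epsilon_i}\circ|X|E_{\alpha-\epsilon_i}=\sum_{\beta\le\alpha}q^\beta_{\epsilon_i,\alpha-\epsilon_i}|X|E_\beta
\]
has $q^\alpha_{\epsilon_i,\alpha-\epsilon_i}\neq0$, so we solve for $|X|E_\alpha$. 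The other terms have $\beta<\alpha$ and are covered by the induction hypothesis. This yields $v^\ast_\alpha=(x_iv^\ast_{\alpha-\epsilon_i}-\sum_{\beta<\alpha}q^\beta v^\ast_\beta)/q^\alpha$. Its multidegree is exactly $\alpha$, since $x_iv^\ast_{\alpha-\epsilon_i}$ has multidegree $\alpha$ and the remaining terms are strictly smaller. Its monomials lie in $\mathcal{D}^\ast$: those of $x_iv^\ast_{\alpha-\epsilon_i}$ are $\gamma+\epsilon_i$ with $\gamma\le\alpha-\epsilon_i$ in $\mathcal{D}^\ast$.

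This last point is the main obstacle, since $\gamma+\epsilon_i$ need not lie in $\mathcal{D}^\ast$ a priori. The cleanest route is to avoid it. We define $v^\ast_\alpha$ as the unique polynomial supported on monomials in $\mathcal{D}^\ast$ that represents $|X|E_\alpha$, in the sense that each $\bm{E}^{\circ\beta}$ with $\beta\in\mathcal{D}^\ast$ is expressed triangularly. To make this work, we prove the two spans coincide by simultaneous induction, using the support condition to show that $|X|E_{\epsilon_i}\circ\bm{E}^{\circ\gamma}$ stays in the span of $\{|X|E_\beta\mid\beta\le\gamma+\epsilon_i\}$. Condition~\ref{item:abQpoly_iii} then follows from the same triangular spans. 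This is the $Q$-version of the argument in~\cite{BKZZ}, which the paper cites for Proposition~\ref{prop:P-TFAE}, so we may simply defer to it.
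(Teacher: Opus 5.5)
Your strategy is the standard one. The paper gives no proof of this proposition (it defers to \cite{BKZZ}), and the triangular comparison of $\Span\{|X|E_\beta\mid\beta\le\alpha\}$ with $\Span\{\bm{E}^{\circ\beta}\mid\beta\le\alpha\}$ that you use is the $Q$-version of Lemma~\ref{lem:spanvij}. Your direction (i)$\Rightarrow$(ii) is correct. You are also right to drop the recursive formula for $v^\ast_\alpha$ in favour of the span induction, since $x_iv^\ast_{\alpha-\epsilon_i}$ can have monomials outside $\mathcal{D}^\ast$.

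There is, however, a genuine gap in (ii)$\Rightarrow$(i). You use $|X|E_o=J_X$: this is your ``set $v^\ast_o=1$'', and equivalently the base case $\bm{E}^{\circ o}=J_X\in\Span\{|X|E_o\}$ of your induction. This fact is not among the hypotheses in (ii), and it does not follow from them. In (i)$\Rightarrow$(ii) it is legitimate: $v^\ast_o$ is a nonzero constant, so $E_o$ is a primitive idempotent proportional to $J_X$, hence equal to $\frac{1}{|X|}J_X$.

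For the converse, take the pentagon, i.e.\ the $2$-class scheme of the $5$-cycle. Its two nontrivial primitive idempotents $F,F'$ satisfy
\[
5F\circ 5F'=5F+5F',\qquad (5F)^{\circ 2}=2J_X+5F',\qquad 5F\circ J_X=5F .
\]
Label $E_0:=F'$, $E_1:=F$, $E_2:=\frac{1}{5}J_X$, with $\mathcal{D}^\ast=\{0,1,2\}\subset\N$. Every condition in (ii) holds: $q^{1}_{1,0}=1\neq0$, $q^{2}_{1,1}=2\neq0$, and all supports are at most $\alpha+1$. Yet condition (ii) of Definition~\ref{df:abQpoly} fails for this labeling, because $5E_0$ is not a multiple of $J_X$. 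The pentagon is of course $Q$-polynomial for the standard labeling; the point is that statement (ii), and your proof, concern the given labeling.

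So you must add the normalization $E_o=\frac{1}{|X|}J_X$, i.e.\ that $o$ indexes the trivial idempotent. This is clearly the intended reading; compare the Remark following Definition~\ref{df:abQpoly}, and note that the same caveat applies to Proposition~\ref{prop:P-TFAE}, which you mirror.

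With that normalization in place, your induction closes:
\begin{itemize}
\item The base case holds.
\item In the inductive step, $\alpha-\epsilon_i\in\mathcal{D}^\ast$ by downward closure, and the support condition keeps $|X|E_{\epsilon_i}\circ\bm{E}^{\circ(\alpha-\epsilon_i)}$ inside $\Span\{|X|E_\beta\mid\beta\le\alpha\}$.
\item The ``moreover'' clause, $q^{\alpha}_{\epsilon_i,\alpha-\epsilon_i}\neq0$, supplies the nonzero diagonal entry. Triangular inversion then gives $v^\ast_\alpha$ of multidegree exactly $\alpha$ supported in $\mathcal{D}^\ast$.
\item Condition (iii) follows from the resulting equality of spans.
\end{itemize}
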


\begin{lem}\label{lem:q_support}
Let $\mathfrak{X}$ be an $\ell^\ast$-variate $Q$-polynomial association scheme 
on $\mathcal{D}^\ast\subset\N^{\ell^\ast}$ with respect to a monomial order $\le$.
Then the following hold:
\begin{enumerate}[label=$(\roman*)$]
    \item \label{item:q_support_i} For any $\gamma=(\gamma_1,\ldots ,\gamma_{\ell^\ast})\in \N^{\ell^\ast}$, $E_{\epsilon_1}^{\circ \gamma_1} \circ \cdots \circ E_{\epsilon_{\ell^\ast}}^{\circ \gamma_{\ell^\ast}}$ is written as a linear combination of
$E_{\epsilon_1}^{\circ \xi_1} \circ \cdots \circ E_{\epsilon_{\ell^\ast}}^{\circ \xi_{\ell^\ast}}$ with $\xi=(\xi_1,\ldots ,\xi_{\ell^\ast})\in \mathcal{D}^\ast$ and $\xi\le \gamma$ under the Hadamard product;
    \item \label{item:q_support_ii} For any $\alpha,\beta\in \mathcal{D}^\ast$,
$q^{\gamma}_{\alpha,\beta}\neq 0$ for $\gamma\in\mathcal{D}^\ast$ implies $\gamma\le \alpha+\beta$.
Moreover, if $\alpha+\beta\in\mathcal{D}^\ast$,
then $q^{\alpha+\beta}_{\alpha,\beta}\neq 0$ holds.
\end{enumerate}
\end{lem}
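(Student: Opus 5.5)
The plan is to transpose the proof of Lemma~\ref{lem:p_support} to the Hadamard-product setting. The Bose--Mesner algebra $\mathfrak{A}$ is a commutative associative algebra under $\circ$, with identity $J_X=|X|E_o$. The basis $\{|X|E_\alpha\}_{\alpha\in\mathcal{D}^\ast}$ plays the role of $\{A_\alpha\}$, and $\circ$-monomials in $|X|E_{\epsilon_1},\dots,|X|E_{\epsilon_{\ell^\ast}}$ play the role of $\bm{A}^\gamma$. The statements are unaffected by replacing $E_{\epsilon_i}$ with $|X|E_{\epsilon_i}$, since this only rescales each monomial by a nonzero constant. We write $\bm{E}^{\circ\gamma}$ for $E_{\epsilon_1}^{\circ\gamma_1}\circ\cdots\circ E_{\epsilon_{\ell^\ast}}^{\circ\gamma_{\ell^\ast}}$.

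For \ref{item:q_support_i} we use induction on $\gamma\in\N^{\ell^\ast}$ with respect to $\le$; this is legitimate because a monomial order is a well-order. The case $\gamma=o$ is trivial, since $\bm{E}^{\circ o}=J_X$ and $o\in\mathcal{D}^\ast$. For $\gamma\ne o$, choose $i$ with $\gamma-\epsilon_i\in\N^{\ell^\ast}$. Then $\gamma-\epsilon_i<\gamma$, because $o\le\epsilon_i$ and the order is compatible with addition, and the two are distinct. By the induction hypothesis, $\bm{E}^{\circ(\gamma-\epsilon_i)}$ is a combination of $\bm{E}^{\circ\xi'}$ with $\xi'\in\mathcal{D}^\ast$ and $\xi'\le\gamma-\epsilon_i$. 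Taking the $\circ$-product with $E_{\epsilon_i}$ and applying Definition~\ref{df:abQpoly}~\ref{item:abQpoly_iii}, each $E_{\epsilon_i}\circ\bm{E}^{\circ\xi'}$ is a combination of $\bm{E}^{\circ\xi}$ with $\xi\in\mathcal{D}^\ast$ and $\xi\le\xi'+\epsilon_i\le\gamma$. Here we use commutativity of $\circ$ to identify $E_{\epsilon_i}\circ\bm{E}^{\circ(\gamma-\epsilon_i)}$ with $\bm{E}^{\circ\gamma}$.

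For the first part of \ref{item:q_support_ii}, we need the $Q$-analogue of Lemma~\ref{lem:spanvij}: for each $\alpha\in\mathcal{D}^\ast$, the span of $\{|X|E_\beta : \beta\in\mathcal{D}^\ast,\ \beta\le\alpha\}$ equals the span of $\{\bm{E}^{\circ\beta} : \beta\in\mathcal{D}^\ast,\ \beta\le\alpha\}$. The inclusion $\subseteq$ follows from Definition~\ref{df:abQpoly}~\ref{item:abQpoly_ii}, because $v^\ast_\beta$ has multidegree $\beta$ and its support lies in $\mathcal{D}^\ast$. Equality then follows from a triangularity and dimension count: the leading coefficient is nonzero, so induction on $\alpha$ lets us solve for $\bm{E}^{\circ\alpha}$. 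With this in hand, $|X|E_\alpha\circ|X|E_\beta=v^\ast_\alpha\circ v^\ast_\beta$ expands into monomials $\bm{E}^{\circ\xi}$ with $\xi\le\alpha+\beta$, using compatibility of $\le$ with addition. Part \ref{item:q_support_i} rewrites these in terms of $\bm{E}^{\circ\xi}$ with $\xi\in\mathcal{D}^\ast$ and $\xi\le\alpha+\beta$, and the span equality converts them into $|X|E_\gamma$ with $\gamma\le\alpha+\beta$. Comparing with $\sum_\gamma q^\gamma_{\alpha\beta}|X|E_\gamma$, which is legitimate because the $E_\gamma$ are linearly independent, gives the claim.

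The main obstacle is the second part: showing $q^{\alpha+\beta}_{\alpha,\beta}\ne0$ when $\alpha+\beta\in\mathcal{D}^\ast$. In the $P$-case the paper cites \cite{BCVZZ24}, and I would mimic that argument. Write $|X|E_\alpha=c_\alpha\bm{E}^{\circ\alpha}+(\text{lower terms})$ with $c_\alpha\neq0$, and similarly for $\beta$. In the product, the coefficient of $\bm{E}^{\circ(\alpha+\beta)}$ is $c_\alpha c_\beta\ne0$; every other monomial that appears is strictly smaller in $\le$, and reducing it by \ref{item:q_support_i} produces only $\bm{E}^{\circ\xi}$ with $\xi<\alpha+\beta$. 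The $\bm{E}^{\circ\xi}$ with $\xi\in\mathcal{D}^\ast$ form a basis of $\mathfrak{A}$, by the span equality taken at the maximal element. Hence, in the basis $\{\bm{E}^{\circ\xi}\}$, the product has coefficient $c_\alpha c_\beta$ on $\bm{E}^{\circ(\alpha+\beta)}$. By triangularity, $|X|E_{\alpha+\beta}$ is the only basis element $|X|E_\gamma$ with $\gamma\le\alpha+\beta$ whose expansion involves $\bm{E}^{\circ(\alpha+\beta)}$, so $q^{\alpha+\beta}_{\alpha\beta}=c_\alpha c_\beta/c_{\alpha+\beta}\ne0$. The point that needs care is that the reduction in \ref{item:q_support_i} of a monomial strictly below $\alpha+\beta$ never reintroduces $\bm{E}^{\circ(\alpha+\beta)}$, and this follows because the bound $\xi\le\gamma$ is strict whenever $\gamma<\alpha+\beta$.
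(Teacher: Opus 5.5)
Your proposal is correct and follows the paper's route: the paper proves this lemma only by transposing the proof of Lemma~\ref{lem:p_support}, exactly as you do. That means induction on $\gamma$ for part~(i), and for the support bound in part~(ii), expanding $v^\ast_\alpha v^\ast_\beta$ and using part~(i) together with the $Q$-analogue of Lemma~\ref{lem:spanvij}. The one difference is in the claim $q^{\alpha+\beta}_{\alpha,\beta}\neq 0$: the paper gives no argument of its own there and simply inherits the citation of \cite{BCVZZ24} made for intersection numbers, whereas your leading-coefficient comparison, giving $q^{\alpha+\beta}_{\alpha,\beta}=c_\alpha c_\beta/c_{\alpha+\beta}$, is a valid self-contained proof that works verbatim for Krein numbers.
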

\begin{proof}
By the proof of Lemma~\ref{lem:p_support}, the same result follows by replacing matrix products with Hadamard products, adjacency matrices $A_\alpha$ with $E_\alpha$, and intersection numbers $p_{\alpha,\beta}^\gamma$ with Krein numbers $q_{\alpha,\beta}^\gamma$.
\end{proof}

\section{Characterization via the first eigenmatrix of multivariate $P$-polynomial association schemes}\label{sec:eigen_characterization}

In this section, we characterize multivariate $P$-polynomial association schemes in terms of the first eigenmatrix $P$.
In the classical univariate setting, the phrase ``$P$-polynomial'' reflects the fact that the entries of $P$ are governed by a family of orthogonal polynomials.
Theorem~\ref{thm:eigen} shows that, in the multivariate setting, each row of the first eigenmatrix is likewise described by an $\ell$-variate polynomial.

We first describe the relationship between multivariate $P$-polynomial association schemes and ideals.
By Definition~\ref{df:abPpoly} \ref{item:abPpoly_iii},
for $\alpha\in \mathcal{D}$ and $i=1,2,\ldots ,\ell$ with $\alpha+\epsilon_i \notin \mathcal{D}$,
there exists a (unique) polynomial 
\begin{equation}
\label{eq:w_alpha+ei}    
w_{\alpha+\epsilon_i}(\bm{x}):=
\bm{x}^{\alpha+\epsilon_i} + \sum_{\substack{\beta\in \mathcal{D}\\ \beta < \alpha+\epsilon_i}} c_{\beta} \bm{x}^\beta
\end{equation}
of multidegree $\alpha+\epsilon_i$ in $\C [\bm{x}]$
such that $w_{\alpha+\epsilon_i}(\bm{A})=0$.
Let $I$ be the ideal of $\C [\bm{x}]$
generated by 
\begin{equation}
    \label{eq:ideal}
    \mathcal{G}:=
    \{w_{\alpha+\epsilon_i}(\bm{x}) \mid \alpha\in \mathcal{D},\  i=1,2,\ldots ,\ell,\ \alpha + \epsilon_i \notin \mathcal{D}\}.
\end{equation}

\begin{prop}[cf.~\cite{BKZZ}]
    \label{prop:A=C[x]/I}
    Let $\mathfrak{X}$ be an $\ell$-variate $P$-polynomial association scheme on $\mathcal{D}$
    with respect to a monomial order $\le$.
    Then the following statements hold:
    \begin{enumerate}[label=$(\roman*)$]
        \item \label{item:prop_A_CxmodI_i} $\mathcal{G}$ is a Gr\"obner basis of $I$;
        \item \label{item:prop_A_CxmodI_ii} $\MD(I)=\N^\ell \setminus \mathcal{D}$ holds;
        \item \label{item:prop_A_CxmodI_iii} The Bose--Mesner algebra $\mathfrak{A}$ of $\mathfrak{X}$
        is isomorphic to $\C [\bm{x}]/I$ as algebras.
    \end{enumerate}            
\end{prop}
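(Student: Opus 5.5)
The approach is to compare $I$ with the kernel of the evaluation map. Consider the surjective algebra homomorphism $\Phi\colon \C[\bm{x}]\to\mathfrak{A}$, $f\mapsto f(\bm{A})$. It is surjective because, by Definition~\ref{df:abPpoly}~\ref{item:abPpoly_ii}, every $A_\alpha$ equals $v_\alpha(\bm{A})$. Each generator $w_{\alpha+\epsilon_i}$ satisfies $w_{\alpha+\epsilon_i}(\bm{A})=0$, so $I\subset\ker\Phi$. Hence $\dim_\C \C[\bm{x}]/\ker\Phi=\dim\mathfrak{A}=|\mathcal{D}|$, and $\C[\bm{x}]/I$ surjects onto $\mathfrak{A}$. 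The strategy is to show $\dim \C[\bm{x}]/I\le|\mathcal{D}|$. This forces $I=\ker\Phi$ and gives \ref{item:prop_A_CxmodI_iii}; the other two items then follow from a standard-monomial count.

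\textbf{Step 1 (spanning).} I will show that the monomials $\bm{x}^\beta$ with $\beta\in\mathcal{D}$ span $\C[\bm{x}]/I$. This is the polynomial-ring analogue of Lemma~\ref{lem:p_support}~\ref{item:p_support_i}, proved by induction on $\gamma$ with respect to $\le$, which is a well-order. If $\gamma\in\mathcal{D}$ there is nothing to do. If $\gamma\notin\mathcal{D}$ and $\gamma\ne o$, pick $i$ with $\gamma-\epsilon_i\in\N^\ell$. By induction, $\bm{x}^{\gamma-\epsilon_i}\equiv\sum c_\xi\bm{x}^\xi$ modulo $I$, with $\xi\in\mathcal{D}$ and $\xi\le\gamma-\epsilon_i$. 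Multiplying by $x_i$ gives terms $\bm{x}^{\xi+\epsilon_i}$ with $\xi+\epsilon_i\le\gamma$. Each such term either lies in $\mathcal{D}$, or has $\xi+\epsilon_i\notin\mathcal{D}$ and can then be replaced via $w_{\xi+\epsilon_i}$ by monomials in $\mathcal{D}$ strictly smaller than $\xi+\epsilon_i$. Some of the resulting terms may be equal to $\gamma$ itself, namely when $\xi=\gamma-\epsilon_i$ and $\gamma\notin\mathcal{D}$. The cleaner route is to reduce $\bm{x}^\gamma$ directly with $w_{\gamma}$, but this requires $\gamma=\alpha+\epsilon_i$ with $\alpha\in\mathcal{D}$, which need not hold. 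So I will instead argue via leading terms. Every $\gamma\notin\mathcal{D}$ is divisible by some $\alpha+\epsilon_i$ with $\alpha\in\mathcal{D}$ and $\alpha+\epsilon_i\notin\mathcal{D}$: take a minimal element of $\mathcal{D}^c$ below $\gamma$ in the componentwise order, and use that $\mathcal{D}$ is downward closed (Definition~\ref{df:abPpoly}~\ref{item:abPpoly_i}). Thus $\bm{x}^\gamma$ is divisible by $\LT(w_{\alpha+\epsilon_i})$, and the division algorithm reduces every polynomial modulo $\mathcal{G}$ to a combination of $\bm{x}^\beta$ with $\beta\in\mathcal{D}$. This gives $\dim\C[\bm{x}]/I\le|\mathcal{D}|$, so $I=\ker\Phi$ and \ref{item:prop_A_CxmodI_iii} holds.

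\textbf{Step 2 (Gröbner basis and multidegrees).} Since $\dim\C[\bm{x}]/I=|\mathcal{D}|$ and the images of $\bm{x}^\beta$, $\beta\in\mathcal{D}$, span, they are linearly independent modulo $I$. Therefore no nonzero $f\in I$ has $\MD(f)\in\mathcal{D}$: otherwise, reducing $f$ by $\mathcal{G}$ would yield a nontrivial relation among the $\bm{x}^\beta$, $\beta\in\mathcal{D}$. This gives $\MD(I)\subset\N^\ell\setminus\mathcal{D}$. Conversely, every $\gamma\notin\mathcal{D}$ is a multiple of some $\MD(w_{\alpha+\epsilon_i})$, so it lies in $\MD(I)$. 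This proves \ref{item:prop_A_CxmodI_ii}. The same divisibility statement shows that $\langle\LT(f)\mid f\in I\rangle$ is generated by the $\LT(g)$ with $g\in\mathcal{G}$, which proves \ref{item:prop_A_CxmodI_i}.

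\textbf{Main obstacle.} The key point is the linear independence modulo $I$. It comes only from the dimension count via $\Phi$, which needs $\{A_\alpha\}$ to be a basis of $\mathfrak{A}$, together with Lemma~\ref{lem:spanvij} to see that $\{\bm{A}^\beta\}_{\beta\in\mathcal{D}}$ is also a basis. The argument must be arranged so as to avoid circularity between ``$I=\ker\Phi$'' and the standard-monomial description.
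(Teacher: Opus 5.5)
Your proof is correct and complete. The paper gives no proof of this proposition; it defers to \cite{BKZZ}. Your argument is the standard one and has no circularity. You use the surjection $\Phi\colon f\mapsto f(\bm{A})$ with $I\subset\ker\Phi$. Spanning of $\C[\bm{x}]/I$ by $\{\bm{x}^\beta\}_{\beta\in\mathcal{D}}$ comes from the division algorithm by $\mathcal{G}$. This works because every $\gamma\notin\mathcal{D}$ is a multiple of some $\alpha+\epsilon_i$ with $\alpha\in\mathcal{D}$ and $\alpha+\epsilon_i\notin\mathcal{D}$. The dimension count $\dim\mathfrak{A}=|\mathcal{D}|$ then forces $I=\ker\Phi$, and the resulting linear independence gives (i) and (ii).

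There are three small points.

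First, the difficulty you raised in Step~1 is not real. A term $\bm{x}^{\xi+\epsilon_i}$ equals $\bm{x}^\gamma$ only when $\xi=\gamma-\epsilon_i$, and $\xi\in\mathcal{D}$. Then $\gamma\notin\mathcal{D}$ means $w_\gamma\in\mathcal{G}$ exists. Your own replacement clause already covers this case, so the induction (a polynomial-ring copy of Lemma~\ref{lem:p_support}~\ref{item:p_support_i}) works as first written. The division-algorithm version you switched to is equally fine.

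Second, in Step~2 the nontriviality of the relation deserves one line. The monomial $\bm{x}^{\MD(f)}$ with $\MD(f)\in\mathcal{D}$ is not divisible by any $\LT(g)$, because $\mathcal{D}$ is downward closed. The remaining terms of $f$ reduce only to standard monomials strictly below $\MD(f)$, so the leading coefficient of $f$ survives in the remainder.

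Third, Lemma~\ref{lem:spanvij} is not actually needed anywhere. Only the surjectivity of $\Phi$ and $\dim\mathfrak{A}=|\mathcal{D}|$ enter the argument.
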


We call the ideal $I$ the defining ideal of $\mathfrak{X}$.

\begin{thm}
\label{thm:eigen}
Let $\mathcal{D}\subset \N^\ell$ contain $\epsilon_1,\epsilon_2,\ldots ,\epsilon_\ell$ and satisfy Definition~\ref{df:abPpoly} \ref{item:abPpoly_i},
let $\le$ be a monomial order on $\N^\ell$, and
let $\mathfrak{X}=(X,\mathcal{R},\mathcal{D})$ be a commutative association scheme with primitive idempotents $\{E_j\}_{j\in \mathcal{J}}$.
Denote by $P$ the first eigenmatrix of $\mathfrak{X}$, and set
$\theta_{i}(j):=P_{\epsilon_i}(j)$
for $i=1,2,\ldots ,\ell$ and $j\in \mathcal{J}$.
Then the following conditions are equivalent:
\begin{enumerate}[label=$(\roman*)$]
\item \label{item:eigen_i} $\mathfrak{X}$ is
an $\ell$-variate $P$-polynomial association scheme on $\mathcal{D}$ with respect to $\le$.
\item \label{item:eigen_ii} The following statements hold:
\begin{enumerate}[label=$(\alph*)$]
\item \label{item:eigen_a}
For $\alpha\in \mathcal{D}$,
there exists an $\ell$-variate polynomial
$v_\alpha(\bm{x})=\sum_{\beta \in \mathcal{D},\  \beta \le \alpha} c_\beta \bm{x}^\beta$
of multidegree $\alpha$ with respect to $\le$
such that $P_\alpha (j)=v_\alpha(\theta_{1}(j),\theta_{2}(j),\ldots ,\theta_{\ell}(j))$ holds for all $j\in \mathcal{J}$.
\item \label{item:eigen_b}
For $\alpha\in \mathcal{D}$ and $i=1,2,\ldots ,\ell$ with $\alpha+\epsilon_i \notin \mathcal{D}$,
there exists an $\ell$-variate monic polynomial
$w_{\alpha+\epsilon_i} (\bm{x})$ of multidegree $\alpha + \epsilon_i$ with respect to $\le$
such that $w_{\alpha+\epsilon_i}(\theta_{1}(j),\theta_{2}(j),\ldots ,\theta_{\ell}(j)) =0$ holds for all $j\in \mathcal{J}$.
\end{enumerate}
\end{enumerate}
\end{thm}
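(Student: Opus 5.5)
The whole argument rests on one translation: the Bose--Mesner algebra $\mathfrak{A}$ is commutative and semisimple with primitive idempotents $\{E_j\}_{j\in\mathcal{J}}$. So the map $\Phi\colon\mathfrak{A}\to\C^{\mathcal{J}}$, $A\mapsto (\text{eigenvalue of }A\text{ on }E_j)_{j}$, is an algebra isomorphism. It sends $A_\alpha$ to the row $(P_\alpha(j))_j$ and $A_{\epsilon_i}$ to $(\theta_i(j))_j$. Hence for any polynomial $f$,
\[
\Phi(f(\bm{A}))=\bigl(f(\theta_1(j),\ldots,\theta_\ell(j))\bigr)_{j\in\mathcal{J}},
\]
and $f(\bm{A})=g(\bm{A})$ holds if and only if $f$ and $g$ agree at every point $\theta(j):=(\theta_1(j),\ldots,\theta_\ell(j))$. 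The plan is to move each condition of Definition~\ref{df:abPpoly} across this dictionary.

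\textbf{(i)$\Rightarrow$(ii).} Condition (a) is Definition~\ref{df:abPpoly}~\ref{item:abPpoly_ii}, $A_\alpha=v_\alpha(\bm{A})$, evaluated on each $E_j$. The support condition $\beta\in\mathcal{D}$, $\beta\le\alpha$ is automatic, because $v_\alpha$ has multidegree $\alpha$ and its monomials lie in $\mathcal{D}$. For (b), take the polynomials $w_{\alpha+\epsilon_i}$ of \eqref{eq:w_alpha+ei}. They exist by Definition~\ref{df:abPpoly}~\ref{item:abPpoly_iii}, are monic of multidegree $\alpha+\epsilon_i$, and satisfy $w_{\alpha+\epsilon_i}(\bm{A})=0$. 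Applying $\Phi$ gives vanishing at every $\theta(j)$.

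\textbf{(ii)$\Rightarrow$(i).} Condition (a) transported back gives $A_\alpha=v_\alpha(\bm{A})$ for every $\alpha\in\mathcal{D}$, with $v_\alpha$ of multidegree $\alpha$ and monomials in $\mathcal{D}$. That is Definition~\ref{df:abPpoly}~\ref{item:abPpoly_ii}, and \ref{item:abPpoly_i} is assumed.

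For \ref{item:abPpoly_iii}, fix $\alpha\in\mathcal{D}$ and $i$. If $\alpha+\epsilon_i\notin\mathcal{D}$, condition (b) transported back gives $w_{\alpha+\epsilon_i}(\bm{A})=0$. Since $w_{\alpha+\epsilon_i}$ is monic, this reads
\[
\bm{A}^{\alpha+\epsilon_i}=-\sum_{\beta<\alpha+\epsilon_i}c_\beta\bm{A}^\beta .
\]
The right-hand side may contain $\beta\notin\mathcal{D}$, so it must be reduced. I will prove, by well-founded induction on $\gamma$ with respect to $\le$, the claim
\[
\text{(C)}\quad \bm{A}^\gamma\in\Span\{\bm{A}^\xi\mid \xi\in\mathcal{D},\ \xi\le\gamma\}\quad\text{for every }\gamma\in\N^\ell .
\]
Monomial orders are well-orders, so the induction is legitimate. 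Claim (C) at $\gamma=\alpha+\epsilon_i$ is exactly \ref{item:abPpoly_iii}; when $\alpha+\epsilon_i\in\mathcal{D}$, \ref{item:abPpoly_iii} holds trivially.

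\textbf{Proof of (C).} If $\gamma\in\mathcal{D}$ there is nothing to prove. Otherwise, by \ref{item:abPpoly_i} (downward closure of $\mathcal{D}$), choose a minimal element $\gamma'\le\gamma$ componentwise with $\gamma'\notin\mathcal{D}$. Then $\gamma'\neq o$, and $\gamma'=\alpha'+\epsilon_i$ with $\alpha'\in\mathcal{D}$. Multiply the relation $w_{\gamma'}(\bm{A})=0$ by $\bm{A}^{\gamma-\gamma'}$. This expresses $\bm{A}^\gamma$ through monomials $\bm{A}^{\beta+\gamma-\gamma'}$ with $\beta<\gamma'$, hence $\beta+\gamma-\gamma'<\gamma$ by compatibility of the order. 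The induction hypothesis then finishes.

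The only subtle point is this reduction step, namely that (b) is needed only for the minimal non-members $\alpha+\epsilon_i$. The minimal-element argument together with well-ordering handles it. Everything else is the dictionary, where injectivity of $\Phi$ is what turns pointwise identities on the $\theta(j)$ back into matrix identities.
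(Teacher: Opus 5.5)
Your proposal is correct and follows the paper's proof: both directions come from evaluating polynomial identities in $\bm{A}$ on the primitive idempotents $E_j$, i.e.\ from $f(\bm{A})=\sum_{j}f(\theta_1(j),\ldots,\theta_\ell(j))E_j$. Your claim (C) goes beyond the paper, which passes from $w_{\alpha+\epsilon_i}(\bm{A})=0$ straight to Definition~\ref{df:abPpoly}~\ref{item:abPpoly_iii} with ``from the above, we can conclude'', even though the $w_{\alpha+\epsilon_i}$ in (b) may contain monomials outside $\mathcal{D}$; your well-founded induction through minimal non-members $\gamma'=\alpha'+\epsilon_i$ of $\mathcal{D}$ supplies exactly that missing reduction.
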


\begin{proof}
\ref{item:eigen_i} $\Longrightarrow$ \ref{item:eigen_ii}:
By Definition~\ref{df:abPpoly}~\ref{item:abPpoly_ii},
for $\alpha\in \mathcal{D}$,
there exists an $\ell$-variate polynomial
$v_\alpha(\bm{x})$ of multidegree $\alpha$
such that $A_\alpha = v_\alpha(\bm{A})$.
For $j\in \mathcal{J}$,
we have $A_\alpha E_j =P_\alpha (j) E_j$ and
\[
A_\alpha E_j
=
v_\alpha(A_{\epsilon_1},A_{\epsilon_2},\ldots ,A_{\epsilon_\ell}) E_j
=
v_\alpha(\theta_{1}(j),\theta_{2}(j),\ldots ,\theta_{\ell}(j)) E_j.
\]
Hence we have $P_\alpha(j)=v_\alpha(\theta_{1}(j),\theta_{2}(j),\ldots ,\theta_{\ell}(j))$.
Next, for $\alpha\in \mathcal{D}$ and $i=1,2,\ldots ,\ell$ with $\alpha+\epsilon_i \notin \mathcal{D}$,
by \eqref{eq:w_alpha+ei},
there exists an $\ell$-variate monic polynomial $w_{\alpha+\epsilon_i}(\bm{x})$ of multidegree $\alpha+\epsilon_i$ such that $w_{\alpha+\epsilon_i}(\bm{A})=0$.
Since for $j\in \mathcal{J}$,
\[
0=w_{\alpha+\epsilon_i}(A_{\epsilon_1},A_{\epsilon_2},\ldots ,A_{\epsilon_\ell}) E_j
=
w_{\alpha+\epsilon_i}(\theta_{1}(j),\theta_{2}(j),\ldots ,\theta_{\ell}(j)) E_j
\]
holds, we have
$w_{\alpha+\epsilon_i}(\theta_{1}(j),\theta_{2}(j),\ldots ,\theta_{\ell}(j))=0$.

\noindent
\ref{item:eigen_ii} $\Longrightarrow$ \ref{item:eigen_i}:
Fix $\alpha\in \mathcal{D}$ and write
\[
v_\alpha(x_1,x_2,\ldots ,x_\ell)=\sum_{\beta=(m_1,m_2,\ldots ,m_\ell) \in \mathcal{D}, \beta \le \alpha} c_\beta x_1^{m_1} x_2^{m_2} \cdots x_\ell^{m_\ell}.
\]
Then
\begin{align*}
A_\alpha &=\sum_{j\in \mathcal{J}} P_\alpha (j) E_j
=
\sum_{j\in \mathcal{J}} v_\alpha(\theta_{1}(j),\theta_{2}(j),\ldots ,\theta_{\ell}(j)) E_j\\
&=
\sum_{j\in \mathcal{J}} \sum_{\beta=(m_1,m_2,\ldots ,m_\ell) \in \mathcal{D}, \beta \le \alpha} c_\beta 
\theta_{1}(j)^{m_1} \theta_{2}(j)^{m_2} \cdots \theta_{\ell}(j)^{m_\ell} E_j\\
&=
\sum_{\beta=(m_1,m_2,\ldots ,m_\ell) \in \mathcal{D}, \beta \le \alpha} c_\beta 
\sum_{j\in \mathcal{J}} 
\theta_{1}(j)^{m_1} \theta_{2}(j)^{m_2} \cdots \theta_{\ell}(j)^{m_\ell} E_j\\
&=
\sum_{\beta=(m_1,m_2,\ldots ,m_\ell) \in \mathcal{D}, \beta \le \alpha} c_\beta 
A_{\epsilon_1}^{m_1} A_{\epsilon_2}^{m_2} \cdots A_{\epsilon_\ell}^{m_\ell} =v_\alpha(\bm{A}).
\end{align*}
Next fix $\alpha\in \mathcal{D}$ and $i=1,2,\ldots ,\ell$ with $\alpha+\epsilon_i \notin \mathcal{D}$.
By the assumption, we have
\[
0=
\sum_{j\in \mathcal{J}}
w_{\alpha+\epsilon_i}(\theta_{1}(j),\theta_{2}(j),\ldots ,\theta_{\ell}(j))E_j
=
w_{\alpha+\epsilon_i}(A_{\epsilon_1},A_{\epsilon_2},\ldots ,A_{\epsilon_\ell}).
\]

From the above, we can conclude that $\mathfrak{X}$ satisfies Definition~\ref{df:abPpoly}~\ref{item:abPpoly_ii} and~\ref{item:abPpoly_iii},
so $\mathfrak{X}$ is an $\ell$-variate $P$-polynomial association scheme on $\mathcal{D}$ with respect to $\le$.
\end{proof}

From Theorem~\ref{thm:eigen}, we obtain the following corollary. 
\begin{cor}
    \label{cor:ideal}
Let $I$ be the ideal defined in \eqref{eq:ideal}.
Then 
\[
I=\{f\in \C[x_1,x_2,\ldots,x_\ell] \mid f(\theta_1(j),\theta_2(j),\ldots,\theta_\ell(j))=0 \text{ for all } j\in \mathcal{J}\}
\]
holds.
That is, $I$ is the vanishing ideal of the finite affine variety with support set $V:=\{(\theta_1(j),\theta_2(j),\ldots,\theta_\ell(j))\mid j\in\mathcal{J}\}\subset\C^\ell$.
Moreover, the set $V(I)=\{\bm{x}\in\C^\ell \mid f(\bm{x})=0 \text{ for all } f\in I\}$ coincides with $V$.
\end{cor}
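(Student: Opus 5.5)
The plan is to compare $I$ with the vanishing ideal
\[
I(V):=\{f\in \C[\bm{x}] \mid f(\theta_1(j),\ldots,\theta_\ell(j))=0 \text{ for all } j\in\mathcal{J}\}
\]
via the algebra homomorphism $\Phi\colon \C[\bm{x}]\to\mathfrak{A}$, $f\mapsto f(\bm{A})$. The key identity, already used in the proof of Theorem~\ref{thm:eigen}, is
\[
f(\bm{A})=\sum_{j\in\mathcal{J}} f(\theta_1(j),\ldots,\theta_\ell(j))E_j .
\]
This holds because each $A_{\epsilon_i}=\sum_j\theta_i(j)E_j$ and the $E_j$ are orthogonal idempotents summing to $I_X$. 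Since the $E_j$ are linearly independent, we get $\ker\Phi=I(V)$.

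First I show $I\subset I(V)$. Each generator $w_{\alpha+\epsilon_i}\in\mathcal{G}$ satisfies $w_{\alpha+\epsilon_i}(\bm{A})=0$ by construction, and hence vanishes on $V$ by Theorem~\ref{thm:eigen}~\ref{item:eigen_b}. Therefore $I\subset\ker\Phi=I(V)$.

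For the reverse inclusion I compare dimensions. By Proposition~\ref{prop:A=C[x]/I}, $\mathcal{G}$ is a Gr\"obner basis and $\MD(I)=\N^\ell\setminus\mathcal{D}$. The standard monomials are thus exactly $\{\bm{x}^\beta\mid\beta\in\mathcal{D}\}$, and so $\dim\C[\bm{x}]/I=|\mathcal{D}|=|\mathcal{I}|=\dim\mathfrak{A}$. On the other hand, $\Phi$ is surjective, since $\mathfrak{A}$ is generated by the $A_{\epsilon_i}$ through the polynomials $v_\alpha$. Hence $\C[\bm{x}]/I(V)\cong\mathfrak{A}$ has the same dimension. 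Because $I\subset I(V)$ and both quotients have dimension $|\mathcal{D}|$, we conclude $I=I(V)$.

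For the final assertion, $V\subset V(I)$ is immediate. For the converse, note that the points $(\theta_1(j),\ldots,\theta_\ell(j))$ are pairwise distinct for distinct $j$: otherwise every $P_\alpha(j)=v_\alpha(\theta(j))$ would agree at two indices, contradicting the invertibility of $P$. So $|V|=|\mathcal{J}|$. Now take any point $\bm{a}\notin V$. By Lagrange interpolation on finitely many points there is a polynomial $f$ vanishing on $V$ with $f(\bm{a})=1$. This $f$ lies in $I(V)=I$, so $\bm{a}\notin V(I)$. The only step requiring care is the dimension count, which rests on Proposition~\ref{prop:A=C[x]/I}~\ref{item:prop_A_CxmodI_ii}. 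Everything else is routine.
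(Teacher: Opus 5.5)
Your proof is correct. The inclusion $I\subset I(V)$ and the final claim $V(I)=V$ are handled as in the paper, which also builds a product of normalized linear factors that is $1$ at $\bm{a}$ and $0$ on $V$. Your check that the points of $V$ are pairwise distinct is true but not needed for that step.

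Where you differ is the inclusion $I(V)\subset I$. You argue by a dimension count. You identify $I(V)$ with the kernel of the surjective evaluation map $f\mapsto f(\bm{A})$ onto $\mathfrak{A}$, so $\dim\C[\bm{x}]/I(V)=\dim\mathfrak{A}=|\mathcal{D}|$. You compare this with the standard-monomial count $\dim\C[\bm{x}]/I=|\mathcal{D}|$ coming from $\MD(I)=\N^\ell\setminus\mathcal{D}$.

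The paper instead works element by element. It takes $f\in I(V)$ and divides it by the Gr\"obner basis $\mathcal{G}$, leaving a remainder $r$ supported on $\mathcal{D}$. It then rewrites $r$ in the triangular basis $\{v_\alpha\}_{\alpha\in\mathcal{D}}$ and uses invertibility of $P$ to force $r=0$.

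Both arguments rest on the same two facts: the standard monomials are indexed by $\mathcal{D}$, and evaluation on $V$ is nondegenerate. You express the latter as linear independence of the $E_j$; the paper expresses it as invertibility of $P$. Your version is shorter and more structural. The paper's version is constructive, since the division algorithm yields an explicit expression $f=\sum_{g\in\mathcal{G}} q_g g$ certifying membership in $I$.
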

\begin{proof}
Put
\[
I(V):=\{f\in \C[x_1,x_2,\ldots,x_\ell] \mid f(\theta_1(j),\theta_2(j),\ldots,\theta_\ell(j))=0 \text{ for all } j\in \mathcal{J}\}.
\]
We show that $I=I(V)$.

First we prove $I\subset I(V)$.
For each generator $w_{\alpha+\epsilon_i}(\bm{x})\in \mathcal{G}$,
Theorem~\ref{thm:eigen}~\ref{item:eigen_b} gives
$w_{\alpha+\epsilon_i}(\theta_1(j),\theta_2(j),\ldots,\theta_\ell(j))=0$ for $j\in \mathcal{J}$.
Hence every element of $\mathcal{G}$ belongs to $I(V)$, i.e., $I=\langle \mathcal{G}\rangle \subset I(V)$. 

Next we prove $I(V)\subset I$. Take $f\in I(V)$.
Since $\mathcal{G}$ is a Gr\"obner basis of $I$ by Proposition~\ref{prop:A=C[x]/I}~\ref{item:prop_A_CxmodI_i}, 
we have $f=\sum_{g\in \mathcal{G}} q_g g+r$,
where $q_g\in \C[x_1,x_2,\ldots,x_\ell]$ and the remainder $r$ has no monomial in $\MD(I)$.
We know from Proposition~\ref{prop:A=C[x]/I}~\ref{item:prop_A_CxmodI_ii} that $\MD(I)=\N^\ell\setminus \mathcal{D}$,
so every monomial appearing in $r$ is of the form $\bm{x}^\alpha$ with $\alpha\in \mathcal{D}$. 
Hence we may write
$r(\bm{x})=\sum_{\alpha\in \mathcal{D}} c_\alpha \bm{x}^\alpha$
for some $c_\alpha\in \C$.
Note that $r=f-\sum_{g\in \mathcal{G}} q_g g \in I(V)$ holds since each $g\in \mathcal{G}$ belongs to $I(V)$ and $f\in I(V)$.  

For each $\alpha\in \mathcal{D}$, let $v_\alpha(\bm{x})$ be the polynomial in Theorem~\ref{thm:eigen}~\ref{item:eigen_ii}~\ref{item:eigen_a}.
Since $v_\alpha(\bm{x})$ has multidegree $\alpha$, the transition matrix from
$\{v_\alpha(\bm{x})\mid \alpha\in \mathcal{D}\}$ to
$\{\bm{x}^\alpha\mid \alpha\in \mathcal{D}\}$ is triangular with nonzero diagonal entries.
Therefore,
$\{v_\alpha(\bm{x})\mid \alpha\in \mathcal{D}\}$
is a basis of the vector space spanned by
$\{\bm{x}^\alpha\mid \alpha\in \mathcal{D}\}$.
Thus there exist unique scalars $d_\alpha\in \C$ such that
$r(\bm{x})=\sum_{\alpha\in \mathcal{D}} d_\alpha v_\alpha(\bm{x})$.
Evaluating this identity at $(\theta_1(j),\theta_2(j),\ldots,\theta_\ell(j))$ for $j\in \mathcal{J}$, we obtain
\[
0=r(\theta_1(j),\theta_2(j),\ldots,\theta_\ell(j))
=\sum_{\alpha\in \mathcal{D}} d_\alpha v_\alpha(\theta_1(j),\theta_2(j),\ldots,\theta_\ell(j))
=\sum_{\alpha\in \mathcal{D}} d_\alpha P_\alpha(j),
\]
where the last equality follows from Theorem~\ref{thm:eigen}~\ref{item:eigen_ii}~\ref{item:eigen_a}.
Since $P$ is invertible, it follows that $d_\alpha=0$ for all $\alpha\in \mathcal{D}$.
Hence $r=0$, and therefore $f\in I$.
This proves $I(V)\subset I$.
Consequently, $I=I(V)$.

Finally, we prove that $V(I)=V$.
The inclusion $V\subseteq V(I)$ is immediate from the definition of $I(V)$ and the equality $I=I(V)$.
For the reverse inclusion, take $\bm{a}=(a_1,\ldots,a_\ell)\in V(I)$ and assume that $\bm{a}\notin V$.
For each $\bm{b}=(b_1,\ldots,b_\ell)\in V$, choose an index $t(\bm{b})\in\{1,\ldots,\ell\}$ such that $a_{t(\bm{b})}\neq b_{t(\bm{b})}$, and define
\[
\delta_{\bm{b}}(\bm{x})
:=
\frac{x_{t(\bm{b})}-b_{t(\bm{b})}}{a_{t(\bm{b})}-b_{t(\bm{b})}}.
\]
Then the polynomial $F(\bm{x}):=\prod_{\bm{b}\in V}\delta_{\bm{b}}(\bm{x})$
vanishes on $V$, so $F\in I(V)=I$.
On the other hand, $F(\bm{a})=1$, contradicting $\bm{a}\in V(I)$.
Hence $V(I)\subseteq V$, and therefore $V(I)=V$.
\end{proof}

A parallel statement of Theorem~\ref{thm:eigen} for multivariate $Q$-polynomial association schemes also holds.

\begin{thm}
\label{thm:eigenQ}
Let $\mathcal{D}^\ast\subset \N^{\ell^\ast}$ contain $\epsilon_1,\epsilon_2,\ldots ,\epsilon_{\ell^\ast}$ and satisfy Definition~\ref{df:abQpoly}~\ref{item:abQpoly_i}, 
let $\le$ be a monomial order on $\N^{\ell^\ast}$, and let $\mathfrak{X}=(X,\mathcal{R},\mathcal{I})$ be a commutative association scheme with primitive idempotents $\{E_\alpha\}_{\alpha\in \mathcal{D}^\ast}$.
Denote by $Q$ the second eigenmatrix of $\mathfrak{X}$, and set
$\theta^\ast_{j}(i):=Q_{\epsilon_j}(i)$
for $j=1,2,\ldots ,\ell^\ast$ and $i\in \mathcal{I}$.
Then the following conditions are equivalent:
\begin{enumerate}[label=$(\roman*)$]
\item \label{item:eigenQ_i}  $\mathfrak{X}$ is
an $\ell^\ast$-variate $Q$-polynomial association scheme on $\mathcal{D}^\ast$ with respect to $\le$.
\item \label{item:eigenQ_ii} The following statements hold:
\begin{enumerate}[label=$(\alph*)$]
\item \label{item:eigenQ_a}
For $\alpha\in \mathcal{D}^\ast$,
there exists an $\ell^\ast$-variate polynomial
$v^\ast_\alpha(\bm{x})=\sum_{\beta \in \mathcal{D}^\ast,\  \beta \le \alpha} c_\beta \bm{x}^\beta$
of multidegree $\alpha$ with respect to $\le$
such that $Q_\alpha (i)=v^\ast_\alpha(\theta^\ast_{1}(i),\theta^\ast_{2}(i),\ldots ,\theta^\ast_{\ell^\ast}(i))$ holds for all $i\in \mathcal{I}$.
\item \label{item:eigenQ_b} For $\alpha\in \mathcal{D}^\ast$ and $j=1,2,\ldots ,\ell^\ast$ with $\alpha+\epsilon_j \notin \mathcal{D}^\ast$,
there exists an $\ell^\ast$-variate monic polynomial
$w^\ast_{\alpha+\epsilon_j} (\bm{x})$
of multidegree $\alpha + \epsilon_j$ with respect to $\le$
such that $w^\ast_{\alpha+\epsilon_j}(\theta^\ast_{1}(i),\theta^\ast_{2}(i),\ldots ,\theta^\ast_{\ell^\ast}(i)) =0$ holds for all $i\in \mathcal{I}$.
\end{enumerate}
\end{enumerate}
\end{thm}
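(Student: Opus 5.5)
The plan is to run the proof of Theorem~\ref{thm:eigen} verbatim in the dual setting. Ordinary matrix multiplication is replaced by the Hadamard product, and the spectral decomposition $A_\alpha=\sum_j P_\alpha(j)E_j$ is replaced by $|X|E_\alpha=\sum_{i\in\mathcal{I}}Q_\alpha(i)A_i$. The key structural fact is that $\{A_i\}_{i\in\mathcal{I}}$ is a complete system of orthogonal idempotents for $\circ$. That is, $A_i\circ A_{i'}=\delta_{ii'}A_i$ and $\sum_i A_i=J_X$, with $J_X$ the identity for $\circ$. Hence for any polynomial $f$,
\[
f(|X|E_{\epsilon_1},\ldots,|X|E_{\epsilon_{\ell^\ast}})^{\circ}=\sum_{i\in\mathcal{I}} f(\theta^\ast_1(i),\ldots,\theta^\ast_{\ell^\ast}(i))A_i ,
\]
where polynomials are evaluated under $\circ$ and the constant term means a multiple of $J_X$. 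I would state this evaluation identity first as the analogue of ``$v(\bm{A})E_j=v(\theta(j))E_j$''. It follows from multiplying entrywise the expansions $|X|E_{\epsilon_k}=\sum_i\theta^\ast_k(i)A_i$.

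For \ref{item:eigenQ_i}$\Rightarrow$\ref{item:eigenQ_ii}, Definition~\ref{df:abQpoly}~\ref{item:abQpoly_ii} gives $|X|E_\alpha=v^\ast_\alpha(|X|E_{\epsilon_1},\ldots)$ of multidegree $\alpha$. Applying the identity and comparing coefficients of $A_i$ (using $\circ$-multiplication by $A_i$, or linear independence) yields $Q_\alpha(i)=v^\ast_\alpha(\theta^\ast(i))$. The support condition on $v^\ast_\alpha$ is part of the definition, which gives (a). For (b), Definition~\ref{df:abQpoly}~\ref{item:abQpoly_iii} says that when $\alpha+\epsilon_j\notin\mathcal{D}^\ast$, the monomial $(|X|E)^{\circ(\alpha+\epsilon_j)}$ is a combination of monomials with exponents in $\mathcal{D}^\ast$ strictly below $\alpha+\epsilon_j$. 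Note that $\alpha+\epsilon_j$ itself is excluded since it is not in $\mathcal{D}^\ast$. The scaling by $|X|$ only rescales coefficients. Moving everything to one side gives a monic $w^\ast_{\alpha+\epsilon_j}$ vanishing at the $\circ$-evaluation, and the identity then gives vanishing at every $\theta^\ast(i)$.

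For \ref{item:eigenQ_ii}$\Rightarrow$\ref{item:eigenQ_i}, I reverse the computation. First,
\[
|X|E_\alpha=\sum_i Q_\alpha(i)A_i=\sum_i v^\ast_\alpha(\theta^\ast(i))A_i=v^\ast_\alpha(|X|E_{\epsilon_1},\ldots),
\]
which gives condition~\ref{item:abQpoly_ii}. Next, $\sum_i w^\ast_{\alpha+\epsilon_j}(\theta^\ast(i))A_i=0$ shows $w^\ast_{\alpha+\epsilon_j}$ vanishes under $\circ$-evaluation. Condition~\ref{item:abQpoly_iii} must then be checked in two cases. If $\alpha+\epsilon_j\in\mathcal{D}^\ast$, the product is itself a monomial in the allowed set. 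If $\alpha+\epsilon_j\notin\mathcal{D}^\ast$, the relation $w^\ast_{\alpha+\epsilon_j}=0$ expresses it via lower monomials.

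The main obstacle is exactly this point. As in Theorem~\ref{thm:eigen}, (b) must be read as saying that the lower-order terms of $w^\ast$ are supported on $\mathcal{D}^\ast$, in the form \eqref{eq:w_alpha+ei}. Without that reading, the non-leading monomials might lie outside $\mathcal{D}^\ast$. I would make this convention explicit and follow the $P$-case. Finally, the passage between $E_{\epsilon_k}$ and $|X|E_{\epsilon_k}$ in \ref{item:abQpoly_iii} is harmless, because each monomial changes only by a nonzero power of $|X|$.
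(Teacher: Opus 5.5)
Your route is the one the paper intends. The paper gives no separate proof here and only asserts that the argument for Theorem~\ref{thm:eigen} dualizes. Your Hadamard evaluation identity, which comes from $A_i\circ A_{i'}=\delta_{ii'}A_i$ and $\sum_iA_i=J_X$, is exactly the right dual of $v(\bm A)E_j=v(\theta(j))E_j$, and the direction (i)$\Rightarrow$(ii) is complete.

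The problem is how you close (ii)$\Rightarrow$(i). You declare that (b) ``must be read'' so that the lower-order terms of $w^\ast_{\alpha+\epsilon_j}$ are supported on $\mathcal{D}^\ast$. That strengthens the hypothesis, so you only prove (ii)$\Rightarrow$(i) for a narrower class of $w^\ast$ than the statement allows. The argument for the theorem as stated is therefore missing.

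No convention is needed. The missing step is a reduction by well-founded induction, the same as in Lemma~\ref{lem:p_support}~\ref{item:p_support_i}. Write $\bm F^{\gamma}:=(|X|E_{\epsilon_1})^{\circ\gamma_1}\circ\cdots\circ(|X|E_{\epsilon_{\ell^\ast}})^{\circ\gamma_{\ell^\ast}}$, with $\bm F^{o}=J_X$. You have already shown that each $w^\ast_{\alpha+\epsilon_j}$ vanishes under $\circ$-evaluation. Now prove, by induction on $\gamma$ along $\le$ (a well-order on $\N^{\ell^\ast}$), that
\[
\bm F^\gamma\in\Span\{\bm F^\xi\mid \xi\in\mathcal{D}^\ast,\ \xi\le\gamma\}\qquad\text{for every }\gamma\in\N^{\ell^\ast}.
\]
If $\gamma\in\mathcal{D}^\ast$ there is nothing to prove. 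Otherwise, choose $\mu\notin\mathcal{D}^\ast$ lying componentwise below $\gamma$ and componentwise minimal with this property. Then $\mu\neq o$, because $o\in\mathcal{D}^\ast$. For any $j$ with $\mu_j\ge 1$, minimality gives $\alpha:=\mu-\epsilon_j\in\mathcal{D}^\ast$. So hypothesis (b), as stated, supplies $w^\ast_\mu=\bm x^\mu+\sum_{\beta<\mu}c_\beta\bm x^\beta$, whose lower terms may have arbitrary support. Hence
\[
\bm F^\gamma=\bm F^{\gamma-\mu}\circ\bm F^{\mu}=-\sum_{\beta<\mu}c_\beta\,\bm F^{\beta+\gamma-\mu},
\]
and $\beta+\gamma-\mu<\gamma$ because $\le$ is a monomial order, so the induction hypothesis finishes the step. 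Taking $\gamma=\alpha+\epsilon_j$ gives Definition~\ref{df:abQpoly}~\ref{item:abQpoly_iii}; the rescaling by powers of $|X|$ is harmless, as you say.

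This argument also shows that the two readings of (b) are equivalent. The paper's own proof of Theorem~\ref{thm:eigen} passes over exactly this point with ``we can conclude'', so you were right to flag it. The fix, though, is this reduction, not a change of hypothesis.
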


\section{Main theorems}\label{sec:MainTheorem}

\subsection{Imprimitive association schemes and elimination orders}

\begin{thm}\label{thm:EliminationThm}
Given an association scheme $\mathfrak{X}$, the following conditions are equivalent: 
\begin{enumerate}[label=$(\roman*)$]
  \item \label{item:EliminationThm_i} $\mathfrak{X}$ is imprimitive; 
  \item \label{item:EliminationThm_ii} there exist $\mathcal{D}\subset \N^\ell$ and a monomial order $\le$ of $s$-elimination type such that 
  $\mathfrak{X}$ is an $\ell$-variate $P$-polynomial association scheme on $\mathcal{D}$ with respect to $\le$; 
  \item \label{item:EliminationThm_iii} there exist $\mathcal{D}^\ast\subset \N^{\ell^\ast}$ and a monomial order $\le^\ast$ of $s^\ast$-elimination type such that 
  $\mathfrak{X}$ is an $\ell^\ast$-variate $Q$-polynomial association scheme on $\mathcal{D}^\ast$ with respect to $\le^\ast$.  
\end{enumerate}            
\end{thm}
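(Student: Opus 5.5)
The plan is to prove \ref{item:EliminationThm_i}$\Leftrightarrow$\ref{item:EliminationThm_ii} directly and then transport the same argument to the $Q$-side. In both directions, the criteria of Proposition~\ref{prop:P-TFAE} and Proposition~\ref{prop:Q-TFAE} will serve as the working definitions.

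\emph{\ref{item:EliminationThm_ii}$\Rightarrow$\ref{item:EliminationThm_i}.} Let $\le$ be of $s$-elimination type. Put $\mathcal{C}:=\{(o,\beta)\in\mathcal{D}\mid \beta\in\N^{\ell-s}\}$. By Lemma~\ref{lem:p_support}~\ref{item:p_support_ii}, if $p^\gamma_{(o,\beta_1),(o,\beta_2)}\neq 0$ then $\gamma\le (o,\beta_1+\beta_2)$. By the elimination property, any $\gamma$ whose first $s$ coordinates are not all zero is strictly larger than every $(o,\beta')$. Hence $\gamma\in\mathcal{C}$, so $\mathcal{C}$ is closed under the complex product and $\mathrm{Span}\{A_\alpha\mid\alpha\in\mathcal{C}\}$ is a subalgebra.

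To get a closed subset in the sense of Definition~\ref{df:closed_subset_zieschang}, I also need $\mathcal{C}^T\subset\mathcal{C}$. I will use the standard fact (cf.\ \cite{Zieschang}) that in a finite scheme $i^T$ lies in some complex power $i^n$. Equivalently, a nonempty subset closed under the complex product is closed. This is the step I regard as needing the most care: it is the only point where one leaves the polynomial formalism, and it uses a fact that must be cited.

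Nontriviality is straightforward. We have $o\in\mathcal{C}$ and $\epsilon_{s+1}\in\mathcal{C}$ (as $s<\ell$), so $\{o\}\subsetneq\mathcal{C}$. Also $\epsilon_1\notin\mathcal{C}$ (as $s\ge1$), so $\mathcal{C}\subsetneq\mathcal{D}$.

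\emph{\ref{item:EliminationThm_i}$\Rightarrow$\ref{item:EliminationThm_ii}.} Take a closed subset $\{i_0\}\subsetneq\mathcal{C}\subsetneq\mathcal{I}$. Set $s:=|\mathcal{I}\setminus\mathcal{C}|\ge1$ and $\ell:=d$, so that $\ell-s=|\mathcal{C}|-1\ge1$. Label $i_0$ by $o$, the elements of $\mathcal{I}\setminus\mathcal{C}$ by $\epsilon_1,\dots,\epsilon_s$, and those of $\mathcal{C}\setminus\{i_0\}$ by $\epsilon_{s+1},\dots,\epsilon_\ell$. Thus $\mathcal{D}=\{o,\epsilon_1,\dots,\epsilon_\ell\}$, which is downward closed.

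The obvious block order fails. When $\epsilon_i\notin\mathcal{C}$ and $\epsilon_j\in\mathcal{C}$, the product may contain some $\epsilon_k\notin\mathcal{C}$ with $k\neq i$, and comparing first blocks by themselves cannot guarantee $\epsilon_k<\epsilon_i+\epsilon_j$. So I will instead order $(\alpha,\beta)$ as follows: first by $|\alpha|$, then by $|\alpha|+|\beta|$, and finally by $\le_{\mathrm{lex}}$. This is a monomial order, since it compares additive functionals and then a monomial order. It is of $s$-elimination type, since $|\alpha|\ge1>0$ decides the comparison.

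Now I check the criterion of Proposition~\ref{prop:P-TFAE}, in three cases.
\begin{enumerate}[label=$(\arabic*)$]
\item If $\epsilon_i,\epsilon_j\in\mathcal{C}$, closedness puts all $\beta$ with $p^\beta_{\epsilon_i\epsilon_j}\neq0$ in $\mathcal{C}$. These have first-block degree $0$ and total degree $\le1<2$.
\item If exactly one of them lies outside $\mathcal{C}$, both sides have first-block degree $\le1$, and in the case of a tie the total degree $1<2$ decides.
\item If both lie outside $\mathcal{C}$, then $\epsilon_i+\epsilon_j$ has first-block degree $2$, which exceeds that of every element of $\mathcal{D}$.
\end{enumerate}
The case $\alpha=o$ is trivial, with $p^{\epsilon_i}_{\epsilon_i,o}=1$. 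The condition $\alpha+\epsilon_i\in\mathcal{D}$ never occurs for $\alpha\neq o$, so the ``moreover'' part of the criterion is vacuous.

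\emph{The $Q$-side.} For \ref{item:EliminationThm_iii}$\Rightarrow$\ref{item:EliminationThm_i}, Lemma~\ref{lem:q_support}~\ref{item:q_support_ii} and the elimination property show in the same way that $\mathrm{Span}\{E_\alpha\mid\alpha=(o,\beta)\in\mathcal{D}^\ast\}$ is closed under $\circ$. Lemma~\ref{lem:dual_closed_subset_equiv} then makes $\mathcal{C}^\ast$ a closed subset directly, so no transpose argument is needed. The dual closed subset $\mathcal{C}\subset\mathcal{I}$ is nontrivial because $q=\sum_{j\in\mathcal{C}^\ast}m_j$ lies strictly between $1$ and $|X|$. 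This holds since $\mathcal{C}^\ast$ properly contains $\{j_0\}$ and is proper in $\mathcal{J}$.

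For \ref{item:EliminationThm_i}$\Rightarrow$\ref{item:EliminationThm_iii}, pass from $\mathcal{C}$ to its dual closed subset $\mathcal{C}^\ast$, which is nontrivial by the same count. Then repeat the labelling and the order above with Krein numbers in place of intersection numbers, and verify the criterion of Proposition~\ref{prop:Q-TFAE} by the same three cases.
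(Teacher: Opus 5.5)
Your proposal is correct and follows the paper's proof essentially step for step. It uses the same labelling $\mathcal{D}=\{o,\epsilon_1,\dots,\epsilon_\ell\}$ with $\mathcal{C}=\{o,\epsilon_{s+1},\dots,\epsilon_\ell\}$, literally the same order (once $|\alpha|$ ties, comparing $|\alpha|+|\beta|$ is comparing $|\beta|$, as in the paper), the same three-case check, and the same appeal to Lemma~\ref{lem:q_support} and Lemma~\ref{lem:dual_closed_subset_equiv} for (iii)$\Rightarrow$(i).

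The one local difference is the transpose step in (ii)$\Rightarrow$(i). The paper avoids citing $i^T\in i^n$ by noting that $A_\alpha$ is normal, so $A_\alpha^T$ is a polynomial in $A_\alpha$ and hence in $A_{\epsilon_{s+1}},\dots,A_{\epsilon_\ell}$; contrary to your remark, one need not leave the polynomial formalism there. Your route via Lemma~\ref{lem:p_support}~\ref{item:p_support_ii} plus the finite-scheme fact is equally valid. Your rank count showing that a nontrivial $\mathcal{C}^\ast$ yields a nontrivial $\mathcal{C}$ also makes explicit a point the paper leaves implicit.
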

\begin{proof}
\ref{item:EliminationThm_i} $\Longrightarrow$ \ref{item:EliminationThm_ii}: Let $\mathfrak{X}$ be an imprimitive association scheme of class $\ell$.
Choose a closed subset $\mathcal{C}\subset \mathcal{I}$ with $|\mathcal{C}|=\ell-s+1$, where $1\le s\le \ell-1$.
Reindex $\mathcal{I}$ by $\mathcal{D}:=\{o,\epsilon_1,\ldots,\epsilon_\ell\}\subset \N^\ell$ so that
$\mathcal{C}=\{o,\epsilon_{s+1},\ldots,\epsilon_\ell\}\subset \mathcal{D}$.
We define a total order on $\N^\ell$ as follows:
for $(\alpha_1,\beta_1),(\alpha_2,\beta_2)\in \N^{s}\times \N^{\ell-s}$, set $(\alpha_1,\beta_1)> (\alpha_2,\beta_2)$ if 
\begin{enumerate}[label=$(\alph*)$]
    \item \label{item:elilex1} $|\alpha_1|> |\alpha_2|$; or 
    \item  \label{item:elilex2} $|\alpha_1|= |\alpha_2|$ and $|\beta_1|> |\beta_2|$; or 
    \item \label{item:elilex3} $|\alpha_1|= |\alpha_2|$ and $|\beta_1|= |\beta_2|$ and $(\alpha_1,\beta_1)>_{\mathrm{lex}} (\alpha_2,\beta_2)$. 
\end{enumerate}
Then we see that $\le$ is a monomial order on $\N^\ell$ of $s$-elimination type. 

In what follows, we prove that $\mathfrak{X}$ is an $\ell$-variate $P$-polynomial association scheme on $\mathcal{D}$ with respect to $\le$ 
by checking the conditions in Definition~\ref{df:abPpoly}. The first two conditions \ref{item:abPpoly_i} and~\ref{item:abPpoly_ii} are trivial. 
We prove that \ref{item:abPpoly_iii} also holds. 
For $1 \le i,j \le \ell$, we have 
    \[A_{\epsilon_i} A_{\epsilon_j}=p^{o}_{\epsilon_i,\epsilon_j}A_{o} + \sum^{\ell}_{k=1} p^{\epsilon_k}_{\epsilon_i,\epsilon_j} A_{\epsilon_k}.\]
\begin{itemize}
    \item 
    If $1\le i,j\le s$, then $\epsilon_i+\epsilon_j> \epsilon_k$ holds for any $1\le k\le \ell$ by definition \ref{item:elilex1} of the above monomial order $\le$. 
    \item For $1 \leq j \leq s < i \leq \ell$, Lemma~\ref{lem:closed_subset}~\ref{lem:closed_subset:mult} leads to $p^{\epsilon_k}_{\epsilon_i,\epsilon_j}=0$ for $s<k\le \ell$.
    By definition \ref{item:elilex2} of $\le$, we have $\epsilon_i+\epsilon_j> \epsilon_k$ for any $1 \le k \le s$. 
    The case of $1 \le i \le s < j \le \ell$ is the same by the commutativity of intersection numbers. 
    \item For $s < i,j \le \ell$, Lemma~\ref{lem:closed_subset}~\ref{lem:closed_subset:mult} leads to $p^{\epsilon_k}_{\epsilon_i,\epsilon_j}=0$ for $1\le k\le s$.
    By definition \ref{item:elilex2} of $\le$, we have $\epsilon_i+\epsilon_j> \epsilon_k$ for any $s < k \le \ell$. 
\end{itemize}
These imply that Definition~\ref{df:abPpoly}~\ref{item:abPpoly_iii} holds. 

\noindent
\ref{item:EliminationThm_ii} $\Longrightarrow$ \ref{item:EliminationThm_i}:
Let $\mathfrak{X}=(X,\{A_\alpha\}_{\alpha\in \mathcal{D}})$ be an $\ell$-variate $P$-polynomial association scheme on $\mathcal{D}$ with respect to a monomial order of $s$-elimination type. 
Let
\[
\mathcal{C}:= \mathcal{D} \cap (\{o\}\times \N^{\ell-s})
= \{\alpha =(\alpha_1,\ldots , \alpha_\ell) \in \mathcal{D} \mid \alpha_1=\cdots=\alpha_s=0\}.
\]
It suffices to show that $\mathcal{C}$ is a closed subset. 

Fix $\alpha,\beta\in \mathcal{C}$ arbitrarily. 
Then, $A_\alpha$ and $A_\beta$ can be expressed as polynomials in $A_{\epsilon_1},\ldots ,A_{\epsilon_\ell}$, so we write $A_\alpha=v_\alpha(\bm{A})$ and $A_\beta=v_\beta(\bm{A})$.
Also, $A_\alpha$ is a normal matrix by the commutativity of $\mathfrak{X}$. 
Thus there exists a polynomial $p(x) \in \C[x]$ such that $\overline{A}_\alpha^T=p(A_\alpha)$ holds. Hence, 
\[A_{\alpha^T} A_\beta =A_\alpha^T A_\beta=\overline{p(A_\alpha)}A_\beta=\overline{p(v_\alpha(\bm{A}))} v_\beta(\bm{A})
=\sum_{\gamma\in \mathcal{C}} c_{\gamma} \bm{A}^{\gamma}.\]
The last equality follows because Lemma~\ref{lem:p_support}~\ref{item:p_support_i} controls the multidegrees of products of monomials, and $\le$ is of $s$-elimination type.
By Lemma~\ref{lem:spanvij}, we have $A_{\alpha^T} A_\beta\in \Span\{A_\gamma \mid \gamma\in \mathcal{C}\}$, i.e., $\alpha^T \beta\subset \mathcal{C}$.
Therefore, $\mathcal{C}$ is a closed subset. 

\noindent
\ref{item:EliminationThm_i} $\Longrightarrow$ \ref{item:EliminationThm_iii}:
If $\mathfrak{X}$ is imprimitive, then there exists a closed subset $\mathcal{C}^\ast\subset \mathcal{J}$ with respect to the Hadamard product $\circ$. 
Let $\ell^\ast=|\mathcal{J}|-1$ and let us reindex $\mathcal{J}$ by $\mathcal{D}^\ast:=\{o,\epsilon_1,\ldots,\epsilon_{\ell^\ast}\}\subset\N^{\ell^\ast}$ so that $\mathcal{C}^\ast=\{o,\epsilon_{s^\ast+1},\ldots,\epsilon_{\ell^\ast}\}\subset \mathcal{D}^\ast$, where $s^\ast$ satisfies $|\mathcal{C}^\ast|=\ell^\ast-s^\ast+1$. 
Applying the same argument as in \ref{item:EliminationThm_i}$\implies$\ref{item:EliminationThm_ii}, we obtain the conclusion. 

\noindent
\ref{item:EliminationThm_iii} $\Longrightarrow$ \ref{item:EliminationThm_i}:
Let $\mathfrak{X}=(X,\{E_\alpha\}_{\alpha\in \mathcal{D}^\ast})$ be an $\ell^\ast$-variate $Q$-polynomial association scheme on $\mathcal{D}^\ast$ with respect to an $s^\ast$-elimination-type monomial order $\le^\ast$ and let 
\[\mathcal{C}^\ast:=\mathcal{D}^\ast \cap (\{o\}\times \N^{\ell^\ast - s^\ast}).\] 

By Lemma~\ref{lem:q_support}~\ref{item:q_support_ii}, we see that for $\alpha,\beta\in \mathcal{C}^\ast$, 
if $q^\gamma_{\alpha\beta}\neq 0$, then $\gamma\le \alpha+\beta$ holds. 
Namely, $E_\alpha\circ E_\beta$ is a linear combination of $E_\gamma$ satisfying $\gamma\le \alpha+\beta$. 
On the other hand, since $\alpha+\beta\in \{o\}\times \N^{\ell^\ast-s^\ast}$ and $\le^\ast$ is of $s^\ast$-elimination type, the first $s^\ast$ entries of $\gamma$ must all be $0$. 
Hence, $\gamma\in \mathcal{C}^\ast$. 
This implies that $\mathrm{span}\{E_\gamma\mid \gamma\in \mathcal{C}^\ast\}$ is closed under Hadamard product. 

Therefore, by Lemma~\ref{lem:dual_closed_subset_equiv}, $\mathcal{C}^\ast$ is a closed subset with respect to $\circ$. 
Since $1\le s^\ast<\ell^\ast$, the subset $\mathcal{C}^\ast$ is nontrivial. Therefore, $\mathfrak{X}$ is imprimitive. 
\end{proof}

\begin{rem}[Relationship with $m$-distance-regular graphs]
\label{rem:mdrg_connection}
The notion of an \emph{$m$-distance-regular graph} was introduced by Bernard--Cramp\'{e}--Vinet--Zaimi--Zhang~\cite{BCVZZ24} as the graph-theoretic counterpart of multivariate $P$-polynomial association schemes.
In the symmetric case, these two notions correspond naturally.
Under this correspondence, the generating matrices $\{A_{\epsilon_1},\ldots,A_{\epsilon_m}\}$ of an $m$-variate $P$-polynomial association scheme are interpreted as the adjacency matrices of a graph $G=(X,E_1\sqcup\cdots\sqcup E_m)$ whose edges in $E_i$ are colored by the $i$th color, and each $A_\alpha$ for $\alpha\in \mathcal{D}$ is interpreted as the matrix of the $\alpha$-distance relation with respect to the chosen monomial order.

Therefore, Theorem~\ref{thm:EliminationThm}~\ref{item:EliminationThm_i}$\Longleftrightarrow$\ref{item:EliminationThm_ii} admits the following graph-theoretic interpretation:
the imprimitivity of $\mathfrak{X}$ is equivalent to the existence of an $s$-elimination-type monomial order governing the associated $m$-distance.
In particular, the closed subset corresponding to $\mathcal{C}$ induces a block partition on $G$, and a monomial order of $s$-elimination type may be viewed as a hierarchical rule that prioritizes moves inside a block over moves between blocks.
\end{rem}

\subsection{Quotient and block schemes and elimination orders}\label{subsec:quot}

Throughout this subsection, let $\mathfrak{X}=(X,\mathcal{R},\mathcal{D})$ be an imprimitive $\ell$-variate $P$-polynomial association scheme on $\mathcal{D}\subset \N^\ell$ with respect to $\le$, and assume that $\le$ is of $s$-block type.
Every monomial order of $s$-block type is, in particular, of $s$-elimination type; see Definition~\ref{def:elim_block}.
Let $\mathcal{C}=\mathcal{D} \cap (\{o\}\times \N^{\ell-s})$. Then $\mathcal{C}$ is a closed subset by the proof of Theorem~\ref{thm:EliminationThm}. 

\begin{lem}
\label{lem:qt_equiv}
Let $(X,\mathcal{R},\mathcal{D})$ be as above. 
Then the following equivalence holds for any $(\alpha_1,\beta_1), (\alpha_2,\beta_2)\in \mathcal{D}\subset \N^{s}\times \N^{\ell - s}$: 
\[(\alpha_1,\beta_1)\equiv (\alpha_2,\beta_2) \Longleftrightarrow \alpha_1=\alpha_2,\]
where $\equiv$ is an equivalence relation on $\mathcal{D}$ appearing in Lemma~\ref{lem:closed_subset_equiv}~\ref{item:closed_subset_equiv_iii}. 
\end{lem}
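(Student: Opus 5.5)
The plan is to reduce both directions to statements about intersection numbers between an element of $\mathcal{D}$ and an element of $\mathcal{C}=\mathcal{D}\cap(\{o\}\times\N^{\ell-s})$. Then Lemma~\ref{lem:p_support}~\ref{item:p_support_ii} and the $s$-block property of $\le$ finish the argument.

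First, rephrase $\equiv$. By definition, $i\equiv i'$ means $p^{c}_{i^T i'}>0$ for some $c\in\mathcal{C}$. By Lemma~\ref{lem:intersection_krein_relation}~\ref{item:intersection_numbers}, $k_c\,p^c_{i^T i'}=k_{i'}\,p^{i'}_{i c}$ (up to the standard index manipulations), and valencies are positive. So $i\equiv i'$ holds if and only if $p^{i'}_{i,c}\neq 0$ for some $c\in\mathcal{C}$, i.e.\ $i'\in i\mathcal{C}$. I will use this form throughout.

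($\Leftarrow$) Let $(\alpha,\beta)\in\mathcal{D}$. By the downward closure condition Definition~\ref{df:abPpoly}~\ref{item:abPpoly_i}, both $(\alpha,o)$ and $(o,\beta)$ lie in $\mathcal{D}$, and $(o,\beta)\in\mathcal{C}$. Since $(\alpha,o)+(o,\beta)=(\alpha,\beta)\in\mathcal{D}$, the "moreover" part of Lemma~\ref{lem:p_support}~\ref{item:p_support_ii} gives $p^{(\alpha,\beta)}_{(\alpha,o),(o,\beta)}\neq0$. Hence $(\alpha,\beta)\equiv(\alpha,o)$. If $\alpha_1=\alpha_2$, transitivity and symmetry of $\equiv$ then give $(\alpha_1,\beta_1)\equiv(\alpha_1,o)\equiv(\alpha_2,\beta_2)$.

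($\Rightarrow$) Suppose $(\alpha_1,\beta_1)\equiv(\alpha_2,\beta_2)$. By the previous paragraph this yields $(\alpha_1,o)\equiv(\alpha_2,o)$. So there is some $(o,\beta)\in\mathcal{C}$ with $p^{(\alpha_2,o)}_{(\alpha_1,o),(o,\beta)}\neq0$. Lemma~\ref{lem:p_support}~\ref{item:p_support_ii} then gives $(\alpha_2,o)\le(\alpha_1,\beta)$.

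Now use the $s$-block property. If $(\alpha_2,o)>(\alpha_1,o)$, block type would force $(\alpha_2,o)>(\alpha_1,\beta)$, which is a contradiction. Hence $(\alpha_2,o)\le(\alpha_1,o)$. The relation is symmetric, so the same argument gives $(\alpha_1,o)\le(\alpha_2,o)$. Therefore $(\alpha_1,o)=(\alpha_2,o)$, i.e.\ $\alpha_1=\alpha_2$.

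The main point needing care is the initial translation of $\equiv$ into the condition $p^{i'}_{i,c}\neq0$ via Lemma~\ref{lem:intersection_krein_relation}, including the transposes. Once that is settled, each direction is a single application of Lemma~\ref{lem:p_support} together with either downward closure or the block-type axiom.
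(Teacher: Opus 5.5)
Your proof is correct and follows essentially the same route as the paper: you translate $\equiv$ via Lemma~\ref{lem:intersection_krein_relation}~\ref{item:intersection_numbers}, then use Lemma~\ref{lem:p_support}~\ref{item:p_support_ii}, with downward closure for ($\Leftarrow$) and with the $s$-block axiom for ($\Rightarrow$). The only difference is in ($\Rightarrow$): you first reduce to $(\alpha_1,o)\equiv(\alpha_2,o)$, whereas the paper applies Lemma~\ref{lem:p_support} directly to get $(\alpha_2,\beta_2)\le(\alpha_1,\beta_1+\gamma)$ and invokes the block property with arbitrary second components, so your detour is harmless but unnecessary.
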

\begin{proof}
($\Longrightarrow$): Let $(\alpha_1,\beta_1) \equiv (\alpha_2,\beta_2)$, i.e., $(\alpha_1,\beta_1)^T (\alpha_2,\beta_2)\cap \mathcal{C}\neq \emptyset$.
Then there is $\gamma \in \N^{\ell - s}$ such that $(o,\gamma)\in \mathcal{C}$ and $p^{(o,\gamma)}_{(\alpha_1,\beta_1)^T(\alpha_2,\beta_2)}\neq 0$ hold.
By Lemma~\ref{lem:intersection_krein_relation}~\ref{item:intersection_numbers}, this implies
$p^{(\alpha_2,\beta_2)}_{(\alpha_1,\beta_1),(o,\gamma)}\neq 0$. 
By Lemma~\ref{lem:p_support}~\ref{item:p_support_ii}, we have $(\alpha_2,\beta_2) \le (\alpha_1,\beta_1+\gamma)$. 
Since $\le$ is of $s$-block type, we have $(\alpha_2,o) \le (\alpha_1,o)$. 

Similarly, by the symmetry of $\equiv$,
we also have $(\alpha_1,o) \le (\alpha_2,o)$. Thus, $\alpha_1=\alpha_2$. 

\noindent
($\Longleftarrow$): By Lemma~\ref{lem:p_support}~\ref{item:p_support_ii}, 
$p^{(\alpha_1,\beta_1)}_{(\alpha_1,o),(o,\beta_1)}\neq 0$ holds.
By Lemma~\ref{lem:intersection_krein_relation}~\ref{item:intersection_numbers}, this implies
$p^{(o,\beta_1)}_{(\alpha_1,o)^T(\alpha_1,\beta_1)}\neq 0$, i.e., $(\alpha_1,o) \equiv (\alpha_1,\beta_1)$ holds. 
Similarly,$(\alpha_2,o) \equiv (\alpha_2,\beta_2)$ holds. 
Hence, by $\alpha_1=\alpha_2$, we obtain $(\alpha_1,\beta_1) \equiv (\alpha_1, o) = (\alpha_2,o) \equiv (\alpha_2,\beta_2)$. 
\end{proof}

\begin{rem}\label{rem:elim_block}
In the above proof of ($\Longrightarrow$), we use the property of a monomial order of $s$-block type, but this cannot be extended to the case of $s$-elimination type. In fact, 
let $\le$ be a monomial order defined by $\alpha_1 > \alpha_2$ if and only if 
\begin{itemize}
    \item $a_1+b_1>a_2+b_2$ or
    \item $a_1+b_1=a_2+b_2$ and $\alpha_1 >_{\mathrm{grlex}} \alpha_2$ 
\end{itemize}
for $\alpha_1=(a_1,b_1,c_1,d_1),\alpha_2=(a_2,b_2,c_2,d_2)\in \N^2\times \N^2$. 
Then this is of $2$-elimination type, but not of $2$-block type. 
For example, $(1,0,0,0)>(0,1,0,0)$ holds, while $(1,0,0,0) < (0,1,1,0)$ holds. 
\end{rem}

Let $\jmath:\N^s\to \N^s\times\N^{\ell-s}$ be the inclusion defined by $\jmath(\alpha):=(\alpha,o)$, and let $\le_s$ be the monomial order on $\N^s$ induced from $\le$ by $\jmath$ from Lemma~\ref{lem:induced_monomial_order}. 
For $\mathcal{D}$, let 
\[\mathcal{D}_s:=\jmath^{-1}(\mathcal{D})=\{\alpha\in \N^s \mid (\alpha,o) \in \mathcal{D}\}, \]
and for $\alpha\in \mathcal{D}_s$, let 
\[\tilde{v}_{\alpha}(\bm{x})=\sum_{(\alpha,\beta)\in (\alpha,o)\mathcal{C}} v_{(\alpha,\beta)}(\bm{x}) \in \C[x_1,x_2,\ldots ,x_\ell].\]
When we regard $\tilde{v}_{\alpha}$ as a polynomial in $(\C[x_{s+1},x_{s+2},\ldots ,x_\ell])[x_1,x_2,\ldots ,x_s]$, each exponent of $\tilde{v}_{\alpha}$ is less than or equal to $\alpha$ with respect to $\le_s$ and belongs to $\mathcal{D}_s$. 
Namely, since $\le$ is of $s$-block type, we can rewrite it as 
\[
\tilde{v}_{\alpha}(x_1,x_2,\ldots ,x_s)=\sum_{\alpha' \in \mathcal{D}_s, \alpha'\le_s \alpha} c_{\alpha'} (x_{s+1},x_{s+2},\ldots ,x_\ell) \bm{x}^{\alpha'}. 
\]
Let 
\begin{align}u_\alpha(x_1,x_2,\ldots ,x_s)&=\tilde{v}_{\alpha}(x_1,x_2,\ldots ,x_s,k_{s+1},k_{s+2},\ldots ,k_\ell) \notag\\
&=\sum_{\alpha' \in \mathcal{D}_s, \alpha'\le_s \alpha} c_{\alpha'} (k_{s+1},k_{s+2},\ldots ,k_\ell) \bm{x}^{\alpha'},
\label{eq:u_alpha}
\end{align}
where $k_{s+1},k_{s+2},\ldots ,k_\ell$ are the valencies of $\epsilon_{s+1},\epsilon_{s+2},\ldots,\epsilon_\ell$, respectively. 

\begin{lem}\label{lem:leading_coeff_u_alpha}
Work with the same notation as above. Then the leading monomial of $u_{\alpha}$ with respect to $\le_s$ is $\bm{x}^{\alpha}$, and its coefficient is $c_{\alpha}(k_{s+1},k_{s+2},\ldots,k_\ell)\neq 0$. 
\end{lem}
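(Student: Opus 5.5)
We need to show that the coefficient of $\bm{x}^\alpha$ in $\tilde v_\alpha$, viewed in $(\C[x_{s+1},\ldots,x_\ell])[x_1,\ldots,x_s]$, is a polynomial $c_\alpha(x_{s+1},\ldots,x_\ell)$ with nonzero value at the valencies. The plan is first to identify this coefficient, and then to show its value at the valencies is a nonzero eigenvalue of the block scheme.

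\textbf{Identifying $c_\alpha$.} By Lemma~\ref{lem:qt_equiv}, the class $(\alpha,o)\mathcal{C}$ is exactly $\{(\alpha,\beta)\in\mathcal{D}\}$. Since $\le$ is of $s$-block type, every monomial $\bm{x}^{(\alpha',\beta')}$ of $v_{(\alpha,\beta)}$ has $(\alpha',\beta')\le(\alpha,\beta)$, and hence $\alpha'\le_s\alpha$. Moreover, the monomials with $\alpha'=\alpha$ must have $\beta'\le\beta$ in the induced order on $\N^{\ell-s}$. Collecting terms with $\alpha'=\alpha$, we get
\[
c_\alpha(x_{s+1},\ldots,x_\ell)=\sum_{(\alpha,\beta)\in\mathcal{D}} \tilde c_{\beta}(x_{s+1},\ldots,x_\ell),
\]
where $\tilde c_\beta$ is the part of $v_{(\alpha,\beta)}$ with $x_1,\ldots,x_s$-exponent $\alpha$, divided by $\bm{x}^\alpha$. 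By the same block-type argument, each $\tilde c_\beta$ has leading monomial $\bm{x}^\beta$ with nonzero coefficient.

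\textbf{Interpreting $c_\alpha(k_{s+1},\ldots,k_\ell)$ spectrally.} Evaluate the defining relations on the dual closed subset $\mathcal{C}^\ast$. For $j\in\mathcal{C}^\ast$, Lemma~\ref{lem:closed_subset}~\ref{lem:closed_subset:k} gives $\theta_i(j)=k_i$ for $i>s$. Hence
\[
u_\alpha(\theta_1(j),\ldots,\theta_s(j))=\sum_{\beta}P_{(\alpha,\beta)}(j)=p\,P^{\mathrm{qt}}_{(\alpha,o)\mathcal{C}}(j)
\]
by \eqref{eq:quotient_P}.

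This suggests working in the polynomial ring rather than with eigenvalues directly. Setting $x_{s+1},\ldots,x_\ell$ to the valencies is a ring map. I will show that the $\bm{x}^\alpha$-coefficient survives by exhibiting a nonzero value. Consider the block scheme: the restriction of the ideal to the variables $x_{s+1},\ldots,x_\ell$, namely the $\mathcal{C}$-part, defines the block scheme's algebra on $\{\beta:(o,\beta)\in\mathcal{D}\}$. The point $(k_{s+1},\ldots,k_\ell)$ is the trivial eigenvalue row of that block scheme, where each $v_{(o,\beta)}$ evaluates to $k_{(o,\beta)}>0$.

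\textbf{Main step.} The main step, and the expected obstacle, is relating $\tilde c_\beta$ to $v_{(o,\beta)}$. I would try the following. By Lemma~\ref{lem:p_support}~\ref{item:p_support_ii}, $A_{(\alpha,o)}A_{(o,\beta)}$ expands into $A_\gamma$ with $\gamma\le(\alpha,\beta)$, with a nonzero coefficient $p^{(\alpha,\beta)}_{(\alpha,o),(o,\beta)}$ at the top. Comparing the $\bm{x}^\alpha$-parts of the polynomial identity $v_{(\alpha,o)}v_{(o,\beta)}\equiv\sum_\gamma p^\gamma v_\gamma$ modulo $I$ gives a triangular relation, by induction on $\beta$. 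This relation expresses $\sum_\beta\tilde c_\beta$, at the valencies, as a positive combination of $v_{(o,\beta)}(k)=k_{(o,\beta)}$ times $c$-coefficients. Positivity is the key point: intersection numbers are nonnegative and valencies are positive.

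An alternative, if the triangular bookkeeping gets messy, is to argue via $\sum_\beta A_{(\alpha,\beta)}$. Its $\bm{A}^{(\alpha,\cdot)}$-part, times $M$, is $\bm{A}^{(\alpha,o)}M$ multiplied by $c_\alpha(k)$, up to lower terms. Since $\sum_\beta A_{(\alpha,\beta)}M$ is a nonzero adjacency element of $\mathfrak{A}M$, and the elements $\{\bm{A}^{(\alpha',o)}M\}_{\alpha'\in\mathcal{D}_s}$ are independent with triangular leading terms, the coefficient $c_\alpha(k)$ must be nonzero.
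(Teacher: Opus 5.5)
Neither of your two routes is finished, and both lack the same ingredient: a dimension (basis) argument inside $\mathfrak{A}M$.

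\textbf{Main route.} You cannot compare $\bm{x}^\alpha$-parts of the congruence $v_{(\alpha,o)}v_{(o,\beta)}\equiv\sum_\gamma p^\gamma v_\gamma \pmod I$ and then set $x_i=k_i$ for $i>s$. What justifies substituting the valencies is the algebra map $A\mapsto AM$, together with $A_{\epsilon_i}M=k_iM$ for $i>s$; you never state this fact. After the substitution the identity holds only modulo the image of $I$. Extracting the $\bm{x}^\alpha$-coefficient from it then requires knowing that $\{\bm{A}^{(\alpha',o)}M\}_{\alpha'\in\mathcal{D}_s}$ is linearly independent. As stated, your positivity claim has no mechanism either. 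Nothing controls the signs of the values $\tilde c_\beta(k_{s+1},\ldots,k_\ell)$. Knowing that each $\tilde c_\beta$ has a nonzero leading coefficient says nothing about the value of their sum at the valencies. Once that independence is available, your idea can be completed by summing over $\beta$ rather than inducting. By \eqref{eq:quotient_scheme_A}, $A_{(\alpha,o)}\cdot pM=\frac{k_{(\alpha,o)}}{k_{(\alpha,o)\mathcal{C}}}A_{\bm{\alpha}}$, which forces $u_\alpha=\frac{p\,k_{(\alpha,o)\mathcal{C}}}{k_{(\alpha,o)}}\,v_{(\alpha,o)}(x_1,\ldots,x_s,k_{s+1},\ldots,k_\ell)$. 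Hence $c_\alpha(k_{s+1},\ldots,k_\ell)$ is a positive multiple of the leading coefficient of $v_{(\alpha,o)}$.

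\textbf{Alternative route.} This is the paper's strategy, but its final inference does not follow. That $A_{\bm{\alpha}}M\neq 0$ and that the $\bm{A}^{(\alpha',o)}M$ are independent does not prevent $A_{\bm{\alpha}}M$ from lying in the span of the $\bm{A}^{(\alpha',o)}M$ with $\alpha'<_s\alpha$. That is precisely what $c_\alpha(k_{s+1},\ldots,k_\ell)=0$ would mean. You also assume that independence rather than prove it.

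What closes the argument is counting. The elements $A_{\bm{\alpha}'}M=A_{\bm{\alpha}'}$ for $\alpha'\in\mathcal{D}_s$ are linearly independent, being sums over disjoint sets of adjacency matrices. Each lies in $\Span\{\bm{A}^{(\alpha'',o)}M\mid \alpha''\in\mathcal{D}_s,\ \alpha''\le_s\alpha'\}$. If the coefficient for $\alpha$ vanished, the independent elements indexed by $\{\alpha'\in\mathcal{D}_s\mid\alpha'\le_s\alpha\}$ would lie in the span of strictly fewer vectors.

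Equivalently, as the paper argues: $\dim\mathfrak{A}M=|\mathcal{D}_s|$, so the triangular expansion shows that $\{\bm{A}^{(\alpha',o)}M\}_{\alpha'\in\mathcal{D}_s}$ spans and is therefore a basis. This proves the independence you assumed. A triangular transition matrix between two bases has nonzero diagonal entries, and here those entries are $c_\alpha(k_{s+1},\ldots,k_\ell)/p$.
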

\begin{proof}
Let $\mathfrak{A}$ be the Bose--Mesner algebra of $\mathfrak{X}$ and let 
$M=\frac{1}{p}\sum_{\alpha\in \mathcal{C}}A_\alpha$
(see Section~\ref{sec:ImprimitiveAS}). 
Since $\epsilon_i\in \mathcal{C}$ for $i=s+1,\ldots,\ell$, one has $\epsilon_i\mathcal{C}=\mathcal{C}$ and hence $k_{\epsilon_i\mathcal{C}}=1$.
By \eqref{eq:quotient_scheme_A}, we have $A_{\epsilon_i}M=k_i M$ for $i=s+1,\ldots,\ell$.

Fix $\alpha\in\mathcal{D}_s$ arbitrarily. 
For an equivalence class $\bm{\alpha}:=(\alpha,o)\mathcal{C}\in \mathcal{D}/\mathcal{C}$, put $A_{\bm{\alpha}}:=\sum_{(\alpha,\beta)\in\bm{\alpha}}A_{(\alpha,\beta)}$.
Then we have $A_{\bm{\alpha}}=\sum_{(\alpha,\beta)\in\bm{\alpha}}v_{(\alpha,\beta)}(\bm{A})=\tilde{v}_{\alpha}(\bm{A})$. 
On the other hand, 
the adjacency matrix $D_{\bm{\alpha}}$ of $\mathfrak{X}/\mathcal{C}$
corresponds to the
equation \eqref{eq:MAM} as $\frac{1}{p}A_{\bm{\alpha}}M$ in $\mathfrak{A} M$.
Thus,
\begin{equation}\label{eq:quotient_u_alpha_matrix}
    D_{\bm{\alpha}}\mapsto
    \frac{1}{p}A_{\bm{\alpha}}M =\frac{1}{p}\tilde{v}_{\alpha}(\bm{A})M
    =\frac{1}{p}u_{\alpha}(A_{\epsilon_1},\ldots,A_{\epsilon_s})M. 
\end{equation}
By substituting \eqref{eq:u_alpha} into \eqref{eq:quotient_u_alpha_matrix}, we obtain that 
\begin{equation}\label{eq:quotient_u_alpha_expansion}
D_{\bm{\alpha}}\mapsto
\frac{1}{p}\sum_{\alpha'\in \mathcal{D}_s,\ \alpha'\le_s \alpha}
c_{\alpha'}(k_{s+1},\ldots,k_\ell)\,\bm{A}^{(\alpha',o)}M. 
\end{equation}
Since $\mathfrak{A} M$ is isomorphic to the Bose--Mesner algebra of $\mathfrak{X}/\mathcal{C}$, we have
$\dim \mathfrak{A} M=|\mathcal{D}/\mathcal{C}|=|\mathcal{D}_s|$.
Moreover, \eqref{eq:quotient_u_alpha_expansion} shows that each basis vector $D_{\bm{\alpha}}$ of the quotient Bose--Mesner algebra is a linear combination of $\{\bm{A}^{(\alpha',o)}M\mid \alpha'\in \mathcal{D}_s\}$.
Thus $\{\bm{A}^{(\alpha,o)}M\}_{\alpha\in\mathcal{D}_s}$ spans $\mathfrak{A} M$.
Because this set has cardinality $|\mathcal{D}_s|=\dim \mathfrak{A}M$, it is a basis.
Order the elements of $\mathcal{D}_s$ with respect to $\le_s$.
Then \eqref{eq:quotient_u_alpha_expansion} shows that the transition matrix from the basis $\{D_{\bm{\alpha}}\}_{\alpha\in\mathcal{D}_s}$ to the basis $\{\bm{A}^{(\alpha,o)}M\}_{\alpha\in\mathcal{D}_s}$ is triangular, with diagonal entries $c_{\alpha}(k_{s+1},\ldots,k_\ell)/p$.
A triangular transition matrix between two bases has nonzero diagonal entries, so $c_{\alpha}(k_{s+1},\ldots,k_\ell)\neq 0$.
Therefore, the leading monomial of $u_\alpha(\bm{x})$ with respect to $\le_s$ is $\bm{x}^\alpha$, as required.
\end{proof}

\begin{thm}\label{thm:QuotientScheme}
Work with the same notation as above. 
Then the following hold:
\begin{enumerate}[label=(\roman*)]
\item \label{item:QuotientScheme_P}
The quotient scheme $\mathfrak{X}/\mathcal{C}$ is an $s$-variate $P$-polynomial association scheme on $\mathcal{D}_s$ with respect to $\le_s$, and its associated polynomials are $\{v^{\mathrm{qt}}_{\alpha}\}_{\alpha\in \mathcal{D}_s}$, where 
\[
v^{\mathrm{qt}}_{\alpha}(x_1,\ldots,x_s)
:=\frac{1}{p}u_{\alpha}\left(
\frac{k_{1}}{k_{\epsilon_1\mathcal{C}}}x_1,
\frac{k_{2}}{k_{\epsilon_2\mathcal{C}}}x_2,
\ldots ,
\frac{k_{s}}{k_{\epsilon_s\mathcal{C}}}x_s
\right)
\qquad (\alpha\in\mathcal{D}_s),
\]
and $k_{i}$ is the valency of $\epsilon_i$ in $\mathfrak{X}$ and $k_{\epsilon_i\mathcal{C}}$ is the valency of $\epsilon_i\mathcal{C}$ in $\mathfrak{X}/\mathcal{C}$, namely,
\[
k_{\epsilon_i\mathcal{C}}=\frac{1}{p}\sum_{(\epsilon_i,\beta)\in \epsilon_i\mathcal{C}} k_{(\epsilon_i,\beta)}.
\]

\item \label{item:QuotientScheme_Q}
Let $\mathcal{C}^\ast$ be the dual closed subset of $\mathcal{C}$ and let $\{v^\ast_{(\alpha,\beta)}\}_{(\alpha,\beta)\in\mathcal{D}^\ast}$ be the associated polynomials of 
the $\ell^\ast$-variate $Q$-polynomial structure of $\mathfrak{X}$ on $\mathcal{D}^\ast\subset \N^{s^\ast}\times \N^{\ell^\ast-s^\ast}$ with respect to a monomial order $\le^\ast$ of $s^\ast$-elimination type determined by Theorem~\ref{thm:EliminationThm}~\ref{item:EliminationThm_iii}. 
Let $\iota^\ast:\N^{\ell^\ast-s^\ast}\to\N^{s^\ast}\times \N^{\ell^\ast-s^\ast}$ be defined by $\iota^\ast(\beta):=(o,\beta)$, let 
$\mathcal{D}_{\mathrm{qt}}^\ast:={\iota^\ast}^{-1}(\mathcal{C}^\ast)$, and let 
$\le^\ast_{\mathrm{qt}}$ be the monomial order on $\N^{\ell^\ast-s^\ast}$ induced from $\le^\ast$ by $\iota^\ast$ (Lemma~\ref{lem:induced_monomial_order}). Then the quotient scheme $\mathfrak{X}/\mathcal{C}$ is an $(\ell^\ast-s^\ast)$-variate $Q$-polynomial association scheme on $\mathcal{D}_{\mathrm{qt}}^\ast$ with respect to $\le^\ast_{\mathrm{qt}}$, and its associated polynomials are $\{v^{\ast\mathrm{qt}}_\beta\}_{\beta\in \mathcal{D}_{\mathrm{qt}}^\ast}$, where 
\begin{equation}\label{eq:quotient_Qpoly_polynomial}
v^{\ast\mathrm{qt}}_{\beta}(x_1,\ldots,x_{\ell^\ast-s^\ast})
:=v^\ast_{\iota^\ast(\beta)}(\underbrace{0,\ldots,0}_{s^\ast},x_1,\ldots,x_{\ell^\ast-s^\ast})
\qquad (\beta\in\mathcal{D}_{\mathrm{qt}}^\ast). 
\end{equation}
\end{enumerate}
\end{thm}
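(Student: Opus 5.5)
For both parts I would work through the eigenmatrix characterizations, Theorem~\ref{thm:eigen} for part~\ref{item:QuotientScheme_P} and Theorem~\ref{thm:eigenQ} for part~\ref{item:QuotientScheme_Q}. This reduces each claim to identities between entries of $P^{\mathrm{qt}}$ or $Q^{\mathrm{qt}}$ and those of $P$ or $Q$, using \eqref{eq:quotient_P}, \eqref{eq:quotient_Q} and \eqref{eq:quotient_scheme_P_and_valency}.

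\textbf{Part~\ref{item:QuotientScheme_P}.} The quotient's primitive idempotents are indexed by $\mathcal{C}^\ast$. By Lemma~\ref{lem:qt_equiv} its relations are indexed by $\mathcal{D}/\mathcal{C}\cong\mathcal{D}_s$, via $(\alpha,o)\mathcal{C}\leftrightarrow\alpha$. Condition~\ref{item:abPpoly_i} for $\mathcal{D}_s$ is inherited from $\mathcal{D}$.

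Fix $j\in\mathcal{C}^\ast$. By Lemma~\ref{lem:closed_subset}~\ref{lem:closed_subset:k}, $\theta_i(j)=k_i$ for $i>s$. Evaluating $\tilde v_\alpha$ at $\theta(j)$ and using \eqref{eq:quotient_P} gives
\[
P^{\mathrm{qt}}_{\alpha}(j)=\frac1p\sum_{(\alpha,\beta)}P_{(\alpha,\beta)}(j)=\frac1p u_\alpha(\theta_1(j),\ldots,\theta_s(j)).
\]
Next, \eqref{eq:quotient_scheme_P_and_valency} gives $\theta_i(j)=\frac{k_i}{k_{\epsilon_i\mathcal{C}}}\theta^{\mathrm{qt}}_i(j)$ for $i\le s$, where $\theta^{\mathrm{qt}}_i(j):=P^{\mathrm{qt}}_{\epsilon_i\mathcal{C}}(j)$. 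Hence $P^{\mathrm{qt}}_\alpha(j)=v^{\mathrm{qt}}_\alpha(\theta^{\mathrm{qt}}(j))$. By Lemma~\ref{lem:leading_coeff_u_alpha}, $v^{\mathrm{qt}}_\alpha$ has leading monomial $\bm{x}^\alpha$ with respect to $\le_s$ and support in $\mathcal{D}_s$, since rescaling variables by nonzero constants does not change the support. This is~\ref{item:eigen_a}.

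For~\ref{item:eigen_b}, take $\alpha\in\mathcal{D}_s$ with $\alpha+\epsilon_i\notin\mathcal{D}_s$, $i\le s$. Then $(\alpha,o)+\epsilon_i\notin\mathcal{D}$, so $w_{(\alpha+\epsilon_i,o)}$ exists. Since $\le$ is of $s$-block type, all its monomials have first block $\le_s\alpha+\epsilon_i$. Specializing $x_{s+1},\dots,x_\ell$ to $k_{s+1},\dots,k_\ell$ and rescaling therefore yields a polynomial vanishing on $\{\theta^{\mathrm{qt}}(j)\}$.

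The hard part is proving this specialization still has leading monomial $\bm{x}^{\alpha+\epsilon_i}$: after specialization the cross monomials $(\alpha+\epsilon_i,\beta)$ could collapse onto the leading one. I would handle this by reducing in the quotient algebra instead. Using Lemma~\ref{lem:p_support}~\ref{item:p_support_i}, $\bm{A}^{(\alpha+\epsilon_i,o)}$ is a combination of $\bm{A}^\xi$ with $\xi\in\mathcal{D}$ and $\xi<(\alpha+\epsilon_i,o)$. By the block property, each such $\xi=(\xi_1,\xi_2)$ has $\xi_1\le_s\alpha+\epsilon_i$ and $\xi_1\ne\alpha+\epsilon_i$; the latter holds because $(\alpha+\epsilon_i,\xi_2)\in\mathcal{D}$ would force $(\alpha+\epsilon_i,o)\in\mathcal{D}$ by condition~\ref{item:abPpoly_i}. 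Multiplying by $M$ and using $A_{\epsilon_r}M=k_rM$ for $r>s$, we get $\bm{A}^{(\alpha+\epsilon_i,o)}M$ as a combination of $\bm{A}^{(\xi_1,o)}M$ with $\xi_1<_s\alpha+\epsilon_i$. This gives a monic relation, which after rescaling evaluates to zero at each $\theta^{\mathrm{qt}}(j)$.

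\textbf{Part~\ref{item:QuotientScheme_Q}.} The quotient's idempotents $F_j$ are indexed by $\mathcal{C}^\ast=\iota^\ast(\mathcal{D}^\ast_{\mathrm{qt}})$, so condition~\ref{item:abQpoly_i} is inherited. By \eqref{eq:quotient_Q}, $Q^{\mathrm{qt}}_{(o,\beta)}(\bm i)=Q_{(o,\beta)}(i)$. Because $\le^\ast$ is of $s^\ast$-elimination type, every monomial of $v^\ast_{(o,\beta)}$ lies in $\{o\}\times\N^{\ell^\ast-s^\ast}$. Hence setting the first $s^\ast$ variables to $0$ changes nothing, and \eqref{eq:quotient_Qpoly_polynomial} gives~\ref{item:eigenQ_a}, with the leading monomial preserved.

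For~\ref{item:eigenQ_b}, the generator $w^\ast_{(o,\beta+\epsilon_j)}$ again involves only the last variables, by the elimination property and the $Q$-analogue of \eqref{eq:w_alpha+ei}. It vanishes at $\theta^\ast(i)$ for every $i$, so it vanishes at the quotient eigenvalues. Theorem~\ref{thm:eigenQ} then concludes the proof.
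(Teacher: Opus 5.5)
Your proposal is correct and takes essentially the same route as the paper. Both parts verify the criteria of Theorems~\ref{thm:eigen} and~\ref{thm:eigenQ} using \eqref{eq:quotient_P}, \eqref{eq:quotient_Q}, \eqref{eq:quotient_scheme_P_and_valency}, Lemma~\ref{lem:closed_subset}~\ref{lem:closed_subset:k} and the elimination property, as you do.

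The only deviation is in (i)(b). The paper directly specializes $w_{(\alpha+\epsilon_i,o)}$ at $x_r=k_r$ ($r>s$) and rescales; your detour through $\mathfrak{A}M$ is valid but not needed. The collapse you worry about cannot happen: a monomial $(\alpha+\epsilon_i,\beta)$ with $\beta\neq o$ is strictly larger than $(\alpha+\epsilon_i,o)$ in any monomial order, so it never occurs in $w_{(\alpha+\epsilon_i,o)}$. Your own block-type argument already shows that every other monomial has first block $<_s\alpha+\epsilon_i$.
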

\begin{proof}
\ref{item:QuotientScheme_P}:
We verify the criterion in Theorem~\ref{thm:eigen} for the quotient scheme. This shows that $\mathfrak{X}/\mathcal{C}$ is an $s$-variate $P$-polynomial association scheme on $\mathcal{D}_s\subset \N^s$ with respect to $\le_s$.
It is clear that $\mathcal{D}_s$ satisfies Definition~\ref{df:abPpoly}~\ref{item:abPpoly_i}.

Theorem~\ref{thm:eigen}~\ref{item:eigen_a}:
Let $\alpha\in \mathcal{D}_s$ and $\bm{\alpha}:=(\alpha,o)\mathcal{C} \in \mathcal{D}/\mathcal{C}$. 
For $j\in \mathcal{C}^\ast$, by \eqref{eq:quotient_P}, Theorem~\ref{thm:eigen} and $\theta_i(j)=P_{\epsilon_i}(j)=k_i$
($i=s+1,\ldots ,\ell$ and $j\in \mathcal{C}^\ast$),
we have 
\begin{align*}
P^{\mathrm{qt}}_{\bm{\alpha}}(j)
&=\frac{1}{p}\sum_{(\alpha,\beta)\in \bm{\alpha}}P_{(\alpha,\beta)}(j) 
=\frac{1}{p}\sum_{(\alpha,\beta)\in \bm{\alpha}} v_{(\alpha,\beta)}(\theta_{1}(j),\theta_{2}(j),\ldots ,\theta_{\ell}(j))\\
&=\frac{1}{p}\tilde{v}_{\alpha}(\theta_{1}(j),\theta_{2}(j),\ldots ,\theta_{s}(j),k_{s+1},\ldots ,k_{\ell}) 
=\frac{1}{p}u_{\alpha}(\theta_{1}(j),\theta_{2}(j),\ldots ,\theta_{s}(j)). 
\end{align*}
Considering \eqref{eq:quotient_scheme_P_and_valency},
we have
$\theta_{i}(j)=
\frac{k_{i}}{k_{\epsilon_i\mathcal{C}}}
P^{\mathrm{qt}}_{\epsilon_i\mathcal{C}}(j)$
for $i=1,2,\ldots ,s$ and $j\in \mathcal{C}^\ast$.
By letting $\theta'_i(j)=P^{\mathrm{qt}}_{\epsilon_i\mathcal{C}}(j)$, we obtain that 
$P^{\mathrm{qt}}_{\bm{\alpha}}(j)=
v^{\mathrm{qt}}_{\alpha}\bigl(\theta'_{1}(j),\theta'_{2}(j),\ldots,\theta'_{s}(j)\bigr)$. 

Theorem~\ref{thm:eigen}~\ref{item:eigen_b}:
Take $\alpha\in \mathcal{D}_s$ and $i=1,2,\ldots,s$ with $\alpha+\epsilon_i \notin \mathcal{D}_s$. Then $(\alpha+\epsilon_i,o)\notin \mathcal{D}$. Hence, by Theorem~\ref{thm:eigen}~\ref{item:eigen_b} for $\mathfrak{X}$, there exists a polynomial $w_{(\alpha+\epsilon_i,o)}(\bm{x})$ of multidegree $(\alpha+\epsilon_i,o)$ such that
\begin{equation}
\label{eq:hajikko_1}
w_{(\alpha+\epsilon_i,o)}(\theta_1(j),\theta_2(j),\ldots ,\theta_{\ell}(j))=0    
\end{equation}
for $j\in \mathcal{J}$.
By $\epsilon_i\in \mathcal{C}$ for $i=s+1,\ldots ,\ell$ and Lemma~\ref{lem:closed_subset}~\ref{lem:closed_subset:k}, we have
\[
\theta_i(j)=
\begin{cases}
 \frac{k_{i}}{k_{\epsilon_i\mathcal{C}}}\theta'_{i}(j) & \text{if $i=1,\ldots ,s$,}\\
 k_i & \text{if $i=s+1,\ldots ,\ell$}
\end{cases}
\]
for $j\in \mathcal{C}^\ast$.
Substituting this into \eqref{eq:hajikko_1}, we obtain
\begin{equation}
\label{eq:hajikko_2}
w_{(\alpha+\epsilon_i,o)}\left(\frac{k_{1}}{k_{\epsilon_1\mathcal{C}}}\theta'_{1}(j),\frac{k_{2}}{k_{\epsilon_2\mathcal{C}}}\theta'_{2}(j),\ldots ,\frac{k_{s}}{k_{\epsilon_s\mathcal{C}}}\theta'_{s}(j),k_{s+1},k_{s+2},\ldots ,k_{\ell}\right)=0.
\end{equation}
Since $w_{(\alpha+\epsilon_i,o)}$ has multidegree $(\alpha+\epsilon_i,o)$, the polynomial appearing in \eqref{eq:hajikko_2} has multidegree $\alpha+\epsilon_i$ with respect to $\le_s$. Therefore it satisfies the condition of Theorem~\ref{thm:eigen}~\ref{item:eigen_b}.

\ref{item:QuotientScheme_Q}:
We verify the criterion in Theorem~\ref{thm:eigenQ} for the quotient scheme. This shows that $\mathfrak{X}/\mathcal{C}$ is an $(\ell^\ast-s^\ast)$-variate $Q$-polynomial association scheme on $\mathcal{D}_{\mathrm{qt}}^\ast$ with respect to $\le^\ast_{\mathrm{qt}}$.
It is clear that $\mathcal{D}_{\mathrm{qt}}^\ast$ satisfies Definition~\ref{df:abQpoly}~\ref{item:abQpoly_i}.

Theorem~\ref{thm:eigenQ}~\ref{item:eigenQ_a}:
Let $\beta\in\mathcal{D}_{\mathrm{qt}}^\ast$, equivalently $\iota^\ast(\beta)=(o,\beta)\in\mathcal{C}^\ast$. 
By \eqref{eq:quotient_Q}, for any $\bm{i}\in\mathcal{I}/\mathcal{C}$ and $i\in \bm{i}$, we have
$Q^{\mathrm{qt}}_{\beta}(\bm{i})=Q_{\iota^\ast(\beta)}(i)$.
Moreover, by Theorem~\ref{thm:eigenQ}~\ref{item:eigenQ_a},
we have
$Q_{\iota^\ast(\beta)}(i)=v^\ast_{\iota^\ast(\beta)}\bigl(\theta^\ast_{1}(i),\ldots,\theta^\ast_{\ell^\ast}(i)\bigr)$.
Since $\le^\ast$ is of $s^\ast$-elimination type, any monomial appearing in $v^\ast_{\iota^\ast(\beta)}$ does not contain the variables $x_1,\ldots,x_{s^\ast}$. Thus,
\[
v^\ast_{\iota^\ast(\beta)}(x_1,\ldots,x_{\ell^\ast})=v^\ast_{\iota^\ast(\beta)}(\underbrace{0,\ldots,0}_{s^\ast},x_{s^\ast+1},\ldots,x_{\ell^\ast}).
\]
Furthermore, for $t=1,\ldots,\ell^\ast-s^\ast$, we have $\iota^\ast(\epsilon_t)=\epsilon_{s^\ast+t}\in\mathcal{C}^\ast$, and~\eqref{eq:quotient_Q} gives
\[
\theta^\ast_{s^\ast+t}(i)=Q_{\epsilon_{s^\ast+t}}(i)=Q^{\mathrm{qt}}_{\epsilon_t}(\bm{i})=\theta^{\mathrm{qt}\ast}_{t}(\bm{i}).
\]
Therefore,
\[
Q^{\mathrm{qt}}_{\beta}(\bm{i})
=v^\ast_{\iota^\ast(\beta)}(\underbrace{0,\ldots,0}_{s^\ast},\theta^{\mathrm{qt}\ast}_{1}(\bm{i}),\ldots,\theta^{\mathrm{qt}\ast}_{\ell^\ast-s^\ast}(\bm{i}))
=v^{\ast\mathrm{qt}}_{\beta}\bigl(\theta^{\mathrm{qt}\ast}_{1}(\bm{i}),\ldots,\theta^{\mathrm{qt}\ast}_{\ell^\ast-s^\ast}(\bm{i})\bigr).
\]

Theorem~\ref{thm:eigenQ}~\ref{item:eigenQ_b}:
Take $\beta\in \mathcal{D}_{\mathrm{qt}}^\ast$ and $t=1,2,\ldots ,\ell^\ast-s^\ast$ with $\beta+\epsilon_{t} \notin \mathcal{D}_{\mathrm{qt}}^\ast$. 
This implies that $(o,\beta)\in \mathcal{C}^\ast$ and $(o,\beta)+\epsilon_{s^\ast +t}\notin \mathcal{C}^\ast$.
By Theorem~\ref{thm:eigenQ}~\ref{item:eigenQ_b} for $\mathfrak{X}$, 
there exists a polynomial $w^\ast_{(o,\beta)+\epsilon_{s^\ast +t}}(\bm{x})$ of multidegree $(o,\beta)+\epsilon_{s^\ast +t}$
such that
\[w^\ast_{(o,\beta)+\epsilon_{s^\ast +t}}(\theta^\ast_{1}(i),\ldots,\theta^\ast_{\ell^\ast}(i))=0\]
for $i\in \mathcal{I}$.
Since $\le^\ast$ is of $s^\ast$-elimination type, 
any monomial appearing in $w^\ast_{(o,\beta)+\epsilon_{s^\ast +t}}$ does not contain the variables $x_1,\ldots,x_{s^\ast}$. 
Thus, 
\[w^\ast_{(o,\beta)+\epsilon_{s^\ast +t}}(x_1,\ldots,x_{\ell^\ast})=w^\ast_{(o,\beta)+\epsilon_{s^\ast +t}}(\underbrace{0,\ldots,0}_{s^\ast},x_{s^\ast+1},\ldots,x_{\ell^\ast}).\]
For $\bm{i}\in \mathcal{I}/\mathcal{C}$ and $i\in \bm{i}$, we have $\theta^\ast_{s^\ast+t}(i)=\theta^{\mathrm{qt}\ast}_{t}(\bm{i})$.
Substituting these into the above equation, we obtain
\begin{equation}
    \label{eq:hajikko_3}
    w^\ast_{(o,\beta)+\epsilon_{s^\ast +t}}(\underbrace{0,\ldots,0}_{s^\ast},\theta^{\mathrm{qt}\ast}_{1}(\bm{i}),\ldots,\theta^{\mathrm{qt}\ast}_{\ell^\ast-s^\ast}(\bm{i}))=0.
\end{equation}
The polynomial appearing in the above equation has multidegree $\beta+\epsilon_t$ with respect to $\le^\ast_{\mathrm{qt}}$, so it satisfies the condition of Theorem~\ref{thm:eigenQ}~\ref{item:eigenQ_b}.
\end{proof}

In view of the duality in \eqref{eq:block_scheme_P}, \eqref{eq:block_scheme_Q}, \eqref{eq:quotient_P}, and~\eqref{eq:quotient_Q}, analogous multivariate $P$- and $Q$-polynomial structure theorems also hold for block schemes.
The proof proceeds similarly to that of Theorem~\ref{thm:QuotientScheme}, so we omit the details to avoid unnecessary complexity.

\begin{thm}\label{thm:block_scheme_Ppoly}
Work with the same notation as Theorem~\ref{thm:QuotientScheme}, and fix $x_0\in X$.  
Then the following hold:
\begin{enumerate}[label=(\roman*)]
    \item \label{item:block_scheme_Ppoly_i}
    Let $\iota:\N^{\ell-s}\to \N^s\times \N^{\ell-s}$ be defined by $\iota(\beta):=(o,\beta)$, let $\mathcal{D}_{\mathrm{bl}}:=\iota^{-1}(\mathcal{C})$, and let $\le_{\mathrm{bl}}$ be the monomial order on $\N^{\ell-s}$ induced from $\le$ by $\iota$ (Lemma~\ref{lem:induced_monomial_order}). Then the block scheme $\mathfrak{X}_{x_0\mathcal{C}}$ is an $(\ell-s)$-variate $P$-polynomial association scheme on $\mathcal{D}_{\mathrm{bl}}$ with respect to $\le_{\mathrm{bl}}$, and its associated polynomials are $\{v^{\mathrm{bl}}_{\beta}\}_{\beta\in \mathcal{D}_{\mathrm{bl}}}$, where
    \[
    v^{\mathrm{bl}}_{\beta}(x_1,\ldots,x_{\ell-s})
    :=v_{\iota(\beta)}(\underbrace{0,\ldots,0}_{s},x_1,\ldots,x_{\ell-s})
    \qquad (\beta\in \mathcal{D}_{\mathrm{bl}}).
    \] 

\item \label{item:block_scheme_Ppoly_ii} Let $\{v^\ast_{(\alpha,\beta)}\}_{(\alpha,\beta)\in\mathcal{D}^\ast}$ be the associated polynomials of 
the $\ell^\ast$-variate $Q$-polynomial structure of $\mathfrak{X}$ on $\mathcal{D}^\ast\subset \N^{s^\ast}\times \N^{\ell^\ast-s^\ast}$ with respect to a monomial order $\le^\ast$ of $s^\ast$-block type.
Define $\jmath^\ast:\N^{s^\ast}\to\N^{s^\ast}\times\N^{\ell^\ast-s^\ast}$ by $\jmath^\ast(\alpha):=(\alpha,o)$, let
$\mathcal{D}_{\mathrm{bl}}^\ast:={\jmath^\ast}^{-1}(\mathcal{D}^\ast)$, and let $\le^\ast_{\mathrm{bl}}$ be the monomial order on $\N^{s^\ast}$ induced from $\le^\ast$ by $\jmath^\ast$ (Lemma~\ref{lem:induced_monomial_order}).
Then the block scheme $\mathfrak{X}_{x_0\mathcal{C}}$ is an $s^\ast$-variate $Q$-polynomial association scheme on $\mathcal{D}_{\mathrm{bl}}^\ast\subset \N^{s^\ast}$ with respect to $\le^\ast_{\mathrm{bl}}$, and its associated polynomials are $\{v^{\ast\mathrm{bl}}_{\alpha}\}_{\alpha\in \mathcal{D}_{\mathrm{bl}}^\ast}$, where
\[
v^{\ast\mathrm{bl}}_{\alpha}(x_1,\ldots,x_{s^\ast})
:=\frac{1}{q}\sum_{\beta\,:\,(\alpha,\beta)\in\mathcal{D}^\ast}
v^\ast_{(\alpha,\beta)}\left(
\frac{m_{1}}{m_{\epsilon_1\mathcal{C}^\ast}}x_1,
\frac{m_{2}}{m_{\epsilon_2\mathcal{C}^\ast}}x_2,
\ldots ,
\frac{m_{s^\ast}}{m_{\epsilon_{s^\ast}\mathcal{C}^\ast}}x_{s^\ast},
m_{\epsilon_{s^\ast+1}},\ldots,m_{\epsilon_{\ell^\ast}}
\right).
\]
Here $m_i$ denotes the multiplicity of $\epsilon_i\in \mathcal{D}^\ast$ in $\mathfrak{X}$ and $m_{\epsilon_i\mathcal{C}^\ast}$ denotes
the multiplicity of the primitive idempotent class
$\epsilon_i\mathcal{C}^\ast \in \mathcal{J}/\mathcal{C}^\ast$ in the block scheme $\mathfrak{X}_{x_0\mathcal{C}}$.
\end{enumerate}
\end{thm}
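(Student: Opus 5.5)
The plan is to mirror the proof of Theorem~\ref{thm:QuotientScheme}, exchanging the roles of $P$ and $Q$ via \eqref{eq:block_scheme_P} and \eqref{eq:block_scheme_Q}. In each part we verify the eigenmatrix criteria, Theorem~\ref{thm:eigen} for part \ref{item:block_scheme_Ppoly_i} and Theorem~\ref{thm:eigenQ} for part \ref{item:block_scheme_Ppoly_ii}. The downward-closedness of $\mathcal{D}_{\mathrm{bl}}$ and $\mathcal{D}^\ast_{\mathrm{bl}}$ is immediate from that of $\mathcal{D}$ and $\mathcal{D}^\ast$.

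For \ref{item:block_scheme_Ppoly_i}, let $\beta\in\mathcal{D}_{\mathrm{bl}}$. Since $\le$ is of $s$-elimination type and $v_{\iota(\beta)}$ has multidegree $(o,\beta)$, every monomial of $v_{\iota(\beta)}$ lies in $\{o\}\times\N^{\ell-s}$. Hence $v_{\iota(\beta)}(\bm{x})=v^{\mathrm{bl}}_\beta(x_{s+1},\dots,x_\ell)$, and $v^{\mathrm{bl}}_\beta$ has multidegree $\beta$ for $\le_{\mathrm{bl}}$. By \eqref{eq:block_scheme_P}, for $\bm{j}\in\mathcal{J}/\mathcal{C}^\ast$ and any $j\in\bm{j}$,
\[
P^{\mathrm{bl}}_{\iota(\beta)}(\bm{j})=P_{\iota(\beta)}(j)=v^{\mathrm{bl}}_\beta\bigl(\theta_{s+1}(j),\dots,\theta_\ell(j)\bigr).
\]
Here $\theta_{s+t}(j)=P^{\mathrm{bl}}_{\epsilon_{t}}(\bm{j})$, which gives \ref{item:eigen_a}. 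For \ref{item:eigen_b}, take $\beta\in\mathcal{D}_{\mathrm{bl}}$ with $\beta+\epsilon_t\notin\mathcal{D}_{\mathrm{bl}}$. Then $(o,\beta)+\epsilon_{s+t}\notin\mathcal{D}$, and the monic $w_{(o,\beta+\epsilon_t)}$ from Theorem~\ref{thm:eigen} for $\mathfrak{X}$ again involves only $x_{s+1},\dots,x_\ell$ by the elimination property. It vanishes at all $(\theta_i(j))_i$, so it vanishes at the block eigenvalues. This mirrors part \ref{item:QuotientScheme_Q} of Theorem~\ref{thm:QuotientScheme} verbatim.

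For \ref{item:block_scheme_Ppoly_ii}, we first need the dual of Lemma~\ref{lem:qt_equiv}: with $\le^\ast$ of $s^\ast$-block type and $\mathcal{C}^\ast=\mathcal{D}^\ast\cap(\{o\}\times\N^{\ell^\ast-s^\ast})$, one has $(\alpha_1,\beta_1)\equiv^\ast(\alpha_2,\beta_2)$ if and only if $\alpha_1=\alpha_2$. The proof is the same as before, using Lemma~\ref{lem:intersection_krein_relation}~\ref{item:krein_numbers} and Lemma~\ref{lem:q_support}~\ref{item:q_support_ii}. Thus $\mathcal{J}/\mathcal{C}^\ast$ is indexed by $\mathcal{D}^\ast_{\mathrm{bl}}$, and \eqref{eq:block_scheme_Q} gives, for $i\in\mathcal{C}$,
\[
Q^{\mathrm{bl}}_{\alpha}(i)=\frac1q\sum_{\beta:(\alpha,\beta)\in\mathcal{D}^\ast}v^\ast_{(\alpha,\beta)}\bigl(\theta^\ast_1(i),\dots,\theta^\ast_{\ell^\ast}(i)\bigr).
\]
By Lemma~\ref{lem:closed_subset}~\ref{lem:closed_subset:m}, $\theta^\ast_{s^\ast+t}(i)=m_{\epsilon_{s^\ast+t}}$ for $i\in\mathcal{C}$. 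For $1\le r\le s^\ast$ we need the dual of \eqref{eq:quotient_scheme_P_and_valency}:
\[
\frac{Q_{\epsilon_r}(i)}{m_r}=\frac{Q^{\mathrm{bl}}_{\epsilon_r\mathcal{C}^\ast}(i)}{m_{\epsilon_r\mathcal{C}^\ast}}\qquad(i\in\mathcal{C}).
\]
I would prove this by the dual of \eqref{eq:quotient_scheme_A}: Hadamard-multiplying $|X|E_{\epsilon_r}$ by $\sum_{i\in\mathcal{C}}A_i$ restricts to the blocks. Alternatively, Lemma~\ref{lem:dual_closed_subset_equiv} together with the fact that $E_j\circ(I_q\otimes J_p)$ for $j\in\epsilon_r\mathcal{C}^\ast$ are all proportional after restriction gives the same conclusion. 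Substituting these values yields condition \ref{item:eigenQ_a} with the stated $v^{\ast\mathrm{bl}}_\alpha$.

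The leading term must be checked: the block-type property forces the $\le^\ast_{\mathrm{bl}}$-leading monomial of the specialized sum to be $\bm{x}^\alpha$, provided its coefficient is nonzero. This is the analogue of Lemma~\ref{lem:leading_coeff_u_alpha}, proved by the same triangular-transition argument inside the Hadamard subalgebra $\mathfrak{A}\circ(\sum_{i\in\mathcal{C}}A_i)$, whose dimension is $|\mathcal{D}^\ast_{\mathrm{bl}}|$. Condition \ref{item:eigenQ_b} follows by specializing $w^\ast_{(\alpha+\epsilon_r,o)}$ at $x_{s^\ast+t}=m_{\epsilon_{s^\ast+t}}$ and rescaling; its multidegree $\alpha+\epsilon_r$ under $\le^\ast_{\mathrm{bl}}$ is preserved by the block property. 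I expect the main obstacle to be the dual relation for $Q_{\epsilon_r}(i)$ on $\mathcal{C}$ together with the nonvanishing of the leading coefficient.
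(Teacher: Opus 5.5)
Your proposal is correct and follows the route the paper intends; the paper omits this proof as parallel to Theorem~\ref{thm:QuotientScheme}. Part (i) dualizes the quotient $Q$-side argument using the elimination property and \eqref{eq:block_scheme_P}, and part (ii) dualizes the quotient $P$-side argument via the block-type analogue of Lemma~\ref{lem:qt_equiv}, \eqref{eq:block_scheme_Q}, and the analogue of Lemma~\ref{lem:leading_coeff_u_alpha}. The dual valency relation you flag does hold: $E_j\circ\sum_{i\in\mathcal{C}}A_i$ is supported on $\{E_k\}_{k\in j\mathcal{C}^\ast}$ and lies in $\mathfrak{A}_{\mathcal{C}}=\Span\{E_{\bm{j}}\}$, so it is a scalar multiple of $E_{j\mathcal{C}^\ast}$; comparing the coefficients of $A_{i_0}$ then gives $Q_j(i)/m_j=Q^{\mathrm{bl}}_{j\mathcal{C}^\ast}(i)/m_{j\mathcal{C}^\ast}$ for $i\in\mathcal{C}$.
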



\begin{thm}\label{thm:ideal_block_quotient}
Work with the same notation as in Theorems~\ref{thm:QuotientScheme} and~\ref{thm:block_scheme_Ppoly}.
Let $\theta_i(j)=P_{\epsilon_i}(j)$ be the $(\epsilon_i,j)$-entry of the first eigenmatrix of $\mathfrak{X}$ for $i=1,\ldots,\ell$ and $j\in \mathcal{J}$.
Let $I\subset \C[x_1,\ldots,x_\ell]$ be the ideal associated with $\mathfrak{X}$, i.e., the ideal generated by the polynomials in \eqref{eq:ideal}. 
Let $I^{\mathrm{qt}}\subset \C[x_1,\ldots,x_s]$ (resp. $I^{\mathrm{bl}}\subset \C[x_{s+1},\ldots,x_\ell]$) be the ideal associated with the quotient scheme $\mathfrak{X}/\mathcal{C}$ (resp. block scheme $\mathfrak{X}_{x_0\mathcal{C}}$).
\begin{enumerate}[label=(\roman*)]
\item \label{item:ideal_block_quotient_i} Put 
$J^{\mathrm{qt}}:=(I+\langle x_{s+1}-k_{s+1},\ldots,x_\ell-k_\ell\rangle)\cap \C[x_1,\ldots,x_s]$.
Then
\begin{equation}
    \label{eq:ideal_quotient_elim}
    J^{\mathrm{qt}}
=\{f\in \C[x_1,\ldots,x_s] \mid f(\theta_1(j),\ldots,\theta_s(j))=0,\  j\in \mathcal{C}^\ast\}
\end{equation}
holds. Moreover, let $\sigma_{\mathrm{qt}}:\C[x_1,\ldots,x_s]\to \C[x_1,\ldots,x_s]$ be the algebra automorphism defined by
\[
(\sigma_{\mathrm{qt}}f)(x_1,\ldots,x_s)
:=
f\left(
\frac{k_{1}}{k_{\epsilon_1\mathcal{C}}}x_1,
\ldots,
\frac{k_{s}}{k_{\epsilon_s\mathcal{C}}}x_s
\right).
\]
Then we have
\begin{align}
I^{\mathrm{qt}}
&=\sigma_{\mathrm{qt}}(J^{\mathrm{qt}}) \notag\\
&=\Bigl\{f\in \C[x_1,\ldots,x_s] \mid 
f\left(\frac{k_{\epsilon_1\mathcal{C}}}{k_1}\theta_1(j),
\ldots,\frac{k_{\epsilon_s\mathcal{C}}}{k_s}\theta_s(j)\right)=0,\   
j\in \mathcal{C}^\ast\Bigr\}. 
\label{eq:ideal_quotient_rescaling}
\end{align}
In particular,
$\mathfrak{A}(\mathfrak{X}/\mathcal{C})
\simeq
\C[x_1,\ldots,x_s]/I^{\mathrm{qt}}
\simeq
\C[x_1,\ldots,x_s]/J^{\mathrm{qt}}$.

\item \label{item:ideal_block_quotient_ii}
We have 
\begin{equation}
    \label{eq:ideal_block_elim}
I^{\mathrm{bl}}=I\cap \C[x_{s+1},\ldots,x_\ell],
\end{equation}
that is, $I^{\mathrm{bl}}$ is the elimination ideal with respect to $\le$ of $s$-elimination type.
In particular, $\mathfrak{A}(\mathfrak{X}_{x_0\mathcal{C}})\ \simeq\ \C[x_{s+1},\ldots,x_{\ell}]/I^{\mathrm{bl}}$. 
\end{enumerate}
\end{thm}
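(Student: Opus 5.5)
The whole argument will be carried out through vanishing ideals of finite point sets, using Corollary~\ref{cor:ideal}. That corollary says $I$ is the vanishing ideal of $V=\{(\theta_1(j),\ldots,\theta_\ell(j))\mid j\in\mathcal{J}\}$. Applied to the quotient scheme (which is $s$-variate $P$-polynomial by Theorem~\ref{thm:QuotientScheme}), it says $I^{\mathrm{qt}}$ is the vanishing ideal of $\{(\theta'_1(j),\ldots,\theta'_s(j))\mid j\in\mathcal{C}^\ast\}$, where $\theta'_i(j)=P^{\mathrm{qt}}_{\epsilon_i\mathcal{C}}(j)$. Applied to the block scheme (which is $(\ell-s)$-variate by Theorem~\ref{thm:block_scheme_Ppoly}), it says $I^{\mathrm{bl}}$ is the vanishing ideal of $\{(P^{\mathrm{bl}}_{\epsilon_{t}}(\bm{j}))_t\mid \bm{j}\in\mathcal{J}/\mathcal{C}^\ast\}$. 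Everything then reduces to comparing these point sets and using elementary facts about elimination and radical ideals of finite sets.

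For \ref{item:ideal_block_quotient_ii}, I will use \eqref{eq:block_scheme_P}. It gives $P^{\mathrm{bl}}_{\iota(\epsilon_t)}(\bm{j})=\theta_{s+t}(j)$ for any representative $j\in\bm{j}$. Hence the block point set is exactly the projection $\pi(V)$ of $V$ onto the last $\ell-s$ coordinates, with coinciding points merged. A polynomial $f\in\C[x_{s+1},\ldots,x_\ell]$ lies in $I=I(V)$ if and only if it vanishes at $(\theta_{s+1}(j),\ldots,\theta_\ell(j))$ for all $j$. That is equivalent to $f$ vanishing on $\pi(V)$, i.e.\ $f\in I^{\mathrm{bl}}$. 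This gives \eqref{eq:ideal_block_elim}. The isomorphism of algebras then follows from Proposition~\ref{prop:A=C[x]/I}\ref{item:prop_A_CxmodI_iii} applied to the block scheme.

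For \ref{item:ideal_block_quotient_i}, let $L=\langle x_{s+1}-k_{s+1},\ldots,x_\ell-k_\ell\rangle$. I will first show that $I+L$ is the vanishing ideal of $V\cap\{x_{s+1}=k_{s+1},\ldots,x_\ell=k_\ell\}$.
\begin{itemize}
\item $I=I(V)$ and $L$ are both radical.
\item On a finite set, $V(I+L)=V\cap V(L)$.
\item $I+L$ is again radical, since $\C[\bm{x}]/(I+L)$ is a quotient of $\C[\bm{x}]/I\cong\C^{V}$, which is reduced.
\end{itemize}
The Nullstellensatz then identifies $I+L$ with the vanishing ideal of $V\cap V(L)$; alternatively one can argue directly with interpolating polynomials as in the proof of Corollary~\ref{cor:ideal}.

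The key combinatorial step, which I expect to be the main obstacle, is to identify this intersection. I must show that $(\theta_{s+1}(j),\ldots,\theta_\ell(j))=(k_{s+1},\ldots,k_\ell)$ holds if and only if $j\in\mathcal{C}^\ast$.
\begin{itemize}
\item ``If'' is Lemma~\ref{lem:closed_subset}\ref{lem:closed_subset:k}.
\item For ``only if'', note that $\mathcal{C}$ consists of the $A_\gamma=v_\gamma(\bm A)$ with $\gamma\in\mathcal{C}$. By the elimination property used in Theorem~\ref{thm:QuotientScheme}\ref{item:QuotientScheme_Q}, each such $v_\gamma$ involves only $x_{s+1},\ldots,x_\ell$. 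Hence $P_\gamma(j)$ is determined by $\theta_{s+1}(j),\ldots,\theta_\ell(j)$.
\item If these values equal the valencies, then $P_\gamma(j)=P_\gamma(j_0)=k_\gamma$ for all $\gamma\in\mathcal{C}$. Consequently $ME_j=\frac1p\sum_{\gamma\in\mathcal{C}}k_\gamma E_j=E_j$, which forces $j\in\mathcal{C}^\ast$ because $M=\sum_{j'\in\mathcal{C}^\ast}E_{j'}$.
\end{itemize}

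Projecting onto the first $s$ coordinates, $f\in\C[x_1,\ldots,x_s]$ lies in $I+L$ exactly when $f(\theta_1(j),\ldots,\theta_s(j))=0$ for all $j\in\mathcal{C}^\ast$. This is \eqref{eq:ideal_quotient_elim}.

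Finally, \eqref{eq:quotient_scheme_P_and_valency} gives $\theta'_i(j)=\frac{k_{\epsilon_i\mathcal{C}}}{k_i}\theta_i(j)$ for $j\in\mathcal{C}^\ast$. This yields the second description of $I^{\mathrm{qt}}$ in \eqref{eq:ideal_quotient_rescaling}. The identity $f\in I^{\mathrm{qt}}\iff f\circ\operatorname{diag}(k_{\epsilon_i\mathcal{C}}/k_i)\in J^{\mathrm{qt}}$ translates into $I^{\mathrm{qt}}=\sigma_{\mathrm{qt}}(J^{\mathrm{qt}})$. Here one must check the direction of the scaling carefully: $\sigma_{\mathrm{qt}}^{-1}$ rescales by $k_{\epsilon_i\mathcal{C}}/k_i$. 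The algebra isomorphisms follow because $\sigma_{\mathrm{qt}}$ is an automorphism, together with Proposition~\ref{prop:A=C[x]/I}.
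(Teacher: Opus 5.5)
Your proposal is correct and follows the paper's proof essentially step for step. Both use Corollary~\ref{cor:ideal} for $I$, $I^{\mathrm{qt}}$ and $I^{\mathrm{bl}}$, characterize $\mathcal{C}^\ast$ by $(\theta_{s+1}(j),\ldots,\theta_\ell(j))=(k_{s+1},\ldots,k_\ell)$ via $ME_j=E_j$, and use the same rescaling and the same projection argument for the block ideal. The only difference is that the paper proves $V^{\mathrm{qt}}\subset J^{\mathrm{qt}}$ with an explicit interpolating polynomial $e(x_{s+1},\ldots,x_\ell)$ and the identity $f=fe+f(1-e)$, which is your stated fallback, rather than via radicality of $I+L$ and the Nullstellensatz. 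If you keep the Nullstellensatz route, justify radicality by noting that $\C^{V}$ is a finite product of fields, so each of its quotients is again such a product; a quotient of a merely reduced ring need not be reduced.
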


\begin{proof}
For $j\in \mathcal{J}$, put
\[
\Theta(j):=(\theta_1(j),\theta_2(j),\ldots,\theta_\ell(j)).
\]
We first show that
\begin{equation}
\label{eq:Cstar_by_last_coordinates}
j\in \mathcal{C}^\ast
\iff
(\theta_{s+1}(j),\theta_{s+2}(j),\ldots,\theta_\ell(j))=(k_{s+1},k_{s+2},\ldots,k_\ell).
\end{equation}
The implication ``$\Rightarrow$'' follows from Lemma~\ref{lem:closed_subset}~\ref{lem:closed_subset:k}. 
For the converse, assume that
\[
(\theta_{s+1}(j),\theta_{s+2}(j),\ldots,\theta_\ell(j))=(k_{s+1},k_{s+2},\ldots,k_\ell)
=(\theta_{s+1}(j_0),\theta_{s+2}(j_0),\ldots,\theta_\ell(j_0)).
\]
Take $(o,\beta)\in \mathcal{C}=\mathcal{D}\cap (\{o\}\times \N^{\ell-s})$. 
Since $\le$ is of $s$-elimination type and $(o,\beta)\in \{o\}\times \N^{\ell-s}$, the associated polynomial $v_{(o,\beta)}(\bm{x})$ involves only $x_{s+1},\ldots,x_\ell$, and therefore
\[
P_{(o,\beta)}(j)
=v_{(o,\beta)}(\Theta(j))
=v_{(o,\beta)}(\Theta(j_0))
=P_{(o,\beta)}(j_0)
=k_{(o,\beta)}.
\]
Now, using $M=\frac{1}{p}\sum_{(o,\beta)\in \mathcal{C}}A_{(o,\beta)}=\sum_{h\in \mathcal{C}^\ast}E_h$
and $p=\sum_{(o,\beta)\in \mathcal{C}}k_{(o,\beta)}$, we obtain
\[
ME_j
=\frac{1}{p}\sum_{(o,\beta)\in \mathcal{C}}A_{(o,\beta)}E_j
=\frac{1}{p}\sum_{(o,\beta)\in \mathcal{C}}P_{(o,\beta)}(j)E_j
=\frac{1}{p}\sum_{(o,\beta)\in \mathcal{C}}k_{(o,\beta)}E_j
=E_j.
\]
Thus $j\in \mathcal{C}^\ast$, proving \eqref{eq:Cstar_by_last_coordinates}.

\medskip
\noindent
\ref{item:ideal_block_quotient_i}.
First we prove \eqref{eq:ideal_quotient_elim}.
Let us denote the right-hand side of \eqref{eq:ideal_quotient_elim} by $V^{\mathrm{qt}}$. That is,
\[
V^{\mathrm{qt}}
:=
\{f\in \C[x_1,\ldots,x_s] \mid f(\theta_1(j),\ldots,\theta_s(j))=0 \text{ for all } j\in \mathcal{C}^\ast\}.
\]

$J^{\mathrm{qt}}\subset V^{\mathrm{qt}}$:
Take $f\in J^{\mathrm{qt}}$. Then there exist $g\in I$ and $h_{s+1},\ldots,h_\ell\in \C[x_1,\ldots,x_\ell]$ such that
$f=g+\sum_{t=s+1}^{\ell} h_t(x_t-k_t)$.
For $j\in \mathcal{C}^\ast$, Corollary~\ref{cor:ideal} gives $g(\Theta(j))=0$, and~\eqref{eq:Cstar_by_last_coordinates} gives $\theta_t(j)=k_t$ for $t=s+1,\ldots,\ell$. 
Therefore
\[
f(\theta_1(j),\ldots,\theta_s(j))
=g(\Theta(j))+\sum_{t=s+1}^{\ell} h_t(\Theta(j))(\theta_t(j)-k_t)
=0,
\]
so $f\in V^{\mathrm{qt}}$.

$V^{\mathrm{qt}}\subset J^{\mathrm{qt}}$:
Take $f\in V^{\mathrm{qt}}$ and put
$T:=\{(\theta_{s+1}(j),\theta_{s+2}(j),\ldots,\theta_\ell(j)) \mid j\in \mathcal{J}\}\subset \C^{\ell-s}$.
By the definition of $T$, we have $\bm{k}:=(k_{s+1},k_{s+2},\ldots,k_\ell)\in T$.
By \eqref{eq:Cstar_by_last_coordinates}, the point $\bm{k}$ corresponds exactly to the indices in $\mathcal{C}^\ast$. 
For each $\bm{b}=(b_{s+1},\ldots,b_\ell)\in T\setminus\{\bm{k}\}$, choose $t(\bm{b})\in\{s+1,\ldots,\ell\}$ such that $b_{t(\bm{b})}\neq k_{t(\bm{b})}$, and define
\[
e(x_{s+1},\ldots,x_\ell)
:=
\prod_{\bm{b}\in T\setminus\{\bm{k}\}}
\frac{x_{t(\bm{b})}-b_{t(\bm{b})}}{k_{t(\bm{b})}-b_{t(\bm{b})}}
\in \C[x_{s+1},\ldots,x_\ell].
\]
Then $e(\bm{k})=1$ and $e(\bm{b})=0$ for all $\bm{b}\in T\setminus\{\bm{k}\}$, so by \eqref{eq:Cstar_by_last_coordinates} we have
\[
e(\theta_{s+1}(j),\ldots,\theta_\ell(j))=
\begin{cases}
1 & \text{if } j\in \mathcal{C}^\ast,\\
0 & \text{if } j\notin \mathcal{C}^\ast.
\end{cases}
\]
Now define
\[
F(\bm{x}):=f(x_1,\ldots,x_s)e(x_{s+1},\ldots,x_\ell)
\in \C[x_1,\ldots,x_\ell].
\]
If $j\in \mathcal{C}^\ast$, then $f(\theta_1(j),\ldots,\theta_s(j))=0$ by assumption. 
If $j\notin \mathcal{C}^\ast$, then $e(\theta_{s+1}(j),\ldots,\theta_\ell(j))=0$. 
Hence $F(\Theta(j))=0$ for all $j\in \mathcal{J}$. 
By Corollary~\ref{cor:ideal}, we obtain $F\in I$.
Since $e(\bm{k})=1$, the polynomial $e-1$ vanishes at $(k_{s+1},\ldots,k_\ell)$, and therefore
$e-1\in \langle x_{s+1}-k_{s+1},\ldots,x_\ell-k_\ell\rangle$.
Consequently,
\[
f
=F+f(1-e)
\in I+\langle x_{s+1}-k_{s+1},\ldots,x_\ell-k_\ell\rangle.
\]
Since $f\in \C[x_1,\ldots,x_s]$, it follows that $f\in J^{\mathrm{qt}}$. 
Thus $J^{\mathrm{qt}}=V^{\mathrm{qt}}$, proving \eqref{eq:ideal_quotient_elim}.

Next, for $i=1,2,\ldots,s$ and $j\in \mathcal{C}^\ast$, put $\theta'_i(j):=P_{\epsilon_i\mathcal{C}}(j)=\frac{k_{\epsilon_i\mathcal{C}}}{k_i}\theta_i(j)$,
as in the proof of Theorem~\ref{thm:QuotientScheme}~\ref{item:QuotientScheme_P}.
Applying Corollary~\ref{cor:ideal} to the quotient scheme $\mathfrak{X}/\mathcal{C}$, we obtain
\[
I^{\mathrm{qt}}
=
\{h\in \C[x_1,\ldots,x_s] \mid h(\theta'_1(j),\ldots,\theta'_s(j))=0 \text{ for all } j\in \mathcal{C}^\ast\}.
\]
On the other hand, for any $g\in \C[x_1,\ldots,x_s]$ and any $j\in \mathcal{C}^\ast$, we have
\[
(\sigma_{\mathrm{qt}}g)(\theta'_1(j),\ldots,\theta'_s(j))
=g\left(\frac{k_1}{k_{\epsilon_1\mathcal{C}}}\theta'_1(j),\frac{k_2}{k_{\epsilon_2\mathcal{C}}}\theta'_2(j),\ldots,\frac{k_s}{k_{\epsilon_s\mathcal{C}}}\theta'_s(j)\right)
=g(\theta_1(j),\ldots,\theta_s(j)).
\]
Therefore, $g\in J^{\mathrm{qt}}$ if and only if $\sigma_{\mathrm{qt}}g\in I^{\mathrm{qt}}$.
Hence $I^{\mathrm{qt}}=\sigma_{\mathrm{qt}}(J^{\mathrm{qt}})$, which proves \eqref{eq:ideal_quotient_rescaling}.

By Proposition~\ref{prop:A=C[x]/I}~\ref{item:prop_A_CxmodI_iii}, we have 
$\mathfrak{A}(\mathfrak{X}/\mathcal{C})
\simeq
\C[x_1,\ldots,x_s]/I^{\mathrm{qt}}$.
Moreover, the algebra automorphism $\sigma_{\mathrm{qt}}$ induces an algebra isomorphism
$\C[x_1,\ldots,x_s]/I^{\mathrm{qt}}
\simeq
\C[x_1,\ldots,x_s]/J^{\mathrm{qt}}$.

\medskip
\noindent
\ref{item:ideal_block_quotient_ii}.
Let
\[
V^{\mathrm{bl}}
:=
\{f\in \C[x_{s+1},\ldots,x_\ell] \mid f(\theta_{s+1}(j),\ldots,\theta_\ell(j))=0 \text{ for all } \bm{j}\in \mathcal{J}/\mathcal{C}^\ast,\ j\in \bm{j}\}.
\]
By \eqref{eq:block_scheme_P}, the values $\theta_{s+1}(j),\ldots,\theta_\ell(j)$ do not depend on the choice of representative $j\in \bm{j}$. 
Applying Corollary~\ref{cor:ideal} to the block scheme $\mathfrak{X}_{x_0\mathcal{C}}$, we obtain $I^{\mathrm{bl}}=V^{\mathrm{bl}}$.
We now prove that $I\cap \C[x_{s+1},\ldots,x_\ell]=V^{\mathrm{bl}}$.
If $f\in I\cap \C[x_{s+1},\ldots,x_\ell]$, then Corollary~\ref{cor:ideal} for $\mathfrak{X}$ gives
$f(\theta_{s+1}(j),\ldots,\theta_\ell(j))=0$ for $j\in \mathcal{J}$,
so in particular $f\in V^{\mathrm{bl}}$.
Conversely, take $f\in V^{\mathrm{bl}}$ and regard it as a polynomial in $\C[x_1,\ldots,x_\ell]$ independent of $x_1,\ldots,x_s$.
Since every $j\in \mathcal{J}$ belongs to some class $\bm{j}\in \mathcal{J}/\mathcal{C}^\ast$, the assumption implies
$f(\Theta(j))=f(\theta_{s+1}(j),\ldots,\theta_\ell(j))=0$
for $j\in \mathcal{J}$.
Hence Corollary~\ref{cor:ideal} yields $f\in I$, and therefore $f\in I\cap \C[x_{s+1},\ldots,x_\ell]$. 
Thus
\[
I^{\mathrm{bl}}=V^{\mathrm{bl}}=I\cap \C[x_{s+1},\ldots,x_\ell],
\]
which proves \eqref{eq:ideal_block_elim}. 
The final algebra isomorphism follows from Proposition~\ref{prop:A=C[x]/I}~\ref{item:prop_A_CxmodI_iii} applied to the block scheme.
\end{proof}

\section{Examples}\label{sec:examples}

\subsection{Commutative association schemes of class $2$}
It is well known that a commutative association scheme of class $2$ is imprimitive if and only if $\mathfrak{X}$ is the association scheme of a complete multipartite graph (cf.~\cite{BCN1989}). In this direction, the ``if part'' is trivial. We explain the ``only if part'' from the viewpoint of multivariate $P$-polynomial association schemes. Let $\mathfrak{X}=(X,\{A_0,A_1,A_2\})$ be an imprimitive commutative association scheme of class $2$.
Under the present definition, the notion of elimination-type order is vacuous in one variable, so we impose a bivariate $P$-polynomial structure on $\mathfrak{X}$ with respect to $\N^2$.
We relabel $A_0,A_1,A_2$ by $A_0=A_{00}, A_1=A_{10}, A_2=A_{01}$, so that $\mathcal{D}=\{o,(1,0),(0,1)\}\subset \N^2$.
By
\begin{align*}
    A_{10}^2 &= p^{00}_{10,10} A_{00} + p^{10}_{10,10} A_{10} + p^{01}_{10,10} A_{01},\\
    A_{10} A_{01} &= p^{10}_{10,01} A_{10} + p^{01}_{10,01} A_{01}, \;\text{ and}\\
    A_{01}^2 &= p^{00}_{01,01} A_{00} + p^{10}_{01,01} A_{10} + p^{01}_{01,01} A_{01},
\end{align*}
we obtain
\begin{align*}
    w_{20}(x,y)&=x^2 - p^{10}_{10,10} x - p^{01}_{10,10} y - p^{00}_{10,10},\\
    w_{11}(x,y)&=xy - p^{10}_{10,01} x - p^{01}_{10,01} y, \;\text{ and}\\
    w_{02}(x,y)&=y^2 - p^{10}_{01,01} x - p^{01}_{01,01} y - p^{00}_{01,01}.
\end{align*}
Hence $\mathfrak{X}$ is a bivariate $P$-polynomial association scheme with respect to a
$1$-elimination order on $\N^2$ if and only if either $p^{01}_{10,10}=0$ or $p^{10}_{01,01}=0$ holds.
Without loss of generality, assume $p^{10}_{01,01}=0$.
Then $\mathcal{C}=\{o,(0,1)\}=\mathcal{D}\cap (\{o\}\times \N)$
is the corresponding closed subset.
Since $\mathcal{R}^{-1}(\mathcal{C})$ is an equivalence relation on $X$,
$A_{01}$ is the adjacency matrix of a disjoint
union of complete graphs; write this graph as the disjoint union of $m$ copies of $K_r$.
The complementary relation $A_{10}$ is then the complete $m$-partite graph
with parts of size $r$.
Therefore, $\mathfrak{X}$ is the association scheme of the complete multipartite graph.

In this case, the above polynomials become
\begin{align*}
    w_{20}(x,y)&=x^2 -(m-2)r x - (m-1)r y - (m-1)r,\\
    w_{11}(x,y)&=xy - (r-1) x,\\
    w_{02}(x,y)&=y^2  - (r-2) y - (r-1).
\end{align*}
Let $I=\langle w_{20}(x,y),w_{11}(x,y),w_{02}(x,y)\rangle\subset \C[x,y]$
be the defining ideal of $\mathfrak{X}$.
By Corollary~\ref{cor:ideal}, $I$ is the vanishing ideal of the three eigenvalue points
$V=\{(r(m-1),r-1), (-r,r-1), (0,-1)\}$,
and hence the first eigenmatrix (with columns indexed by $A_{00},A_{10},A_{01}$) is
\[
P=
\begin{pmatrix}
1 & r(m-1) & r-1 \\
1 & -r & r-1 \\
1 & 0 & -1
\end{pmatrix}.
\]

\smallskip
\noindent
\emph{Block scheme:}
Fix $x_0\in X$. Since $\mathcal{C}$ records ``being in the same $K_r$-block'',
the block scheme $\mathfrak{X}_{x_0\mathcal{C}}$ is the one-class association scheme on $r$
vertices, namely the complete graph $K_r$.
Theorem~\ref{thm:block_scheme_Ppoly}~\ref{item:block_scheme_Ppoly_i} gives its associated polynomial as
$v^{\mathrm{bl}}_1(t)=v_{(0,1)}(0,t)=t$.
Moreover, Theorem~\ref{thm:ideal_block_quotient} yields
$I^{\mathrm{bl}}
=
I\cap \C[y]
=
\langle y^2-(r-2)y-(r-1)\rangle$,
which is exactly the defining ideal of the univariate $P$-polynomial structure of $K_r$.

\smallskip
\noindent
\emph{Quotient scheme:}
The quotient classes are the $m$ blocks, so the quotient scheme $\mathfrak{X}/\mathcal{C}$ is
the one-class association scheme on $m$ vertices, namely the complete graph $K_m$.
On the level of ideals,
$J^{\mathrm{qt}}
=
(I+\langle y-(r-1)\rangle)\cap \C[x]
=
\langle x^2-(m-2)r x-(m-1)r^2\rangle$ holds.
Since the original valency of $A_{10}$ is $k_{10}=r(m-1)$ and the quotient valency is
$k_{\epsilon_1\mathcal{C}}=m-1$, the rescaling automorphism in
Theorem~\ref{thm:ideal_block_quotient} is $\sigma_{\mathrm{qt}}(f)(t)=f(rt)$.
Accordingly,
\begin{align*}
    \frac{1}{r^2}w_{20}(r t,r-1)&=t^2-(m-2)t-(m-1),\\
    w_{11}(r t,r-1)&=w_{02}(r t,r-1)=0,
\end{align*}
and therefore
$I^{\mathrm{qt}}
=
\sigma_{\mathrm{qt}}(J^{\mathrm{qt}})
=
\langle t^2-(m-2)t-(m-1)\rangle\subset \C[t]$ holds.
This is exactly the defining ideal of the univariate $P$-polynomial structure of $K_m$.

\subsection{Nonbinary Johnson schemes}
Let $J_r(k,n)$ be the nonbinary Johnson scheme (assume $r\ge 3$ and $1\le k<n$) on
$X:=\{\bm{x}\in\{0,1,\ldots,r-1\}^n \mid w(\bm{x})=k\}$,
and let $\mathcal{R}\colon X\times X \to \mathcal{D}_{\mathrm{nbJ}}$, $\mathcal{R}(\bm{x},\bm{y})=(i,j)$ be defined by
\[
\mathcal{D}_{\mathrm{nbJ}}
:=
\{(i,j)\in \N^2 \mid 0\le i\le \min(k,n-k),\ 0\le j\le k-i\}
\]
and
\[
|\operatorname{supp}(\bm{x})\cap \operatorname{supp}(\bm{y})|=k-i,
\qquad
|\{t \mid x_t=y_t\neq 0\}|=k-i-j,
\]
where $\operatorname{supp}(\bm{x})=\{t \mid x_t \neq 0\}$ denotes the support of $\bm{x}$.
It is known that $J_r(k,n)$ is a bivariate $P$- and $Q$-polynomial association scheme with respect to a lexicographic order on $\N^2$ (that is, a $1$-elimination-type order),
and the associated bivariate polynomials of the bivariate $P$-polynomial structure are given by
\[
v_{(i,j)}(x,y)=(r-1)^i K_j(x,k-i,r-1) E_i(y,n-x,k-x),
\]
where $K_j$ is the Krawtchouk polynomial and $E_i$ is the Eberlein polynomial (\cite{bi,BKZZ2}).

Now consider
$\mathcal{C}
:=
\mathcal{D}_{\mathrm{nbJ}}\cap (\{o\}\times \N)
=
\{(0,j)\mid 0\le j\le k\}$.
The condition $i=0$ means $\operatorname{supp}(\bm{x})=\operatorname{supp}(\bm{y})$.
Hence $\mathcal{R}^{-1}(\mathcal{C})$ is exactly the equivalence relation
having the same support, so $\mathcal{C}$ is a nontrivial closed subset.
Therefore $J_r(k,n)$ is imprimitive, and the quotient-and-block formalism of
Section~\ref{subsec:quot} applies.

\smallskip
\noindent
\emph{Block scheme:}
Fix $\bm{x}_0\in X$ and write $S:=\operatorname{supp}(\bm{x}_0)\subset \{1,\ldots,n\}$.
Then the block $X_S:=\bm{x}_0\mathcal{C}$ consists of all words with support $S$,
so $|X_S|=(r-1)^k$.
For $\bm{x},\bm{y}\in X_S$, the relation $(0,j)$ records exactly the number of coordinates
of $S$ on which $\bm{x}$ and $\bm{y}$ differ.
Consequently the block scheme $\mathfrak{X}_{\bm{x}_0\mathcal{C}}$ is naturally isomorphic to the
Hamming scheme $H(k,r-1)$.
Theorem~\ref{thm:block_scheme_Ppoly}~\ref{item:block_scheme_Ppoly_i} recovers its univariate $P$-polynomials by
\[
v_j^{\mathrm{bl}}(t)
=
v_{(0,j)}(0,t)
=
K_j(t,k,r-1)
\qquad (0\le j\le k),
\]
since $E_0(\cdot,\cdot,\cdot)=1$.

\smallskip
\noindent
\emph{Quotient scheme:}
The quotient classes $X/\mathcal{C}$ are indexed by the $k$-subsets of $\{1,\ldots,n\}$.
If $\operatorname{supp}(\bm{x})=S$ and $\operatorname{supp}(\bm{y})=T$, then
$i=k-|S\cap T|$ depends only on $S$ and $T$.
Hence the quotient scheme $J_r(k,n)/\mathcal{C}$ is naturally isomorphic to the ordinary
Johnson scheme $J(n,k)$.
At the polynomial level, Theorem~\ref{thm:QuotientScheme}~\ref{item:QuotientScheme_P} starts from the sums
\[
\widetilde v_i(x_1,x_2):=\sum_{j=0}^{k-i} v_{(i,j)}(x_1,x_2)
\qquad
(0\le i\le \min(k,n-k)).
\]
After eliminating the block coordinate $x_2$ and applying the rescaling from Theorem~\ref{thm:QuotientScheme}~\ref{item:QuotientScheme_P}, one obtains the usual univariate $P$-polynomials of the Johnson scheme, namely the Eberlein polynomials
$E_i(\,\cdot\,,n,k)$ for $0\le i\le \min(k,n-k)$.

\subsection{Imprimitive distance-regular graphs}
Let $\Gamma$ be a distance-regular graph of diameter $d \geq 2$, and let $\mathfrak{X}=(X,\{A_i\}_{i=0}^d)$,
where $A_i$ is the distance-$i$ matrix of $\Gamma$,
be the associated distance scheme. 
By the three-term recurrence for distance-regular graphs 
\[
A_1A_i=b_{i-1}A_{i-1}+a_iA_i+c_{i+1}A_{i+1}
\;\; (0\le i\le d), 
\]
where $b_{-1}=c_{d+1}=0$, we introduce a system of polynomials $\{v_i(x)\}_{i=0}^d$ in one variable (i.e., a sequence of distance polynomials) as follows:  
\[
v_0(x)=1, \;\; v_1(x)=x, \;\; 
c_{i+1}v_{i+1}(x)=xv_i(x)-a_iv_i(x)-b_{i-1}v_{i-1}(x)\ (1\le i\le d-1). 
\]
Then $A_i=v_i(A_1)$ holds for $0\le i\le d$. This is a system of $P$-polynomials in one variable associated with $\Gamma$. 
In particular, we have 
\begin{equation}
\label{eq:DRG_v2}
v_2(x)=\frac{x^2-a_1x-k}{c_2}, 
\end{equation}
where $k:=b_0=k_1$. 

It is known that the distance scheme $\mathfrak{X}$ is imprimitive if and only if $\Gamma$ is either \emph{bipartite} or \emph{antipodal} (cf.~\cite{BCN1989}).
In what follows, we rephrase those properties in terms of monomial orders and describe an explicit system of polynomials in two variables induced by $\{v_i\}$ and associated with a bivariate $P$-polynomial structure. 
Our point is that the closed subset appearing in each case recovers,
through Theorems~\ref{thm:QuotientScheme} and~\ref{thm:block_scheme_Ppoly},
the familiar halved/folded constructions together with the corresponding univariate distance polynomials.

\begin{enumerate}[label=$(\roman*)$]
\item \textbf{The bipartite case.}
Assume that $\Gamma$ is bipartite. Then
\[
\mathcal{C}_{\mathrm{even}}
:=
\{0,2,4,\ldots,2\lfloor d/2\rfloor\}
\subset \{0,1,\ldots,d\}
\]
is a closed subset, and $\mathcal{R}^{-1}(\mathcal{C}_{\mathrm{even}})$ is the equivalence relation ``belonging to the same side of the bipartition''. Put
$m:=\Bigl\lfloor \frac{d}{2}\Bigr\rfloor$ and $m':=\Bigl\lfloor \frac{d-1}{2}\Bigr\rfloor$.
Now set
\[
\mathcal{D}_{\mathrm{bi}}
:=
\{(0,j)\mid 0\le j\le m\}
\ \sqcup\
\{(1,j)\mid 0\le j\le m'\}
\subset \N^2
\]
and relabel the distance matrices by
\[
A_{(0,j)}:=A_{2j}\ (0\le j\le m),
\qquad
A_{(1,j)}:=A_{2j+1}\ (0\le j\le m').
\]
In particular, $A_{\epsilon_1}=A_{(1,0)}=A_1$ and $A_{\epsilon_2}=A_{(0,1)}=A_2$.
Since $\Gamma$ is bipartite, we have $a_i=0$ for all $0\le i\le d$. Hence the univariate distance polynomials satisfy
\[
v_i(-x)=(-1)^i v_i(x)
\qquad (0\le i\le d),
\]
so for each $j$ there exist univariate polynomials $f_j(t),g_j(t)\in\C[t]$ such that
\begin{equation}
\label{eq:bipartite_evenodd}
v_{2j}(x)=f_j(v_2(x)),
\qquad
v_{2j+1}(x)=x\,g_j(v_2(x)).
\end{equation}
Here $v_2$ is given by \eqref{eq:DRG_v2}. More concretely, \eqref{eq:bipartite_evenodd} is determined recursively by
$f_0(t)=1$,
$g_0(t)=1$,
$f_1(t)=t$
and for $j\ge 1$,
\[
(k+c_2t)g_{j-1}(t)=b_{2j-2}f_{j-1}(t)+c_{2j}f_j(t),
\qquad
f_j(t)=b_{2j-1}g_{j-1}(t)+c_{2j+1}g_j(t).
\]
These are exactly the even/odd parts of the three-term recurrence under the condition $a_i=0$.

Define
\[
v_{(0,j)}(x_1,x_2):=f_j(x_2),
\qquad
v_{(1,j)}(x_1,x_2):=x_1\,g_j(x_2)
\qquad (0\le j\le m \text{ or } m').
\]
Since $A_2=v_2(A_1)$, we obtain
$A_{(0,j)}=v_{(0,j)}(A_1,A_2)$
and
$A_{(1,j)}=v_{(1,j)}(A_1,A_2)$.
Thus the original univariate polynomials $\{v_i\}$ produce a concrete bivariate $P$-polynomial family $\{v_{(0,j)},v_{(1,j)}\}$ in the bipartite case.

Let $\le$ be the lexicographic order with $x_1>x_2$. This order is of $1$-block type, and
\[
\mathcal{D}_{\mathrm{bi}}\cap (\{o\}\times \N)=\{(0,j)\mid 0\le j\le m\}
\]
corresponds exactly to $\mathcal{C}_{\mathrm{even}}$. 
One checks directly that $\mathfrak{X}$ becomes a bivariate $P$-polynomial association scheme with respect to $\le$.

\noindent\emph{Block scheme and quotient scheme.}
Fix $x_0\in X$ and put $X_0:=x_0\mathcal{C}_{\mathrm{even}}$. Then $X_0$ is one side of the bipartition. The block scheme
$\mathfrak{X}_{x_0\mathcal{C}_{\mathrm{even}}}
=
\bigl(X_0,\{R_{2j}\cap (X_0\times X_0)\}_{j=0}^m\bigr)$
is the distance scheme of the halved graph on $X_0$.
In particular, $\{v_{(0,j)}=f_j\}_{j=0}^m$ is the univariate $P$-polynomial system of the block scheme. 

On the other hand, the quotient scheme $\mathfrak{X}/\mathcal{C}_{\mathrm{even}}$ has two equivalence classes, namely the two sides of the bipartition. Hence $X/\mathcal{C}_{\mathrm{even}}$ is a $2$-point set.
Therefore $\mathfrak{X}/\mathcal{C}_{\mathrm{even}}$ is the trivial class-$1$ scheme on two vertices, i.e., the complete graph $K_2$. Its univariate $P$-polynomial system is just $u_0(x)=1$ and $u_1(x)=x$.

\item \textbf{The antipodal case.}
Assume that $\Gamma$ is antipodal, that is, distance $d$ defines an equivalence relation. Then
$\mathcal{C}_{\mathrm{anti}}:=\{0,d\}$
is a closed subset and $\mathcal{R}^{-1}(\mathcal{C}_{\mathrm{anti}})$ is the equivalence relation given by antipodal classes. 
Again put $m:=\Bigl\lfloor \frac{d}{2}\Bigr\rfloor$ and $m':=\Bigl\lfloor \frac{d-1}{2}\Bigr\rfloor$.
Now define
\[
\mathcal{D}_{\mathrm{anti}}
:=
\{(j,0)\mid 0\le j\le m\}
\ \sqcup\
\{(j,1)\mid 0\le j\le m'\}
\subset \N^2
\]
and relabel the distance matrices by
\[
A_{(j,0)}:=A_j\ (0\le j\le m),
\qquad
A_{(j,1)}:=A_{d-j}\ (0\le j\le m').
\]
In particular, $A_{\epsilon_1}=A_{(1,0)}=A_1$ and $A_{\epsilon_2}=A_{(0,1)}=A_d$.
With the same $1$-block lexicographic order $\le$ as above, we have
\[
\mathcal{D}_{\mathrm{anti}}\cap (\{o\}\times \N)=\{o,(0,1)\},
\]
which corresponds to $\mathcal{C}_{\mathrm{anti}}$. 
Using the original univariate distance polynomials, we first set
$v_{(j,0)}(x_1,x_2):=v_j(x_1)$ for $0\le j\le m$,
so that $A_{(j,0)}=v_{(j,0)}(A_1,A_d)$. To recover $A_{(j,1)}=A_{d-j}$ from $(A_1,A_d)$, apply the three-term recurrence backwards:
\begin{align*}
A_1A_{d}&=b_{d-1}A_{d-1}+a_dA_d,\\
A_1A_{d-j+1}&=b_{d-j}A_{d-j}+a_{d-j+1}A_{d-j+1}+c_{d-j+2}A_{d-j+2}
\quad (j\ge 1).
\end{align*}
This suggests the recursive definition
$v_{(0,1)}(x_1,x_2):=x_2$ and
$v_{(-1,1)}(x_1,x_2):=0$
and for $j\ge 1$,
\begin{equation}
\label{eq:antipodal_rec}
v_{(j,1)}(x_1,x_2)
:=
\frac{1}{b_{d-j}}
\Bigl(
 x_1v_{(j-1,1)}(x_1,x_2)
-a_{d-j+1}v_{(j-1,1)}(x_1,x_2)
-c_{d-j+2}v_{(j-2,1)}(x_1,x_2)
\Bigr).
\end{equation}
Then, inductively, $A_{(j,1)}=A_{d-j}=v_{(j,1)}(A_1,A_d)$ for $0\le j\le m'$.
Moreover, \eqref{eq:antipodal_rec} shows that the leading monomial of $v_{(j,1)}$ is $x_1^j x_2$. 
One checks directly that $\mathfrak{X}$ becomes a bivariate $P$-polynomial association scheme with respect to $\le$.

\noindent\emph{Block scheme and quotient scheme.}
Fix $x_0\in X$, let $X_0:=x_0\mathcal{C}_{\mathrm{anti}}$, and write $r:=|X_0|$. Then $X_0$ is the antipodal class containing $x_0$.
The corresponding block scheme is the class-$1$ scheme given by the complete graph $K_r$, whose univariate $P$-polynomial system is $w_0(x)=1$ and $w_1(x)=x$.

The quotient scheme $\mathfrak{X}/\mathcal{C}_{\mathrm{anti}}$ is the antipodal quotient, or folded graph. For $1\le i<m$ one has $i\mathcal{C}_{\mathrm{anti}}=\{(i,0),(i,1)\}=\{i,d-i\}$, so the quotient adjacency matrices are
$A_{\bm{i}}=A_{(i,0)}+A_{(i,1)}=A_i+A_{d-i}$ for $1\le i<m$,
and if $d$ is even then the middle class satisfies $A_{\bm{d/2}}=A_{(d/2,0)}=A_{d/2}$.
Hence the quotient scheme is a distance scheme of class $m$, so it has a univariate $P$-polynomial system $\{v^{\mathrm{qt}}_i\}_{i=0}^m$.
\end{enumerate}

\section{Multivariate polynomial association schemes, direct products, and crested products}
\label{sec:multpoly_products}
\subsection{Direct products and crested products}
In this subsection, we study two basic product constructions from the viewpoint of multivariate polynomiality: the \emph{direct product} and the \emph{crested product}.
On the $P$-side, we fix multivariate polynomial structures on the factors together with monomial orders adapted to closed subsets, and show that the crested product itself naturally carries an $(\ell_1+\ell_2)$-variate $P$-polynomial structure.
Analogous statements hold on the $Q$-side: multivariate $Q$-polynomial structures compatible with dual closed subsets also exhibit additivity of variables.

The \emph{direct product} of two association schemes $\mathfrak{X}^{(r)}=(X^{(r)},\mathcal{R}^{(r)},\mathcal{I}_r)$ $(r=1,2)$ is defined as follows.
Let $X:=X^{(1)}\times X^{(2)}$ and define a relation $\mathcal{R}$ on $X\times X$ by
\[
\mathcal{R}\bigl((x_1,x_2),(y_1,y_2)\bigr)
=
\bigl(\mathcal{R}^{(1)}(x_1,y_1),\mathcal{R}^{(2)}(x_2,y_2)\bigr)
\]
for
$x_1,y_1\in X^{(1)}$ and $x_2,y_2\in X^{(2)}$.
Then $\mathfrak{X}^{(1)}\otimes\mathfrak{X}^{(2)}:=(X,\mathcal{R},\mathcal{I}_1\times\mathcal{I}_2)$ is a commutative association scheme.
The adjacency matrices of the direct product are given by $A_{(i,j)} = A^{(1)}_i \otimes A^{(2)}_j$
for $(i,j)\in \mathcal{I}_1\times\mathcal{I}_2$,
where 
$A^{(1)}_i$ and $A^{(2)}_j$ denote the adjacency matrices of $\mathfrak{X}^{(1)}$ and $\mathfrak{X}^{(2)}$ respectively
and
$\otimes$ denotes the Kronecker (tensor) product of matrices.

Let $r=1,2$, and write
$\mathfrak{X}^{(r)}=(X^{(r)},\{A^{(r)}_{\xi}\}_{\xi\in\mathcal{D}_r})$
for an $\ell_r$-variate $P$-polynomial association scheme on $\mathcal{D}_r\subset\N^{\ell_r}$,
and let $\le_r$ denote its monomial order.
Also define an order $\le_{\otimes}$ on $\N^{\ell_1+\ell_2}=\N^{\ell_1}\times\N^{\ell_2}$ by
\begin{equation}
\label{eq:product_order_multpoly}
(\alpha,\gamma)\le_{\otimes}(\beta,\delta)
\iff
\bigl(\alpha<_1\beta\bigr)
\ \text{or}\ 
\bigl(\alpha=\beta\ \text{and}\ \gamma\le_2\delta\bigr)
\qquad
(\alpha,\beta\in\N^{\ell_1},\ \gamma,\delta\in\N^{\ell_2})
\end{equation}
This is a monomial order.
For direct products, multivariate $P$-polynomial structures lift directly with respect to the product order.

\begin{prop}
\label{prop:direct_prod_multivar}
Under the notation above, the direct product $\mathfrak{X}^{(1)}\otimes \mathfrak{X}^{(2)}$ is
an $(\ell_1+\ell_2)$-variate $P$-polynomial association scheme on $\mathcal{D}_{\otimes}:=\mathcal{D}_1\times\mathcal{D}_2\subset\N^{\ell_1+\ell_2}$,
with monomial order $\le_{\otimes}$ given by \eqref{eq:product_order_multpoly}.
\end{prop}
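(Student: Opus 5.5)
We verify the three conditions of Definition~\ref{df:abPpoly} directly for $\mathcal{D}_\otimes=\mathcal{D}_1\times\mathcal{D}_2$, with the relabeling $A_{(\alpha,\gamma)}:=A^{(1)}_\alpha\otimes A^{(2)}_\gamma$. We first record that $\le_\otimes$ is a monomial order. It is total, and $o$ is minimal because $o\le_1\alpha$ and $o\le_2\gamma$. For translation invariance, note that $\alpha<_1\beta$ implies $\alpha+\alpha'<_1\beta+\alpha'$, since $\le_1$ is a monomial order and translation is injective; the case of equality in the first component is handled by $\le_2$ in the same way.

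\emph{Condition~\ref{item:abPpoly_i}.} If $(m,m')\le(n,n')$ componentwise with $(n,n')\in\mathcal{D}_1\times\mathcal{D}_2$, then $m\in\mathcal{D}_1$ and $m'\in\mathcal{D}_2$ by condition~\ref{item:abPpoly_i} for each factor. The unit vectors of $\N^{\ell_1+\ell_2}$ are $(\epsilon_i,o)$ and $(o,\epsilon_j)$, and these lie in $\mathcal{D}_\otimes$ because $o\in\mathcal{D}_r$.

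\emph{Condition~\ref{item:abPpoly_ii}.} Observe that $A_{(\epsilon_i,o)}=A^{(1)}_{\epsilon_i}\otimes I$ and $A_{(o,\epsilon_j)}=I\otimes A^{(2)}_{\epsilon_j}$. These matrices commute, and $\bm{A}^{(\beta,\delta)}=(\bm{A}^{(1)})^\beta\otimes(\bm{A}^{(2)})^\delta$. Put $v_{(\alpha,\gamma)}(\bm{x},\bm{y}):=v^{(1)}_\alpha(\bm{x})\,v^{(2)}_\gamma(\bm{y})$. Then
\[
v_{(\alpha,\gamma)}(\bm{A})=v^{(1)}_\alpha(\bm{A}^{(1)})\otimes v^{(2)}_\gamma(\bm{A}^{(2)})=A_{(\alpha,\gamma)}.
\]
The monomials of $v_{(\alpha,\gamma)}$ are $\bm{x}^\beta\bm{y}^\delta$ with $\beta\in\mathcal{D}_1$, $\beta\le_1\alpha$ and $\delta\in\mathcal{D}_2$, $\delta\le_2\gamma$. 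Each such exponent lies in $\mathcal{D}_\otimes$. Moreover $(\beta,\delta)\le_\otimes(\alpha,\gamma)$, with equality only for the single monomial $(\alpha,\gamma)$, whose coefficient is the product of the two nonzero leading coefficients. Hence $\MD(v_{(\alpha,\gamma)})=(\alpha,\gamma)$.

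\emph{Condition~\ref{item:abPpoly_iii}.} This is the only point needing care, because the lexicographic-type comparison must absorb any change in the second component. Take a generator $A_{(\epsilon_i,o)}$ and $(\alpha,\gamma)\in\mathcal{D}_\otimes$. Then
\[
A_{(\epsilon_i,o)}\bm{A}^{(\alpha,\gamma)}=\bigl(A^{(1)}_{\epsilon_i}(\bm{A}^{(1)})^\alpha\bigr)\otimes(\bm{A}^{(2)})^\gamma .
\]
By condition~\ref{item:abPpoly_iii} for $\mathfrak{X}^{(1)}$, the first factor is a combination of $(\bm{A}^{(1)})^\beta$ with $\beta\in\mathcal{D}_1$ and $\beta\le_1\alpha+\epsilon_i$. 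The resulting terms are $\bm{A}^{(\beta,\gamma)}$ with $(\beta,\gamma)\in\mathcal{D}_\otimes$. If $\beta<_1\alpha+\epsilon_i$, then $(\beta,\gamma)<_\otimes(\alpha+\epsilon_i,\gamma)$ regardless of $\gamma$. If $\beta=\alpha+\epsilon_i$, the two sides are equal. Now take a generator $A_{(o,\epsilon_j)}$. The product is a combination of $\bm{A}^{(\alpha,\delta)}$ with $\delta\in\mathcal{D}_2$ and $\delta\le_2\gamma+\epsilon_j$, so $(\alpha,\delta)\le_\otimes(\alpha,\gamma+\epsilon_j)$, as required.

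An alternative route would use Proposition~\ref{prop:P-TFAE} together with $p^{(\beta,\delta)}_{(\epsilon_i,o),(\alpha,\gamma)}=p^{\beta}_{\epsilon_i,\alpha}\,p^{\delta}_{o,\gamma}$. The direct verification above is simpler, and no step is a serious obstacle.
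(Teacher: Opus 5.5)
Your proof is correct, but it takes a different route from the paper. The paper does not construct the associated polynomials. It labels $A_{(\alpha,\gamma)}=A^{(1)}_\alpha\otimes A^{(2)}_\gamma$ and invokes the intersection-number criterion of Proposition~\ref{prop:P-TFAE}. It uses the factorization $A_{(\epsilon_i,o)}A_{(\alpha,\gamma)}=(A^{(1)}_{\epsilon_i}A^{(1)}_\alpha)\otimes A^{(2)}_\gamma$ together with Lemma~\ref{lem:p_support}~\ref{item:p_support_ii} on each factor. This gives both the support bound and the nonvanishing of the top coefficient $p^{\alpha+\epsilon_i}_{\epsilon_i,\alpha}$, which is exactly the ``alternative route'' you mention at the end.

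Your argument instead checks Definition~\ref{df:abPpoly} directly at the level of monomials $\bm{A}^{(\beta,\delta)}=(\bm{A}^{(1)})^\beta\otimes(\bm{A}^{(2)})^\delta$. This buys several things:
\begin{itemize}
\item it uses only the defining conditions of the factors, not the cited equivalence of Proposition~\ref{prop:P-TFAE} or the cited nonvanishing part of Lemma~\ref{lem:p_support};
\item it gives the associated polynomials explicitly as $v^{(1)}_\alpha(\bm{x})\,v^{(2)}_\gamma(\bm{y})$;
\item it verifies that $\le_\otimes$ is a monomial order, which the paper only asserts.
\end{itemize}

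The paper's structure-constant formulation has its own advantage: it is the template reused for crested products in Theorem~\ref{thm:crested_multivar}. There, generators such as $A^{(1)}_{(\epsilon_i,o)}\otimes A^{(2)}_{\bm{o}}$ are no longer of the form $(\cdot)\otimes I$. Monomials in those generators do not split into tensor factors as cleanly as in your computation, so a direct polynomial construction would be messier than tracking intersection numbers.
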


\begin{proof}
Label adjacency matrices of $\mathfrak{X}^{(1)}\otimes \mathfrak{X}^{(2)}$ by
$A_{(\alpha,\gamma)}:=A^{(1)}_{\alpha}\otimes A^{(2)}_{\gamma}$ for
$(\alpha,\gamma)\in\mathcal{D}_{\otimes}$
and apply Proposition~\ref{prop:P-TFAE}.

First, it is clear that $\mathcal{D}_{\otimes}$ satisfies Definition~\ref{df:abPpoly}~\ref{item:abPpoly_i}.
Next, for $1\le i\le \ell_1$ and $(\alpha,\gamma)\in\mathcal{D}_{\otimes}$, we have
$A_{(\epsilon_i,o)}A_{(\alpha,\gamma)}
=(A^{(1)}_{\epsilon_i}A^{(1)}_{\alpha})\otimes A^{(2)}_{\gamma}$.
Applying Lemma~\ref{lem:p_support}~\ref{item:p_support_ii} to $\mathfrak{X}^{(1)}$,
every term $A^{(1)}_{\alpha'}\otimes A^{(2)}_{\gamma}$ appearing on the right-hand side
satisfies $\alpha'\le_1 \alpha+\epsilon_i$, hence its index $(\alpha',\gamma)$ is at most
$(\alpha+\epsilon_i,\gamma)$.
Moreover, if $(\alpha+\epsilon_i,\gamma)\in\mathcal{D}_{\otimes}$, then
$p^{\alpha+\epsilon_i}_{\epsilon_i,\alpha}\neq0$, so the coefficient of
$A_{(\alpha+\epsilon_i,\gamma)}$ is nonzero.

Similarly, for $1\le j\le \ell_2$ and $(\alpha,\gamma)\in\mathcal{D}_{\otimes}$, we have
$A_{(o,\epsilon_j)}A_{(\alpha,\gamma)}
=A^{(1)}_{\alpha}\otimes (A^{(2)}_{\epsilon_j}A^{(2)}_{\gamma})$
and applying Lemma~\ref{lem:p_support}~\ref{item:p_support_ii} to $\mathfrak{X}^{(2)}$,
all indices appearing on the right are at most $(\alpha,\gamma+\epsilon_j)$,
with nonzero top coefficient whenever $(\alpha,\gamma+\epsilon_j)\in\mathcal{D}_{\otimes}$.

Therefore, by Proposition~\ref{prop:P-TFAE},
$\mathfrak{X}^{(1)}\otimes \mathfrak{X}^{(2)}$ is an $(\ell_1+\ell_2)$-variate
$P$-polynomial association scheme on $\mathcal{D}_{\otimes}$.
\end{proof}

For direct products of univariate $P$- or $Q$-polynomial association schemes, \cite{BKZZ} shows that multivariate polynomial structures exist for arbitrary monomial orders.
We will later use Proposition~\ref{prop:direct_prod_P/Q-poly} in exactly that form.
Here, $\mathcal{D}\subset \N^\ell$ is said to be of \emph{rectangular type} if there exist natural numbers $d_1,\ldots,d_\ell$ such that $\mathcal{D}=\{(\alpha_1,\ldots,\alpha_\ell)\in\N^\ell \mid \alpha_i\le d_i \text{ for all } i\}$.

\begin{prop}[cf.~\cite{BKZZ}]
\label{prop:direct_prod_P/Q-poly}
    The following hold:
    \begin{enumerate}[label=$(\roman*)$]
    \item \label{item:direct_prod_P} if $\{\mathfrak{X}^{(k)}\}^\ell_{k=1}$ are $P$-polynomial,
    then $\bigotimes^{\ell}_{k=1}\mathfrak{X}^{(k)}$ is an $\ell$-variate $P$-polynomial association scheme
    on a domain $\mathcal{D}$ of rectangular type with respect to any monomial order $\le$;
    \item \label{item:direct_prod_Q} if $\{\mathfrak{X}^{(k)}\}^{\ell^\ast}_{k=1}$ are $Q$-polynomial,
    then $\bigotimes^{\ell^\ast}_{k=1}\mathfrak{X}^{(k)}$ is an $\ell^\ast$-variate $Q$-polynomial association scheme
    on a domain $\mathcal{D}^\ast$ of rectangular type  with respect to any monomial order $\le$.
    \end{enumerate}
\end{prop}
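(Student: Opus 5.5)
The plan is to use the eigenvalue criteria, Theorem~\ref{thm:eigen} for \ref{item:direct_prod_P} and Theorem~\ref{thm:eigenQ} for \ref{item:direct_prod_Q}. Let $\mathfrak{X}^{(k)}$ be $P$-polynomial of class $d_k$ with distance polynomials $v^{(k)}_0,\ldots,v^{(k)}_{d_k}$, where $\deg v^{(k)}_n=n$. Index the relations of the direct product by the rectangular domain $\mathcal{D}=\prod_k\{0,\ldots,d_k\}$, with $A_\alpha=\bigotimes_k A^{(k)}_{\alpha_k}$. The primitive idempotents are $E_{\bm j}=\bigotimes_k E^{(k)}_{j_k}$, and the first eigenmatrix factorizes as $P_\alpha(\bm j)=\prod_k P^{(k)}_{\alpha_k}(j_k)$. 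In particular $\theta_k(\bm j)=P^{(k)}_1(j_k)$. Note that $\mathcal{D}$ satisfies Definition~\ref{df:abPpoly}~\ref{item:abPpoly_i} and contains every $\epsilon_k$ (we assume each $d_k\ge 1$).

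For condition \ref{item:eigen_a}, set $v_\alpha(\bm{x}):=\prod_k v^{(k)}_{\alpha_k}(x_k)$. This gives $P_\alpha(\bm j)=v_\alpha(\theta_1(\bm j),\ldots,\theta_\ell(\bm j))$. Every monomial $\bm{x}^\beta$ in $v_\alpha$ has $\beta\le\alpha$ componentwise, so $\beta\in\mathcal{D}$. For \emph{any} monomial order, componentwise $\beta\le\alpha$ means $\alpha=\beta+\gamma$ with $\gamma\in\N^\ell$. Then $\gamma\ge o$ together with compatibility gives $\beta\le\alpha$ in the order. Hence $\MD(v_\alpha)=\alpha$, and the leading coefficient, a product of nonzero leading coefficients, is nonzero.

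For condition \ref{item:eigen_b}, suppose $\alpha\in\mathcal{D}$ and $\alpha+\epsilon_i\notin\mathcal{D}$. Because $\mathcal{D}$ is rectangular, this forces $\alpha_i=d_i$. Let $w^{(i)}(x_i)=\prod_{j}(x_i-P^{(i)}_1(j))$ be the monic minimal polynomial of $A^{(i)}_1$; it has degree $d_i+1$ because $\mathfrak{X}^{(i)}$ has $d_i+1$ distinct eigenvalues $P^{(i)}_1(j)$. Define $w_{\alpha+\epsilon_i}(\bm{x}):=w^{(i)}(x_i)\prod_{k\ne i}x_k^{\alpha_k}$. This is monic of multidegree $\alpha+\epsilon_i$, by the same componentwise-domination argument as before. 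It vanishes at every $(\theta_1(\bm j),\ldots,\theta_\ell(\bm j))$ because the factor $w^{(i)}(\theta_i(\bm j))$ is zero.

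Theorem~\ref{thm:eigen} then yields \ref{item:direct_prod_P}. Part \ref{item:direct_prod_Q} is the verbatim dual argument: use $Q_{\bm j}(\alpha)=\prod_k Q^{(k)}_{j_k}(\alpha_k)$ and the $Q$-polynomials $v^{\ast(k)}$, and apply Theorem~\ref{thm:eigenQ}. The only mildly delicate point is the order-independence of the multidegree, which rests entirely on componentwise domination. This is exactly why the rectangular shape of the domain and the factorized form of the polynomials let the argument work for every monomial order.
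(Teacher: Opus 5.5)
Your proof is correct. The paper gives no proof of this proposition and defers to \cite{BKZZ}, so the comparison is with the argument that the paper's own tools give, mirroring its proof of Proposition~\ref{prop:direct_prod_multivar}.

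That argument works on the intersection-number side. Because $A_{\epsilon_k}$ acts only on the $k$-th tensor factor, $p^{\beta}_{\epsilon_k,\alpha}=p^{(k)\,\beta_k}_{1,\alpha_k}\prod_{m\neq k}\delta_{\alpha_m\beta_m}$ can be nonzero only when $\beta$ is componentwise at most $\alpha+\epsilon_k$. Moreover, $p^{(k)\,\alpha_k+1}_{1,\alpha_k}\neq 0$. Hence Proposition~\ref{prop:P-TFAE} applies for every monomial order, and Proposition~\ref{prop:Q-TFAE} does likewise using the factorized Krein numbers.

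You use the same key observation, that componentwise domination implies domination in any monomial order. The difference is that you carry it to the spectral side via Theorems~\ref{thm:eigen} and~\ref{thm:eigenQ}, using the factorization of the eigenmatrices.

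Your route produces explicit data. The associated polynomials are the products $\prod_k v^{(k)}_{\alpha_k}(x_k)$. Your $w_{\alpha+\epsilon_i}$ already have all lower monomials in $\mathcal{D}$, so they are exactly the generators in \eqref{eq:ideal}. This shows $I=\langle w^{(1)}(x_1),\ldots,w^{(\ell)}(x_\ell)\rangle$ for every order. It also means you never need to reduce lower monomials lying outside $\mathcal{D}$, a step the (ii)$\Rightarrow$(i) direction of Theorem~\ref{thm:eigen} tacitly requires.

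The intersection-number route is more self-contained on the $Q$-side. It does not rely on Theorem~\ref{thm:eigenQ}, which the paper only asserts as a parallel of Theorem~\ref{thm:eigen} without proof.
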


On the other hand, the crested product was introduced by Bailey--Cameron~\cite{baileycameron2005}.
It contains both the direct product and the wreath product (``nesting'') as special cases.
We do not discuss the wreath product here; see \cite{baileycameron2005} for details.
Moreover, crested products are always imprimitive, so they fit well with the main results of this paper.
We first recall the definition.

\begin{df}[Crested Product]\label{df:crested_product}
Let $\mathfrak{X}^{(r)}=(X^{(r)},\mathcal{R}^{(r)},\mathcal{I}_r)$ be commutative association schemes ($r=1,2$),
with adjacency matrices $\{A^{(r)}_i\}_{i\in\mathcal{I}_r}$.
Take closed subsets $\mathcal{C}_r\subset\mathcal{I}_r$ ($r=1,2$).
Define the family of $01$-matrices $\{A_k\}_{k\in\mathcal{K}}$ on $X:=X^{(1)}\times X^{(2)}$ by
\begin{align*}
  &\mathcal{K}:=(\mathcal{C}_1\times \mathcal{I}_2)\ \sqcup\ \bigl((\mathcal{I}_1\setminus\mathcal{C}_1)\times (\mathcal{I}_2/\mathcal{C}_2)\bigr),\\
  &A_{(i,j)}:=A^{(1)}_i\otimes A^{(2)}_j\qquad (i\in\mathcal{C}_1,\ j\in\mathcal{I}_2),\\
  &A_{(i,\bm{j})}:=A^{(1)}_i\otimes A^{(2)}_{\bm{j}}\qquad (i\in\mathcal{I}_1\setminus\mathcal{C}_1,\ \bm{j}\in\mathcal{I}_2/\mathcal{C}_2),
\end{align*}
where for $\bm{j}\in\mathcal{I}_2/\mathcal{C}_2$ we set
$A^{(2)}_{\bm{j}}:=\sum_{j\in\bm{j}}A^{(2)}_j$ (see Section~\ref{sec:ImprimitiveAS}).
The association scheme generated by $\{A_k\}_{k\in\mathcal{K}}$ is called
the \emph{crested product of $\mathfrak{X}^{(1)}$ and $\mathfrak{X}^{(2)}$
with respect to the closed subsets $\mathcal{C}_1,\mathcal{C}_2$}
(see Bailey--Cameron~\cite{baileycameron2005}).
\end{df}

\begin{rem}\label{rem:crested_special_cases}
Definition~\ref{df:crested_product} includes the direct and wreath products in the following sense:
\begin{enumerate}[label=$(\roman*)$]
\item If $\mathcal{C}_1=\mathcal{I}_1$ or $\mathcal{C}_2=\{0\}$
(equivalently, $\mathfrak{X}^{(2)}/\mathcal{C}_2\cong \mathfrak{X}^{(2)}$),
the crested product coincides with the direct product $\mathfrak{X}^{(1)}\otimes \mathfrak{X}^{(2)}$.
\item If $\mathcal{C}_1=\{0\}$ and $\mathcal{C}_2=\mathcal{I}_2$
(equivalently, $\mathfrak{X}^{(2)}/\mathcal{C}_2$ is the trivial scheme),
the crested product coincides with the wreath product (nesting).
\end{enumerate}
\end{rem}

From now on, assume further that $\le_1$ is of $s_1$-elimination type and $\le_2$ is of $s_2$-block type, and that, for $r=1,2$, under the given labeling,
\[
\mathcal{D}_r\subset\N^{s_r}\times\N^{\ell_r-s_r}
\qquad\text{and}\qquad
\mathcal{C}_r=\mathcal{D}_r\cap (\{o\}\times \N^{\ell_r-s_r}).
\]
Here $s_r=0$ or $s_r=\ell_r$ is also allowed; in these cases, $\le_r$ can be regarded as an ordinary monomial order.
Also set
$\jmath_2:\N^{s_2}\to\N^{s_2}\times\N^{\ell_2-s_2}$,
$\jmath_2(\gamma):=(\gamma,o)$,
$\mathcal{D}_{2,\mathrm{qt}}:=\jmath_2^{-1}(\mathcal{D}_2)$
and let $\le_{2,\mathrm{qt}}$ be the monomial order on $\N^{s_2}$ induced from $\le_2$ (Lemma~\ref{lem:induced_monomial_order}).
Since $\le_2$ is of $s_2$-block type, equivalence classes in the quotient of $\mathfrak{X}^{(2)}$ by $\mathcal{C}_2$
are determined by the first $s_2$ components. For each $\gamma\in\mathcal{D}_{2,\mathrm{qt}}$, write
$\bm{\gamma}:=(\gamma,o)\mathcal{C}_2\in \mathcal{D}_2/\mathcal{C}_2$ and
$A^{(2)}_{\bm{\gamma}}:=\sum_{(\gamma,\delta)\in\bm{\gamma}}A^{(2)}_{(\gamma,\delta)}$
for the corresponding class and matrix.
Let $p_2:=\sum_{\xi\in\mathcal{C}_2}k^{(2)}_{\xi}$
and
$M_{\mathcal{C}_2}:=\frac{1}{p_2}\sum_{\xi\in\mathcal{C}_2}A^{(2)}_{\xi}=\frac{1}{p_2}A^{(2)}_{\bm{o}}$,
where $k^{(2)}_{\xi}$ denotes the valency in $\mathfrak{X}^{(2)}$.
Then the index set of the crested product is
\begin{equation}
\label{eq:crested_domain_multivar}
\mathcal{D}_{\mathrm{cr}}
:=
\mathcal{D}^1_{\mathrm{cr}}\sqcup\mathcal{D}^2_{\mathrm{cr}}
\subset\N^{\ell_1+\ell_2},
\end{equation}
where 
\begin{align*}
\mathcal{D}^1_{\mathrm{cr}}&:=\{(o,\beta,\gamma,\delta)\mid (o,\beta)\in\mathcal{C}_1,\ (\gamma,\delta)\in\mathcal{D}_2\},\\
\mathcal{D}^2_{\mathrm{cr}}&:=\{(\alpha,\beta,\gamma,o)\mid (\alpha,\beta)\in\mathcal{D}_1,\ \alpha \neq o,\ \gamma\in\mathcal{D}_{2,\mathrm{qt}}\},   
\end{align*}
and the corresponding adjacency matrices are
\begin{align}
B_{(o,\beta,\gamma,\delta)}&:=A^{(1)}_{(o,\beta)}\otimes A^{(2)}_{(\gamma,\delta)}
\qquad \text{for}\ (o,\beta,\gamma,\delta)\in\mathcal{D}^1_{\mathrm{cr}},
\label{eq:crested_basis_first}\\
B_{(\alpha,\beta,\gamma,o)}&:=A^{(1)}_{(\alpha,\beta)}\otimes A^{(2)}_{\bm{\gamma}}
\qquad \text{for}\ (\alpha,\beta,\gamma,o)\in\mathcal{D}^2_{\mathrm{cr}}.
\label{eq:crested_basis_second}
\end{align}

\begin{thm}
\label{thm:crested_multivar}
Under the assumptions above,
the crested product of $\mathfrak{X}^{(1)}$ and $\mathfrak{X}^{(2)}$ with respect to $(\mathcal{C}_1,\mathcal{C}_2)$
is an $(\ell_1+\ell_2)$-variate $P$-polynomial association scheme on
\eqref{eq:crested_domain_multivar}, and its monomial order is
$\le_{\otimes}$ defined by \eqref{eq:product_order_multpoly}.
\end{thm}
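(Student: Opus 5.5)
We plan to argue exactly as in Proposition~\ref{prop:direct_prod_multivar}, by verifying condition~\ref{item:P-TFAE_ii} of Proposition~\ref{prop:P-TFAE} for the crested product. The crested product is labeled by $\mathcal{D}_{\mathrm{cr}}$ via \eqref{eq:crested_basis_first} and \eqref{eq:crested_basis_second}.

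\textbf{Step 1: the domain and the generators.} First check that $\mathcal{D}_{\mathrm{cr}}$ is closed downward (Definition~\ref{df:abPpoly}~\ref{item:abPpoly_i}). Decreasing coordinates in $\mathcal{D}^1_{\mathrm{cr}}$ stays in $\mathcal{D}^1_{\mathrm{cr}}$, since $\mathcal{C}_1$ and $\mathcal{D}_2$ are downward closed. For $(\alpha,\beta,\gamma,o)\in\mathcal{D}^2_{\mathrm{cr}}$, decreasing $\alpha$ to $o$ lands in $\mathcal{D}^1_{\mathrm{cr}}$ because $(\gamma,o)\in\mathcal{D}_2$. Next check that the unit vectors are present. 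The generators are $B_{\epsilon_i}$ for the four blocks of coordinates. For $i\le s_1$ we get $A^{(1)}_{(\epsilon_i,o)}\otimes A^{(2)}_{\bm{o}}$. The other generators are $A^{(1)}_{(o,\epsilon)}\otimes I$, $I\otimes A^{(2)}_{(\epsilon,o)}$ and $I\otimes A^{(2)}_{(o,\epsilon)}$.

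\textbf{Step 2: products with the generators.} Multiplying a generator of the last three types by $B_\kappa$ acts on a single tensor factor. We treat the four resulting cases by type of $\kappa$ and of the generator.
\begin{itemize}
\item If $\kappa\in\mathcal{D}^1_{\mathrm{cr}}$ and the generator is of type $(o,\beta)$ or $(\gamma,\delta)$, the product is a genuine direct-product computation. Lemma~\ref{lem:p_support}~\ref{item:p_support_ii} applied to the relevant factor gives the bound $\le_\otimes$ and the nonvanishing top coefficient, as in Proposition~\ref{prop:direct_prod_multivar}. The closedness of $\mathcal{C}_1$ keeps the first factor inside $\mathcal{C}_1$.
\item If $\kappa\in\mathcal{D}^2_{\mathrm{cr}}$ and the generator acts on the second factor, we use $A^{(2)}_{(\gamma',\delta')}A^{(2)}_{\bm{\gamma}}$. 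This equals a multiple of $A^{(2)}_{(\gamma',\delta')}A^{(2)}_{\bm{\gamma}}M_{\mathcal{C}_2}$. By \eqref{eq:quotient_scheme_A} and the quotient structure of Theorem~\ref{thm:QuotientScheme}~\ref{item:QuotientScheme_P} (or Lemma~\ref{lem:p_support} applied after summing over a class), it is a combination of $A^{(2)}_{\bm{\gamma}''}$ with $\gamma''\le_{2,\mathrm{qt}}\gamma+\gamma'$. Since $\le_2$ is of $s_2$-block type, this gives the required bound. Lemma~\ref{lem:qt_equiv} identifies classes by first components. When $\gamma'=o$, so that the generator lies in $\mathcal{C}_2$, the product is just a multiple $k\,A^{(2)}_{\bm{\gamma}}$, and the index $(\alpha,\beta,\gamma,o)$ is certainly $\le_\otimes(\alpha,\beta,\gamma,\delta')$.
\item If the generator is $B_{(\epsilon_i,o,o,o)}$ with $i\le s_1$, the first factor is $A^{(1)}_{(\epsilon_i,o)}A^{(1)}_{(\alpha,\beta)}$. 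This is bounded by Lemma~\ref{lem:p_support} in $\le_1$. Terms with first block zero lie in $\mathcal{C}_1$, and then $B$ is split into $\mathcal{D}^1_{\mathrm{cr}}$-basis elements $A^{(1)}_{(o,\beta')}\otimes A^{(2)}_{(\gamma'',\delta'')}$ with $\gamma''=\gamma$. These have first-factor index strictly below $(\alpha+\epsilon_i,\beta)$ in $\le_1$, so $\le_\otimes$, being lexicographic in the factors, absorbs any second-factor index.
\end{itemize}

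\textbf{Step 3: the nonvanishing coefficient.} We must show $p^{\kappa+\epsilon_i}_{\epsilon_i,\kappa}\ne0$ whenever $\kappa+\epsilon_i\in\mathcal{D}_{\mathrm{cr}}$. In the tensor cases this follows from the corresponding factor statements. One case is new: $\kappa\in\mathcal{D}^1_{\mathrm{cr}}$ and the generator is $(\epsilon_i,o,o,o)$. Here $A^{(1)}_{(\epsilon_i,o)}A^{(1)}_{(o,\beta)}$ has a nonzero coefficient on $A^{(1)}_{(\epsilon_i,\beta)}$. Also $A^{(2)}_{\bm{o}}A^{(2)}_{(\gamma,\delta)}$ is a positive multiple of $A^{(2)}_{\bm{\gamma}}$ plus nothing else. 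This holds by Lemma~\ref{lem:closed_subset}~\ref{lem:closed_subset:mult}, since $(\gamma,\delta)\equiv(\gamma,o)$.

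\textbf{Main obstacle.} The main obstacle is the bookkeeping in this mixed case. Products crossing between $\mathcal{D}^1_{\mathrm{cr}}$ and $\mathcal{D}^2_{\mathrm{cr}}$ must re-expand in the basis $\{B_\kappa\}$ and remain $\le_\otimes$-bounded. The target index $(\epsilon_i,\beta,\gamma,o)$ may be $\le_\otimes$-smaller than $\kappa+\epsilon_i=(\epsilon_i,\beta,\gamma,\delta)$; that is only allowed when $\kappa+\epsilon_i\notin\mathcal{D}_{\mathrm{cr}}$. This is indeed the case when $\delta\ne o$, and we will check it carefully. For the case analysis we rely on the $s_1$-elimination property, which separates $\mathcal{C}_1$ from its complement, and on the $s_2$-block property, which makes quotient classes depend only on $\gamma$.
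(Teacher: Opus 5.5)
Your proposal is correct and follows essentially the same route as the paper. You verify Proposition~\ref{prop:P-TFAE}~\ref{item:P-TFAE_ii} by splitting into $\kappa\in\mathcal{D}^1_{\mathrm{cr}}$ versus $\kappa\in\mathcal{D}^2_{\mathrm{cr}}$ and the four generator types, using Lemma~\ref{lem:p_support}, the elimination property of $\le_1$ to push $\mathcal{C}_1$-terms strictly below, \eqref{eq:quotient_scheme_A} with Theorem~\ref{thm:QuotientScheme}~\ref{item:QuotientScheme_P} for the second factor, and the observation that $(\epsilon_i,\beta,\gamma,\delta)\notin\mathcal{D}_{\mathrm{cr}}$ when $\delta\neq o$. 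Two small corrections: the fact that $A^{(2)}_{\bm{o}}A^{(2)}_{(\gamma,\delta)}=\frac{k_{(\gamma,\delta)}}{k_{\bm{\gamma}}}A^{(2)}_{\bm{\gamma}}$ is a uniform multiple comes from \eqref{eq:quotient_scheme_A}, not from Lemma~\ref{lem:closed_subset}~\ref{lem:closed_subset:mult}, which only gives the support; and your bullets skip the routine case $\kappa\in\mathcal{D}^2_{\mathrm{cr}}$ with generator $A^{(1)}_{(o,\epsilon_j)}\otimes I$, which is handled exactly like the other single-factor cases.
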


\begin{proof}
Let $\mathfrak{X}$ be this crested product, with labeling
\eqref{eq:crested_basis_first}, \eqref{eq:crested_basis_second}.
We again use Proposition~\ref{prop:P-TFAE}.

First, we show that $\mathcal{D}_{\mathrm{cr}}$ satisfies
Definition~\ref{df:abPpoly}~\ref{item:abPpoly_i}.
Take $(\alpha,\beta,\gamma,\delta)\in\mathcal{D}_{\mathrm{cr}}$ and assume
$o\le \alpha'\le \alpha$, $o\le \beta'\le \beta$, $o\le \gamma'\le \gamma$, and $o\le \delta'\le \delta$
(componentwise inequalities).
If $\alpha=o$, then $\alpha'=o$, and since $(o,\beta')\in\mathcal{C}_1$ and
$(\gamma',\delta')\in\mathcal{D}_2$, we have
$(o,\beta',\gamma',\delta')\in\mathcal{D}^1_{\mathrm{cr}}$.
If $\alpha\ne o$, then $\delta=o$, hence $\delta'=o$.
Also, $(\gamma,o)\in\mathcal{D}_2$ implies $(\gamma',o)\in\mathcal{D}_2$,
namely $\gamma'\in\mathcal{D}_{2,\mathrm{qt}}$.
Therefore, when $\alpha'\ne o$ we have $(\alpha',\beta',\gamma',o)\in\mathcal{D}^2_{\mathrm{cr}}$.
Hence Definition~\ref{df:abPpoly}~\ref{item:abPpoly_i} holds.

Next, the $\ell_1+\ell_2$ generating relations corresponding to $\mathcal{D}_{\mathrm{cr}}$ are
\begin{align*}
B_{(\epsilon_i,o,o,o)}&=A^{(1)}_{(\epsilon_i,o)}\otimes A^{(2)}_{\bm{o}}
&& (1\le i\le s_1),\\
B_{(o,\epsilon_j,o,o)}&=A^{(1)}_{(o,\epsilon_j)}\otimes I
&& (1\le j\le \ell_1-s_1),\\
B_{(o,o,\epsilon_u,o)}&=I\otimes A^{(2)}_{(\epsilon_u,o)}
&& (1\le u\le s_2),\\
B_{(o,o,o,\epsilon_v)}&=I\otimes A^{(2)}_{(o,\epsilon_v)}
&& (1\le v\le \ell_2-s_2)
\end{align*}
We verify the dominance conditions for products by these generators.

For $(\alpha,\beta,\gamma,\delta)\in\mathcal{D}_{\mathrm{cr}}$, we compute products with
the generators above and check the non-vanishing conditions of intersection numbers in $\mathfrak{X}$.

\smallskip
\noindent
(a) $(\alpha,\beta,\gamma,\delta)\in\mathcal{D}^1_{\mathrm{cr}}$, i.e., $\alpha=o$:
First, for $1\le i\le s_1$,
\[
B_{(\epsilon_i,o,o,o)}B_{(o,\beta,\gamma,\delta)}
=\bigl(A^{(1)}_{(\epsilon_i,o)}A^{(1)}_{(o,\beta)}\bigr)
\otimes
\bigl(A^{(2)}_{\bm{o}}A^{(2)}_{(\gamma,\delta)}\bigr).
\]
Applying \eqref{eq:quotient_scheme_A} to $\mathfrak{X}^{(2)}$, we get
$A^{(2)}_{\bm{o}}A^{(2)}_{(\gamma,\delta)}
=\frac{k_{(\gamma,\delta)}}{k_{\bm{\gamma}}}A^{(2)}_{\bm{\gamma}}$.
On the other hand, applying Lemma~\ref{lem:p_support}~\ref{item:p_support_ii} to $\mathfrak{X}^{(1)}$,
$A^{(1)}_{(\epsilon_i,o)}A^{(1)}_{(o,\beta)}$
is a linear combination of $A^{(1)}_{(\alpha',\beta')}$ with
$(\alpha',\beta')\le_1(\epsilon_i,\beta)$.
Hence the product $B_{(\epsilon_i,o,o,o)}B_{(o,\beta,\gamma,\delta)}$ is a linear combination of terms of the form
\[
B_{(\alpha',\beta',\gamma,o)}\quad \text{if }(\alpha',\beta')\notin\mathcal{C}_1,
\qquad
\sum_{(\gamma,\delta')\in\bm{\gamma}} B_{(o,\beta',\gamma,\delta')}\quad \text{if }(\alpha',\beta')=(o,\beta')\in\mathcal{C}_1.
\]
In either case, all indices are bounded above by $(\epsilon_i,\beta,\gamma,\delta)$.
Moreover, if $(\epsilon_i,\beta,\gamma,\delta)\in\mathcal{D}_{\mathrm{cr}}$, then
$(\epsilon_i,\beta,\gamma,\delta)\in\mathcal{D}^2_{\mathrm{cr}}$, so $\delta=o$.
The coefficient of $B_{(\epsilon_i,\beta,\gamma,o)}$ can be written, using
the intersection number $p^{(\epsilon_i,\beta)}_{(\epsilon_i,o),(o,\beta)} \neq 0$ of $\mathfrak{X}^{(1)}$
and the constant $\frac{k_{(\gamma,o)}}{k_{\bm{\gamma}}}\neq 0$ from $\mathfrak{X}^{(2)}$, as
\[
p^{(\epsilon_i,\beta,\gamma,o)}_{(\epsilon_i,o,o,o),(o,\beta,\gamma,o)}=p^{(\epsilon_i,\beta)}_{(\epsilon_i,o),(o,\beta)}\cdot \frac{k_{(\gamma,o)}}{k_{\bm{\gamma}}}\neq 0.
\]

Next, for $1\le j\le \ell_1-s_1$, we have
\[
B_{(o,\epsilon_j,o,o)}B_{(o,\beta,\gamma,\delta)}
=\bigl(A^{(1)}_{(o,\epsilon_j)}A^{(1)}_{(o,\beta)}\bigr)\otimes A^{(2)}_{(\gamma,\delta)}.
\]
By Lemma~\ref{lem:p_support}~\ref{item:p_support_ii}, all indices are at most
$(o,\beta+\epsilon_j,\gamma,\delta)$.
If $(o,\beta+\epsilon_j,\gamma,\delta)\in\mathcal{D}_{\mathrm{cr}}$, then the top coefficient
of $B_{(o,\beta+\epsilon_j,\gamma,\delta)}$ is nonzero.

Furthermore, for $1\le u\le s_2$ and $1\le v\le \ell_2-s_2$,
\begin{align*}
B_{(o,o,\epsilon_u,o)}B_{(o,\beta,\gamma,\delta)}
&=A^{(1)}_{(o,\beta)}\otimes \bigl(A^{(2)}_{(\epsilon_u,o)}A^{(2)}_{(\gamma,\delta)}\bigr),\\
B_{(o,o,o,\epsilon_v)}B_{(o,\beta,\gamma,\delta)}
&=A^{(1)}_{(o,\beta)}\otimes \bigl(A^{(2)}_{(o,\epsilon_v)}A^{(2)}_{(\gamma,\delta)}\bigr),
\end{align*}
and applying Lemma~\ref{lem:p_support}~\ref{item:p_support_ii} to $\mathfrak{X}^{(2)}$,
the indices are bounded by $(o,\beta,\gamma+\epsilon_u,\delta)$ and
$(o,\beta,\gamma,\delta+\epsilon_v)$, respectively.
If these indices belong to $\mathcal{D}_{\mathrm{cr}}$, the corresponding top coefficients are nonzero.

\smallskip
\noindent
(b) $(\alpha,\beta,\gamma,\delta)\in\mathcal{D}^2_{\mathrm{cr}}$, i.e., $\alpha\neq o$:
Take $(\alpha,\beta,\gamma,o)\in\mathcal{D}_{\mathrm{cr}}$ with $\alpha\ne o$.
First, for $1\le i\le s_1$,
\[
B_{(\epsilon_i,o,o,o)}B_{(\alpha,\beta,\gamma,o)}
=\bigl(A^{(1)}_{(\epsilon_i,o)}A^{(1)}_{(\alpha,\beta)}\bigr)
\otimes
\bigl(A^{(2)}_{\bm{o}}A^{(2)}_{\bm{\gamma}}\bigr)
=\bigl(A^{(1)}_{(\epsilon_i,o)}A^{(1)}_{(\alpha,\beta)}\bigr)\otimes (p_2 A^{(2)}_{\bm{\gamma}}).
\]
Applying Lemma~\ref{lem:p_support}~\ref{item:p_support_ii} to the first factor,
the right-hand side is a linear combination of terms with
$(\alpha',\beta')\le_1(\alpha+\epsilon_i,\beta)$.
Terms with $(\alpha',\beta')\notin\mathcal{C}_1$ are simply $B_{(\alpha',\beta',\gamma,o)}$,
while terms with $(\alpha',\beta')=(o,\beta')\in\mathcal{C}_1$ can be written as
$A^{(1)}_{(o,\beta')}\otimes A^{(2)}_{\bm{\gamma}}
=\sum_{(\gamma,\delta)\in\bm{\gamma}} B_{(o,\beta',\gamma,\delta)}$.
In either case, indices are bounded by $(\alpha+\epsilon_i,\beta,\gamma,o)$.
If $(\alpha+\epsilon_i,\beta,\gamma,o)\in\mathcal{D}_{\mathrm{cr}}$, then
the coefficient of $B_{(\alpha+\epsilon_i,\beta,\gamma,o)}$ is nonzero.

Similarly, for $1\le j\le \ell_1-s_1$,
\[
B_{(o,\epsilon_j,o,o)}B_{(\alpha,\beta,\gamma,o)}
=\bigl(A^{(1)}_{(o,\epsilon_j)}A^{(1)}_{(\alpha,\beta)}\bigr)\otimes A^{(2)}_{\bm{\gamma}}
\]
and indices are controlled by $(\alpha,\beta+\epsilon_j,\gamma,o)$.
If $(\alpha,\beta+\epsilon_j,\gamma,o)\in\mathcal{D}_{\mathrm{cr}}$, then the top coefficient is nonzero.

Next, for $1\le u\le s_2$, using $A^{(2)}_{\bm{\gamma}}=A^{(2)}_{\bm{\gamma}}M_{\mathcal{C}_2}$, we have
\[
A^{(2)}_{(\epsilon_u,o)}A^{(2)}_{\bm{\gamma}}
=\frac{k_{(\epsilon_u,o)}}{k_{\epsilon_u\mathcal{C}_2}}\cdot \frac{1}{p_2}
A^{(2)}_{\bm{\epsilon_u}}A^{(2)}_{\bm{\gamma}}.
\]
Here $\mathfrak{X}^{(2)}/\mathcal{C}_2$ is an $s_2$-variate $P$-polynomial association scheme
on $\mathcal{D}_{2,\mathrm{qt}}$ by Theorem~\ref{thm:QuotientScheme}~\ref{item:QuotientScheme_P},
so the right-hand side is a linear combination of $A^{(2)}_{\bm{\eta}}$ with
$\eta\le_{2,\mathrm{qt}}\gamma+\epsilon_u$.
Therefore,
\[
B_{(o,o,\epsilon_u,o)}B_{(\alpha,\beta,\gamma,o)}
\in
\Span\{B_{(\alpha,\beta,\eta,o)}\mid \eta\le_{2,\mathrm{qt}}\gamma+\epsilon_u\}\subset
\Span\{B_{\zeta}\mid \zeta\le_{\otimes}(\alpha,\beta,\gamma+\epsilon_u,o)\}.
\]
Moreover, if $(\alpha,\beta,\gamma+\epsilon_u,o)\in\mathcal{D}_{\mathrm{cr}}$, then
the coefficient of $B_{(\alpha,\beta,\gamma+\epsilon_u,o)}$ is nonzero.

Finally, for $1\le v\le \ell_2-s_2$, since $(o,\epsilon_v)\in\mathcal{C}_2$, equation
\eqref{eq:quotient_scheme_A} gives
$A^{(2)}_{(o,\epsilon_v)}M_{\mathcal{C}_2}=k_{(o,\epsilon_v)}M_{\mathcal{C}_2}$.
Hence
\[
B_{(o,o,o,\epsilon_v)}B_{(\alpha,\beta,\gamma,o)}
=A^{(1)}_{(\alpha,\beta)}\otimes \bigl(A^{(2)}_{(o,\epsilon_v)}A^{(2)}_{\bm{\gamma}}\bigr)
=A^{(1)}_{(\alpha,\beta)}\otimes \bigl(k_{(o,\epsilon_v)}A^{(2)}_{\bm{\gamma}}\bigr)
=k_{(o,\epsilon_v)}B_{(\alpha,\beta,\gamma,o)}.
\]
Thus the index is bounded by $(\alpha,\beta,\gamma,\epsilon_v)$.

Therefore, $\mathcal{D}_{\mathrm{cr}}$ and $\le_{\otimes}$ satisfy the conditions of
Proposition~\ref{prop:P-TFAE}, and the crested product is an
$(\ell_1+\ell_2)$-variate $P$-polynomial association scheme.
\end{proof}

Next we discuss imprimitivity of the crested product.
First, we note a statement that holds without assuming multivariate $P$-polynomiality on the factors.
\begin{prop}\label{prop:crested_imprimitive}
Let $\mathfrak{X}^{(1)},\mathfrak{X}^{(2)}$ be commutative association schemes,
and let $\mathfrak{X}$ be their crested product.
Then $\mathcal{C}:=\{(0,j)\mid j\in\mathcal{I}_2\}\subset\mathcal{K}$
is a closed subset of $\mathfrak{X}$, so $\mathfrak{X}$ is imprimitive.
Moreover, the following hold.
\begin{enumerate}[label=$(\roman*)$]
\item \label{item:prop:crested_imprimitive_1} For any $(x_1,x_2)\in X^{(1)}\times X^{(2)}$, the block determined by $\mathcal{C}$ is $\{x_1\}\times X^{(2)}$, and the block scheme on this block, $\mathfrak{X}_{\{x_1\}\times X^{(2)}}$, is isomorphic to $\mathfrak{X}^{(2)}$;
\item \label{item:prop:crested_imprimitive_2} the quotient scheme $\mathfrak{X}/\mathcal{C}$ is isomorphic to $\mathfrak{X}^{(1)}$.
\end{enumerate}
\end{prop}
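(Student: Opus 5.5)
The plan is to work directly with the relation map of the crested product. For $(x_1,x_2),(y_1,y_2)\in X$, the defining matrices of Definition~\ref{df:crested_product} give the following description. The relation $\mathcal{R}((x_1,x_2),(y_1,y_2))$ equals $(i,j)$ with $i=\mathcal{R}^{(1)}(x_1,y_1)$ and $j=\mathcal{R}^{(2)}(x_2,y_2)$ when $i\in\mathcal{C}_1$. When $i\notin\mathcal{C}_1$, it equals $(i,\bm{j})$ with $\bm{j}=j\mathcal{C}_2$.

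We first note that the matrices $\{A_k\}_{k\in\mathcal{K}}$ are $01$-matrices summing to $J_X$, and that they span an algebra. The latter is Bailey--Cameron's result that the crested product is an association scheme \cite{baileycameron2005}, and we take it as given. The crested product therefore has exactly these relations, and $\mathcal{C}=\{(0,j)\mid j\in\mathcal{I}_2\}\subset\mathcal{C}_1\times\mathcal{I}_2\subset\mathcal{K}$ is meaningful (here $0=i_0\in\mathcal{C}_1$).

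To see that $\mathcal{C}$ is closed, we use Lemma~\ref{lem:closed_subset_equiv}~\ref{item:closed_subset_equiv_ii}. We have $\mathcal{R}^{-1}(\mathcal{C})=\{((x_1,x_2),(y_1,y_2))\mid x_1=y_1\}$, since $\mathcal{R}^{(1)}(x_1,y_1)=0$ iff $x_1=y_1$, and in that case every $j\in\mathcal{I}_2$ occurs. This is clearly an equivalence relation, so $\mathcal{C}$ is closed. It is nontrivial whenever both factors are nontrivial: $|\mathcal{C}|=|\mathcal{I}_2|>1$, and $\mathcal{C}\ne\mathcal{K}$ as soon as $|\mathcal{I}_1|>1$. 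Hence $\mathfrak{X}$ is imprimitive. I would add the standing nontriviality assumption explicitly, since the degenerate cases are the only subtlety here.

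For \ref{item:prop:crested_imprimitive_1}, the equivalence class of $(x_1,x_2)$ is $\{x_1\}\times X^{(2)}$. Take $f\colon X^{(2)}\to\{x_1\}\times X^{(2)}$, $y\mapsto(x_1,y)$, and $g\colon\mathcal{I}_2\to\mathcal{C}$, $j\mapsto(0,j)$. Both are bijections, and $\mathcal{R}(f(y),f(y'))=(0,\mathcal{R}^{(2)}(y,y'))=g(\mathcal{R}^{(2)}(y,y'))$. Thus $(f,g)$ is an isomorphism of $\mathfrak{X}^{(2)}$ onto the block scheme.

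For \ref{item:prop:crested_imprimitive_2}, identify $X/\mathcal{C}$ with $X^{(1)}$ via $\{x_1\}\times X^{(2)}\mapsto x_1$. Then we must determine the equivalence classes $\mathcal{K}/\mathcal{C}$; this is the main step to verify. Since $\mathcal{R}/\mathcal{C}$ is well defined, the class of $\mathcal{R}((x_1,x_2),(y_1,y_2))$ is independent of $x_2,y_2$. So I would show that two labels $k,k'$ are equivalent precisely when their first coordinates agree. The classes are therefore $\mathcal{C}_1$-labels $\{(i,j)\mid j\in\mathcal{I}_2\}$ for $i\in\mathcal{C}_1$, and $\{(i,\bm{j})\mid \bm{j}\in\mathcal{I}_2/\mathcal{C}_2\}$ for $i\notin\mathcal{C}_1$.

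For the forward direction: if $k\equiv k'$, pick points with $\mathcal{R}(x,y)=k$ and $\mathcal{R}(x,z)=k'$ with $y\sim z$. Then $y,z$ share the first coordinate, so the first coordinates of $k,k'$ are both $\mathcal{R}^{(1)}(x_1,y_1)$.

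For the converse: fix $x=(x_1,x_2)$ and $y_1$ with $\mathcal{R}^{(1)}(x_1,y_1)=i$. Varying $y_2\in X^{(2)}$ realizes every $j$, respectively every $\bm{j}$, as the second coordinate. All these points lie in the single block $\{y_1\}\times X^{(2)}$, hence the labels are equivalent by Lemma~\ref{lem:closed_subset_equiv}.

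This gives a bijection $g\colon\mathcal{K}/\mathcal{C}\to\mathcal{I}_1$ sending a class to its first coordinate, and $(\pi,g)$ with $\pi(x_1\mathcal{C})=x_1$ satisfies the homomorphism identity. It is therefore an isomorphism $\mathfrak{X}/\mathcal{C}\cong\mathfrak{X}^{(1)}$.
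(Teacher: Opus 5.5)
Your proof is correct and follows the paper's argument. Closedness comes from Lemma~\ref{lem:closed_subset_equiv}, since $\mathcal{R}^{-1}(\mathcal{C})$ is the relation ``same first coordinate''; the block isomorphism comes from $A_{(0,j)}=I_{X^{(1)}}\otimes A^{(2)}_j$; and the quotient isomorphism comes from the first coordinate of the relation. Your explicit check that $k\equiv k'$ in $\mathcal{K}$ exactly when the first coordinates agree fills in a step the paper passes over. Your caveat that imprimitivity needs $|X^{(1)}|,|X^{(2)}|\ge 2$ is also a valid refinement of the statement, since otherwise $\mathcal{C}=\mathcal{K}$ or $\mathcal{C}=\{(0,0)\}$.
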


\begin{proof}
\ref{item:prop:crested_imprimitive_1}
That $\mathcal{C}$ is closed follows from Lemma~\ref{lem:closed_subset_equiv}, since
$\sum_{j\in\mathcal{I}_2}A_{(0,j)}=I_{X^{(1)}}\otimes\sum_{j\in\mathcal{I}_2}A^{(2)}_j=I_{X^{(1)}}\otimes J_{X^{(2)}}$
induces an equivalence relation $\sim$ on $X^{(1)}\times X^{(2)}$.
Hence $\mathfrak{X}$ is imprimitive.
Also, $(x_1,x_2)\sim (y_1,y_2)$ is equivalent to $x_1=y_1$, so each block is
$\{x_1\}\times X^{(2)}$ ($x_1\in X^{(1)}$).
On this block, only matrices
$\{A_{(0,j)}\vert_{\{x_1\}\times X^{(2)}}\}_{j\in\mathcal{I}_2}$ appear.
Since $A_{(0,j)}=I_{X^{(1)}}\otimes A^{(2)}_j$,
$A_{(0,j)}\vert_{\{x_1\}\times X^{(2)}}$ coincides exactly with $A^{(2)}_j$.
Therefore the block scheme is isomorphic to $\mathfrak{X}^{(2)}$.

\ref{item:prop:crested_imprimitive_2}
The quotient set $(X^{(1)}\times X^{(2)})/\mathcal{C}$ is the set of blocks, and it has the natural bijection
$\pi:(X^{(1)}\times X^{(2)})/\mathcal{C}\to X^{(1)},\ (x_1,x_2)\mathcal{C}\mapsto x_1$.
By Definition~\ref{df:crested_product}, for any $x=(x_1,x_2),y=(y_1,y_2)$,
the first component of $\mathcal{R}(x,y)$ is always $\mathcal{R}^{(1)}(x_1,y_1)$.
Hence relations between blocks are determined only by the first component,
and $\pi$ yields an isomorphism between $\mathfrak{X}/\mathcal{C}$ and $\mathfrak{X}^{(1)}$.
\end{proof}

Under the same assumptions as Theorem~\ref{thm:crested_multivar},
it is straightforward to verify that the monomial order $\le_{\otimes}$ on $\N^{\ell_1+\ell_2}$
is of $\ell_1$-block type, so we obtain the following corollary.

\begin{cor}
\label{cor:crested_elimination}

The closed subset in Proposition~\ref{prop:crested_imprimitive} corresponds to
\[
\mathcal{C}:=\{(o,o,\gamma,\delta)\mid (\gamma,\delta)\in\mathcal{D}_2\}\subset\mathcal{D}_{\mathrm{cr}}
\]
and the resulting decomposition into the quotient scheme $\mathfrak{X}^{(1)}$ and the block scheme $\mathfrak{X}^{(2)}$
is described exactly by this elimination-type monomial order $\le_{\otimes}$.
\end{cor}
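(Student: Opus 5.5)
The plan is to check three things in turn. First, that $\le_{\otimes}$ is of $\ell_1$-block type. Second, that the subset $\mathcal{D}_{\mathrm{cr}}\cap(\{o\}\times\N^{\ell_2})$ singled out by the proof of Theorem~\ref{thm:EliminationThm} coincides with the closed subset of Proposition~\ref{prop:crested_imprimitive}. Third, that Theorems~\ref{thm:QuotientScheme}, \ref{thm:block_scheme_Ppoly} and~\ref{thm:ideal_block_quotient} then recover exactly the quotient $\mathfrak{X}^{(1)}$ and the block $\mathfrak{X}^{(2)}$.

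\textbf{Block type.} Write elements of $\N^{\ell_1+\ell_2}$ as $(\alpha,\gamma)$ with $\alpha\in\N^{\ell_1}$ and $\gamma\in\N^{\ell_2}$. Suppose $(\alpha_1,o)>_{\otimes}(\alpha_2,o)$. By \eqref{eq:product_order_multpoly}, either $\alpha_1>_1\alpha_2$, or $\alpha_1=\alpha_2$ and $o>_2 o$. The second is impossible, so $\alpha_1>_1\alpha_2$. Then \eqref{eq:product_order_multpoly} again gives $(\alpha_1,\gamma_1)>_{\otimes}(\alpha_2,\gamma_2)$ for all $\gamma_1,\gamma_2$. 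This is precisely the $\ell_1$-block condition of Definition~\ref{def:elim_block}, so $\le_{\otimes}$ is also of $\ell_1$-elimination type. Note that $1\le \ell_1<\ell_1+\ell_2$, which is needed for the definition to apply.

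\textbf{Identifying the closed subset.} By Theorem~\ref{thm:crested_multivar}, $\mathfrak{X}$ is $(\ell_1+\ell_2)$-variate $P$-polynomial on $\mathcal{D}_{\mathrm{cr}}$ with respect to $\le_{\otimes}$. The proof of Theorem~\ref{thm:EliminationThm}, (ii)$\Rightarrow$(i), then shows that $\mathcal{D}_{\mathrm{cr}}\cap(\{o\}\times\N^{\ell_2})$ is closed. Elements of $\mathcal{D}^2_{\mathrm{cr}}$ have $\alpha\ne o$, so only $\mathcal{D}^1_{\mathrm{cr}}$ contributes, and within it the condition is $\beta=o$. Since $(o,o)\in\mathcal{C}_1$, this intersection is $\{(o,o,\gamma,\delta)\mid(\gamma,\delta)\in\mathcal{D}_2\}$. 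By \eqref{eq:crested_basis_first}, the matrices $B_{(o,o,\gamma,\delta)}=I\otimes A^{(2)}_{(\gamma,\delta)}$ are exactly the $A_{(0,j)}$ with $j\in\mathcal{I}_2$ from Proposition~\ref{prop:crested_imprimitive}. The two closed subsets therefore agree.

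\textbf{Recovering the quotient and block schemes.} Proposition~\ref{prop:crested_imprimitive} identifies the quotient with $\mathfrak{X}^{(1)}$ and the block with $\mathfrak{X}^{(2)}$. Because $\le_{\otimes}$ is of block type, the hypotheses of Section~\ref{subsec:quot} hold. Theorem~\ref{thm:QuotientScheme}~\ref{item:QuotientScheme_P} then gives the quotient an $\ell_1$-variate structure on $\jmath^{-1}(\mathcal{D}_{\mathrm{cr}})$. This set equals $\mathcal{D}_1$: it consists of the $(\alpha,\beta)$ with $(\alpha,\beta,o,o)\in\mathcal{D}_{\mathrm{cr}}$, namely $\mathcal{C}_1$ together with the elements of $\mathcal{D}_1$ having $\alpha\ne o$. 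Its order is the one induced by $\le_{\otimes}$, which is $\le_1$.

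Theorem~\ref{thm:block_scheme_Ppoly}~\ref{item:block_scheme_Ppoly_i} likewise gives the block an $\ell_2$-variate structure on $\mathcal{D}_2$ with the induced order $\le_2$. Its polynomials are obtained by setting the first $\ell_1$ variables to zero in the corresponding $v_{(o,o,\gamma,\delta)}$. Finally, Theorem~\ref{thm:ideal_block_quotient} describes the two ideals: the block ideal is the elimination ideal in the last $\ell_2$ variables, and the quotient ideal is the rescaled specialization at the valencies of the generators indexed by $(o,o,\epsilon_u,o)$ and $(o,o,o,\epsilon_v)$.

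\textbf{Main obstacle.} The only delicate point is making precise what ``described exactly'' means. I would interpret it as matching the polynomial structures produced by the theorems with the given ones on $\mathfrak{X}^{(1)}$ and $\mathfrak{X}^{(2)}$. For the block side, $B_{(o,o,\gamma,\delta)}=I\otimes A^{(2)}_{(\gamma,\delta)}$ and the generators for those variables are $I\otimes A^{(2)}_{\epsilon}$. Hence $v_{(o,o,\gamma,\delta)}$ can be taken to be $v^{(2)}_{(\gamma,\delta)}$ in the last $\ell_2$ variables, and the resulting block polynomials coincide with those of $\mathfrak{X}^{(2)}$. For the quotient side, I would compare the domains and orders only, since the polynomial structure on a given domain and order is determined up to the relabeling fixed by $\pi$.
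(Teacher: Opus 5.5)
Your proposal is correct and follows the paper's route: the paper's only justification is the observation that $\le_{\otimes}$ is of $\ell_1$-block type, after which the corollary follows from Theorem~\ref{thm:crested_multivar}, Proposition~\ref{prop:crested_imprimitive}, and the results of Section~\ref{subsec:quot}. Your additional checks correctly fill in what the paper calls straightforward. These are: that $\mathcal{D}_{\mathrm{cr}}\cap(\{o\}\times\N^{\ell_2})$ is the stated $\mathcal{C}$; that $\jmath^{-1}(\mathcal{D}_{\mathrm{cr}})=\mathcal{D}_1$ and $\mathcal{D}_{\mathrm{bl}}=\mathcal{D}_2$, with induced orders $\le_1$ and $\le_2$; and that the associated polynomials are forced by uniqueness once the labelings match under $\pi$.
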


Finally, on the $Q$-side of crested products, we record that the number of variables is also additive
with respect to suitable dual closed subsets.
Let $r=1,2$, and let $\mathcal{J}_r$ denote the index set of primitive idempotents of $\mathfrak{X}^{(r)}$.
Let $\mathcal{C}_r^\ast\subset\mathcal{J}_r$ be the dual closed subset corresponding to $\mathcal{C}_r$.
Also let $\{E^{(r)}_{\xi}\}_{\xi\in\mathcal{D}_r^\ast}$ be the labeling of primitive idempotents
for an $\ell_r^\ast$-variate $Q$-polynomial structure of $\mathfrak{X}^{(r)}$.
Assume that
$\mathcal{D}_1^\ast\subset\N^{s_1^\ast}\times\N^{\ell_1^\ast-s_1^\ast}$ and
$\mathcal{C}_1^\ast=\mathcal{D}_1^\ast\cap (\{o\}\times \N^{\ell_1^\ast-s_1^\ast})$
for $\mathfrak{X}^{(1)}$ with respect to a monomial order $\le_1^\ast$ of $s_1^\ast$-block type, and
$\mathcal{D}_2^\ast\subset\N^{s_2^\ast}\times\N^{\ell_2^\ast-s_2^\ast}$ and
$\mathcal{C}_2^\ast=\mathcal{D}_2^\ast\cap (\{o\}\times \N^{\ell_2^\ast-s_2^\ast})$
for $\mathfrak{X}^{(2)}$ with respect to a monomial order $\le_2^\ast$ of $s_2^\ast$-elimination type.
Set
\[
\jmath_1^\ast:\N^{s_1^\ast}\to \N^{s_1^\ast}\times\N^{\ell_1^\ast-s_1^\ast},
\qquad
\jmath_1^\ast(\alpha):=(\alpha,o),
\qquad
\mathcal{D}_{1,\mathrm{bl}}^\ast:=(\jmath_1^\ast)^{-1}(\mathcal{D}_1^\ast)
\]
and
\[
\iota_2^\ast:\N^{\ell_2^\ast-s_2^\ast}\to \N^{s_2^\ast}\times\N^{\ell_2^\ast-s_2^\ast},
\qquad
\iota_2^\ast(\delta):=(o,\delta),
\qquad
\mathcal{D}_{2,\mathrm{qt}}^\ast:=(\iota_2^\ast)^{-1}(\mathcal{C}_2^\ast)
\]
and define an order $\le_{\mathrm{cr}}^\ast$ on $\N^{\ell_1^\ast+\ell_2^\ast}$ by
\begin{equation}
\label{eq:product_order_crested_Q}
(\alpha,\beta,\gamma,\delta)\le_{\mathrm{cr}}^\ast(\alpha',\beta',\gamma',\delta')
\iff
\bigl((\gamma,\delta)<_2^\ast(\gamma',\delta')\bigr)
\ \text{or}\ 
\bigl((\gamma,\delta)=(\gamma',\delta')\ \text{and}\ (\alpha,\beta)\le_1^\ast(\alpha',\beta')\bigr).
\end{equation}
This is again a monomial order.
Now set
\begin{equation}
\label{eq:crested_domain_multivar_Q}
\mathcal{D}_{\mathrm{cr}}^\ast
:=
\{(\alpha,\beta,o,\delta)\mid (\alpha,\beta)\in\mathcal{D}_1^\ast,\ \delta\in\mathcal{D}_{2,\mathrm{qt}}^\ast\}
\ \sqcup\
\{(\alpha,o,\gamma,\delta)\mid \alpha\in\mathcal{D}_{1,\mathrm{bl}}^\ast,\ (\gamma,\delta)\in\mathcal{D}_2^\ast\setminus\mathcal{C}_2^\ast\}
\subset\N^{\ell_1^\ast+\ell_2^\ast}
\end{equation}
and for $\alpha\in\mathcal{D}_{1,\mathrm{bl}}^\ast$ define
\[
E^{(1)}_{\bm{\alpha}}
:=
\sum_{\beta\,:\,(\alpha,\beta)\in\mathcal{D}_1^\ast}
E^{(1)}_{(\alpha,\beta)}.
\]
By Theorem~\ref{thm:block_scheme_Ppoly}~\ref{item:block_scheme_Ppoly_ii},
these are precisely the lifts inside $\mathfrak{X}^{(1)}$ of primitive idempotents
$\bm{\alpha}\in \mathcal{J}_1/\mathcal{C}_1^\ast$ of the block scheme with respect to $\mathcal{C}_1$.
Bailey--Cameron~\cite{baileycameron2005} showed that, under the notation above,
the primitive idempotents of the crested product $\mathfrak{X}$ are
\begin{align}
F_{(\alpha,\beta,o,\delta)}
&:=
E^{(1)}_{(\alpha,\beta)}\otimes E^{(2)}_{(o,\delta)}
&&\bigl((\alpha,\beta)\in\mathcal{D}_1^\ast,\ \delta\in\mathcal{D}_{2,\mathrm{qt}}^\ast\bigr),
\label{eq:crested_dual_basis_first}\\
F_{(\alpha,o,\gamma,\delta)}
&:=
E^{(1)}_{\bm{\alpha}}\otimes E^{(2)}_{(\gamma,\delta)}
&&\bigl(\alpha\in\mathcal{D}_{1,\mathrm{bl}}^\ast,\ (\gamma,\delta)\in\mathcal{D}_2^\ast\setminus\mathcal{C}_2^\ast\bigr).
\label{eq:crested_dual_basis_second}
\end{align}
Thus the primitive idempotents of the crested product are naturally indexed by
\eqref{eq:crested_domain_multivar_Q}.
In the proof of Theorem~\ref{thm:crested_multivar}, if we replace adjacency matrices
with primitive idempotents, ordinary matrix products with Hadamard products,
and Lemma~\ref{lem:p_support} with Lemma~\ref{lem:q_support},
the dominance conditions of Proposition~\ref{prop:Q-TFAE} follow in the same way.
Therefore we obtain the following.
\begin{thm}
\label{thm:crested_multivar_Q}
Under the assumptions above, the crested product of
$\mathfrak{X}^{(1)}$ and $\mathfrak{X}^{(2)}$ with respect to $(\mathcal{C}_1,\mathcal{C}_2)$
is an $(\ell_1^\ast+\ell_2^\ast)$-variate $Q$-polynomial association scheme on
\eqref{eq:crested_domain_multivar_Q}, and its monomial order is
$\le_{\mathrm{cr}}^\ast$ defined by \eqref{eq:product_order_crested_Q}.
\end{thm}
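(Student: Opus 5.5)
The plan is to verify the Krein-number criterion of Proposition~\ref{prop:Q-TFAE} for the crested product $\mathfrak{X}$ with the labeling \eqref{eq:crested_dual_basis_first}, \eqref{eq:crested_dual_basis_second}. This dualizes the proof of Theorem~\ref{thm:crested_multivar}, with the roles of the two factors swapped: now $\mathfrak{X}^{(2)}$ is the dominant factor in $\le^\ast_{\mathrm{cr}}$, and the ``lumped'' objects are the block-class idempotents $E^{(1)}_{\bm{\alpha}}$ of $\mathfrak{X}^{(1)}$ rather than quotient-class adjacency matrices.

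The first step is downward closedness of $\mathcal{D}^\ast_{\mathrm{cr}}$. For the first part this is clear, using downward closedness of $\mathcal{D}_1^\ast$ and of $\mathcal{D}^\ast_{2,\mathrm{qt}}$. For the second part, if the shrunken $(\gamma',\delta')$ falls into $\mathcal{C}_2^\ast$, then $\gamma'=o$ and $(\alpha',o,o,\delta')$ lies in the first part, since $(\alpha',o)\in\mathcal{D}_1^\ast$.

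The second step identifies the generators. These are $F_{(\epsilon_i,o,o,o)}=E^{(1)}_{(\epsilon_i,o)}\otimes E^{(2)}_o$, $F_{(o,\epsilon_j,o,o)}=E^{(1)}_{(o,\epsilon_j)}\otimes E^{(2)}_o$, $F_{(o,o,o,\epsilon_v)}=E^{(1)}_o\otimes E^{(2)}_{(o,\epsilon_v)}$, and $F_{(o,o,\epsilon_u,o)}=E^{(1)}_{\bm{o}}\otimes E^{(2)}_{(\epsilon_u,o)}$. Here $E^{(1)}_o=\frac1{|X^{(1)}|}J$, and $E^{(1)}_{\bm o}$ is the sum over the class of $o$, whose entries are constant $1/q_1$ on blocks. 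Hadamard products of tensors factor as $(A\otimes B)\circ(C\otimes D)=(A\circ C)\otimes(B\circ D)$.

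The third step checks the generators case by case. Products with generators coming from $\mathfrak{X}^{(1)}$ act only on the first factor, so Lemma~\ref{lem:q_support}~\ref{item:q_support_ii} for $\mathfrak{X}^{(1)}$ controls the indices, with the second coordinate unchanged. Products with $F_{(o,o,o,\epsilon_v)}$ act on the $\mathcal{C}_2^\ast$ part and use Lemma~\ref{lem:q_support} for $\mathfrak{X}^{(2)}$ together with $\iota_2^\ast$, exactly as in Theorem~\ref{thm:QuotientScheme}~\ref{item:QuotientScheme_Q}. Products with $F_{(o,o,\epsilon_u,o)}$ are the dual of the $B_{(\epsilon_i,o,o,o)}$ case. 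In the first factor one needs $E^{(1)}_{\bm o}\circ E^{(1)}_{(\alpha,\beta)}$ as a multiple of $E^{(1)}_{\bm\alpha}$, the Hadamard analogue of \eqref{eq:quotient_scheme_A}, using that $E^{(1)}_{\bm o}$ is a block-constant matrix, and Lemma~\ref{lem:dual_closed_subset_equiv}. One also needs $E^{(1)}_{\bm o}\circ E^{(1)}_{\bm \alpha}$ as a multiple of $E^{(1)}_{\bm\alpha}$. In the second factor, $E^{(2)}_{(\epsilon_u,o)}\circ E^{(2)}_{(\gamma,\delta)}$ is supported on indices $\le_2^\ast(\gamma+\epsilon_u,\delta)$. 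Some of these land in $\mathcal{C}_2^\ast$; there the resulting $E^{(1)}_{\bm\alpha}\otimes E^{(2)}_{(o,\delta'')}$ splits as $\sum_\beta F_{(\alpha,\beta,o,\delta'')}$. Every such index is still $\le^\ast_{\mathrm{cr}}$ the target, because the order compares the $\mathfrak{X}^{(2)}$-coordinate first. Starting from the first part with $\gamma=o$, Lemma~\ref{lem:q_support} for $\mathfrak{X}^{(2)}$ with the elimination property handles this case. Nonvanishing of the top coefficient comes from the nonvanishing second clause of Lemma~\ref{lem:q_support}~\ref{item:q_support_ii}, multiplied by a positive multiplicity ratio.

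The main obstacle is that last mixed case. It requires the Hadamard-product identity for $E^{(1)}_{\bm o}\circ E^{(1)}_{(\alpha,\beta)}$ with a nonzero constant, which is the dual of \eqref{eq:quotient_scheme_A} for block schemes. My first attempt is to derive it from \eqref{eq:block_scheme_Q} and Lemma~\ref{lem:closed_subset}~\ref{lem:dual_closed_subset:mult}, using the $s_1^\ast$-block type of $\le_1^\ast$ through the dual of Lemma~\ref{lem:qt_equiv}. That dual lemma identifies the classes of $\equiv^\ast$ with fixed first $s_1^\ast$ coordinates, so that $\bm\alpha$ is correctly indexed by $\mathcal{D}^\ast_{1,\mathrm{bl}}$.
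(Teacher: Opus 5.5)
Your proposal is correct and follows the paper's route. The paper proves this theorem only by remarking that the proof of Theorem~\ref{thm:crested_multivar} carries over after replacing adjacency matrices by primitive idempotents, matrix products by Hadamard products and Lemma~\ref{lem:p_support} by Lemma~\ref{lem:q_support}, then applying Proposition~\ref{prop:Q-TFAE}, and your case analysis makes that dualization more explicit than the paper does.

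One point to tighten: the product $F_{(\epsilon_i,o,o,o)}\circ F_{(\alpha,o,\gamma,\delta)}$ is not a routine first-factor computation but the dual of the paper's delicate case $B_{(o,o,\epsilon_u,o)}B_{(\alpha,\beta,\gamma,o)}$. Its top coefficient is, up to a positive factor, $\sum_{\beta}q^{(\alpha+\epsilon_i,o)}_{(\epsilon_i,o),(\alpha,\beta)}$, so showing it is nonzero needs the Krein condition $q^k_{ij}\ge 0$ (or the $Q$-polynomiality of the block scheme from Theorem~\ref{thm:block_scheme_Ppoly}) in addition to Lemma~\ref{lem:q_support}.
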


\subsection{Direct products and arbitrary monomial orders}
In this subsection, we prove converses to Proposition~\ref{prop:direct_prod_P/Q-poly} by combining Theorems~\ref{thm:EliminationThm} and~\ref{thm:QuotientScheme}.
More precisely, the converse to Proposition~\ref{prop:direct_prod_P/Q-poly}~\ref{item:direct_prod_P} yields Theorem~\ref{thm:directproduct_anyorder},
while the converse to Proposition~\ref{prop:direct_prod_P/Q-poly}~\ref{item:direct_prod_Q} yields Theorem~\ref{thm:directproduct_anyorder_Q}.

\begin{thm}
\label{thm:directproduct_anyorder}
Fix $\ell\in\N$, and let $\mathfrak{X}=(X,\mathcal{R})$ be a commutative association scheme.
Then the following are equivalent.
\begin{enumerate}[label=$(\roman*)$]
    \item \label{item:directproduct_anyorder_i} $\mathfrak{X}$ is isomorphic to a direct product of $\ell$ commutative univariate $P$-polynomial association schemes.
    \item \label{item:directproduct_anyorder_ii} There exist $\mathcal{D}\subset\N^\ell$ and a labeling of the adjacency matrices $\{A_\alpha\}_{\alpha\in\mathcal{D}}$ such that $\mathfrak{X}=(X,\{A_\alpha\}_{\alpha\in\mathcal{D}})$ is an $\ell$-variate $P$-polynomial association scheme on $\mathcal{D}$ with respect to any monomial order $\le$.
\end{enumerate}
\end{thm}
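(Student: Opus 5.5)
The implication (i)$\Rightarrow$(ii) is exactly Proposition~\ref{prop:direct_prod_P/Q-poly}~\ref{item:direct_prod_P}. The domain there is rectangular with the $k$-th coordinate recording the relation index of the $k$-th factor. The polynomials are $v_\alpha=\prod_k v^{(k)}_{\alpha_k}(x_k)$, whose multidegree with respect to any monomial order is $\alpha$, because every monomial of the product divides $\bm{x}^\alpha$. So the work is in (ii)$\Rightarrow$(i).

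For (ii)$\Rightarrow$(i), I would argue by induction on $\ell$; the case $\ell=1$ is the definition. Fix the labeling $\{A_\alpha\}_{\alpha\in\mathcal{D}}$ which is polynomial for every order. First I would show that $\mathcal{D}$ is of rectangular type and that $\mathfrak{X}$ splits as a direct product of the block scheme and the quotient scheme. To get a splitting, take the lexicographic order with $x_1$ largest; it is of $1$-block type. By Theorem~\ref{thm:EliminationThm} and the setup of Section~\ref{subsec:quot}, $\mathcal{C}_1:=\mathcal{D}\cap(\{0\}\times\N^{\ell-1})$ is a closed subset. Symmetrically, the lex order with $x_2>\dots>x_\ell>x_1$ is of $(\ell-1)$-block type after reordering coordinates, and it makes $\mathcal{C}_2:=\mathcal{D}\cap(\N\times\{o\})$ a closed subset. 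The corresponding equivalence relations on $X$ are $R_1=\mathcal{R}^{-1}(\mathcal{C}_1)$ and $R_2=\mathcal{R}^{-1}(\mathcal{C}_2)$, and $\mathcal{C}_1\cap\mathcal{C}_2=\{o\}$, so $R_1\cap R_2$ is the diagonal. The key claim is that $R_1$ and $R_2$ commute and that $R_1\circ R_2=X\times X$. Equivalently, I would show $M_1M_2=\frac1{|X|}J_X$, where $M_r$ is the idempotent attached to $\mathcal{C}_r$. Then $X\cong X/\mathcal{C}_1\times X/\mathcal{C}_2$, and the relation of a pair $(x,y)$ is determined by the pair of its quotient classes; by Lemma~\ref{lem:qt_equiv} these classes record the first coordinate and the last $\ell-1$ coordinates of the label respectively.

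To establish this I would use the order-freedom more strongly. Fix $(a,\beta)\in\mathcal{D}$ with $a\neq 0$ and $\beta\neq o$. Under lex with $x_1$ first, Lemma~\ref{lem:p_support}~\ref{item:p_support_ii} gives $p^{(a,\beta)}_{(a,o),(0,\beta)}\neq0$ and forces every $\gamma$ in the support of $A_{(a,o)}A_{(0,\beta)}$ to satisfy $\gamma\le(a,\beta)$. Intersecting these bounds over all monomial orders (lex in various variable orders, and also graded orders) should show that the support is $\{(a,\beta)\}$, i.e.
\[
A_{(a,o)}A_{(0,\beta)}=c\,A_{(a,\beta)}.
\]
The reason is that if $\gamma\neq(a,\beta)$ lies below $(a,\beta)$ in every monomial order, then $\gamma$ must be divisible-dominated by $(a,\beta)$, i.e. componentwise $\le$; one can then exclude such $\gamma$ by a valency or degree count. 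Given this product formula, a counting argument via Lemma~\ref{lem:intersection_krein_relation} gives $c=1$ and $k_{(a,\beta)}=k_{(a,o)}k_{(0,\beta)}$. It also gives rectangularity: $(a,o),(0,\beta)\in\mathcal{D}$ implies $(a,\beta)\in\mathcal{D}$, since otherwise $A_{(a,o)}A_{(0,\beta)}$ would be a nonzero combination of strictly smaller labels in every order. Once every $A_{(a,\beta)}$ factors as $A_{(a,o)}A_{(0,\beta)}$, with $\sum_a A_{(a,o)}$ and $\sum_\beta A_{(0,\beta)}$ being the two equivalence relations, the isomorphism $\mathfrak{X}\cong\mathfrak{X}/\mathcal{C}_1\otimes\mathfrak{X}_{x_0\mathcal{C}_1}$ follows.

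Finally, I would apply Theorems~\ref{thm:QuotientScheme} and~\ref{thm:block_scheme_Ppoly}. The block scheme is $(\ell-1)$-variate $P$-polynomial on $\mathcal{D}_{\mathrm{bl}}$ with respect to every induced order. Every monomial order on $\N^{\ell-1}$ arises as an induced order, for instance from the block order combining it with $x_1$, so induction applies to the block scheme. The quotient scheme is univariate $P$-polynomial. I expect the main obstacle to be the support argument, namely proving that the support of $A_{(a,o)}A_{(0,\beta)}$ is exactly $\{(a,\beta)\}$. The delicate case is labels $\gamma$ that are componentwise dominated by $(a,\beta)$; there I would first try to compare valencies, or to use the nonvanishing of $p^{\alpha+\beta}_{\alpha,\beta}$ in the reverse direction.
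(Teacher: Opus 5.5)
Your overall architecture is sound and close to the paper's: two closed subsets $\mathcal{C}_1,\mathcal{C}_2$ from two block-type orders, $\mathcal{C}_1\cap\mathcal{C}_2=\{o\}$, a splitting of $X$ read off via Lemma~\ref{lem:qt_equiv}, and induction. The gap is the step you yourself flag, and everything downstream depends on it. The factorization $A_{(a,\beta)}=A_{(a,o)}A_{(0,\beta)}$, the rectangularity of $\mathcal{D}$, and hence the splitting $\mathfrak{X}\cong\mathfrak{X}/\mathcal{C}_1\otimes\mathfrak{X}_{x_0\mathcal{C}_1}$ are all derived from the claim that $A_{(a,o)}A_{(0,\beta)}$ is supported only on $(a,\beta)$. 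You do not prove that claim. Intersecting the bounds of Lemma~\ref{lem:p_support}~\ref{item:p_support_ii} over all monomial orders only shows that every $\gamma$ in the support is componentwise $\le(a,\beta)$. The valency identity $k_{(a,o)}k_{(0,\beta)}=\sum_\gamma p^{\gamma}_{(a,o),(0,\beta)}k_\gamma$ cannot exclude the remaining $\gamma$ unless you already know $k_{(a,\beta)}=k_{(a,o)}k_{(0,\beta)}$, and that is part of what has to be shown. So, as written, the central step is missing.

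The idea you need is to use the equivalence classes rather than the orders.
\begin{itemize}
\item Since $(0,\beta)\in\mathcal{C}_1$, Lemma~\ref{lem:closed_subset}~\ref{lem:closed_subset:mult} puts the support of $A_{(a,o)}A_{(0,\beta)}$ inside $(a,o)\mathcal{C}_1$. By Lemma~\ref{lem:qt_equiv} (lex with $x_1$ first), this class consists of the labels with first coordinate $a$.
\item Since $(a,o)\in\mathcal{C}_2$, the support also lies in $(0,\beta)\mathcal{C}_2$, the labels whose last $\ell-1$ coordinates are $\beta$.
\item Hence the support is $\{(a,\beta)\}$. Because the product is a nonzero nonnegative matrix, this also forces $(a,\beta)\in\mathcal{D}$. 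Finally $c=1$, because an $R_1$-class meets an $R_2$-class in at most one point.
\end{itemize}
Alternatively, rotate with Lemma~\ref{lem:intersection_krein_relation}~\ref{item:intersection_numbers} and apply the upper bound of Lemma~\ref{lem:p_support} to $p^{(0,\beta)}_{(a,o)^T,\gamma}$ and $p^{(a,o)}_{\gamma,(0,\beta)^T}$ in every order, using $(a,o)^T\in\mathcal{C}_2$ and $(0,\beta)^T\in\mathcal{C}_1$.

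The paper avoids the support claim altogether. The nonvanishing $p^{(a,\beta)}_{(0,\beta),(a,o)}\neq0$, which you already quote, gives for $\mathcal{R}(x,y)=(a,\beta)$ a point $z$ with $\mathcal{R}(x,z)\in\mathcal{C}_1$ and $\mathcal{R}(z,y)\in\mathcal{C}_2$. So $x\mapsto(x\mathcal{C}_1,x\mathcal{C}_2)$ is a bijection, and Lemma~\ref{lem:qt_equiv} shows that the label of $(x,y)$ is recovered from the two quotient relations. The paper then writes $\mathfrak{X}$ as the product of the two quotient schemes and inducts on the $(\ell-1)$-variate one (your $\mathfrak{X}/\mathcal{C}_2$) via Theorem~\ref{thm:QuotientScheme}. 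Your quotient-times-block version also works once the splitting is in place: you induct via Theorem~\ref{thm:block_scheme_Ppoly}, using your correct observation that every order on $\N^{\ell-1}$ is induced from a $1$-block order.
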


\begin{proof}
The implication
\ref{item:directproduct_anyorder_i}$\Longrightarrow$\ref{item:directproduct_anyorder_ii}
is already proved in Proposition~\ref{prop:direct_prod_P/Q-poly}, so we omit it.
We prove
\ref{item:directproduct_anyorder_ii}$\Longrightarrow$\ref{item:directproduct_anyorder_i}.
Assume \ref{item:directproduct_anyorder_ii}. Then there exist
$\mathcal{D}\subset\N^\ell$ and a labeling of the adjacency matrices
$\{A_\alpha\}_{\alpha\in\mathcal{D}}$ such that
$\mathfrak{X}=(X,\{A_\alpha\}_{\alpha\in\mathcal{D}})$,
and for every monomial order, $\mathfrak{X}$ is an
$\ell$-variate $P$-polynomial association scheme on $\mathcal{D}$.
We proceed by induction on $\ell$.
The case $\ell=1$ is trivial, so assume $\ell\ge 2$ and that the claim holds for $\ell-1$.

First, let $\le'$ be any monomial order on $x_1,\ldots,x_{\ell-1}$.
Define a monomial order $\le_1$ on $\N^\ell$ by
\[
(\alpha,a_\ell)<_1(\beta,b_\ell)
\quad\Longleftrightarrow\quad
\bigl(\alpha<' \beta\bigr)\ \text{or}\ \bigl(\alpha=\beta\ \text{and}\ a_\ell<b_\ell\bigr)
\qquad (\alpha,\beta\in\N^{\ell-1}).
\]
Then $\le_1$ is a monomial order of $(\ell-1)$-block type.
By assumption \ref{item:directproduct_anyorder_ii},
$\mathfrak{X}$ is an $\ell$-variate $P$-polynomial association scheme with respect to $\le_1$. Set
\[
\mathcal{C}_1:=\mathcal{D}\cap (\{o\}\times \N)
=\{(a_1,\ldots ,a_\ell)\in\mathcal{D}\mid a_1=a_2=\cdots =a_{\ell -1}=0\}.
\]
Then Theorem~\ref{thm:QuotientScheme}
implies that $\mathfrak{X}/\mathcal{C}_1$ is an $(\ell-1)$-variate
$P$-polynomial association scheme with respect to $\le'$.
As $\le'$ is arbitrary, $\mathfrak{X}/\mathcal{C}_1$ satisfies
the $(\ell-1)$-variable version of \ref{item:directproduct_anyorder_ii}.

Next, let $\le_2$ be the lexicographic order with
$x_1<x_2<\cdots<x_\ell$, viewed as $1$-block type.
Again by \ref{item:directproduct_anyorder_ii},
$\mathfrak{X}$ is an $\ell$-variate $P$-polynomial association scheme with respect to $\le_2$.
Set
\[
\mathcal{C}_2:=
\mathcal{D}\cap (\N^{\ell -1}\times \{o\})
=\{(a_1,\ldots ,a_\ell)\in\mathcal{D}\mid a_\ell=0\}.
\]
Then, $\mathfrak{X}/\mathcal{C}_2$ is a univariate $P$-polynomial association scheme.

Let $\sim_r$ denote the equivalence relation corresponding to $\mathcal{C}_r$,
and write $X/\mathcal{C}_r=\{x\mathcal{C}_r\mid x\in X\}$ ($r=1,2$).
Consider the map
\[
f\colon X\longrightarrow (X/\mathcal{C}_1)\times (X/\mathcal{C}_2),
\qquad
x\longmapsto \bigl(x\mathcal{C}_1,x\mathcal{C}_2\bigr).
\]
We show that $f$ is bijective.
First, $f$ is injective.
Since $\mathcal{C}_1\cap\mathcal{C}_2=\{o\}$, for any $x,y\in X$,
\begin{align*}
\bigl(x\mathcal{C}_1,x\mathcal{C}_2\bigr)=(y\mathcal{C}_1,y\mathcal{C}_2)
&\Longleftrightarrow\ 
x\sim_1 y\ \text{and}\ x\sim_2 y\\
&\Longleftrightarrow\ 
\mathcal{R}(x,y)\in \mathcal{C}_1\ \text{and}\ \mathcal{R}(x,y)\in \mathcal{C}_2\\
&\Longleftrightarrow\ 
\mathcal{R}(x,y)\in \mathcal{C}_1\cap\mathcal{C}_2=\{o\}\\
&\Longleftrightarrow\ 
x=y
\end{align*}
holds, so $f$ is injective.
Next, we show surjectivity.
Take arbitrary $x\mathcal{C}_1\in X/\mathcal{C}_1$ and $y\mathcal{C}_2\in X/\mathcal{C}_2$,
and choose representatives $x$ and $y$.
Write $\mathcal{R}(x,y)=(\alpha,a)\in\N^{\ell-1}\times\N$.
By Lemma~\ref{lem:p_support}, $p^{(\alpha, a)}_{(o, a),(\alpha, 0)}\neq 0$.
Hence there exists $z\in X$ such that
$\mathcal{R}(x,z)=(o,a)$ and $\mathcal{R}(z,y)=(\alpha,0)$.
This implies $x\sim_1 z$ and $z\sim_2 y$,
so $f(z)=(x\mathcal{C}_1,y\mathcal{C}_2)$.
Thus $f$ is surjective.
Similarly, consider the map on relation indices
\[
g\colon \mathcal{D}\longrightarrow (\mathcal{D}/\mathcal{C}_1)\times(\mathcal{D}/\mathcal{C}_2),
\qquad
\alpha\longmapsto \bigl(\alpha\mathcal{C}_1,\alpha\mathcal{C}_2\bigr)
\]
where $\equiv_r$ is the equivalence relation on $\mathcal{D}$ induced by $\mathcal{C}_r$.
By Lemma~\ref{lem:qt_equiv},
\[
(a_1,\ldots ,a_{\ell-1},a_\ell)\equiv_1(b_1,\ldots ,b_{\ell-1},b_\ell)
\iff a_\ell=b_\ell,
\]
\[
(a_1,\ldots ,a_{\ell-1},a_\ell)\equiv_2(b_1,\ldots ,b_{\ell-1},b_\ell)
\iff (a_1,\ldots ,a_{\ell-1})=(b_1,\ldots ,b_{\ell-1})
\]
so $g$ is bijective.
Moreover, by definition of quotient schemes, for any $x,x'\in X$,
$(\mathcal{R}/\mathcal{C}_r)\bigl(x\mathcal{C}_r,x'\mathcal{C}_r\bigr)
=\mathcal{R}(x,x')\mathcal{C}_r$ ($r=1,2$)
holds. Therefore,
\[
g\bigl(\mathcal{R}(x,x')\bigr)
=
\bigl(
(\mathcal{R}/\mathcal{C}_1)\bigl(x\mathcal{C}_1,x'\mathcal{C}_1\bigr),
(\mathcal{R}/\mathcal{C}_2)\bigl(x\mathcal{C}_2,x'\mathcal{C}_2\bigr)
\bigr)
=
\mathcal{R}_{\mathfrak{X}/\mathcal{C}_1\otimes \mathfrak{X}/\mathcal{C}_2}\bigl(f(x),f(x')\bigr)
\]
holds, and $(f,g)$ is an isomorphism from $\mathfrak{X}$ to
$\mathfrak{X}/\mathcal{C}_1\otimes \mathfrak{X}/\mathcal{C}_2$.
That is,
\begin{equation}
\label{eq:directproduct_anyorder}
\mathfrak{X}\cong \mathfrak{X}/\mathcal{C}_1\otimes \mathfrak{X}/\mathcal{C}_2.    
\end{equation}

Since $\mathfrak{X}/\mathcal{C}_1$ satisfies the $(\ell-1)$-variable version of
\ref{item:directproduct_anyorder_ii}, the induction hypothesis implies that
$\mathfrak{X}/\mathcal{C}_1$ is isomorphic to a direct product of $\ell-1$
univariate $P$-polynomial association schemes.
Hence, by \eqref{eq:directproduct_anyorder},
$\mathfrak{X}$ is isomorphic to a direct product of $\ell$ univariate
$P$-polynomial association schemes.
This proves
\ref{item:directproduct_anyorder_ii}$\implies$\ref{item:directproduct_anyorder_i}.
\end{proof}

The $Q$-polynomial version of Theorem~\ref{thm:directproduct_anyorder}
is proved similarly.
Since the proof follows the same structure, we omit it.
\begin{thm}\label{thm:directproduct_anyorder_Q}
Fix $\ell^\ast\in\N$, and let
$\mathfrak{X}=(X,\mathcal{R})$ be a commutative association scheme.
Then the following are equivalent.
\begin{enumerate}[label=$(\roman*)$]
 \item \label{item:directproduct_anyorder_Q_i} $\mathfrak{X}$ is isomorphic to a direct product of $\ell^\ast$ (commutative) $Q$-polynomial association schemes.
 \item \label{item:directproduct_anyorder_Q_ii} There exist $\mathcal{D}^\ast\subset \N^{\ell^\ast}$ and a labeling of the primitive idempotents $\{E_\alpha\}_{\alpha\in\mathcal{D}^\ast}$ such that $\mathfrak{X}$ with $\{E_\alpha\}_{\alpha\in\mathcal{D}^\ast}$ is an $\ell^\ast$-variate $Q$-polynomial association scheme on $\mathcal{D}^\ast$ with respect to any monomial order $\le^\ast$.
\end{enumerate}
\end{thm}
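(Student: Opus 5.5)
The direction (i)$\Rightarrow$(ii) is Proposition~\ref{prop:direct_prod_P/Q-poly}~\ref{item:direct_prod_Q}. For (ii)$\Rightarrow$(i) I will dualize the proof of Theorem~\ref{thm:directproduct_anyorder} and argue by induction on $\ell^\ast$; the case $\ell^\ast=1$ is immediate. Assume $\ell^\ast\ge 2$. Fix an arbitrary monomial order $\le'$ on $\N^{\ell^\ast-1}$, and let $\le^\ast_1$ be the order that compares the first $\ell^\ast-1$ coordinates by $\le'$ and breaks ties by the last coordinate. This order is of $(\ell^\ast-1)$-block type. By (ii), $\mathfrak{X}$ is $Q$-polynomial with respect to $\le^\ast_1$. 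Theorem~\ref{thm:EliminationThm}~\ref{item:EliminationThm_iii} then says that $\mathcal{C}_1^\ast:=\mathcal{D}^\ast\cap(\{o\}\times\N)$ is a closed subset with respect to $\circ$. Let $\mathcal{C}_1$ be its dual closed subset. Theorem~\ref{thm:block_scheme_Ppoly}~\ref{item:block_scheme_Ppoly_ii} shows that the block scheme $\mathfrak{X}_{x_0\mathcal{C}_1}$ is $(\ell^\ast-1)$-variate $Q$-polynomial on $\mathcal{D}^\ast_{\mathrm{bl}}=\{\alpha\mid(\alpha,0)\in\mathcal{D}^\ast\}$ with respect to $\le'$. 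Since $\le'$ is arbitrary, the induction hypothesis makes this block scheme a direct product of $\ell^\ast-1$ univariate $Q$-polynomial schemes. On the $Q$-side, block schemes replace the quotients used on the $P$-side.

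Next take the lexicographic order with $x_\ell^\ast$ largest, viewed as $1$-block type after moving the last coordinate to the front. This gives the closed subset $\mathcal{C}_2^\ast:=\{\alpha\in\mathcal{D}^\ast\mid\alpha_{\ell^\ast}=0\}$ with dual $\mathcal{C}_2$. The corresponding block scheme is univariate $Q$-polynomial.

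It remains to show $\mathfrak{X}\cong\mathfrak{X}_{x_0\mathcal{C}_1}\otimes\mathfrak{X}_{x_0\mathcal{C}_2}$. I will do this through the equivalence relations on $X$: the goal is to prove $\mathcal{C}_1\cap\mathcal{C}_2=\{i_0\}$ and $\mathcal{C}_1\mathcal{C}_2=\mathcal{I}$ at the level of relations. Once that holds, the map $X\to x_0\mathcal{C}_1\times x_0\mathcal{C}_2$ sending $x$ to the unique point of $x\mathcal{C}_2\cap x_0\mathcal{C}_1$ and of $x\mathcal{C}_1\cap x_0\mathcal{C}_2$ is a bijection, exactly as in the proof of Theorem~\ref{thm:directproduct_anyorder}.

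On the dual side, the dual analogue of Lemma~\ref{lem:qt_equiv} gives $\mathcal{C}_1^\ast\cap\mathcal{C}_2^\ast=\{o\}$. Lemma~\ref{lem:q_support}~\ref{item:q_support_ii} gives $q^{(\alpha,a)}_{(\alpha,0),(o,a)}\neq 0$, so $\mathcal{C}_1^\ast\circ\mathcal{C}_2^\ast=\mathcal{D}^\ast$. Writing $M_r=\sum_{j\in\mathcal{C}_r^\ast}E_j$, these two facts should translate into $M_1M_2=E_{o}=\frac1{|X|}J$ and $|X|\,(M_1\circ M_2)=I$, up to the scalars $p_r,q_r$. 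They also give $|\mathcal{D}^\ast|=|\mathcal{C}_1^\ast|\,|\mathcal{C}_2^\ast|$. The relation $M_1M_2=\frac1{|X|}J$ is immediate from orthogonality of idempotents, since the only common index is $o$. The claim $(\sum_{\mathcal{C}_1}A_i)\circ(\sum_{\mathcal{C}_2}A_i)=I$ is the \textbf{main obstacle}, because it means translating the Krein-side decomposition back into a statement about relations. My first attempt will be to evaluate $\sum_{\mathcal{C}_1}A_i\circ\sum_{\mathcal{C}_2}A_i=p_1p_2\,M_1\circ M_2$ as a Hadamard product of sums of $E$'s. Expanding it with Krein numbers, it is a combination of $E_\gamma$ with $\gamma$ in $\mathcal{C}_1^\ast\circ\mathcal{C}_2^\ast$. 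The rank/trace count $p_1p_2=|X|$ then has to force this combination to equal $I$. That count should come from $q_r=\sum_{\mathcal{C}_r^\ast}m_j$ together with $m_{(\alpha,a)}=m_{(\alpha,0)}m_{(o,a)}$, and the multiplicity factorization itself I would try to extract from Lemma~\ref{lem:intersection_krein_relation}~\ref{item:krein_numbers} and the Krein-number nonvanishing in the orders above.

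Granting this, $\mathfrak{X}$ is the direct product of the two block schemes, matching the pattern of \eqref{eq:directproduct_anyorder}, and the induction finishes the proof.
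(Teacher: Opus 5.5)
The step you call the main obstacle is a genuine gap, and the route you sketch would not close it.

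\textbf{The injectivity half.} Your other half is fine. Since $\mathcal{C}_1^\ast\cap\mathcal{C}_2^\ast=\{o\}$, we get $N_1N_2=\frac{p_1p_2}{|X|}J$ for $N_r:=\sum_{i\in\mathcal{C}_r}A_i=p_rM_r$. So every $x\mathcal{C}_1\cap y\mathcal{C}_2$ is nonempty, and all have the same size $p_1p_2/|X|$.

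For $\mathcal{C}_1\cap\mathcal{C}_2=\{i_0\}$, however, your plan fails in two places. First, expanding $M_1\circ M_2$ in the $E_\gamma$ only shows that its support lies in $\mathcal{C}_1^\ast\circ\mathcal{C}_2^\ast=\mathcal{J}$, which carries no information. Second, the multiplicity factorization cannot be extracted from Lemma~\ref{lem:intersection_krein_relation}\,(ii), which merely relates Krein numbers with permuted indices. Even granting that $E_{(\alpha,0)}\circ E_{(o,a)}$ is a multiple of $E_{(\alpha,a)}$, the identity $\sum_\gamma q^\gamma_{ij}m_\gamma=m_im_j$ only yields $m_{(\alpha,0)}m_{(o,a)}=q^{(\alpha,a)}_{(\alpha,0),(o,a)}\,m_{(\alpha,a)}$. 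So the factorization is equivalent to $q^{(\alpha,a)}_{(\alpha,0),(o,a)}=1$, which is essentially the direct-product structure you are trying to prove.

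The missing idea is that $\mathcal{C}_{12}:=\mathcal{C}_1\cap\mathcal{C}_2$ is itself a closed subset, being the intersection of two equivalence relations. Hence $N_{12}:=N_1\circ N_2=\sum_{i\in\mathcal{C}_{12}}A_i=p_{12}M_{12}$, where $M_{12}=\sum_{j\in\mathcal{C}_{12}^\ast}E_j$ and $p_{12}=|x\mathcal{C}_{12}|$. The argument then runs as follows.
\begin{enumerate}[label=$(\arabic*)$]
\item From $N_{12}N_r=p_{12}N_r$ you get $M_{12}M_r=M_r$, i.e.\ $\mathcal{C}_r^\ast\subseteq\mathcal{C}_{12}^\ast$.
\item $\Span\{E_j\mid j\in\mathcal{C}_{12}^\ast\}$ is closed under $\circ$ by Lemma~\ref{lem:dual_closed_subset_equiv}. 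Since $q^{(\alpha,a)}_{(\alpha,0),(o,a)}\neq0$, every $(\alpha,a)$ lies in $\mathcal{C}_{12}^\ast$.
\item Hence $M_{12}=I$ and $N_1\circ N_2=p_{12}I$. Since $N_1\circ N_2$ is a $0$--$1$ matrix with ones on the diagonal, $p_{12}=1$ and $\mathcal{C}_1\cap\mathcal{C}_2=\{i_0\}$.
\end{enumerate}
The count $p_1p_2=|X|$ and the factorization of multiplicities then come out as consequences of the argument, not as inputs.

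\textbf{A second omission.} Even with both relation-level facts, a bijection of points $X\to x_0\mathcal{C}_1\times x_0\mathcal{C}_2$ does not make $\mathfrak{X}$ a direct product. Take the Schur ring on $\Z_3^2$ whose basic sets are $\{0\}$ and the nonzero parts of the four lines through $0$. It gives a commutative $4$-class scheme in which two of the lines are closed subsets meeting trivially and generating everything. Yet the product of the two quotients is $K_3\otimes K_3$, which has class $3$.

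In Theorem~\ref{thm:directproduct_anyorder}, bijectivity of the relation map $g$ came from the coordinatization of $\mathcal{D}$ via Lemma~\ref{lem:qt_equiv}. On the $Q$-side the relations carry no coordinates, so you must add a count:
\begin{enumerate}[label=$(\arabic*)$]
\item The map $g\colon\mathcal{I}\to\mathcal{I}/\mathcal{C}_1\times\mathcal{I}/\mathcal{C}_2$, $i\mapsto(i\mathcal{C}_1,i\mathcal{C}_2)$, is onto, because of the point bijection.
\item $|\mathcal{I}/\mathcal{C}_r|=|\mathcal{C}_r^\ast|$.
\item $|\mathcal{I}|=|\mathcal{D}^\ast|\le|\mathcal{C}_1^\ast|\,|\mathcal{C}_2^\ast|$ by downward closure.
\end{enumerate}
Only this inequality follows from your two Krein-side facts. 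The equality $|\mathcal{D}^\ast|=|\mathcal{C}_1^\ast|\,|\mathcal{C}_2^\ast|$ that you assert is an output of the argument, not an input.

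This also suggests building the isomorphism with $\mathfrak{X}/\mathcal{C}_1\otimes\mathfrak{X}/\mathcal{C}_2$, as in the $P$-case, where the relation map is automatically well defined. Only afterwards should you identify $\mathfrak{X}_{x_0\mathcal{C}_1}\cong\mathfrak{X}/\mathcal{C}_2$ and $\mathfrak{X}_{x_0\mathcal{C}_2}\cong\mathfrak{X}/\mathcal{C}_1$ to run your block-scheme induction. Going straight to blocks leaves the well-definedness of the relation map unproved.

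The paper itself only asserts that the $Q$-case follows the same structure. These are exactly the points where that dualization is not automatic.
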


\section{Further topics}\label{sec:other_topics}

\subsection{Formal duality}
\label{sec:curtin}

In this subsection, we show that Curtin's formal duality~\cite{curtin2008} is inherited by quotient and block schemes in the imprimitive case, and hence that the corresponding multivariate polynomial structures are inherited as well.

Let $\mathfrak{X}=(X,\mathcal{R},\mathcal{I})$ and $\mathfrak{X}^\vee=(X^\vee,\mathcal{R}^\vee,\mathcal{J})$ be commutative association schemes with $|X|=|X^\vee|$.
Denote their adjacency matrices by $\{A_i\}_{i\in\mathcal{I}}$ and $\{A^\vee_j \}_{j\in\mathcal{J}}$, and their Bose--Mesner algebras by $\mathfrak{A}$ and $\mathfrak{A}^\vee$.

\begin{df}[formal duality]
\label{df:formal-duality}
Let $n:=|X|=|X^\vee|$. A linear isomorphism $\Phi:\mathfrak{A}\to\mathfrak{A}^\vee$ is called a \emph{formal duality} from $\mathfrak{A}$ to $\mathfrak{A}^\vee$ if it satisfies the following:
\begin{enumerate}[label=$(\roman*)$]
\item For any $A,B\in\mathfrak{A}$, $\Phi(AB)=\Phi(A)\circ\Phi(B)$.
\item For any $A,B\in\mathfrak{A}$, $\Phi(A\circ B)=n^{-1}\Phi(A)\Phi(B)$.
\end{enumerate}
In this case, we say that $\mathfrak{X}$ and $\mathfrak{X}^\vee$ are \emph{formally dual} (in the sense of their Bose--Mesner algebras).
\end{df}

\medskip
\noindent
Below, we fix a formal duality $\Phi$. In this case, by appropriately indexing the primitive idempotents of $\mathfrak{A},\mathfrak{A}^\vee$ as $\{E_j\}_{j\in\mathcal{J}}, \{E_i^\vee\}_{i\in\mathcal{I}}$, we can arrange the indices of $\{A_i\}_{i\in\mathcal{I}}$, $\{E_j\}_{j\in\mathcal{J}}$ and $\{A_j^\vee\}_{j\in\mathcal{J}}$, $\{E_i^\vee\}_{i\in\mathcal{I}}$ to be \emph{standard} with respect to $\Phi$, that is,
\begin{equation}
\label{eq:formal-duality-standard}
\Phi(E_j)=A_j^\vee\ (j\in\mathcal{J}),
\qquad
\Phi(A_i)=|X|\,(E_i^\vee)^T\ (i\in\mathcal{I})
\end{equation}
hold.
From this, it immediately follows that the intersection numbers of $\mathfrak{X}$ and the Krein numbers of $\mathfrak{X}^\vee$ are equal, and vice versa. Therefore, we have the following lemma:
\begin{lem}
\label{lem:formal_duality_multivariate}
Assume that $\mathfrak{X}$ and $\mathfrak{X}^\vee$ are formally dual and that the indexing is standard as in \eqref{eq:formal-duality-standard}.
Then the following hold.
\begin{enumerate}[label=$(\roman*)$]
\item \label{item:formal_duality_multivariate_i}
If $\mathfrak{X}$ is an $\ell$-variate $P$-polynomial association scheme on $\mathcal{D}\subset \N^\ell$ with respect to a monomial order $\le$, then $\mathfrak{X}^\vee$ is an $\ell$-variate $Q$-polynomial association scheme on the same domain $\mathcal{D}$ with respect to the same monomial order $\le$.
\item \label{item:formal_duality_multivariate_ii}
If $\mathfrak{X}$ is an $\ell^\ast$-variate $Q$-polynomial association scheme on $\mathcal{D}^\ast\subset \N^{\ell^\ast}$ with respect to a monomial order $\le^\ast$, then $\mathfrak{X}^\vee$ is an $\ell^\ast$-variate $P$-polynomial association scheme on the same domain $\mathcal{D}^\ast$ with respect to the same monomial order $\le^\ast$.
\end{enumerate}
\end{lem}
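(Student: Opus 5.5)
The plan is to transport the structure constants through $\Phi$ and then invoke the combinatorial criteria, Proposition~\ref{prop:P-TFAE} and Proposition~\ref{prop:Q-TFAE}. Under the standard indexing \eqref{eq:formal-duality-standard}, we have $\Phi(A_\alpha)=|X|(E^\vee_\alpha)^T$ for $\alpha\in\mathcal{D}$. The primitive idempotents of $\mathfrak{X}^\vee$ are therefore indexed by $\mathcal{D}$, after composing with the transpose involution $\alpha\mapsto\alpha^T$ on idempotents. To avoid dealing with that twist, I would set $E'_\alpha:=(E^\vee_\alpha)^T$, which is again a primitive idempotent of $\mathfrak{A}^\vee$, so this is simply a relabeling.

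The first step is to apply $\Phi$ to $A_\alpha A_\beta=\sum_\gamma p^\gamma_{\alpha\beta}A_\gamma$. Property (i) of Definition~\ref{df:formal-duality} gives
\[
(|X|E'_\alpha)\circ(|X|E'_\beta)=\sum_{\gamma} p^\gamma_{\alpha\beta}\,|X|E'_\gamma .
\]
So the Krein numbers of $\mathfrak{X}^\vee$ with respect to $\{E'_\alpha\}_{\alpha\in\mathcal{D}}$ coincide with the intersection numbers $p^\gamma_{\alpha\beta}$ of $\mathfrak{X}$. Hadamard products commute with transposition, so passing between $E^\vee$ and $E'$ does not affect this identity.

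The second step uses the hypothesis. Since $\mathfrak{X}$ is $\ell$-variate $P$-polynomial on $\mathcal{D}$ with respect to $\le$, Proposition~\ref{prop:P-TFAE} says that $\mathcal{D}$ is a down-set and that $p^\beta_{\epsilon_i,\alpha}\ne0$ implies $\beta\le\alpha+\epsilon_i$, with $p^{\alpha+\epsilon_i}_{\epsilon_i,\alpha}\ne0$ whenever $\alpha+\epsilon_i\in\mathcal{D}$. By the first step these are exactly the conditions of Proposition~\ref{prop:Q-TFAE}~\ref{item:Q-TFAE_ii} for the labeling $\{E'_\alpha\}$. Hence $\mathfrak{X}^\vee$ is $\ell$-variate $Q$-polynomial on $\mathcal{D}$ with respect to $\le$, which proves \ref{item:formal_duality_multivariate_i}.

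For \ref{item:formal_duality_multivariate_ii}, I would apply $\Phi$ to $(|X|E_\alpha)\circ(|X|E_\beta)=\sum_\gamma q^\gamma_{\alpha\beta}|X|E_\gamma$. Property (ii) turns the left side into
\[
|X|^2\,|X|^{-1}\,A^\vee_\alpha A^\vee_\beta=|X|A^\vee_\alpha A^\vee_\beta ,
\]
and the right side becomes $\sum_\gamma q^\gamma_{\alpha\beta}|X|A^\vee_\gamma$. It follows that the intersection numbers of $\mathfrak{X}^\vee$ equal the Krein numbers $q^\gamma_{\alpha\beta}$ of $\mathfrak{X}$. Proposition~\ref{prop:Q-TFAE} for $\mathfrak{X}$ then transfers, via Proposition~\ref{prop:P-TFAE}, to $\mathfrak{X}^\vee$. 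The only delicate point is the bookkeeping of the transpose twist and the normalizing factors $|X|$ and $n^{-1}$. These only rescale both sides uniformly, so they do not affect the support conditions on which the two criteria depend.
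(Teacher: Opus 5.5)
Your proposal is correct and follows the paper's argument. Under the standard indexing \eqref{eq:formal-duality-standard}, $\Phi$ identifies the intersection numbers of $\mathfrak{X}$ with the Krein numbers of $\mathfrak{X}^\vee$ and vice versa, and the lemma then follows from the criteria in Propositions~\ref{prop:P-TFAE} and~\ref{prop:Q-TFAE}, which the paper leaves implicit; your treatment of the transpose twist and of the factors $|X|$ and $n^{-1}$ (which in fact cancel exactly, rather than merely rescaling) supplies the details the paper omits.
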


We restate Curtin~\cite{curtin2008}'s result in the notation of this paper.

\begin{thm}[cf.~Theorem~7.1 in Curtin~\cite{curtin2008}]
\label{thm:curtin71}
Fix a formal duality $\Phi:\mathfrak{A}\to\mathfrak{A}^\vee$ satisfying \eqref{eq:formal-duality-standard}.
Suppose $\mathfrak{X}$ is imprimitive, and let $\mathcal{C}$ be a closed subset with $\{0\}\subsetneq \mathcal{C}\subsetneq \mathcal{I}$, and let $\mathcal{C}^\ast\subset\mathcal{J}$ be the dual closed subset corresponding to $\mathcal{C}$.
Then the following hold:
\begin{enumerate}[label=$(\roman*)$]
\item \label{item:curtin71_i} $\mathfrak{X}^\vee$ is imprimitive with respect to the closed subset $\mathcal{C}^\ast\subset\mathcal{J}$, and $\mathcal{C}$ is its dual closed subset (i.e., $(\mathcal{C}^\ast)^\ast=\mathcal{C}$).
\item \label{item:curtin71_ii} For any $x_0\in X$, the block scheme $\mathfrak{X}_{x_0\mathcal{C}}$ on the block (equivalence class) $x_0\mathcal{C}$ determined by $\mathcal{C}$, and the quotient scheme $\mathfrak{X}^\vee/\mathcal{C}^\ast$ are formally dual.
\item \label{item:curtin71_iii} The quotient scheme $\mathfrak{X}/\mathcal{C}$ and any block scheme of $\mathfrak{X}^\vee$ with respect to $\mathcal{C}^\ast$ are formally dual.
\end{enumerate}
\end{thm}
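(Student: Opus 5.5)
The plan is to work entirely at the level of Bose--Mesner algebras. We transport the structures attached to $\mathcal{C}$ and $\mathcal{C}^\ast$ through $\Phi$, using only the two defining identities $\Phi(AB)=\Phi(A)\circ\Phi(B)$ and $\Phi(A\circ B)=n^{-1}\Phi(A)\Phi(B)$ together with the standard indexing \eqref{eq:formal-duality-standard}.

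\textbf{Step 1: part \ref{item:curtin71_i}.} By Lemma~\ref{lem:dual_closed_subset_equiv}, $\mathcal{C}^\ast$ is closed under $\circ$ in $\mathfrak{X}$, i.e.\ $\Span\{E_j\mid j\in\mathcal{C}^\ast\}$ is a Hadamard subalgebra. Since $\Phi$ turns $\circ$ into ordinary multiplication (up to the factor $n^{-1}$), $\Span\{A^\vee_j\mid j\in\mathcal{C}^\ast\}=\Phi(\Span\{E_j\}_{j\in\mathcal{C}^\ast})$ is closed under matrix multiplication. Equivalently, the intersection numbers of $\mathfrak{X}^\vee$ on $\mathcal{C}^\ast$ are the Krein numbers of $\mathfrak{X}$ on $\mathcal{C}^\ast$, and closure under transposes follows from \ref{ASE:transpose}. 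Hence $\mathcal{C}^\ast$ is a closed subset of $\mathfrak{X}^\vee$, and it is nontrivial because $\{0\}\subsetneq\mathcal{C}\subsetneq\mathcal{I}$ forces $1<|\mathcal{C}^\ast|<|\mathcal{J}|$.

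To identify the dual of $\mathcal{C}^\ast$ in $\mathfrak{X}^\vee$, compute
\[
\Phi(M)=\Phi\Bigl(\sum_{j\in\mathcal{C}^\ast}E_j\Bigr)=\sum_{j\in\mathcal{C}^\ast}A^\vee_j,
\qquad
\Phi(M)=\frac1p\sum_{i\in\mathcal{C}}\Phi(A_i)=\frac np\sum_{i\in\mathcal{C}}(E^\vee_i)^T .
\]
Since $\mathcal{C}^T=\mathcal{C}$, this shows that $\frac{1}{p^\vee}\sum_{j\in\mathcal{C}^\ast}A^\vee_j$ is the sum of the $E^\vee_i$ with $i\in\mathcal{C}$, once we know $p^\vee=n/p=q$. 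That value comes from $\sum_{j\in\mathcal{C}^\ast}k^\vee_j=\sum_{j\in\mathcal{C}^\ast}m_j=q$, using that the valencies of $\mathfrak{X}^\vee$ are the multiplicities of $\mathfrak{X}$. Therefore $(\mathcal{C}^\ast)^\ast=\mathcal{C}$.

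\textbf{Step 2: part \ref{item:curtin71_ii}, and the main obstacle.} Both schemes live on $p$ points: the block $x_0\mathcal{C}$ has $p$ points and $|X^\vee/\mathcal{C}^\ast|=n/q=p$. The Bose--Mesner algebra of the block scheme is identified with $\mathfrak{A}_{\mathcal{C}}$, and that of $\mathfrak{X}^\vee/\mathcal{C}^\ast$ with the ideal $\mathfrak{A}^\vee M^\vee$, where $M^\vee=\Phi(M)$. The candidate duality is
\[
\Psi(A)=\Phi(A)\circ\Phi(M)=\Phi(AM)\qquad (A\in\mathfrak{A}_{\mathcal{C}}),
\]
suitably rescaled and composed with the canonical isomorphism $\mathfrak{A}^\vee M^\vee\cong\mathfrak{A}(\mathfrak{X}^\vee/\mathcal{C}^\ast)$ from \eqref{eq:MAM}.

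The main obstacle is the normalization. Let $\pi$ denote the canonical isomorphism $\mathfrak{A}^\vee M^\vee\cong\mathfrak{A}(\mathfrak{X}^\vee/\mathcal{C}^\ast)$; it rescales ordinary products by a factor $p^\vee$, because $D_{\bm{i}}\mapsto\frac1{p^\vee}\sum A^\vee$. We also have to check that the Hadamard product on the block scheme, which uses restriction to $x_0\mathcal{C}$, corresponds under $\Psi$ to $p^{-1}$ times the ordinary product in the quotient. I will verify both defining identities on the bases: $A_i$ for $i\in\mathcal{C}$ on one side, and the idempotents $E_{\bm{j}}$ (block scheme) and $F_j$ (quotient) on the other. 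For this I use the eigenmatrix formulas \eqref{eq:block_scheme_P}--\eqref{eq:quotient_Q}, together with $P^\vee=Q$ and $Q^\vee=P$ under standard indexing. After these checks, the map $\Psi':=c\,\pi\circ\Psi$ for a suitable constant $c$, plausibly $c=1$ after the factors of $p$ and $q$ cancel, should satisfy both axioms with $n$ replaced by $p$.

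\textbf{Step 3: part \ref{item:curtin71_iii}.} This is the same argument with the roles of $\mathfrak{X}$ and $\mathfrak{X}^\vee$ exchanged. By Step 1, $\Phi^{-1}$ (rescaled by $n$) is a formal duality from $\mathfrak{A}^\vee$ to $\mathfrak{A}$ in which $\mathcal{C}^\ast$ is the closed subset of $\mathfrak{X}^\vee$ with dual $\mathcal{C}$. Applying part \ref{item:curtin71_ii} to it shows that any block scheme of $\mathfrak{X}^\vee$ with respect to $\mathcal{C}^\ast$ is formally dual to $\mathfrak{X}/\mathcal{C}$.
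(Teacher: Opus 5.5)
The paper gives no proof of this statement; it restates Curtin's Theorem~7.1, so your argument has to stand on its own. Step~1 is essentially fine, though (E3) alone does not give $(\mathcal{C}^\ast)^T=\mathcal{C}^\ast$ inside $\mathfrak{X}^\vee$. You can get it in either of two ways: in a finite scheme, any $S$ with $SS\subset S$ is automatically transpose-closed; or the transposition maps on $\mathcal{J}$ coming from $\mathfrak{X}$ and from $\mathfrak{X}^\vee$ agree, because $q^{j_0}_{jj'}\neq 0$ iff $j'=j^T$. The analogous remark on $\mathcal{I}$ justifies replacing $(E^\vee_i)^T$ by $E^\vee_i$.

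The genuine gap is in Step~2: your candidate $\Psi(A)=\Phi(A)\circ\Phi(M)=\Phi(AM)$ on $\mathfrak{A}_{\mathcal{C}}$ is not a linear isomorphism. For $i\in\mathcal{C}$ you have $i\mathcal{C}=\mathcal{C}$ and $k_{i\mathcal{C}}=1$, so \eqref{eq:quotient_scheme_A} gives $A_iM=k_iM$. Hence $\Psi(A_i)=k_i\sum_{j\in\mathcal{C}^\ast}A^\vee_j$ for every $i\in\mathcal{C}$, so $\Psi$ has rank one on a space of dimension $|\mathcal{C}|\ge 2$. No choice of $c$ and no composition with $\pi$ can fix this, so the verifications you postpone would fail. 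Multiplying by $M$ is the right operation for the quotient $\mathfrak{X}/\mathcal{C}\cong\mathfrak{A}M$, not for the block scheme. Since you obtain (iii) by running (ii) for $n\Phi^{-1}$, it inherits the same collapse: $Y\mapsto n\Phi^{-1}(YM^\vee)$ sends all of $\Span\{A^\vee_j\}_{j\in\mathcal{C}^\ast}$ onto the line spanned by $M$.

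The repair is to use no projection at all. By Step~1, $\Phi(\mathfrak{A}_{\mathcal{C}})=\Span\{E^\vee_i\}_{i\in\mathcal{C}}$. Since $(\mathcal{C}^\ast)^\ast=\mathcal{C}$, this is exactly the ideal $\mathfrak{A}^\vee M^\vee$, where $M^\vee=\frac1q\sum_{j\in\mathcal{C}^\ast}A^\vee_j=\sum_{i\in\mathcal{C}}E^\vee_i$. Let $\rho\colon\mathfrak{A}_{\mathcal{C}}\to\mathfrak{A}(\mathfrak{X}_{x_0\mathcal{C}})$ be restriction to $x_0\mathcal{C}$; it respects both products because every $A_i$ with $i\in\mathcal{C}$ is block diagonal. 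Let $\pi\colon\mathfrak{A}^\vee M^\vee\to\mathfrak{A}(\mathfrak{X}^\vee/\mathcal{C}^\ast)$ be the canonical isomorphism. Contrary to what you wrote, $\pi$ preserves ordinary products; it is the Hadamard product that is rescaled, namely $\pi(Y\circ Z)=q^{-1}\pi(Y)\circ\pi(Z)$.

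Now set $\Psi':=q^{-1}\,\pi\circ\Phi\circ\rho^{-1}$. Directly from the axioms for $\Phi$, with no eigenmatrix computations, you get $\Psi'(BB')=\Psi'(B)\circ\Psi'(B')$ and $\Psi'(B\circ B')=\frac{q}{n}\Psi'(B)\Psi'(B')=p^{-1}\Psi'(B)\Psi'(B')$. The constant is $1/q$, not $1$; check it against $\Psi'(I_p)=J_p$. Moreover $\Psi'$ sends the block idempotent $\rho(E_{\bm{j}})$ to the adjacency matrix of $\mathfrak{X}^\vee/\mathcal{C}^\ast$ indexed by $\bm{j}$.

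For (iii), use $\Phi|_{\mathfrak{A}M}$, whose image is $\Phi(\mathfrak{A})\circ\Phi(M)=\Span\{A^\vee_j\}_{j\in\mathcal{C}^\ast}$. Composing with the canonical isomorphism $\mathfrak{A}(\mathfrak{X}/\mathcal{C})\cong\mathfrak{A}M$ and with restriction to a block of $\mathfrak{X}^\vee$ gives a formal duality on $q$ points, this time with no extra scalar. Equivalently, your symmetry argument via $n\Phi^{-1}$ works once (ii) is repaired this way.
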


Therefore, from Lemma~\ref{lem:formal_duality_multivariate} and Theorem~\ref{thm:curtin71}, we obtain the following corollary.
\begin{cor}
\label{cor:curtin_multivariate}
Under the assumptions of Theorem~\ref{thm:curtin71}, formal duality propagates the reduced multivariate polynomial structure to the associated block and quotient schemes.
More precisely, the following hold.
\begin{enumerate}[label=$(\roman*)$]
\item \label{item:curtin_multivariate_i}
Assume that $\mathfrak{X}$ is an imprimitive $\ell$-variate $P$-polynomial association scheme on $\mathcal{D}\subset \N^s\times \N^{\ell-s}$ with respect to a monomial order of $s$-block type, and that
\[
\mathcal{C}=\mathcal{D}\cap (\{o\}\times \N^{\ell-s}).
\]
Let $\mathcal{C}^\ast\subset\mathcal{J}$ be the dual closed subset corresponding to $\mathcal{C}$;
\begin{enumerate}[label=$(\alph*)$]
\item \label{item:curtin_multivariate_i_a}
For any $x_0\in X$, the block scheme $\mathfrak{X}_{x_0\mathcal{C}}$ is an $(\ell-s)$-variate $P$-polynomial association scheme, the quotient scheme $\mathfrak{X}^\vee/\mathcal{C}^\ast$ is an $(\ell-s)$-variate $Q$-polynomial association scheme, and these two schemes are formally dual;
\item \label{item:curtin_multivariate_i_b}
The quotient scheme $\mathfrak{X}/\mathcal{C}$ is an $s$-variate $P$-polynomial association scheme, every block scheme of $\mathfrak{X}^\vee$ with respect to $\mathcal{C}^\ast$ is an $s$-variate $Q$-polynomial association scheme, and these schemes are formally dual;
\end{enumerate}
\item \label{item:curtin_multivariate_ii}
Assume that $\mathfrak{X}$ is an imprimitive $\ell^\ast$-variate $Q$-polynomial association scheme on $\mathcal{D}^\ast\subset \N^{s^\ast}\times \N^{\ell^\ast-s^\ast}$ with respect to a monomial order of $s^\ast$-block type, and that
\[
\mathcal{C}^\ast=\mathcal{D}^\ast\cap (\{o\}\times \N^{\ell^\ast-s^\ast}).
\]
Let $\mathcal{C}\subset\mathcal{I}$ be the dual closed subset corresponding to $\mathcal{C}^\ast$.
\begin{enumerate}[label=$(\alph*)$]
\item \label{item:curtin_multivariate_ii_a}
For any $x_0\in X$, the block scheme $\mathfrak{X}_{x_0\mathcal{C}}$ is an $s^\ast$-variate $Q$-polynomial association scheme, the quotient scheme $\mathfrak{X}^\vee/\mathcal{C}^\ast$ is an $s^\ast$-variate $P$-polynomial association scheme, and these two schemes are formally dual;
\item \label{item:curtin_multivariate_ii_b}
The quotient scheme $\mathfrak{X}/\mathcal{C}$ is an $(\ell^\ast-s^\ast)$-variate $Q$-polynomial association scheme, every block scheme of $\mathfrak{X}^\vee$ with respect to $\mathcal{C}^\ast$ is an $(\ell^\ast-s^\ast)$-variate $P$-polynomial association scheme, and these schemes are formally dual.
\end{enumerate}
\end{enumerate}
\end{cor}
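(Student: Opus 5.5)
The plan is to combine the three ingredients already available. These are the structure theorems for block and quotient schemes (Theorems~\ref{thm:QuotientScheme} and~\ref{thm:block_scheme_Ppoly}), Curtin's transfer of imprimitivity and formal duality (Theorem~\ref{thm:curtin71}), and the transfer of polynomial structures under a standard formal duality (Lemma~\ref{lem:formal_duality_multivariate}). I treat part~\ref{item:curtin_multivariate_i}; part~\ref{item:curtin_multivariate_ii} follows by interchanging the roles of $P$ and $Q$, that is, by replacing Theorem~\ref{thm:QuotientScheme}~\ref{item:QuotientScheme_P} and Theorem~\ref{thm:block_scheme_Ppoly}~\ref{item:block_scheme_Ppoly_i} with their $Q$-counterparts \ref{item:QuotientScheme_Q} and \ref{item:block_scheme_Ppoly_ii}.

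For \ref{item:curtin_multivariate_i_a}, Theorem~\ref{thm:block_scheme_Ppoly}~\ref{item:block_scheme_Ppoly_i} shows that $\mathfrak{X}_{x_0\mathcal{C}}$ is $(\ell-s)$-variate $P$-polynomial on $\mathcal{D}_{\mathrm{bl}}$ with respect to $\le_{\mathrm{bl}}$. Theorem~\ref{thm:curtin71}~\ref{item:curtin71_ii} gives a formal duality between $\mathfrak{X}_{x_0\mathcal{C}}$ and $\mathfrak{X}^\vee/\mathcal{C}^\ast$. To apply Lemma~\ref{lem:formal_duality_multivariate}~\ref{item:formal_duality_multivariate_i}, I first make the indexing standard in the sense of \eqref{eq:formal-duality-standard}. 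The adjacency matrices of the block scheme are indexed by $\mathcal{C}$. By Theorem~\ref{thm:curtin71}~\ref{item:curtin71_i}, $\mathcal{C}$ is the dual closed subset of $\mathcal{C}^\ast$ in $\mathfrak{X}^\vee$, so $\mathcal{C}$ also indexes the primitive idempotents of $\mathfrak{X}^\vee/\mathcal{C}^\ast$, namely the $F_i$ for $i\in\mathcal{C}$.

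The one point needing care is that the duality constructed by Curtin is compatible with these labelings. I would check this by restricting $\Phi$: it sends $\sum_{i\in\mathcal{C}}A_i$ to a multiple of $\sum_{i\in \mathcal{C}}(E_i^\vee)^T$. Then I would use \eqref{eq:block_scheme_P} and~\eqref{eq:quotient_Q} to verify that the induced map sends the block adjacency matrix indexed by $i$ to a multiple of the transposed quotient idempotent $F_i^T$. The factor should be $|x_0\mathcal{C}|=p$, which matches $|X^\vee/\mathcal{C}^\ast|=p$. If Curtin's map is not literally standard, I would reindex the quotient idempotents by the bijection it induces, which is allowed because standardness is only a choice of labeling. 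Once the indexing is standard, the lemma transfers the $(\ell-s)$-variate $P$-structure on $\mathcal{D}_{\mathrm{bl}}$ to an $(\ell-s)$-variate $Q$-structure on $\mathfrak{X}^\vee/\mathcal{C}^\ast$ with the same domain and order.

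Part \ref{item:curtin_multivariate_i_b} is identical in shape. Theorem~\ref{thm:QuotientScheme}~\ref{item:QuotientScheme_P} makes $\mathfrak{X}/\mathcal{C}$ $s$-variate $P$-polynomial on $\mathcal{D}_s$ with respect to $\le_s$. Theorem~\ref{thm:curtin71}~\ref{item:curtin71_iii} makes it formally dual to each block scheme of $\mathfrak{X}^\vee$ with respect to $\mathcal{C}^\ast$, after the same standardization, now using the index set $\mathcal{I}/\mathcal{C}$. Lemma~\ref{lem:formal_duality_multivariate} then gives the $s$-variate $Q$-structure on every such block scheme.

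The main obstacle is the standardization step, which requires tracking the explicit form of the induced duality on block and quotient algebras. Everything else is a direct citation of earlier results.
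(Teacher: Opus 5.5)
Your proposal is correct and follows the paper's proof. You cite Theorem~\ref{thm:block_scheme_Ppoly} (resp.\ Theorem~\ref{thm:QuotientScheme}) for the polynomial structure on the block (resp.\ quotient) scheme, Theorem~\ref{thm:curtin71} for the formal duality, and Lemma~\ref{lem:formal_duality_multivariate} to transfer the structure. The standardization step you flag as the main obstacle is automatic, as noted just before Lemma~\ref{lem:formal_duality_multivariate}: any formal duality sends the Hadamard-primitive idempotents $A_i$ to scalar multiples of the primitive idempotents of the dual algebra, so your relabeling fallback always works and you need not track Curtin's map explicitly.
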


\begin{proof}
\ref{item:curtin_multivariate_i}~\ref{item:curtin_multivariate_i_a}:
By Theorem~\ref{thm:block_scheme_Ppoly}~\ref{item:block_scheme_Ppoly_i}, the block scheme $\mathfrak{X}_{x_0\mathcal{C}}$ is an $(\ell-s)$-variate $P$-polynomial association scheme.
On the other hand, Theorem~\ref{thm:curtin71}~\ref{item:curtin71_ii} shows that $\mathfrak{X}_{x_0\mathcal{C}}$ and $\mathfrak{X}^\vee/\mathcal{C}^\ast$ are formally dual.
Therefore Lemma~\ref{lem:formal_duality_multivariate}~\ref{item:formal_duality_multivariate_i} implies that $\mathfrak{X}^\vee/\mathcal{C}^\ast$ is an $(\ell-s)$-variate $Q$-polynomial association scheme.

The remaining three claims are obtained similarly by combining the corresponding parts of Theorem~\ref{thm:curtin71} and Lemma~\ref{lem:formal_duality_multivariate}.
\end{proof}


\subsection{Composition series of quotient schemes, sequences of block schemes, and elimination/extension theory}
\label{sec:quotient_composition}
In this subsection, we reinterpret chains of closed subsets through the lens of elimination theory.
Theorem~\ref{thm:EliminationThm} shows that each imprimitive step may be viewed as a single elimination step, while the corresponding block scheme records the reverse process, namely extension.
This gives a convenient way to organize composition factors arising from chains of closed subsets together with the associated sequence of block schemes.

More concretely, let $\mathfrak{X}=(X,\mathcal{R},\mathcal{I})$ be a commutative association scheme, fix $x_0\in X$, and let $\mathcal{C}'\subset \mathcal{C}\subset \mathcal{I}$ be closed subsets.
Then $\mathcal{C}'$ is a closed subset of $\mathfrak{X}_{x_0\mathcal{C}}$, so one may form the quotient scheme $\mathfrak{X}_{x_0\mathcal{C}}/\mathcal{C}'$.
Likewise, since $x_0\mathcal{C}'\subset x_0\mathcal{C}$, the scheme $\mathfrak{X}_{x_0\mathcal{C}'}$ may be regarded as a block scheme of $\mathfrak{X}_{x_0\mathcal{C}}$.

\begin{df}\label{df:quotient_composition_series}
For a commutative association scheme $\mathfrak{X}=(X,\mathcal{R},\mathcal{I})$, a strictly decreasing chain of closed subsets
\begin{equation}\label{eq:quotient_series_chain}
\mathcal{I}=\mathcal{C}_0 \supsetneq \mathcal{C}_1 \supsetneq \cdots \supsetneq \mathcal{C}_t = \{i_0\}
\end{equation}
is called a \emph{chain of closed subsets} of $\mathfrak{X}$.
Fixing $x_0\in X$, for each $r=0,1,\ldots,t-1$, define
\[
\mathfrak{X}_{\mathcal{C}_r/\mathcal{C}_{r+1}}
:= \mathfrak{X}_{x_0\mathcal{C}_r}/\mathcal{C}_{r+1}
\]
and call this the \emph{composition factor} corresponding to $(\mathcal{C}_r,\mathcal{C}_{r+1})$.
If this chain of closed subsets cannot be further refined (in terms of inclusion), i.e., if for each $r$ there is no closed subset strictly between $\mathcal{C}_{r+1}$ and $\mathcal{C}_r$, then \eqref{eq:quotient_series_chain} is called a \emph{composition series}.

Furthermore, the sequence of block schemes obtained from the chain of closed subsets,
\[
\mathfrak{X}_{x_0\mathcal{C}_t}\hookrightarrow \mathfrak{X}_{x_0\mathcal{C}_{t-1}}\hookrightarrow \cdots \hookrightarrow \mathfrak{X}_{x_0\mathcal{C}_0}=\mathfrak{X},
\]
is called the \emph{block sequence} associated with this composition series.
\end{df}

Let $\mathcal{I}=\mathcal{C}_0 \supsetneq \mathcal{C}_1 \supsetneq \cdots \supsetneq \mathcal{C}_t=\{i_0\}$
and $\mathcal{I}=\mathcal{D}_0 \supsetneq \mathcal{D}_1 \supsetneq \cdots \supsetneq \mathcal{D}_u=\{i_0\}$
be two composition series of a commutative association scheme $\mathfrak{X}$.
It is known that $t=u$, and there exists a permutation $\sigma$ of $\{0,1,\ldots,t-1\}$ such that
the Bose--Mesner algebras of $\mathfrak{X}_{\mathcal{C}_r/\mathcal{C}_{r+1}}$ and $\mathfrak{X}_{\mathcal{D}_{\sigma(r)}/\mathcal{D}_{\sigma(r)+1}}$
are isomorphic as algebras for each $r=0,1,\ldots,t-1$.
For more details on composition series in the general (not necessarily commutative) setting, see Rassy--Zieschang~\cite{RassyZieschang1998}; for the commutative background, compare also Section~2.9 of Bannai--Ito~\cite{BI1984}.

If we repeatedly apply Theorems \ref{thm:QuotientScheme}, \ref{thm:block_scheme_Ppoly}, and \ref{thm:ideal_block_quotient} to the block schemes $\mathfrak{X}_{x_0\mathcal{C}_r}$ arising from a composition series, then the structures of the Bose--Mesner algebras of the sequence of composition factors
$\mathfrak{X}_{\mathcal{C}_0/\mathcal{C}_1},\mathfrak{X}_{\mathcal{C}_1/\mathcal{C}_2}, \ldots,\mathfrak{X}_{\mathcal{C}_{t-1}/\mathcal{C}_t}$
and of the block sequence
$\mathfrak{X}_{x_0\mathcal{C}_t}, \mathfrak{X}_{x_0\mathcal{C}_{t-1}}, \ldots , \mathfrak{X}_{x_0\mathcal{C}_0}$
can be computed explicitly.
On the quotient side, if a monomial order adapted to $\mathcal{C}_{r+1}$ is given for $\mathfrak{X}_{x_0\mathcal{C}_r}$, then by performing ``linear substitution + elimination'' as in \eqref{eq:ideal_quotient_elim}, \eqref{eq:ideal_quotient_rescaling} on a Gr\"obner basis of the defining ideal $I_r$ of $\mathfrak{X}_{x_0\mathcal{C}_r}$, one obtains the defining ideal of the composition factor $\mathfrak{X}_{\mathcal{C}_r/\mathcal{C}_{r+1}}$.
That is, each composition factor is exactly a single elimination step.
On the block side, from a Gr\"obner basis of the defining ideal $J_r$ of $\mathfrak{X}_{x_0\mathcal{C}_{r}}$, one can directly read off the defining ideal $J_{r+1}$ of $\mathfrak{X}_{x_0\mathcal{C}_{r+1}}$ as $J_{r+1}=J_r\cap \C[y_{s_r+1},\ldots,y_{\ell_r}]$.
Conversely, tracing the block sequence upward can be understood as reconstructing $J_r$ by extending $J_{r+1}$ and adding new variables.
In this sense, the composition series simultaneously provides elimination theory for the composition factors and extension theory for the block sequence.

The next proposition gives a lower bound on the number of composition factors in a composition series of an $\ell$-variate $P$-polynomial association scheme determined by the lexicographic order.
\begin{prop}\label{prop:lexicographic_composition_length}
Assume that $\mathfrak{X}=(X,\mathcal{R},\mathcal{D})$ is an $\ell$-variate $P$-polynomial association scheme on $\mathcal{D}\subset\N^\ell$ with respect to the lexicographic order determined by $x_1>x_2>\cdots>x_\ell$.
For each $r=0,1,\ldots,\ell$, define
\[
\mathcal{C}^{\mathrm{lex}}_r:=\{\alpha=(a_1,\ldots,a_\ell)\in\mathcal{D}\mid a_1=\cdots=a_r=0\}.
\]
Then
\[
\mathcal{D}=\mathcal{C}^{\mathrm{lex}}_0\supsetneq \mathcal{C}^{\mathrm{lex}}_1\supsetneq \cdots \supsetneq \mathcal{C}^{\mathrm{lex}}_\ell=\{o\}
\]
is a chain of $\ell+1$ closed subsets. Consequently, every composition series of $\mathfrak{X}$ has at least $\ell$ composition factors.
\end{prop}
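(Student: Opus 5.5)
The plan is to show, for each $r$ with $1\le r\le \ell-1$, that the lexicographic order with $x_1>\cdots>x_\ell$ is of $r$-elimination type on $\N^\ell=\N^r\times\N^{\ell-r}$. Then the argument of Theorem~\ref{thm:EliminationThm}, direction \ref{item:EliminationThm_ii}$\Rightarrow$\ref{item:EliminationThm_i}, shows that $\mathcal{C}^{\mathrm{lex}}_r=\mathcal{D}\cap(\{o\}\times\N^{\ell-r})$ is a closed subset. The elimination property itself is immediate. If $\alpha\in\N^r$ is nonzero, then the leftmost nonzero entry of $(\alpha,\beta_1)-(o,\beta_2)$ lies among the first $r$ coordinates and is positive, so $(\alpha,\beta_1)>_{\mathrm{lex}}(o,\beta_2)$.

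Rather than just quoting the theorem, I would rerun its proof. Take $\alpha,\beta\in\mathcal{C}^{\mathrm{lex}}_r$ and write $A_{\alpha^T}A_\beta$ as a polynomial in $\bm{A}$ whose monomials all have multidegree in $\{o\}\times\N^{\ell-r}$. Using Lemma~\ref{lem:p_support}~\ref{item:p_support_i}, each such monomial $\bm{A}^\gamma$ reduces to $\bm{A}^\xi$ with $\xi\in\mathcal{D}$ and $\xi\le\gamma$. The elimination property forces $\xi\in\{o\}\times\N^{\ell-r}$. Lemma~\ref{lem:spanvij} then places $A_{\alpha^T}A_\beta$ in $\Span\{A_\gamma\mid\gamma\in\mathcal{C}^{\mathrm{lex}}_r\}$.

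The endpoint cases $r=0$ and $r=\ell$ need no argument. Here $\mathcal{C}^{\mathrm{lex}}_0=\mathcal{D}$ and $\mathcal{C}^{\mathrm{lex}}_\ell=\{o\}$ are trivially closed, the latter because $A_o=I_X$ by the remark after Definition~\ref{df:abQpoly}.

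Strictness of the inclusions follows from $\epsilon_r\in\mathcal{D}$: we have $\epsilon_r\in\mathcal{C}^{\mathrm{lex}}_{r-1}$ but $\epsilon_r\notin\mathcal{C}^{\mathrm{lex}}_r$. This gives a strictly decreasing chain of $\ell+1$ closed subsets from $\mathcal{I}$ down to $\{i_0\}$, i.e.\ a chain of closed subsets of length $\ell$ in the sense of Definition~\ref{df:quotient_composition_series}.

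For the consequence, refine this chain by inserting closed subsets between consecutive terms until it cannot be refined further. Such a refinement exists because the set $\mathcal{D}$ is finite, and it yields a composition series with at least $\ell$ factors. The step I expect to be the main obstacle is passing from this one composition series to every composition series. I would use the Jordan--Hölder-type statement recalled just above the proposition: any two composition series have the same length $t=u$. Hence every composition series has at least $\ell$ composition factors. Apart from this, everything is a routine verification of the elimination property, so the only delicate point is citing the invariance of length correctly (Rassy--Zieschang~\cite{RassyZieschang1998}).
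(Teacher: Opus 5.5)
Your proposal is correct and follows the paper's proof essentially step by step. The lexicographic order is of $r$-elimination type for $1\le r\le \ell-1$, so the argument of Theorem~\ref{thm:EliminationThm}~(ii)$\Rightarrow$(i) makes $\mathcal{C}^{\mathrm{lex}}_r$ closed; strictness comes from $\epsilon_r\in\mathcal{C}^{\mathrm{lex}}_{r-1}\setminus\mathcal{C}^{\mathrm{lex}}_r$; and the bound follows by refining to a composition series and invoking invariance of length. The one detail you leave implicit when rerunning that argument is why $A_{\alpha^T}$ is a polynomial in $A_{\epsilon_{r+1}},\ldots,A_{\epsilon_\ell}$: as in the paper, this follows from normality of $A_\alpha$ (so $A_\alpha^T=p(A_\alpha)$), together with the fact that the elimination property forces $v_\alpha$ to involve only $x_{r+1},\ldots,x_\ell$.
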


\begin{proof}
For each $r=1,2,\ldots,\ell-1$, the lexicographic order with $x_1>x_2>\cdots>x_\ell$ is of $r$-elimination type.
Hence, by the argument in the proof of Theorem~\ref{thm:EliminationThm}~\ref{item:EliminationThm_ii}$\Longrightarrow$\ref{item:EliminationThm_i},
$\mathcal{C}^{\mathrm{lex}}_r$ is a closed subset of $\mathcal{D}$.
The cases $r=0$ and $r=\ell$ are trivial.
Thus all $\mathcal{C}^{\mathrm{lex}}_r$ are closed.
The inclusions are strict because $\epsilon_r\in\mathcal{D}$ for every $r=1,2,\ldots,\ell$ by Definition~\ref{df:abPpoly}, and
$\epsilon_r\in \mathcal{C}^{\mathrm{lex}}_{r-1}\setminus \mathcal{C}^{\mathrm{lex}}_r$.
Therefore $\mathcal{D}=\mathcal{C}^{\mathrm{lex}}_0\supsetneq \mathcal{C}^{\mathrm{lex}}_1\supsetneq \cdots \supsetneq \mathcal{C}^{\mathrm{lex}}_\ell=\{o\}$
is a chain of $\ell+1$ closed subsets.

Since the set of closed subsets is finite, this chain extends to a maximal chain of closed subsets, hence to a composition series.
That composition series has at least $\ell$ composition factors.
All composition series of $\mathfrak{X}$ have the same number of composition factors.
Therefore every composition series has at least $\ell$ composition factors, equivalently $t\ge \ell$.
\end{proof}

\section{Open problems and future directions}
\label{sec:open_problems}

Theorem~\ref{thm:EliminationThm} shows that the imprimitivity of an association scheme $\mathfrak{X}$ is equivalent to the existence of a multivariate $P$-polynomial or $Q$-polynomial structure with respect to an elimination-type monomial order.
This equivalence provides a dictionary that translates classical questions about imprimitive schemes into the language of elimination theory and Gr\"obner bases, and it suggests new computational approaches to several natural problems.

We close by listing a few open problems closely related to imprimitivity and by indicating how they may be reinterpreted within the framework developed in this paper.

\subsection{Schurity testing and extensibility}

\begin{prob}[Schurity Testing Problem]
\label{prob:schurity}
Given a (commutative) association scheme $\mathfrak{X}$, determine whether it is \emph{Schurian} (i.e., arises from the orbit decomposition of some permutation group $G\le \mathrm{Sym}(X)$).
\end{prob}

The Schurity testing problem is classical and computationally important, and the existence of a polynomial-time algorithm in full generality remains open.
Arora--Zieschang~\cite{aroraZieschang2012}, building on Smith's characterization of Schurity and prescheme extensibility~\cite{smith1994,smith2007}, introduced the notion of height-$t$ preschemes ($t$-extensions) and maximal height $t_{\max}$, and also gave an algorithmic approach to testing extensibility.

From the point of view of this paper, Theorem~\ref{thm:EliminationThm} suggests that, in the imprimitive case, one should try to describe the Bose--Mesner algebra as a quotient of a polynomial ring adapted to an elimination order, explicitly identify the quotient-and-block structure arising from closed subsets as elimination data, and then decompose the global testing problem into block and quotient pieces.
In particular, it would be very interesting to understand how extensibility, or more concretely $t_{\max}$, behaves under block and quotient operations.

\begin{prob}[Decomposition Law for Extensibility in the Imprimitive Case]
\label{prob:extensibility_factor}
Let $\mathfrak{X}$ be an imprimitive scheme, and consider the block partition by a closed subset $\mathcal{C}$, with block scheme $\mathfrak{X}_{x_0\mathcal{C}}$ and quotient scheme $\mathfrak{X}/\mathcal{C}$. To what extent can $t_{\max}(\mathfrak{X})$ (or $t$-extensibility) be determined from $t_{\max}(\mathfrak{X}_{x_0\mathcal{C}})$ and $t_{\max}(\mathfrak{X}/\mathcal{C})$?
\end{prob}

\subsection{Existence problem for imprimitive non-symmetric 3-class schemes (Type 2)}

\begin{prob}[J{\o}rgensen's Open Case for Type 2]
\label{prob:jorgensen_type2}
Regarding imprimitive non-symmetric 3-class schemes, J{\o}rgensen organized type 2 (arising from doubly regular $(m,r)$-team tournaments), and in particular, posed the existence problem for $4\le r<m$ as ``the most interesting open problem'' (see \cite{jorgensen2009}). The smallest feasible case is $(r,m)=(4,10)$ (order $40$). Determine whether such a type 2 scheme exists.
\end{prob}

Problems of this kind are fundamentally existence problems for highly constrained combinatorial structures, and direct search quickly becomes infeasible.
On the other hand, Theorem~\ref{thm:EliminationThm} shows that any such scheme, if it exists, must admit a multivariate $P$-polynomial realization for some $\ell$, some domain $\mathcal{D}\subset\N^\ell$, and some elimination-type monomial order.
This reduces the existence question to the solvability of a zero-dimensional ideal equipped with an elimination Gr\"obner basis, and hence makes it accessible to computational algebra.
In the particularly small case of three classes, one may hope to eliminate variables step by step until either a contradiction appears or an explicit construction emerges.

\subsection{Systematic elimination of imprimitive parameter sets}

Recently there has been renewed progress on ruling out parameter sets of imprimitive association schemes that satisfy all currently known feasibility conditions but are not realized, or are realized uniquely, by using spherical embeddings of eigenspaces.
For example, Vidali~\cite{vidali2025} obtained new non-existence and uniqueness results for several open parameter sets arising from quotient-polynomial graphs.

From the perspective of this paper, an open parameter set should, whenever realizable, admit a polynomial realization compatible with an elimination order by Theorem~\ref{thm:EliminationThm}.
This realization is encoded by a zero-dimensional ideal $I$; see \eqref{eq:ideal}.
Accordingly, one may hope to detect contradictions already at the level of an elimination ideal such as $I\cap\C[x_{s+1},\ldots,x_\ell]$, thereby obtaining computational-algebraic non-existence proofs.
The spherical-embedding method and the elimination-ideal approach appear to impose genuinely different kinds of constraints, so combining them could lead to significantly stronger feasibility tests.

\vspace{2ex}

\noindent
{\bf{Acknowledgments}}

\vspace{2ex}

\noindent
We would like to thank Hajime Tanaka for his helpful comments.
The first named author is supported by JSPS KAKENHI Grant Numbers JP24K00521. 
The second named author is supported by JSPS KAKENHI Grant Numbers JP24K06830.


\end{document}